\newtheorem{cond}{Condition}[section]
\newtheorem{lemma}[cond]{Lemma}
\newtheorem{prop}[cond]{Proposition}
\newtheorem{thm}[cond]{Theorem}
\newtheorem{cor}[cond]{Corollary}
\theoremstyle{definition}
\newtheorem{defn}[cond]{Definition}
\theoremstyle{remark}
\newtheorem{rem}[cond]{\bf Remark}
\newtheorem{ex}[cond]{\bf Example}
\newtheorem{conj}[cond]{\bf Conjecture}
\newcounter{numl}
\newcommand{\labelnuml}{\textup{(\roman{numl})}}
\newenvironment{numlist}{\begin{list}{\labelnuml}%
{\usecounter{numl}\setlength{\leftmargin}{0pt}%
\setlength{\itemindent}{2\parindent}%
\setlength{\itemsep}{\smallskipamount}\def
\makelabel ##1{\hss \llap {\upshape ##1}}}}{\end{list}}
\newenvironment{bulletlist}{\begin{list}{\labelitemi}%
{\setlength{\leftmargin}{\parindent}\def
\makelabel ##1{\hss \llap {\upshape ##1}}}}{\end{list}}
\DeclareSymbolFont{script}{U}{eus}{m}{n}
\DeclareSymbolFontAlphabet{\mathscr}{script}
\DeclareMathSymbol{\Wedge}{0}{script}{"5E}
\DeclareMathAlphabet{\mathrmsl}{OT1}{cmr}{m}{sl}
\newcommand{\R}{{\mathbb R}}
\newcommand{\C}{{\mathbb C}}
\newcommand{\Z}{{\mathbb Z}}
\newcommand{\T}{{\mathbb T}}
\newcommand{\F}{{\mathbb F}}
\newcommand{\cC}{{\mathcal C}}
\newcommand{\cF}{{\mathcal F}}
\newcommand{\cL}{{\mathcal L}}
\newcommand{\cM}{{\mathcal M}}
\newcommand{\cO}{{\mathcal O}}
\newcommand{\cS}{{\mathcal S}}
\newcommand{\tor}{{\mathfrak t}}
\newcommand{\vzero}{u^0_{\alpha,\beta}}
\newcommand{\Proj}{\mathbb P}
\newcommand{\Hzero}{\mathbf{H}^0_{\alpha,\beta}}
\newcommand{\Gzero}{\mathbf{G}^0_{\alpha,\beta}}
\newcommand{\al}{\alpha}
\newcommand{\be}{\beta}
\newcommand{\1}{0}
\newcommand{\2}{\infty}
\newcommand{\Gv}{\mathbf{G}^u}
\newcommand{\Hv}{\mathbf{H}^u}
\newcommand{\bL}{\boldsymbol{L}}
\begin{document}

\title{Extremal K\"ahler Poincar\'e type metrics on toric varieties}

\author[V. Apostolov]{Vestislav Apostolov}
\address{Vestislav Apostolov \\ D{\'e}partement de Math{\'e}matiques\\
UQAM\\ C.P. 8888 \\ Succursale Centre-ville \\ Montr{\'e}al (Qu{\'e}bec) \\
H3C 3P8 \\ Canada}
\email{apostolov.vestislav@uqam.ca}

\author[H. Auvray]{Hugues Auvray}
\address{Hugues Auvray \\ Laboratoire de Math\'{e}matiques d'Orsay \\ Universit\'{e} Paris-Sud \\
CNRS, Universit\'{e} Paris-Saclay \\ D\'epartement de Math\'ematiques \\
B\^atiment 425 \\ 91405 Orsay \\ France.}
\email{hugues.auvray@math.u-psud.fr}

\author[L. M. Sektnan]{Lars Martin Sektnan}
\address{Lars Martin Sektnan\\ D{\'e}partement de Math{\'e}matiques\\
UQAM\\ C.P. 8888 \\ Succursale Centre-ville \\ Montr{\'e}al (Qu{\'e}bec) \\
H3C 3P8 \\ Canada}
\email{lars.sektnan@cirget.ca}

\keywords{Extremal K\"ahler metrics, toric geometry, complete Poincar\'e  metrics on a complement of divisor.}
\subjclass{53C55, 53C25}

\thanks{V.A. was supported in part by an NSERC discovery grant.  He is grateful to l'\'Ecole Normale Sup\'erieure and l'\'Ecole polytechnique in Paris,  and 
the  Institute of Mathematics and Informatics of the Bulgarian Academy of Sciences in Sofia, for their hospitality during the preparation of parts of this work. L.S. is thankful to the CIRGET who supports his postdoctoral position. V.A. and H.A. would like to thank O. Biquard and P. Gauduchon for stimulating discussions about this project.}

\begin{abstract} We develop a general theory for the existence of extremal K\"ahler metrics  of Poincar\'e type in the sense of \cite{Auvray0},  defined on the complement of a toric divisor of a polarized toric variety.  In the case when the divisor is smooth, we obtain  a list of necessary conditions which must  be satisfied  for such a metric to exist. Using the explicit methods of \cite{ACG1,ACG2} together with the computational approach of \cite{lars}, we show that on a Hirzebruch complex surface the necessary conditions are also sufficient. In particular, on such a complex surface the complement of  the  infinity section admits  an extremal K\"ahler metric of Poincar\'e type whereas the complement of a fibre  admits a complete ambitoric extremal K\"ahler metric which is not of Poincar\'e type.
\end{abstract}

\maketitle

\section{Introduction}  
In this article, we are interested in the study of (non-compact) complete extremal K\"ahler metrics, 
defined on the complement of a simple normal crossing divisor  $Z$ in an $n$-dimensional K\"ahler manifold $X$. 
Such metrics naturally appear (see e.g. \cite{Do2})  in attempts to apply continuity methods, or to study global properties of geometric  flows, 
aiming at producing extremal K\"ahler metrics on $X$ in the framework of the general problem of finding canonical K\"ahler metrics formulated by Calabi~\cite{calabi}.  

The main conjecture regarding the Calabi problem  is the Yau--Tian--Donaldson conjecture 
which relates the existence of an extremal K\"ahler metric in the first Chern class $c_1(L)$ of an ample line bundle $L$ on $X$ 
to an algebro-geometric notion of stability of the polarized projective variety $(X, L)$. 
In this context, a key point is to understand what happens when  an extremal K\"ahler metric does not exist in $c_1(L)$. 
For toric varieties, Donaldson conjectured \cite[Conj. 7.2.3.]{Do2}  
that there should be a splitting of the corresponding  Delzant polytope into sub-polytopes 
which are semi-stable when attaching a $0$ measure to the facets that are not from the original polytope; 
furthermore, in the case when a semi-stable polytope in the splitting is stable, 
it is conjectured  to admit  a symplectic potential inducing a  (unique)  complete extremal K\"ahler metric 
on the complement of the divisors corresponding to the facets with $0$ measure. 
Such extremal toric K\"ahler metrics have  a finite volume, and  we shall refer to them  as {\it Donaldson metrics}.

The main motivation for this paper is to study, in the toric case,  
the  precise link between the extremal Donaldson metrics and the class  of  complete K\"ahler metrics of finite volume on $X\setminus Z$, 
called of {\it Poincar\'e type}, 
early used for instance in \cite{CoGr, TY}, and studied by the second named author in \cite{Auvray0}.

\begin{defn}\label{d:Poincare-type}  
Let $Z \subset X$ be a simple normal crossing divisor in a compact complex $n$-dimensional K\"ahler manifold $(X, \omega_0)$. 
A  K\"ahler metric $\omega$ on $X\setminus Z$ is said to be  of {\it Poincar\'e type of class $[\omega_0]$} if 
\begin{enumerate}
\item[$\bullet$] On any open subset  $U\subset X$ with holomorphic coordinates $(z_1, \ldots, z_n)$ such that $Z\cap U$ is given by $z_1\cdots z_k =0$, 
$\omega$ is quasi-isometric to the $(1,1)$-form
$$\omega_{\rm mod} =\sqrt{-1} \Big(\sum_{j=1}^k \frac{1}{|z_j|^2(\log|z_j|)^2} dz_j \wedge d\bar z_j +  \sum_{j=k+1}^n dz_j \wedge d\bar z_j \Big)$$
near $Z$, and
\item[$\bullet$] $\omega = \omega_0 + dd^c \varphi$ where $\varphi$ is a smooth function on $X\setminus Z$ 
and $\varphi = O\big(\sum_{j=1}^k \log(-\log |z_j|)\big)$ in the coordinates $(z_1, \ldots, z_n)$ as above, 
with  $d\varphi$ having  bounded derivatives of any order with respect to the model metric $\omega_{\rm mod}$ above.
\end{enumerate}
\end{defn}

General theory for  {\it extremal} Poincar\'e type metrics  on $(X\setminus Z, [\omega_0])$  has been developed in \cite{Auvray0, Auvray, Auvray1, Auvray2}. 
In particular,  a differential-geometric obstruction for the existence of a {\it constant scalar curvature K\"ahler} (CSCK) metric of Poincar\'e type on $X\subset Z$, 
reminiscent to the usual Futaki invariant, is introduced in \cite{Auvray2}.  
Furthermore,   in the special case when the K\"ahler class $[\omega_0]= c_1(L)$ is associated to a polarization $L$ on $X$, 
an algebro-geometric notion of  (relative) {\it K-stability} of $(X, Z, L)$ is  formulated by G. Sz\'ekelyhidi in \cite{Sz},  
who introduced a suitable version of the Donaldson--Futaki invariant of a test configuration associated to the triple $(X, Z, L)$. 
Sz\'ekelyhidi also defined a numerical constraint, which we shall refer to in this paper as {\it  Sz\'ekelyhidi's numerical constraint} (see Definition~\ref{d:constraint}), 
which is related to the deformation to the normal cone of $Z\subset X$, 
and is designed to guarantee the existence of a Poincar\'e type metric (and not a complete extremal K\"ahler metric with different asymptotics near $Z$).  
It was later shown in \cite{Auvray} that  Sz\'ekelyhidi's numerical constraint is  a necessary condition 
for the existence of a  CSCK Poincar\'e type  metric  on $X\setminus Z$ in the class $c_1(L)$.  
The case when $Z$ is smooth and admits a  K\"ahler metric of {\it non-positive}  constant scalar curvature has been also studied in \cite{S-Sun, Sun-Sun, J-Sun}, 
where it is conjectured that $(X, Z, L)$ is then $K$-semistable and admits a complete K\"ahler  metric of {\it negative} constant scalar curvature.

Thus motivated, in Section~\ref{s:toric} we turn  to the case when $(X, L)$ is a  smooth toric variety, and $Z$ a divisor invariant under the torus action. 
Compared to the theory in \cite{Sun-Sun}, we are dealing with the case when each component of $Z$ is a toric variety, 
and therefore can only admit a {\it positive} CSCK metric. 
In terms of the corresponding momentum polytope $\Delta$, $Z$  is  the  preimage by the moment map of  the union $F= \cup_{i} F_i$ of  facets $F_i$ of $\Delta$. 
In this setting (and taking $F$ to be a single facet), we show that Sz\'ekelyhidi's numerical constraint  takes a particularly simple form (Lemma~\ref{l:toric-numerical}), and matches the necessary numerical condition for the existence of an extremal K\"ahler metric of Poincar\'e type  on $X\setminus Z$ found in  \cite{Auvray2}.  
Furthermore, building on the recent results in \cite{Auvray1} and a conjecture from \cite{Do2} (see Conjecture~\ref{conjecture2} below),  
we formulate a precise conjectural picture concerning the existence of an extremal Poincar\'e type toric K\"ahler metric on $(X\setminus Z, [\omega ])$,  see Conjecture~\ref{conjecture1}.  
It states that such an extremal K\"ahler metric exists if 
(i) the pair $(\Delta, F)$ is stable, (ii) for each facet $F_i \subset F$, the pair $(F_i, F_i\cap (\cup_{j\neq i} F_j))$ is stable, 
and (iii) when restricted to $F_i$, the extremal affine function of $(\Delta, F)$ differs  from the extremal affine function of $(F_i, F_i\cap (\cup_{j\neq i} F_j))$ 
by a negative constant.  
This is a much stronger statement than the original conjecture made in \cite{Sz} (see Conjecture~\ref{c:numerical}), 
but in the remainder of the paper we show that  it is sharper too.

In Section~\ref{s:abreu-guillemin}, we develop the Abreu--Guillemin formalism of toric K\"ahler metrics of Poincar\'e type, 
thus leading to a natural class of symplectic potentials (see Definition~\ref{toric-poincare} and Theorem~\ref{thm:criterion}) 
which give rise to Poincar\'e type metrics in the sense of Definition~\ref{d:Poincare-type}. 
While these conditions are sufficient, they are not  necessary in general 
(but are conjecturally sharp \textit{when adding the extremality condition}).  
We show that within this class  of Poincar\'e type metrics on $X\setminus Z$, 
the extremal ones are unique and the conditions (i), (ii) and (iii) of our Conjecture~\ref{conjecture1}  are necessary too.

In the last Section~\ref{s:examples}, we turn to explicit examples by using the methods of \cite{ACG1,ACG2}. 
These results together with \cite{Dixon} confirm the Donaldson conjecture concerning the existence of a  complete extremal K\"ahler metric 
in the case when  $(\Delta,F)$ is a stable quadrilateral with some of its facets with measure $0$, also allowing us to find the metric explicitly.  
Investigating the stability of such pairs is,  on its own,  a  problem of formidable complexity but using the method from \cite{lars},  
we obtain a  complete picture on the Hirzebruch complex surfaces.

\begin{figure}[h!]
\captionsetup[subfigure]{labelformat=parens}
\centering
\begin{subfigure}[b]{0.4\textwidth}
\includegraphics[scale=0.13,angle=0]{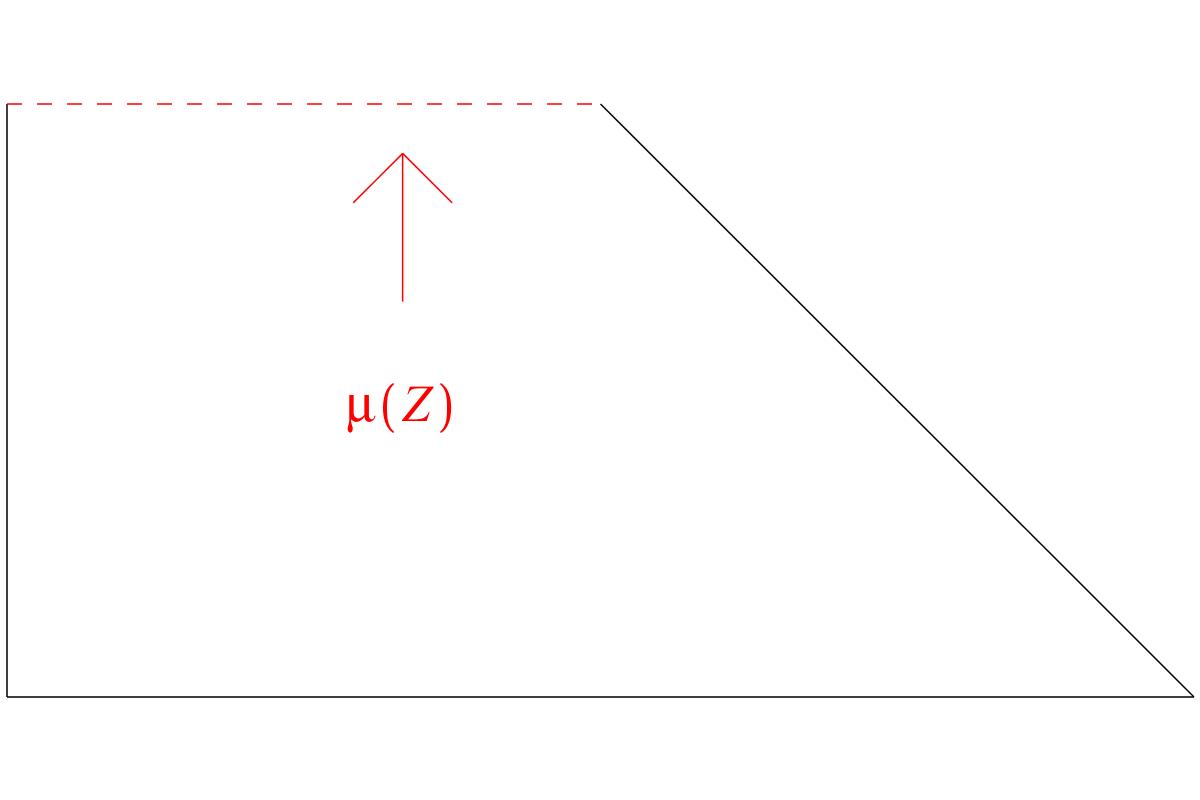}
\caption{$Z = S_0$}
\end{subfigure}
~
\begin{subfigure}[b]{0.4\textwidth}
\includegraphics[scale=0.13,angle=0]{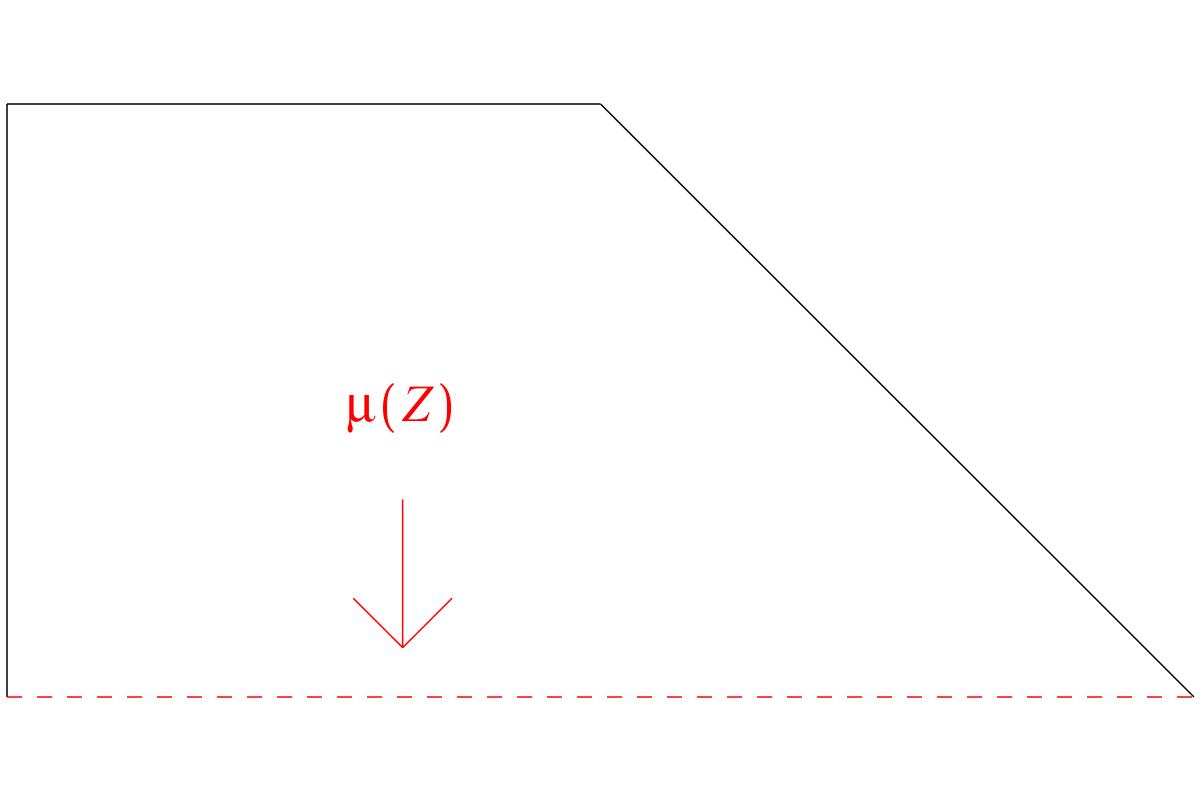}
\caption{$Z = S_{\infty}$}
\end{subfigure}

\begin{subfigure}[b]{0.4\textwidth}
\includegraphics[scale=0.13,angle=0]{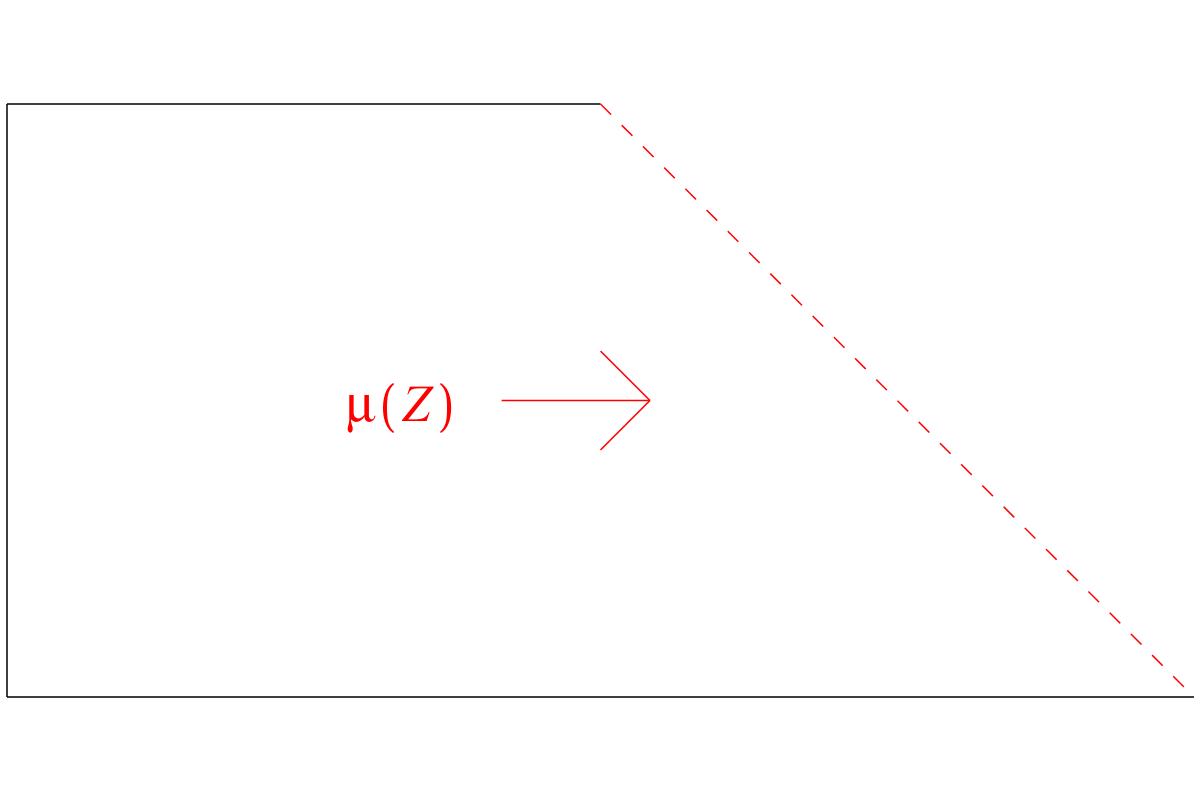}
\caption{$Z = F_1$}
\end{subfigure}
~
\begin{subfigure}[b]{0.4\textwidth}
\includegraphics[scale=0.13,angle=0]{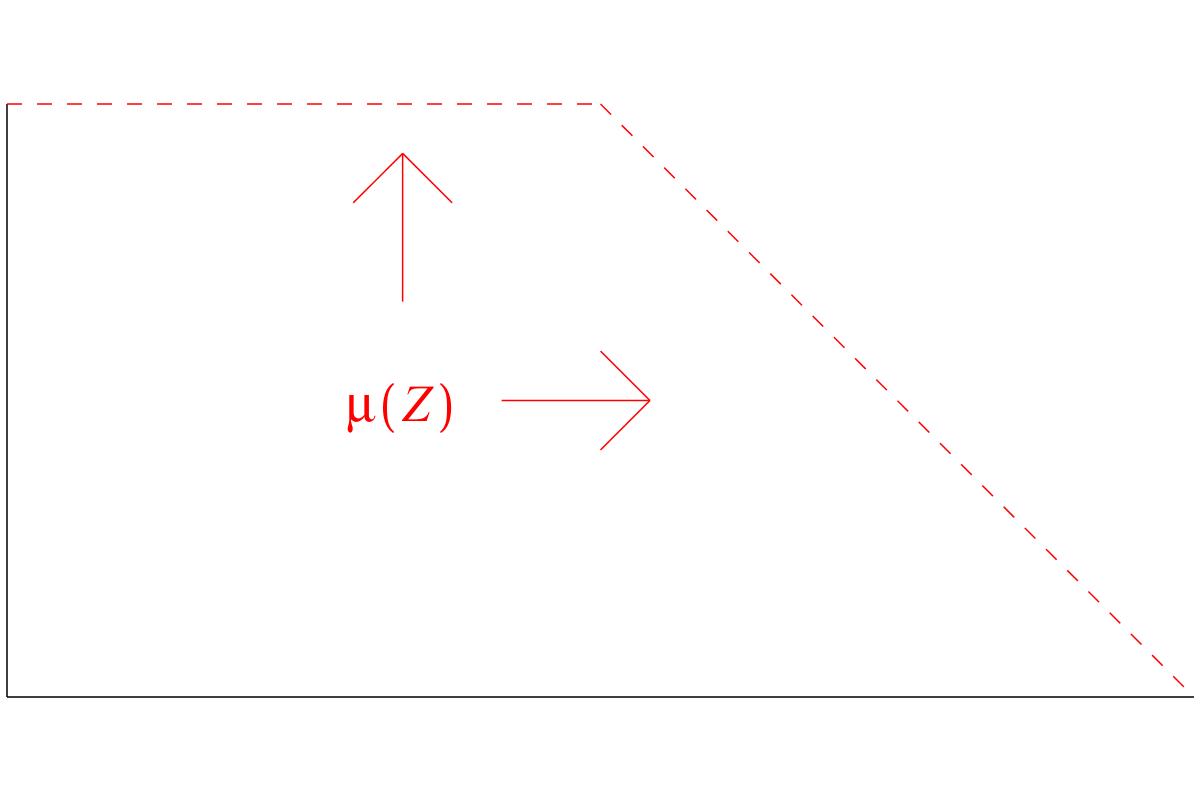}
\caption{$Z = S_0 \cup F_1$}
\end{subfigure}

\begin{subfigure}[b]{0.4\textwidth}
\includegraphics[scale=0.13,angle=0]{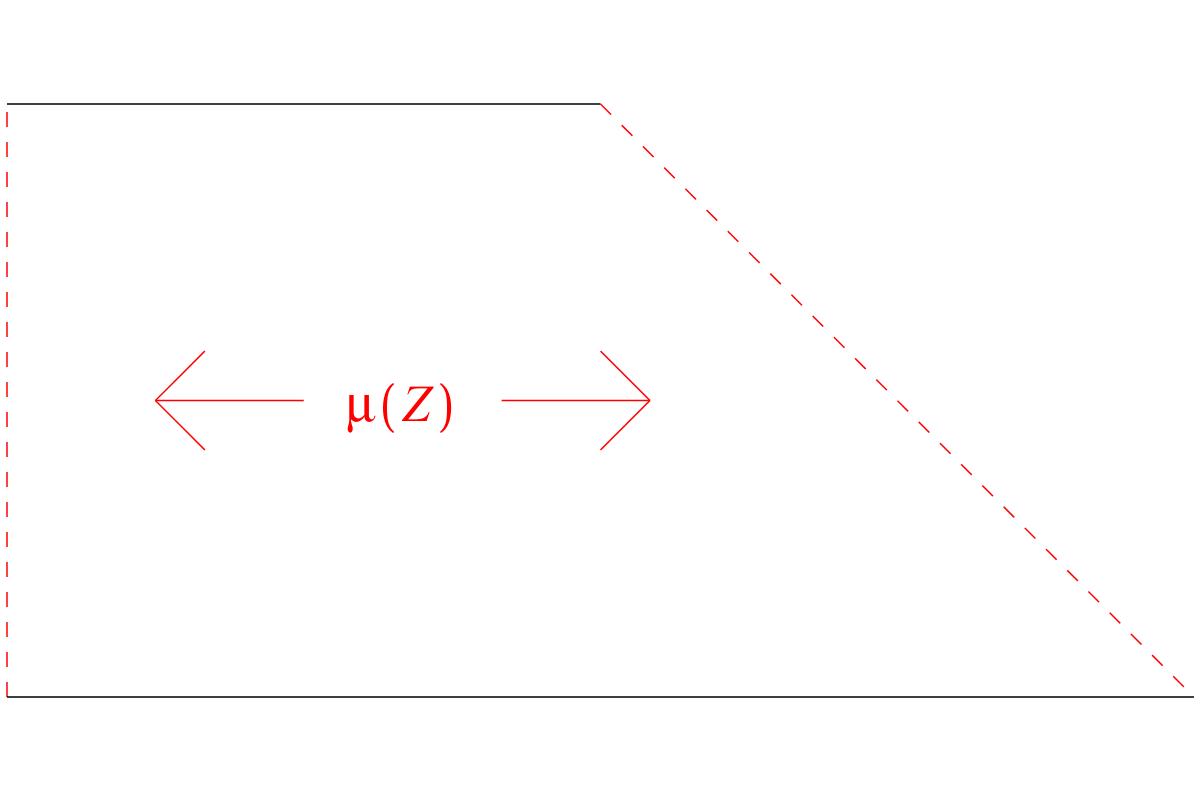}
\caption{$Z = F_1 \cup F_2$}
\end{subfigure}
~
\begin{subfigure}[b]{0.4\textwidth}
\includegraphics[scale=0.13,angle=0]{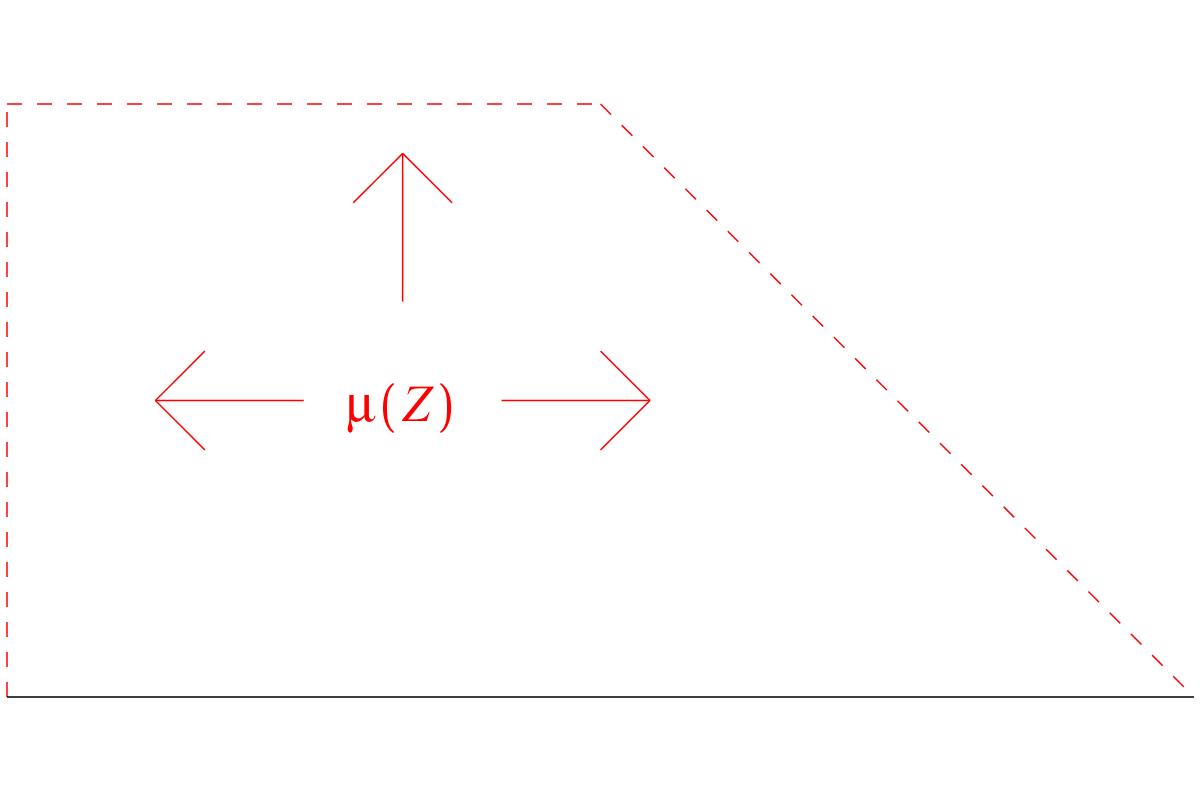}
\caption{$Z = F_1 \cup F_2 \cup S_0 $}
\end{subfigure}
\caption{The rows illustrate case (a), (b) and (c) of Theorem~\ref{main-example}.}
\end{figure}

\begin{thm}\label{main-example} 
Let $X= {\mathbb P}(\cO \oplus \cO(m)) \to \C P^1, m\ge 1$, be  the $m$-th Hirzebruch  surface, 
considered as a  toric complex surface under the action of a $2$-dimensinal torus $\T$,  and $[\omega_0]$ be a K\"ahler class on $X$. Then, 

\begin{enumerate}
\item[{\rm (a)}] If $Z \subset X$ is the divisor consisting of either the zero section $S_0$, 
                 or the  infinity section $S_{\infty}$, 
                 or the union of both, 
                 then $X\setminus Z$ admits a $\T$-invariant  extremal Poincar\'e type K\"ahler metric in $[\omega_0],$ which  is a Donaldson metric of $(X, Z, [\omega_0])$.
\item[{\rm (b)}] If $Z\subset X$ is the divisor consisting of either a single fibre $F_1$ of $X$ fixed by the $\T$-action, 
                 or the union of $F_1$ with $S_0$, 
                 or the union of $F_1$ with $S_{\infty}$,  
                 then 
                 $X\setminus Z$ admits a complete $\T$-invariant Donaldson extremal K\"ahler metric in $[\omega_0],$ which is not of Poincar\'e type.
\item[{\rm (c)}] If $Z$ consists of the union of two fibres fixed by the torus action, or contains three curves fixed by the action, 
                 then $(X, Z, [\omega_0])$ is  unstable, i.e.  there are no Donaldson metrics on $X\setminus Z$.
\end{enumerate}
In particular, Conjecture~\ref{conjecture1} holds true.
\end{thm}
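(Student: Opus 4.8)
The plan is to reduce Theorem~\ref{main-example} to a finite, completely explicit analysis on quadrilateral polytopes, carried out in Section~\ref{s:examples}, and then to read off the verification of Conjecture~\ref{conjecture1} from the case-by-case outcome. The first step is to fix the combinatorial setup: for the $m$-th Hirzebruch surface $X={\mathbb P}(\cO\oplus\cO(m))$ polarized by a $\T$-invariant K\"ahler class $[\omega_0]$, the momentum polytope $\Delta$ is a trapezoid (quadrilateral) with vertices and edge normals determined by $m$ and by the two real parameters encoding $[\omega_0]$. Each torus-invariant curve ($S_0$, $S_\infty$, and the two fibres) corresponds to a facet of $\Delta$, and choosing $Z$ amounts to selecting the subset $F\subset\partial\Delta$ of facets that receive measure zero. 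Thus all cases (a), (b), (c) become statements about the pair $(\Delta,F)$ for a quadrilateral $\Delta$ with a prescribed subset $F$ of its four edges.

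Next I would invoke the explicit ambitoric machinery of \cite{ACG1,ACG2}: a quadrilateral is ambitoric, so its extremal (possibly Poincar\'e type) toric K\"ahler metrics are governed by an explicit ODE/polynomial ansatz, and the extremal affine function of $(\Delta,F)$ is computed by solving a small linear system (the orthogonality relations defining the extremal affine function against the facet measures, with the measure of the facets in $F$ set to zero). With these formulas in hand, the verification of the three conditions of Conjecture~\ref{conjecture1} is mechanical: (i) stability of $(\Delta,F)$ reduces, by the method of \cite{lars}, to positivity of a finite collection of explicitly computable quantities (the values of the Donaldson--Futaki-type functional on the finitely many "simple piecewise-linear" destabilizers that suffice for quadrilaterals); (ii) for each facet $F_i\subset F$, the pair $(F_i, F_i\cap(\cup_{j\neq i}F_j))$ is one-dimensional, so its stability and its extremal affine function are elementary; and (iii) the sign of the constant by which the extremal affine function of $(\Delta,F)$ differs from that of $(F_i,\cdot)$ when restricted to $F_i$ is again a single explicit inequality. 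Running through cases (a), (b), (c)---and appealing to \cite{Dixon} for the input on which quadrilateral pairs are stable---one finds in case (a) that all three conditions hold \emph{and} the extremal affine function restricts to a constant on each $F_i$ equal to the extremal affine function of the boundary (so condition (iii) is satisfied with the "negative constant" being the degenerate endpoint of the admissible range, which is exactly what forces the resulting metric to be genuinely of Poincar\'e type); in case (b) the pair $(\Delta,F)$ is stable so a Donaldson metric exists, but the sign in (iii) is \emph{wrong} (the relevant constant is non-negative rather than negative), which is why the complete extremal metric produced by \cite{ACG1,ACG2} has non-Poincar\'e asymptotics along the fibre; and in case (c) the pair $(\Delta,F)$ is outright unstable, so no Donaldson metric exists at all.

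Finally, to conclude "Conjecture~\ref{conjecture1} holds true" I would note that Conjecture~\ref{conjecture1} is an \emph{if-and-only-if}-type prediction relating the existence of an extremal Poincar\'e type toric metric to the triple of conditions (i)--(iii), together with the refinement (proved within the Abreu--Guillemin class in Section~\ref{s:abreu-guillemin}) that within that class these conditions are also necessary. For the Hirzebruch surfaces, cases (a), (b), (c) exhaust all $\T$-invariant $Z$ (since $X$ has exactly four torus-invariant curves, $S_0$, $S_\infty$, and two fibres, and by the toric symmetry any two fibres are equivalent and any configuration of three invariant curves contains two fibres), and in each case the explicit computation shows that the existence (or non-existence) of an extremal Poincar\'e type metric matches exactly the verification (or failure) of (i)--(iii); so the conjecture is confirmed on the entire family. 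The main obstacle I expect is not conceptual but computational bookkeeping: controlling the stability analysis of the quadrilateral pairs $(\Delta,F)$ uniformly in $m$ and in the two K\"ahler parameters, i.e.\ making precise the reduction of \cite{lars} to a finite check and verifying the resulting inequalities do not degenerate---this is where \cite{Dixon} does the heavy lifting, and where the sign in condition (iii) must be pinned down carefully to distinguish cases (a) and (b).
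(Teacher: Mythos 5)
Your overall strategy coincides with the paper's: reduce everything to the trapezoid $(\Delta,F)$, produce the extremal Donaldson metrics explicitly via the ambitoric constructions of \cite{ACG1,ACG2} (with completeness supplied by \cite{Dixon} and stability by the determinant criterion of \cite{lars}, not the other way around --- Dixon's input is completeness, not stability), and then sort the cases against conditions (i)--(iii) of Conjecture~\ref{conjecture1}. However, two of your pivotal claims misdescribe what actually happens, and as stated they would not carry the argument. First, in case (a) the constant in condition (iii) is not a ``degenerate endpoint of the admissible range'': the computation \eqref{extremal} gives $s_{(\Delta, {\bf L}, F)} = s_{(F, {\bf L}_F)} - 2/\alpha$ with $\alpha>0$, a genuinely negative nonzero constant, and the Poincar\'e type property is not deduced from the position of that constant. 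It is verified directly: the Calabi-type solution satisfies $A(\al_k)=A'(\al_k)=0$ and $A'''(\al_k)\neq 0$ on the facets of $F$, which places the symplectic potential in the class $\cS_{\alpha,\beta}(\Delta,{\bf L},F)$ of Definition~\ref{toric-poincare}, whereupon Theorem~\ref{thm:criterion} yields the Poincar\'e asymptotics. Second, in case (b) condition (iii) does not fail because ``the relevant constant is non-negative rather than negative'': it fails because the difference of extremal affine functions restricted to the fibre facet is \emph{not constant at all}. This is the content of Lemma~\ref{PTrestriction} and the computation of $K_1,K_3$ (and $K$) in Appendix~\ref{AppendixB}, which show that the restricted affine function takes different values at the two vertices of the facet; equivalently, in Example~\ref{ex:1} the coefficient of $\log L_{\al,\2}$ in the symplectic potential is a non-constant affine function on that facet. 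Relatedly, you should keep the two ambitoric regimes separate: case (a) lives in the Calabi-type ansatz precisely because the extremal affine function is constant on the parallel facets (Proposition~\ref{trapezoid}), while case (b) is forced into the regular (hyperbolic) ambitoric ansatz of Proposition~\ref{main} because it is not --- this dichotomy is what ultimately separates the Poincar\'e from the non-Poincar\'e asymptotics, and your sketch elides it.
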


Similar existence results hold for the toric surfaces $X= \C P^2$ and $\C P^1 \times \C P^1$, see Corollary~\ref{c:product} and Theorem~\ref{thm:triangle} below. 

We end this introduction by noticing  that part (b) of Theorem~\ref{main-example} above implies that while for the $X,Z$ and $[\omega_0]$ considered here, 
the relative stability of $(X,Z,[\omega_0])$ does imply the existence of a complete  extremal  K\"ahler metric on $X \setminus Z$, 
this metric is not in general of Poincar\'e type, even though the Sz\'ekelyhidi numerical constraint  is satisfied. 
Thus,  in general, one will need more conditions to guarantee that the extremal metric obtained for a relatively stable triple $(X, Z, [\omega_0])$ is of Poincar\'e type. 
Conjecture~\ref{conjecture1} is designed to incorporate this extra requirement in the toric setting.

\section{The relative K-stability of a pair}\label{s:Donaldson-Futaki}  

\subsection{Donaldson--Futaki invariant of a pair} We follow \cite[\S 3.1.2]{Sz}. 
Let $(X,L)$ be a smooth polarized variety of complex dimension $n$ and $Z\subset X$ a smooth divisor.  
Suppose $\alpha$ is a $\C^{\times}$-action on $(X,L)$ which preserves $Z$. We consider the embedding
$$H^0(X, L^k\otimes {\mathcal O}(-Z)) \subset H^0(X, L^k)$$
via a section of ${\mathcal O}(Z)$ which vanishes along $Z$. Since $L$ is ample,
\begin{equation*}
H^1(X, L^k\otimes{\mathcal O}(-Z))=0  \ \ {\rm for} \ k\gg 0,
\end{equation*}
and we have an exact sequence
\begin{equation}\label{exact}
0 \longrightarrow H^0(X, L^k\otimes {\mathcal O}(-Z)) 
  \longrightarrow H^0(X, L^k) 
  \longrightarrow  H^0(Z, L_{|_Z}^k) 
  \longrightarrow 0.
\end{equation}
Let $d_k, d'_k, d^Z_k$ be the dimensions of $H^0(X,L^k), H^0(X, L^k\otimes \cO(-Z)), H^0(Z, L_{|_Z}^k))$, respectively, 
and let ${\tilde d}_k$ be the average of $d_k$ and $d'_k$. By Riemann--Roch and \eqref{exact}, 
\begin{equation}\label{dimension}
\begin{split}
d_k&= c_0k^n + c_1k^{n-1} + O(k^{n-2}); \ \ d^Z_k = \alpha_0k^{n-1} + \alpha_1 k^{n-2} + O(k^{n-3});\\
{\tilde d}_k& = \frac{d_k + d'_k}{2}=  d_k - \frac{d^Z_k}{2}= c_0 k^{n} + \Big(c_1- \frac{\alpha_0}{2}\Big)k^{n-1} + O(k^{n-2}).
\end{split}
 \end{equation}
Similarly, letting $w_k, w'_k, w^Z_k$ be the respective weights of the action of $\alpha$ on $H^0(X,L^k), H^0(X, L^k\otimes \cO(-Z)), H^0(Z, L_{|_Z}^k))$, 
and ${\tilde w}_k$ be the average of $w_k$ and $w'_k$, by the equivariant Riemann--Roch and \eqref{exact} we have
\begin{equation}\label{weight}
\begin{split}
w_k (\alpha) &= a_0k^{n+1} + a_1k^{n} + O(k^{n-1}); \ \ w^Z_k (\alpha) = \beta_0k^{n} + \beta_1 k^{n-1} + O(k^{n-2});\\
{\tilde w}_k(\alpha) & = \frac{w_k (\alpha)+ w'_k(\alpha)}{2}=  w_k(\alpha) - \frac{w^Z_k(\alpha)}{2}= a_0 k^{n+1} + \Big(a_1- \frac{\beta_0}{2}\Big)k^{n} + O(k^{n-1}).
\end{split}
 \end{equation}

\begin{defn}\label{d:DF} 
The {\it Donaldson--Futaki invariant} ${\widetilde {\mathcal F}}_{X,Z,L}(\alpha)$ of $\alpha$ with respect to $(X,Z,L)$  
is defined as $-4$ times the residue at $k=0$ of the Laurent series of ${\tilde w}_k/(k {\tilde d}_k)$ with respect to $k$, i.e.
\begin{equation}\label{modified-futaki}
\begin{split}
\frac{1}{4} {\widetilde {\mathcal F}}_{X,Z,L}(\alpha) &= \frac{c_0(a_1- \frac{\beta_0}{2}) -a_0(c_1- \frac{\alpha_0}{2})}{c_0^2}\\
&= \frac{1}{4}{\mathcal F}_{X,L}(\alpha) +  \frac{1}{2}\Big(\frac{a_0\alpha_0 -c_0\beta_0}{c_0^2}\Big),
\end{split}
\end{equation}
where ${\mathcal F}_{X,L}(\alpha) = 4(\frac{c_0a_1- a_0c_1}{c_0^2})$ is the convention for the Donaldson--Futaki invariant in \cite{Do2}, 
so that it coincides with the differential-geometric formula in \cite{Auvray2} for the usual normalized Futaki invariant of $\alpha$, 
expressed in terms of the $\bL^2$
-product  of a normalized Killing potential for the $\C^{\times}$-action 
and the scalar curvature with respect to an $S^1$-invariant K\"ahler metric on $X$ in  $ 2\pi c_1(L)$, divided by the volume. 
\end{defn}

Following \cite{Sz}, one can also define a relative version of ${\widetilde {\mathcal F}}_{X,Z,L}(\alpha)$ 
with respect to another $\C^{\times}$-action $\beta$ in the group ${\rm Aut}(X, L, Z)$ of automorphisms of  $(X,L)$,  preserving $Z$. 
Recall that the inner product $\langle \alpha, \beta \rangle$ is defined 
to be the coefficient of $k^{n+2}$ of the expansion of ${\rm Tr} (A_k B_k) - w_k(\alpha)w_k(\beta)/d_k$,
where $A_k$ and $B_k$ are generators of the actions of $\alpha$ and $\beta$ on $H^0(X,L^k)$. 
This definition is consistent with the $\bL^2$
-norm of normalized Killing potentials (the so-called Futaki--Mabuchi bilinear form~\cite{FM}). 

\begin{defn} The $\beta$-relative Donaldson--Futaki invariant (of $\alpha$, with respect to $(X,Z,L)$) is 
\begin{equation}\label{relative-modified-futaki}
{\widetilde {\mathcal F}}_{X,Z,L}^{\beta}(\alpha) 
          = {\widetilde {\mathcal F}}_{X,Z,L}(\alpha)
             - \frac{\langle \alpha, \beta\rangle}{\langle \beta, \beta \rangle} {\widetilde {\mathcal F}}_{X,Z,L}(\beta).
\end{equation}
\end{defn}
The above definitions make sense for any rational multiples of $\alpha$ and $\beta$ (by linearity). 
We then consider a maximal complex torus $\T^c =(\C^{\times})^{\ell}$ in ${\rm Aut}(X, L, Z)$ 
and define the  {\it extremal} $\C^{\times}$-action $\chi$ of $(X,L,Z)$ as the unique $\C^{\times}$ subgroup of $\T^c$ 
such that ${\widetilde {\mathcal F}}_{X,Z,L}^{\chi}(\alpha)=0$.

\subsection{Test configurations and $K$-stability of a pair} The ingredients of the previous section yield Sz\'ekelyhidi's extension \cite{Sz} of $K$-stability to pairs.

\begin{defn}\label{d:relative-DF} 
 The triple $(X, Z, L)$ is called {\it $K$-stable} if 
 for any test-configuration $({\mathcal X}, {\cL})$ of $(X,L)$ 
 with a flat $\C^{\times}$-invariant Cartier divisor ${\mathcal Z} \subset {\mathcal X}$ which restricts to $Z$ on the non-zero fibres, 
 the modified Donaldson--Futaki invariant  of the central fibre satisfies
\begin{equation}\label{stability}
{\widetilde {\mathcal F}}_{X_0,Z_0,L_0}(\alpha)  \ge 0
\end{equation}
with equality if and only if the test configuration is {\it trivial in codimension 2} 
(see \cite{stoppa} for a precise definition of triviality). Similarly, one can define {\it relative K-stability} of $(X,L,Z)$  by requiring 
\begin{equation}\label{relative-stability}
{\widetilde {\mathcal F}}^{\chi}_{X_0,Z_0,L_0}(\alpha)  \ge 0,
\end{equation}
with equality if and only if the test configuration is { trivial in codimension 2}. 
(Recall $\chi$ is the extremal $\C^{\times}$-action defined algebraically in the previous section.)
\end{defn}

Investigating a ruled complex surface $X={\mathbb P}(\cO \oplus \cL) \to \Sigma$ with $Z$ being the infinity section, 
Sz\'ekelyhidi~\cite{Sz} noticed that  for some polarizations $L$, 
there are  complete finite volume extremal K\"ahler metrics on $X\setminus Z$  in $c_1(L),$  which are not of Poincar\'e type, but have instead the asymptotics of 
\begin{align*} \frac{|dz|^2}{|z|^2 \big(- \log(|z|) \big)^{\frac{3}{2}}} + \ {\rm smooth},
\end{align*}
where $z$ is a (local) defining holomorphic function of $Z$.
In order to exclude this behaviour,  
Sz\'ekelyhidi furthermore proposes to use the notion of {\it slope stability} introduced by Ross--Thomas~\cite{RT} for the triple $(X, L, Z)$ as follows. Recall
that for any $(X,L,Z)$ as above,  and any rational number $c\in (0, \epsilon(Z))$ (where $\epsilon(Z)$ is the Seshadri constant of $Z$ with respect to $(X,L)$), 
one can associate a  test configuration $({\mathcal X}, {\mathcal L}_c, {\mathcal Z})$,  called the {\it degeneration to the normal cone of $Z$}: 
${\mathcal X}$ is the blow-up of $X\times \C$ along $Z\times \{0\}$,  $\cL_c=\pi^*(L)\otimes \cO(-cP)$  where $P$ is the exceptional divisor 
(naturally identified with the projective bundle $P=\mathbb{P}(\cO \oplus \nu_Z) \to Z$ where $\nu_Z$ is the normal bundle of $Z\subset X$), 
and  $\pi : {\mathcal X} \to \C$ is the projection. 
Note that  the central fibre $X_0$ of $\pi$ is isomorphic to $X$ glued to $P$ along the infinity section $\mathbb{P}(\nu_Z)\cong Z$. 
However, considering the zero section $Z_0 \subset P$, one gets the proper transform ${\mathcal Z}$ of $Z\times \C \subset X\times \C$ on the blow-up  ${\mathcal X}$, 
so that $\pi : {\mathcal Z} \to \C$ is a trivial family. 
It follows from \cite{RT} that  $({\mathcal X}, {\mathcal L}_c, {\mathcal Z})$ defines  a test configuration for the triple $(X,L,Z)$.  
This motivates: 
\begin{defn} In the notation above, we let 
$$F(c):={\widetilde {\mathcal F}}_{X_0,{\cL_c}_{|_{{\mathcal X}_0},Z_0}}(\alpha_c), 
\ \ \ F_{\chi}(c):={\widetilde {\mathcal F}}^{\chi}_{X_0, Z_0, {\cL_c}_{|_{{\mathcal X}_0}}}(\alpha_c)$$ 
be the corresponding modified Donaldson--Futaki invariant and relative modified Donaldson--Futaki invariant  associated to 
the degeneration  of the normal cone to $Z\subset (X, L)$.
\end{defn}
Then, Sz\'ekelyhidi conjectures:
\begin{conj}[\bf Sz\'ekelyhidi~\cite{Sz}]\label{c:numerical} 
 The triple $(X, Z,L)$ admits a {\it constant scalar curvature} 
 (resp. an {\it extremal}) K\"ahler metric of Poincar\'e type 
 if and only if $(X,Z,L)$ is $K$-stable (resp. relative $K$-stable)  
 and, additionally,  $F''(0)>0$ (resp. $F_{\chi}''(0)>0)$.
\end{conj}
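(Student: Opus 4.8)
\emph{Necessity.} This is Sz\'ekelyhidi's conjecture and is open in general; what follows is the line of attack I would take, carried out in this paper only after reducing to the toric situation, and treating the two implications separately. First I would show that the existence of an extremal (resp.\ CSCK) K\"ahler metric of Poincar\'e type on $X\setminus Z$ in $c_1(L)$ forces relative $K$-stability (resp.\ $K$-stability) of $(X,Z,L)$, and then that it forces $F_\chi''(0)>0$ (resp.\ $F''(0)>0$). For the stability part the plan is to adapt the classical mechanism by which the modified Donaldson--Futaki invariant of a test configuration bounds from below the slope at infinity of a Mabuchi-type energy whose critical points are the extremal Poincar\'e-type metrics: given a test configuration $({\mathcal X},\cL,{\mathcal Z})$ for $(X,Z,L)$ one associates a weak subgeodesic ray of Poincar\'e-type potentials degenerating onto the central fibre, and the completeness and finite-volume control built into Definition~\ref{d:Poincare-type} (through quasi-isometry with $\omega_{\rm mod}$ and the decay of $d\varphi$) is exactly what makes the integrations by parts and the convexity estimate along the ray legitimate; the slope then reproduces $\widetilde{\mathcal F}^{\chi}_{X_0,Z_0,L_0}(\alpha)$, which must be $\ge 0$, with equality only in the codimension-$2$-trivial case. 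This is the route already taken in \cite{Auvray}. For the numerical constraint I would feed into this the degeneration to the normal cone of $Z$: the sign of $F_\chi''(0)$ is read off the second-order term of the weight expansion on $H^0(X_0,{\cL_c}^k)$, and it detects whether a metric produced by such a ray carries the model asymptotics $\omega_{\rm mod}$ or the heavier $|z|^{-2}(-\log|z|)^{-3/2}$ tail observed in \cite{Sz}; hence $F_\chi''(0)\le 0$ is incompatible with membership in the Poincar\'e-type class, matching the necessary condition of \cite{Auvray2}.

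\emph{Sufficiency.} Here the plan is to pass to the toric case and run the programme of Sections~\ref{s:toric}--\ref{s:examples}: translate relative $K$-stability of $(X,Z,L)$ into the combinatorial stability of the pair $(\Delta,F)$, with $F=\cup_i F_i$ the union of the facets given zero boundary measure, and $F_\chi''(0)>0$ into the numerical condition of Lemma~\ref{l:toric-numerical}; then, in the Abreu--Guillemin picture of Section~\ref{s:abreu-guillemin}, look for a symplectic potential in the class of Definition~\ref{toric-poincare} --- with a $-\log$ blow-up along $F$ --- solving the Abreu equation whose right-hand side is the extremal affine function of $(\Delta,F)$, and finally feed the resulting potential into Theorem~\ref{thm:criterion} to recover a genuine Poincar\'e-type metric. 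Producing such a potential out of stability is a Poincar\'e-type analogue of the toric Yau--Tian--Donaldson theorem and, in general, would require new a priori estimates near $F$ modelled on $\omega_{\rm mod}$. Rather than attempt this in general I would, following \cite{ACG1,ACG2}, specialise to quadrilaterals and ambitoric surfaces --- in particular the Hirzebruch surfaces of Theorem~\ref{main-example} --- where the extremal equation collapses to an explicitly solvable ODE for a momentum profile, stability of $(\Delta,F)$ becomes positivity of that profile on the open edges, and one reads off directly whether the resulting complete metric is of Poincar\'e type; which quadrilaterals $(\Delta,F)$ are stable I would settle by the computational method of \cite{lars}, as is done to prove Theorem~\ref{main-example}.

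\emph{Main obstacle.} The hard part is the sufficiency implication at the PDE level: there is no general existence theorem producing an extremal Poincar\'e-type metric from combinatorial stability plus the numerical constraint, because the boundary behaviour of the Abreu equation at facets carrying zero measure is genuinely new and does not reduce to the compact toric theory used in \cite{Do2}. There is also a conceptual obstacle, which this paper confronts directly: the conjecture in its ``extremal $+$ Poincar\'e type'' form is, as stated, too optimistic. Part (b) of Theorem~\ref{main-example} exhibits relatively stable triples satisfying the Sz\'ekelyhidi numerical constraint whose complete extremal K\"ahler metric is \emph{not} of Poincar\'e type, so the sufficiency half must be strengthened by the transversality input (iii) of Conjecture~\ref{conjecture1}, comparing the extremal affine function of $(\Delta,F)$ with that of each facet pair $(F_i,F_i\cap\bigcup_{j\neq i}F_j)$. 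I therefore expect the honest outcome of this attack to be: a general proof of necessity, a proof of sufficiency only in the explicitly integrable (ambitoric / rank-one bundle) cases, and a replacement of the sufficiency half of Conjecture~\ref{c:numerical} by its sharpened toric form, Conjecture~\ref{conjecture1}.
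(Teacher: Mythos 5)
This statement is a conjecture attributed to Sz\'ekelyhidi, and the paper offers no proof of it; your proposal correctly recognizes this and accurately reflects what the paper actually does, namely: necessity results drawn from \cite{Auvray,Auvray2} (made precise in the toric case in Theorem~\ref{hugues}), explicit constructions in the ambitoric/Calabi cases in place of a general sufficiency argument, and --- via part (b) of Theorem~\ref{main-example} and Corollary~\ref{Hirzebruch-non-poincare} --- a counterexample showing the sufficiency half of the extremal version fails as stated, which is exactly why the paper replaces it by the sharpened Conjecture~\ref{conjecture1}. Your account is therefore consistent with the paper; the only caveat worth recording is that the general (non-toric) $K$-stability implication in your necessity sketch is not carried out anywhere in the paper or its references --- only the numerical constraint $F''(0)>0$ is established in general for CSCK metrics in \cite{Auvray}, with the relative/extremal statements proved here only for toric metrics.
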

\begin{defn}\label{d:constraint} We shall refer to the conditions $F''(0)>0$ (resp. $F_{\chi}''(0)>0$) as the  {\it Sz\'ekelyhidi numerical constraint} 
(resp. {\it relative Sz\'ekelyhidi numerical constraint}).
\end{defn}
The following observation is made in \cite{Sz}:
\begin{lemma}\label{characterization} $F(c):={\widetilde {\mathcal F}}_{X_0,{\cL_c}_{|_{{\mathcal X}_0},Z_0}}(\alpha_c)$ is a polynomial of degree $\le (n+1)$ in $c$ 
satisfying $F(0)=F'(0)=0$. It is positive for $c\in (0, \epsilon(Z))$ if $(X,L,Z)$ is $K$-stable. 
Furthermore, the Sz\'ekelyhidi numerical constraint $F''(0)>0$ is equivalent to 
\begin{equation}\label{contraint}
\alpha_1c_0 > \alpha_0(c_1 - \frac{\alpha_0}{2}),
\end{equation}
where $\alpha_i$, $c_i$  are defined by \eqref{dimension}.
\end{lemma}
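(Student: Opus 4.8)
The plan is to compute $F(c)$ directly from Definition~\ref{d:DF}, using the fact that the test configuration is the degeneration to the normal cone, for which the relevant Hilbert-polynomial and weight data are classical.

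\medskip

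\textbf{Step 1: Assemble the Hilbert and weight polynomials of the central fibre.} First I would recall that for the degeneration to the normal cone with parameter $c$, the central fibre $(X_0, {\cL_c}_{|_{{\mathcal X}_0}})$ has the property that
\begin{equation*}
d_k(X_0, {\cL_c}_{|_{{\mathcal X}_0}}) = h^0(X, L^k) = d_k,
\end{equation*}
i.e. the dimensions of the spaces of sections agree with those of $(X,L)$ (flatness of $\pi$), while the induced $\C^{\times}$-action $\alpha_c$ acts with a weight $w_k(\alpha_c)$ that can be read off from the filtration of $H^0(X, L^k)$ by order of vanishing along $Z$: the graded pieces are $H^0(Z, L_{|_Z}^k \otimes \nu_Z^{-j})$ and the $j$-th piece carries weight proportional to $(j-kc)$ (the $\cO(-cP)$ twist shifts the linearization). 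Summing over $j$ from $0$ up to $\sim kc$ against the dimensions $h^0(Z, L_{|_Z}^k\otimes\nu_Z^{-j})$ produces, via Riemann--Roch on $Z$, a polynomial in $k$ whose leading coefficients are polynomial in $c$. The divisor ${\mathcal Z}$ restricts to $Z_0\cong Z$ on the central fibre, and $H^0(Z_0, ({\cL_c})_{|_{Z_0}}^k)$ is governed by $\alpha_0, \alpha_1$ as in \eqref{dimension} (independent of $c$, since ${\mathcal Z}\to\C$ is trivial), with an analogous weight expansion for $w^Z_k(\alpha_c)$.

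\medskip

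\textbf{Step 2: Extract $F(c)$ and verify the degree and vanishing.} Plugging the expansions of ${\tilde d}_k$ and ${\tilde w}_k$ into \eqref{modified-futaki} gives $\tfrac14 F(c)$ as a rational combination of the $c$-dependent coefficients $a_0(c), a_1(c), \beta_0(c)$ together with the fixed $c_0, c_1, \alpha_0$. Since $a_0(c)$ is a polynomial of degree $n+1$ in $c$ and $a_1(c), \beta_0(c)$ of degree $n$ (one checks this from the integrals $\int_0^c$ of polynomial integrands of the appropriate degree), $F(c)$ is a polynomial of degree $\le n+1$. For the vanishing $F(0)=F'(0)=0$: at $c=0$ the test configuration is trivial (the normal-cone degeneration with $c=0$ is the product configuration), so $F(0)=0$; for $F'(0)=0$ I would differentiate the explicit expression and observe the linear-in-$c$ terms of $a_0, a_1, \beta_0$ cancel — this is exactly the content of the statement that the Donaldson--Futaki invariant of the product is zero together with the fact that the first-order deformation of the linearization contributes a term proportional to the original Futaki invariant, which vanishes on the product. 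Alternatively, and more cleanly, $F''(0)>0\Leftrightarrow F(c)>0$ for small $c>0$ given $F(0)=F'(0)=0$, so I only need the Taylor coefficient at order $2$.

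\medskip

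\textbf{Step 3: Identify $F''(0)$ with \eqref{contraint}.} This is the computational heart. Writing $\tfrac14 F(c) = \tfrac14{\mathcal F}_{X_0,{\cL_c}}(\alpha_c) + \tfrac12\big(\tfrac{a_0(c)\alpha_0 - c_0\beta_0(c)}{c_0^2}\big)$ as in \eqref{modified-futaki}, I would expand each piece to second order in $c$. The key inputs are: $a_0(c) = a_0 + (\text{deg-}1) + \tfrac12 a_0'' c^2 + \cdots$ where $a_0''$ is, up to the universal constant, the leading coefficient $\alpha_0$ of $d^Z_k$ — this comes from the fact that in the weight sum $\sum_j (j-kc)\,h^0(Z, L_{|_Z}^k\otimes\nu_Z^{-j})$ the $c^2$-term of the top $k$-power is $\tfrac12(\text{top coeff of }h^0_Z)$; and similarly $\beta_0(c)$ contributes at order $c^2$ a multiple of $\alpha_0$ and at order $c$ a term involving $\alpha_1$. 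Collecting coefficients of $c^2$ and using that the $\C^{\times}$-action $\alpha_c$ on $Z_0$ is the trivial-family action whose weight data is $\alpha_0, \alpha_1$, the inequality $F''(0)>0$ reduces after cancellation of $c_0$'s to $\alpha_1 c_0 > \alpha_0(c_1 - \tfrac{\alpha_0}{2})$. The main obstacle is bookkeeping: keeping track of the several Riemann--Roch expansions (on $X$, on $Z$, and on the graded pieces $L_{|_Z}^k\otimes\nu_Z^{-j}$), the shift of linearization induced by $\cO(-cP)$, and the averaging in ${\tilde d}_k, {\tilde w}_k$, all simultaneously to the order needed; the risk is an off-by-a-constant error, which I would cross-check against the known $n=1$ (ruled surface) case of \cite{Sz} and against the differential-geometric normalization recorded in Definition~\ref{d:DF}. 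Positivity of $F(c)$ on $(0,\epsilon(Z))$ under $K$-stability is immediate from Definition~\ref{d:relative-DF} applied to this test configuration for each rational $c$ in the range, together with polynomiality and continuity.
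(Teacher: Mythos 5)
Your proposal follows essentially the same route as the paper: both use the Ross--Thomas weight decomposition \eqref{splitting} of $H^0(X_0,{\cL_c}_{|_{{\mathcal X}_0}})$ for the degeneration to the normal cone, feed the resulting expansions into \eqref{modified-futaki}, and read off the degree bound, the vanishing $F(0)=F'(0)=0$, and the second derivative. The only difference is one of packaging: the paper quotes the closed-form expressions $a_0=\int_0^c(x-c)\alpha_0(x)\,dx$, etc., from \cite{RT}, which make all three claims immediate by differentiating under the integral sign (in particular $F''(0)$ picks up $-\alpha_i(0)=-\alpha_i$, yielding \eqref{contraint} directly), whereas you describe re-deriving those expansions from the filtration by order of vanishing along $Z$ --- the same computation, carried one level further back.
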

\begin{proof} The (usual) Donaldson--Futaki invariant of $({\mathcal X}, {\mathcal L}_c)$ is computed in \cite{RT}: 
\begin{equation}\label{slope-futaki}
\frac{1}{4}{\mathcal F}(\alpha_c) = \frac{1}{c_0^2}\Big[-c_0\int_0^c \alpha_1(x)(x-c)dx +c \frac{c_0\alpha_0}{2} + c_1\int_{0}^c \alpha_0(x)(x-c)dx\Big],
\end{equation}
where $c_0,c_1$ are the coefficients of $k^n$ and $k^{n-1}$ of  $d_k$ as defined in the previous section (with respect to $(X,L,Z)$), 
see \eqref{dimension}, and
\begin{equation} \begin{split}
 \alpha_0(x) &= \frac{1}{(n-1)!} \int_Z (c_1(L) + x c_1({\mathcal O}(Z))^{n-1}; \\
\alpha_1(x) &= \frac{1}{2(n-2)!}\int_Z c_1(TX)\wedge (c_1(L)+ x c_1({\mathcal O}(Z))^{n-2}.
\end{split}
\end{equation}
By Riemann--Roch, $\alpha_i(0)$ is the constant $\alpha_i$ appearing in the previous section (first line of \eqref{dimension}). 

The main ingredient in order to carry out the above calculation in the modified case 
is the weight space decomposition for the induced $\C^{\times}$-action $\alpha_c$ on the space $H^0(X_0, {\cL_c}_{|_{{\mathcal X}_0}})$
(see \cite[\S4.2]{RT}):
\begin{equation}\label{splitting}
H^0(X_0, {\cL_c}_{|_{{\mathcal X}_0}})= H^0(X, L^{k}\otimes \cO(kc Z)) \oplus \bigoplus_{i=0}^{ck-1} t^{ck-i}H^0(Z, L^k_{|_Z}\otimes (\nu^*_Z)^i),
\end{equation}
where the weight of $\alpha_c$ on the first factor is $0$ and $-(ck-i)$ on the components of the second direct sum. 
Note that the factor $t^{ck}H^0(Z, L^k_{|_Z})$ in the above decomposition corresponds to $H^0(Z_0, \cL_{|_{Z_0}})$ in \eqref{exact}. It follows that
\begin{equation}
d_k^{Z_0} = d_k^{Z} = \alpha_0 k^{n-1}  + O(k^{n-2}); \ \  w_k^{Z_0} = ckd_k^Z= c\alpha_0 k^n + O(k^{n-1})
\end{equation}
while the coefficients $a_0$ and $a_1$ of the weights induced on $H^0(X_0, {\cL_c}_{|_{{\mathcal X}_0}})$ are given by (see \cite[Eqn.(4.6)]{RT}): 
\begin{equation}
a_0= \int_{0}^{c}(x-c)\alpha_0(x)dx; \ \ a_1 = -c\frac{\alpha_0}{2c_0} + \int_0^c(x-c)dx. 
\end{equation}
We therefore compute the modified Donaldson--Futaki invariant given by \eqref{modified-futaki}: 
\begin{equation}\label{modified-futaki-slope}
\frac{1}{4}{\widetilde {\mathcal F}}_{X_0, Z_0, {\cL_c}_{|_{{\mathcal X}_0}}}(\alpha_c) 
      = \frac{1}{c_0^2}\Big[(c_1-\frac{\alpha_0}{2})\int_{0}^c(x-c)\alpha_0(x)dx - c_0\int_0^c(x-c)\alpha_1(x)dx\Big].
\end{equation}
Form the above formula, the proof of Lemma~\ref{characterization} then follows easily. \end{proof}
Using Lemma~\ref{characterization}, it is shown in \cite{Auvray}: 
\begin{thm}\cite{Auvray} 
 If there exists a CSCK metric of Poincar\'e type on $X\setminus Z$ in the class $c_1(L),$ then the  Sz\'ekelyhidi numerical constraint holds, 
 i.e. \eqref{contraint} is satisfied.
\end{thm}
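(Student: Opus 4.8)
The plan is to translate Sz\'ekelyhidi's numerical constraint \eqref{contraint} into a comparison of averaged scalar curvatures on $X\setminus Z$ and on $Z$, and then to read that comparison off from the behaviour of a Poincar\'e type CSCK metric near $Z$.

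First I would do the cohomological bookkeeping. Inserting the Riemann--Roch expansions of \eqref{dimension} and using the adjunction formula $c_1(TZ) = c_1(TX)|_Z - c_1(N_Z)$ (with $N_Z$ the normal bundle of $Z$), one checks that, since $c_0,\alpha_0 > 0$, the inequality \eqref{contraint}, i.e. $\alpha_1c_0 > \alpha_0(c_1 - \tfrac{\alpha_0}{2})$, is equivalent to
$$\widehat s_Z > \bar s_{X\setminus Z},$$
where $\widehat s_Z := 2\pi(n-1)\,c_1(TX)|_Z\cdot(c_1(L)|_Z)^{n-2}/(c_1(L)|_Z)^{n-1}$ and $\bar s_{X\setminus Z} := 2\pi n\,\big(c_1(X) - c_1(\cO(Z))\big)\cdot c_1(L)^{n-1}/c_1(L)^{n}$ --- equivalently, by Lemma~\ref{characterization}, to $F''(0) > 0$. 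Here, up to one fixed universal positive factor, $\widehat s_Z$ is the average scalar curvature of a K\"ahler metric on $Z$ in the class $c_1(L)|_Z$ corrected by the normal-bundle term, while $\bar s_{X\setminus Z}$ is --- by the finite-volume total scalar curvature formula of \cite{Auvray0} --- exactly the value of $\tfrac{1}{\vol(X\setminus Z,\omega)}\int_{X\setminus Z}s(\omega)\,\tfrac{\omega^n}{n!}$ for any Poincar\'e type K\"ahler metric $\omega$ in $c_1(L)$: the twist of $c_1(X)$ by $-c_1(\cO(Z))$ is the contribution of $Z$ to the relevant integration by parts, and $\vol(X\setminus Z,\omega) = \tfrac{1}{n!}c_1(L)^n$ because $\omega$ has finite volume and lies in $c_1(L)$. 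In particular, a Poincar\'e type CSCK metric $\omega$ must have $s(\omega) \equiv \bar s_{X\setminus Z}$.

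The decisive step is to compute $\widehat s_Z$ from the metric. By the asymptotic analysis of Poincar\'e type metrics in \cite{Auvray0, Auvray2}, near $Z$ the metric $\omega$ is, modulo terms decaying with respect to $\omega_{\rm mod}$, a twisted product of the complete hyperbolic model cusp in the normal direction over a K\"ahler metric $\omega_Z$ on $Z$ in the class $c_1(L)|_Z$; inserting this into the scalar-curvature formula for such a twisted product and integrating over $(Z,\omega_Z)$, the intrinsic geometry of $Z$ together with the normal-bundle curvature assemble exactly into $\widehat s_Z$, whereas the normal cusp factor contributes a fixed \emph{negative} amount $-\kappa$ (the scalar curvature of the model cusp, with $\kappa > 0$). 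Since $s(\omega) \equiv \bar s_{X\setminus Z}$ is constant, passing to this limit forces $\bar s_{X\setminus Z} = \widehat s_Z - \kappa$, hence $\widehat s_Z = \bar s_{X\setminus Z} + \kappa > \bar s_{X\setminus Z}$, which by the first step is exactly \eqref{contraint}. (In passing, $\omega_Z$ is then forced to be a twisted CSCK metric on $Z$.)

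The main obstacle is this last step: one has to extract from the comparatively weak hypotheses of Definition~\ref{d:Poincare-type} --- quasi-isometry to $\omega_{\rm mod}$ together with boundedness of all $\omega_{\rm mod}$-derivatives of $d\varphi$ --- a bona fide asymptotic decomposition of $\omega$ transverse to $Z$, with enough control to pass to the limit in a two-derivative curvature quantity, and to pin down both the constant $\kappa$ and the normal-bundle contribution; this is precisely where the analytic machinery of \cite{Auvray0, Auvray2} is needed, and it is the strict negativity of the model cusp ($\kappa > 0$) that produces the \emph{strict} inequality \eqref{contraint} rather than a non-strict one. A secondary, but genuine, technical point is to justify the non-compact integration by parts underlying the total scalar curvature formula, where finiteness of the cusp ends both legitimises the computation and produces the twist by $-c_1(\cO(Z))$.
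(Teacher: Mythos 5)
First, a point of orientation: the paper does not prove this theorem --- it is quoted from \cite{Auvray}, and the only ingredient supplied here is Lemma~\ref{characterization}, which identifies $F''(0)>0$ with \eqref{contraint}. Your proposal can therefore only be measured against the strategy of the cited reference and against the closely parallel argument the paper \emph{does} carry out (Lemma~\ref{l:toric-numerical} and Theorem~\ref{hugues}). Measured that way, your mechanism is the right one: rewrite \eqref{contraint} as a comparison of mean scalar curvatures, use the finite-volume total-scalar-curvature identity to identify the constant value of $s(\omega)$ with the twisted average over $X\setminus Z$, and extract the \emph{strict} inequality from the strictly negative curvature of the model cusp in the asymptotic splitting of $\omega$ near $Z$; this is exactly the mechanism behind \eqref{extremal} and the conclusion $s_F-(s_{(\Delta,F)})\restr{F}=const>0$ in Theorem~\ref{hugues}.

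There is, however, a concrete bookkeeping error infecting both halves of your argument: the normal bundle of $Z$ should not appear. With $\alpha_1$ as defined by \eqref{dimension} --- i.e.\ via $d_k^Z=\dim H^0(Z,L^k_{|_Z})$, hence by Riemann--Roch \emph{on $Z$}, involving $c_1(TZ)$ rather than $c_1(TX)_{|_Z}$ --- the inequality \eqref{contraint} is equivalent to the comparison with the \emph{intrinsic} mean scalar curvature of $(Z,c_1(L)_{|_Z})$, not with your normal-bundle-corrected $\widehat s_Z$. The paper's own toric computation confirms this: Lemma~\ref{l:toric-numerical} yields $\tfrac12\int_F(s_F-s_{(\Delta,F)})\,d\nu_F$ with $s_F$ the extremal affine function of the facet $F$ viewed intrinsically as a Delzant polytope. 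Correspondingly, the splitting \eqref{poincare-split} from \cite{Auvray1} gives $s(\omega)\to s(\omega_1)$ minus a positive constant depending only on $a_1$, where $\omega_1$ is a K\"ahler metric \emph{on $Z$} in $c_1(L)_{|_Z}$: no curvature of the normal bundle enters this limit, so your assertion that ``the intrinsic geometry of $Z$ together with the normal-bundle curvature assemble exactly into $\widehat s_Z$'' is not what the asymptotics deliver. Your two misstatements are off by the same term $c_1(N_Z)\cdot c_1(L)^{n-2}$ --- which is sign-indefinite (e.g.\ $[S_0]^2=-m$ on $\F_m$), so this is not a harmless normalization --- and hence formally cancel, but each step as written is false; replacing $\widehat s_Z$ by the intrinsic average throughout repairs both. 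Beyond that, the genuinely hard content --- upgrading the weak hypotheses of Definition~\ref{d:Poincare-type} to an asymptotic description strong enough to control a two-derivative curvature quantity --- is, as you acknowledge, entirely outsourced to \cite{Auvray0,Auvray1,Auvray2}, which is defensible for a statement that is itself a citation, but means your write-up is a reconstruction of the strategy rather than a proof.
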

\begin{rem}  It is plausible to expect a similar numerical expression  for the relative Sz\'ekelyhidi numerical constraint $F_{\chi}''(0)>0$ 
but we failed to see a neat way to compute $F_{\chi}(c)$ in a sufficient generality, especially if the extremal $\C^{\times}$-action is not trivial on $Z$. 
\end{rem}

We shall next turn to the toric case as a model example for the above theory, and where specific computations are manageable.  
We shall show  (see in particular Corollary \ref{Hirzebruch-non-poincare})  that  
there are examples of relatively $K$-stable triples $(X,Z,L)$ satisfying $F_{\chi}''(0) > 0,$ which cannot be of Poincar\'e type. 
We note, however, that these examples do admit a {complete} extremal metric on $X \setminus Z$ -- it just cannot satisfy the Poincar\'e type condition. 
We shall thus propose a straightened version of Conjecture~\ref{c:numerical} 
for when a relatively K-stable triple $(X,Z,L)$ should admit an extremal K\"ahler metric of Poincar\'e type in $c_1(L)$ in the toric setting (see Conjecture \ref{conjecture1}).

\section{Extremal Poincar\'e type K\"ahler metrics on toric varieties}\label{s:toric} 
In this section we consider the case when $(X,L)$ is a (smooth) polarized toric variety. 
We denote by $\T$ the real $n$-dimensional torus and by $\T^c \cong (\C^{\times})^n$ its complexification.  
The material follows \cite{Do2}. 

\subsection{Stability of pairs  and toric test configurations}
Switching from complex to symplectic point of view, Delzant's theorem \cite{Delzant} describes $(X,L)$ in terms of a compact convex polytope $\Delta \subset \mathfrak{t}^*$ 
(where $\mathfrak{t}= {\rm Lie}(\T)$ is the Lie algebra of  $\T$) 
such that $\Delta = \{\mu : L_j(\mu)= \langle e_j, \mu\rangle + \lambda_j \ge 0, j=1, \ldots, d\}$ 
with $e_j$ belonging to the lattice $\Lambda \subset \mathfrak{t}$ of circle subgroups of $\T$. 
The fact that $X$ is smooth corresponds to requiring that at each vertex of $v\in \Delta$ the adjacent normals span the same lattice $\Lambda\subset \tor$ 
(see \cite{Delzant,LT}), while the polarization $L$ forces $\Delta$ to have its vertices in the dual lattice $\Lambda^* \subset \mathfrak{t}^*$. 
Taking any generators of $\Lambda$ as a basis of $\mathfrak{t}$, one identifies $\Lambda$ with $\Z^n$ and we consider the Lebesgue measure $d\mu$ on $\mathfrak{t}^*\cong \R^n$; 
furthermore, one defines a measure $d\nu$ on $\partial \Delta,$ such that on each facet $F_j \subset \Delta$ (i.e. a face of co-dimension one),  we let
\begin{equation}\label{measure}
-dL_j \wedge d\nu_{F_j} =-e_j \wedge d\nu_{F_j} =d\mu. 
\end{equation}
A central fact in this theory (see e.g. \cite[Sect. 6.6]{Cannas da Silva})  is the weight decomposition of $H^0(X, L^k)$ with respect to the (linearized) torus action of $\T$ 
it is isomorphic to
$ \{\mu \in k\Delta \cap\Z^n \}$ with  the weights identified with corresponding elements of $\Z^n$.  
On the other hand, for any smooth function $f$ on $\mathfrak{t}^*$, we have \cite{GS, Zeld}:
\begin{equation*}
 \sum_{\mu \in k\Delta \cap \Z^n} f(\mu) = k^n \int_{\Delta} f d\mu+ \frac{k^{n-1}}{2}\int_{\partial \Delta} f d\nu + O(k^{n-2}),
 \ \text{as }k\to \infty.
\end{equation*}
If $\alpha$ is the $\C^{\times}$-action with Killing potential corresponding to an affine linear  function $f_{\alpha}$ on ${\mathfrak t}^*$ normalized by $f_{\al}(0)=0$, 
the above formula allows us to compute the coefficients $c_0,c_1, a_0, a_1$ in \eqref{dimension} and \eqref{weight} as follows:
\begin{equation}\label{a-c}
c_0= {\rm Vol}(\Delta); \ \ c_1= \frac{{\rm Vol}(\partial \Delta)}{2}; \ \ a_0 = \int_{\Delta} f_{\alpha} d\mu; \ \ a_1= \frac{1}{2}\int_{\partial \Delta} f_{\alpha} d\nu,
\end{equation}
so that the Donaldson--Futaki invariant $\cF(\alpha)$ of $\alpha$ is
\begin{equation}\label{toric-futaki}
\frac{(2\pi)^n{\rm Vol}(\Delta)}{2}\cF(\alpha) =  \int_{\partial \Delta} f_{\alpha} d\nu - \frac{\bf s}{2}\int_{\Delta} f_{\alpha} d\mu,
\end{equation}
where ${\bf s}= 2{\rm Vol}(\partial\Delta)/{\rm Vol}(\Delta)=2n \Big(\int_X c_1(TX) \wedge c_1(L)^{n-1}/\int_X c_1(L)^n\Big)$ 
is the averaged scalar curvature of any compatible K\"ahler metric.

Similarly, if $Z\subset X$ is a divisor corresponding to the preimage of the union $F=F_{i_1}\cup \cdots \cup F_{i_k}$ of some facets $\Delta$  by the momentum map, 
the coefficients $\alpha_0, \alpha_1, \beta_0, \beta_1$ in \eqref{dimension} and \eqref{weight} are given by
\begin{equation}\label{alpha-beta}
\alpha_0 = {\rm Vol}(F); \ \ \alpha_1= \frac{1}{2} {\rm Vol} (\partial F); \ \ \beta_0 = \int_{F} f_{\alpha} d\nu; \ \ \beta_1 = \int_{\partial F} f_{\alpha} d\sigma_F,
\end{equation}
where $d\sigma_{\partial F}$ is the induced measure on the boundary of each $F_i \in F$ (viewed itself as a Delzant polytope in ${\R}^{n-1}$). 
The modified Futaki invariant ${\widetilde \cF_{X,L,Z}}(\alpha)$ is then
\begin{equation}\label{toric-modified-futaki}
\frac{(2\pi)^n{\rm Vol}(\Delta)}{2} {\widetilde \cF_{X,L,Z}}(\alpha) 
     = \int_{\partial \Delta \setminus F} f_{\alpha} d\nu - \frac{{\bf s}_{(\Delta, F)}}{2}\int_{\Delta} f_{\alpha} d\mu,
\end{equation}
with
\begin{equation}\label{average-s}
\begin{split}
{\bf s}_{(\Delta, F)}&=2\frac{{\rm Vol}(\partial\Delta\setminus F)}{{\rm Vol}(\Delta)}\\
 &=2n \Big(\int_X (c_1(TX)+c_1(\cO(-Z)) \wedge c_1(L)^{n-1}/\int_X c_1(L)^n\Big).
 \end{split}
\end{equation}

The extremal $\C^{\times}$-action $\chi$ has Killing potential which is an affine linear function $s_{\Delta}$ determined by requiring
\begin{equation*} \label{extremal-function}
 \int_{\partial \Delta} f d\nu - \frac{1}{2}\int_{\Delta} f s_{\Delta} d\mu =0
 \end{equation*}
for any affine function $f$ (see e.g. \cite{Sz}). Then, as shown in \cite{Do2, Sz}, the relative Donaldson--Futaki invariant is given by
\begin{equation}\label{toric-relative-futaki}
\frac{(2\pi)^n{\rm Vol}(\Delta)}{2}\cF^{\chi}(\alpha) =  \int_{\partial \Delta} f_{\alpha} d\nu - \frac{1}{2}\int_{\Delta} f_{\alpha} s_{\Delta}d\mu
\end{equation}
while its modified version is 
\begin{equation}\label{toric-modified-relative-futaki}
\frac{(2\pi)^n{\rm Vol}(\Delta)}{2} {\widetilde \cF^{\chi}_{X,L,Z}}(\alpha) 
     = \int_{\partial \Delta \setminus F} f_{\alpha} d\nu 
       - \frac{1}{2}\int_{\Delta} f_{\alpha} s_{(\Delta, F)} d\mu
\end{equation}
where $s_{(\Delta, F)}$ again is the unique affine function such that  
\begin{equation*}
 \int_{\partial \Delta \setminus F} f d\nu - \frac{1}{2}\int_{\Delta} f s_{(\Delta, F)} d\mu=0
\end{equation*}
for any affine linear function $f$.

Donaldson generalizes the above expression for $\cF(\alpha)$ by considering convex piecewise affine linear functions $f_{\alpha}$ with integer coefficients. 
He associates to such an $f_{\alpha}$ a test configuration $({\mathcal X}, \cL)$, called {\it toric},  
and identifies the Donaldson--Futaki invariant of the central fibre $(X_0,L_0)$ with \eqref{toric-futaki}. 
Sz\'ekelyhidi~\cite[\S~4.1]{Sz} shows that \eqref{toric-relative-futaki} computes the relative Donaldson--Futaki invariant for such test configurations. 
These computations generalize easily in the case of a pair $(X, Z)$ where  the divisor $Z$ corresponds to the  preimage of a number of facets of $\Delta$ by the moment map. 
In this case,  the toric test configurations come equipped with a divisor ${\mathcal Z}$ which defines a flat family for $Z$; 
furthermore,  \eqref{toric-modified-futaki} and \eqref{toric-modified-relative-futaki} 
compute the modified Donaldson--Futaki and relative Donaldson--Futaki invariant of toric test configurations, respectively. 
We are thus led to the following:

\begin{defn}\label{d:toric-stability} 
 Let $(X, L)$ be a toric polarized variety and $Z\subset X$ a divisor corresponding to the preimage under the moment map of the union $F=F_{i_1}\cup \cdots \cup F_{i_k}$ 
 of some facets of the momentum polytope $\Delta$. We say that $(X, Z, L)$ is {\it relative K-stable} with respect to toric degenerations if 
\begin{equation}\label{toric}
\cL_{(\Delta,F)} (f) := \int_{\partial \Delta \setminus F} f d\nu - \frac{1}{2}\int_{\Delta} f s_{(\Delta, F)} d\mu >0
\end{equation}
for any convex, piecewise affine linear  function $f$ which is not affine linear on $\Delta$. 
Recall that $s_{(\Delta, F)}$ is by definition the unique affine linear function such that \eqref{toric} vanishes for any affine linear function $f$, 
and is called the  {\it extremal affine linear function} of $(\Delta, F)$.  
If \eqref{toric} is satisfied, we shall refer to $(\Delta, F)$ as a {\it stable} pair.
\end{defn}

\begin{lemma}\label{l:toric-numerical} Let $(X, L)$ be a toric polarized variety and $Z\subset X$ a divisor 
corresponding to the preimage under the moment map of  one facet $F$ of  the momentum polytope $\Delta$.  
Then, the relative Sz\'ekelyhidi numerical constraint  is equivalent to
\begin{equation}\label{inequality-gabor}
F_{\chi}''(0) = \frac{1}{2} \int_{F} (s_F - {s}_{(\Delta,F)})d\nu_F >0,
\end{equation}
where $s_{(\Delta, F)}$ is the extremal affine linear function of $(\Delta, F)$ and $s_F$ is the extremal affine linear function of the facet $F$ 
{\rm (}seen as a Delzant polytope of an $(n-1)$ dimensional toric variety{\rm )}.
\end{lemma}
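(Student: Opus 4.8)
\emph{Strategy.} The plan is to compute $F_\chi(c)$ by mirroring the proof of Lemma~\ref{characterization}, but for the \emph{relative} modified Donaldson--Futaki invariant. In that proof, Ross--Thomas's weight decomposition \eqref{splitting} expresses $F(c)$ as a $(c-x)$-weighted integral over $x\in[0,c]$ of intersection-theoretic densities supported on $Z$; in the toric case these are read off the momentum polytope. The relative version is obtained by using Sz\'ekelyhidi's formula \eqref{toric-modified-relative-futaki} in place of \eqref{toric-modified-futaki}, i.e.\ by replacing the constant ${\bf s}_{(\Delta,F)}$ in the ``bulk'' term by the extremal affine function $s_{(\Delta,F)}$ of the original pair $(\Delta,F)$.

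\emph{The computation.} Running this, one finds that $F_\chi(c)$ is a polynomial in $c$ of degree $\le n+1$ with $F_\chi(0)=F_\chi'(0)=0$, whose coefficient of $c^2$ is a fixed nonzero multiple of ${\rm Vol}(\partial F)-\tfrac12\int_F s_{(\Delta,F)}\,d\nu_F$ (the only change from Lemma~\ref{characterization} being that the bulk term now carries $s_{(\Delta,F)}$ rather than ${\bf s}_{(\Delta,F)}$). Applying the defining relation of the extremal affine function $s_F$ of $F$ (as an $(n-1)$-dimensional Delzant polytope) to the constant function $1$ gives ${\rm Vol}(\partial F)=\int_{\partial F}1\,d\sigma_F=\tfrac12\int_F s_F\,d\nu_F$, so this coefficient is a multiple of $\int_F(s_F-s_{(\Delta,F)})\,d\nu_F$. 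Fixing the normalisation so as to match \eqref{inequality-gabor}, we get
\begin{equation*}
 F_\chi''(0)=\tfrac12\int_F\bigl(s_F-s_{(\Delta,F)}\bigr)\,d\nu_F,
\end{equation*}
as claimed. A consistency check: repeating the argument with \eqref{toric-modified-futaki} in place of \eqref{toric-modified-relative-futaki} replaces $s_{(\Delta,F)}$ by ${\bf s}_{(\Delta,F)}$, and since then $\int_F(s_F-{\bf s}_{(\Delta,F)})\,d\nu_F={\rm Vol}(F)\bigl({\bf s}_F-{\bf s}_{(\Delta,F)}\bigr)$, one recovers exactly the inequality \eqref{contraint} of Lemma~\ref{characterization}.

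\emph{The main obstacle.} It lies in the first step, not in the manipulation: one must transcribe the degeneration of $Z$ to its normal cone into the toric dictionary so that Sz\'ekelyhidi's relative formula \eqref{toric-modified-relative-futaki} applies to the induced $\C^{\times}$-action with the extremal affine function of the \emph{original} pair $(\Delta,F)$, rather than one re-extremalised on the (possibly more symmetric) central fibre. In particular one has to keep track of the twist of the polarisation by $\cO(-cP)$, which deforms the momentum polytope of the central fibre near $F$ and makes the normal bundle $\nu_Z$ enter the $(c-x)$-family of densities; all of this is governed by \cite{Do2,RT,Sz}. Given $F_\chi(c)$, reading off the $c^2$-coefficient and rewriting it through the defining relations of $s_F$ and $s_{(\Delta,F)}$ --- legitimate because this coefficient only sees the geometry infinitesimally near $Z$, hence on $F$ --- is routine linear algebra with the boundary measures.
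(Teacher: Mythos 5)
Your proposal follows essentially the same route as the paper's proof: identify the degeneration to the normal cone with Donaldson's toric test configuration $f_c=\max(0,c-L)$ (via Ross--Thomas), apply \eqref{toric-modified-relative-futaki} to get $F_\chi(c)=\int_{\partial\Delta\setminus F}f_c\,d\nu-\tfrac12\int_\Delta f_c\,s_{(\Delta,F)}\,d\mu$, read off the $c^2$-coefficient as $\mathrm{Vol}(\partial F)-\tfrac12\int_F s_{(\Delta,F)}\,d\nu_F$, and convert $\mathrm{Vol}(\partial F)=\tfrac12\int_F s_F\,d\nu_F$ using the defining relation of $s_F$ applied to the constant function. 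The only cosmetic difference is that the paper organizes the computation by splitting off $\tfrac{c_0}{4}F(c)$ and reusing Lemma~\ref{characterization}, whereas you differentiate the relative expression directly; your consistency check against \eqref{contraint} matches the paper's intermediate identity \eqref{relative-constraint}.
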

\begin{proof}
In the toric case, Ross--Thomas~\cite[\S~4.3]{RT} link their construction of degenerations to the normal cone to  toric test configurations: 
the degeneration to the normal cone of $Z \subset X$ corresponding to a facet $F \subset \Delta$ defined by the zero set of an affine linear function $L$ (with $L \geq 0$ on $\Delta$) 
is given by Donaldson's construction with $f_c= {\rm max}(0, c-L)$. 
Therefore, the corresponding relative modified Donaldson--Futaki invariant \eqref{toric-modified-relative-futaki} is 
\begin{equation} \label{chi-futaki}
\begin{split}
F_{\chi}(c) =  & \int_{\partial \Delta \setminus F} f_{c} d\nu - \frac{1}{2}\int_{\Delta} f_{c} s_{(\Delta,  F)} d\mu\\
            =  & \int_{\partial \Delta \setminus F} f_{c} d\nu - \frac{1}{2}\int_{\Delta} f_c {\bf s}_{(\Delta, F)} d\mu \\
               & -\frac{1}{2} \int_{\Delta} f_c ({s}_{(\Delta,F)}-{\bf s}_{(\Delta, F)}) d\mu.
\end{split}
\end{equation}
Note that the sum on the second line is $c_0/4$ times the function $F(c)$ introduced in Lemma~\ref{characterization} 
(and computed via \eqref{modified-futaki-slope}) and, for any affine function $\xi$, 
\begin{equation*}
 \frac{\partial^2}{\partial c^2}\Big( \int_{\Delta} f_c \xi  d\mu\Big)\Big|_{c=0} = \int_{F} \xi d\nu_F,
\end{equation*}
where, we recall,  $d\nu_F$ is determined via the defining equation $L=0$ for $F$
by  letting $-dL\wedge d\nu_F = d\mu$. 
Using \eqref{a-c}, \eqref{alpha-beta}, and \eqref{average-s} one then gets 
\begin{equation}\label{relative-constraint}
\begin{split}
F_{\chi}''(0) = & 2\Big(\alpha_1- (c_1-\frac{\alpha_0}{2})\frac{\alpha_0}{c_0}\Big)  -\frac{1}{2} \int_{F} ({s}_{(\Delta,F)}-{\bf s}_{(\Delta,F)}) d\nu_F\\
          = &{\rm Vol}(\partial F)- \frac{1}{2}\int_{F} {s}_{(\Delta,F)}d\nu_F \\
          & + \frac{{\rm Vol}(\partial \Delta \setminus F)}{{\rm Vol}(\Delta)} {\rm Vol}(F)-\frac{{\rm Vol}(\partial \Delta \setminus F)}{{\rm Vol}(\Delta)}{\rm Vol}(F)\\
          =& \int_{\partial F} d{\sigma}_F - \frac{1}{2}\int_{F} {s}_{(\Delta,F)}d\nu_F\\
          =& \frac{1}{2} \int_{F} (s_F - {s}_{(\Delta,F)})d\nu_F,
 \end{split}
 \end{equation}        
where $s_F$ denotes the extremal affine function corresponding to $Z$. \end{proof}

\begin{lemma}\label{l:numerical/stability} Let $(X, Z)$ be as in Lemma~\ref{l:toric-numerical}.  If $(\Delta, F)$ is stable, then 
\begin{equation*}\label{inequality}
\int_{F} (s_{F} - {s}_{(\Delta, F)})d\nu_{F} \ge 0,
\end{equation*}
\end{lemma}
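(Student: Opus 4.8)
The plan is to exploit the variational characterization of the extremal affine linear functions $s_F$ and $s_{(\Delta,F)}$ together with the stability hypothesis applied to a carefully chosen test function. First I would recall the defining properties: $s_{(\Delta,F)}$ is the unique affine linear function with $\cL_{(\Delta,F)}(f) = 0$ for all affine linear $f$, and $s_F$ is the extremal affine linear function of the facet $F$ viewed as an $(n-1)$-dimensional Delzant polytope, i.e. $\int_{\partial F} f\, d\sigma_F - \frac{1}{2}\int_F f s_F\, d\nu_F = 0$ for all affine linear $f$ on $F$. The quantity to be bounded is $\int_F (s_F - s_{(\Delta,F)})\, d\nu_F$, and by Lemma~\ref{l:toric-numerical} this equals $2 F_\chi''(0)$; so the statement is really that stability of $(\Delta,F)$ forces $F_\chi''(0)\ge 0$.

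The key step is to feed the test function $f_c = \max(0, c - L)$ (where $L=0$ defines $F$, $L\ge 0$ on $\Delta$) into the stability inequality \eqref{toric}. This $f_c$ is convex and piecewise affine linear on $\Delta$ for small $c>0$, and it is not affine linear, so stability gives $\cL_{(\Delta,F)}(f_c) = F_\chi(c) > 0$ for $c$ in the relevant range. By Lemma~\ref{characterization} (applied in the relative setting, as in the computation \eqref{chi-futaki}–\eqref{relative-constraint}) one knows $F_\chi(0) = F_\chi'(0) = 0$. Then $F_\chi(c) > 0$ for small positive $c$ together with $F_\chi(0)=F_\chi'(0)=0$ forces $F_\chi''(0) \ge 0$ by a second-order Taylor expansion — a nonnegative smooth (here polynomial) function vanishing to first order at $0$ has nonnegative second derivative there. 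Combining with the identity $F_\chi''(0) = \tfrac{1}{2}\int_F (s_F - s_{(\Delta,F)})\, d\nu_F$ from \eqref{relative-constraint} yields the claim.

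One subtlety I would need to address carefully: the stability definition \eqref{toric} requires strict positivity only for $f$ not affine linear on $\Delta$, and it is stated for convex piecewise affine linear functions with a possibly rational/integral normalization. I would note that $f_c$ is exactly of the type handled by Donaldson's construction (it is the function realizing the degeneration to the normal cone, per \cite{RT}), so it is a legitimate competitor; and for $c$ small enough $f_c$ genuinely fails to be affine linear on $\Delta$ (its graph has a kink along $\{L = c\}\cap \Delta$, which is nonempty for $c < \max_\Delta L$). Hence $F_\chi(c) > 0$ strictly on an interval $(0,\epsilon)$. Passing from strict positivity on an open interval to $F_\chi''(0)\ge 0$ only needs the weak inequality, which is why the conclusion of the lemma is $\ge 0$ rather than $> 0$ (the strict inequality being precisely the separate Sz\'ekelyhidi numerical constraint).

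The main obstacle is bookkeeping rather than conceptual: one must be sure that the relative modified Donaldson--Futaki invariant $F_\chi(c)$ evaluated on $f_c$ really coincides with $\cL_{(\Delta,F)}(f_c)$ appearing in \eqref{toric}, and that the normalizations match so that Lemma~\ref{characterization}'s conclusions $F_\chi(0)=F_\chi'(0)=0$ transfer to the relative case. This is handled by the computation already carried out in \eqref{chi-futaki}: $F_\chi(c)$ splits as $\tfrac{c_0}{4}F(c)$ plus a term $-\tfrac12\int_\Delta f_c(s_{(\Delta,F)} - {\bf s}_{(\Delta,F)})\, d\mu$ which is a polynomial in $c$ vanishing to second order at $0$ (since $\frac{\partial^2}{\partial c^2}\int_\Delta f_c\xi\, d\mu|_{c=0} = \int_F \xi\, d\nu_F$ and $\frac{\partial}{\partial c}\int_\Delta f_c \xi\, d\mu|_{c=0} = 0$, with $\int_F(s_{(\Delta,F)} - {\bf s}_{(\Delta,F)})\, d\nu_F = 0$ by the normalization — actually one checks $F_\chi'(0)=0$ directly). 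So $F_\chi$ inherits $F_\chi(0) = F_\chi'(0) = 0$, and the argument closes.
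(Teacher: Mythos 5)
Your proof is correct and follows essentially the same route as the paper's: both use the degeneration-to-the-normal-cone test function $f_c=\max(0,c-L)$, the identity $\cL_{(\Delta,F)}(f_c)=F_\chi(c)$ from \eqref{chi-futaki} together with $F_\chi(0)=F_\chi'(0)=0$, and then conclude $F_\chi''(0)\ge 0$ from stability (the paper phrases this contrapositively, saying $f_c$ would destabilize $(\Delta,F)$ if $F_\chi''(0)<0$, but the content is identical).
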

\begin{proof} Using the expression  $F_{\chi}(c)= \frac{c_0}{4}F(c) -\frac{1}{2}\int_{\Delta}f_c(s_{(\Delta,F)}- {\bf s}_{(\Delta,F)}) d\mu$ 
in \eqref{chi-futaki} and  Lemma~\ref{characterization}, one easily computes  that $F_{\chi}(0)= F'_{\chi}(0)=0$. 
It thus follows from \eqref{relative-constraint} that for $c$ sufficiently small, the piecewise affine linear convex function $f_c= {\rm max}(0, c-L)$ will destabilize $(\Delta, F)$, should 
$\int_{F} (s_{F} - {s}_{(\Delta, F)})d\nu_{F} = 2 F_{\chi}''(0) < 0.$ \end{proof}

\section{Labelled polytopes and the Abreu--Guillemin theory for K\"ahler metrics of Poincar\'e type}~\label{s:abreu-guillemin}

\subsection{Donaldson metrics on a labelled polytope}
The discussion in Section~\ref{s:toric} can be put in a broader framework which makes sense 
for {\it any} labelled convex compact simple polytope $(\Delta, {\bf L})$ in $(\R^n)^*$.
\begin{defn} Let $\Delta \subset (\R^n)^*=\tor^*$ be a compact convex polytope defined by a system of $d$  linear  inequalities
$$\Delta =\{ x\in (\R^n)^* : L_j(x)=\langle e_j, x\rangle + \lambda_j \ge 0, \ \ j=1, \ldots, d\}$$
where ${\bf L}=\{L_1(x), \ldots, L_d(x)\}$ are affine linear functions on $(\R^n)^*$  and $dL_j :=e_j \in \R^n$ are inward normals to $\Delta$. 
We suppose that $\Delta$ is {\it simple} in the sense that for each vertex $v$, 
there are precisely $n$ affine linear  functions $L_{v,1}, \ldots, L_{v,n}$ in ${\bf L}$ which vanish at $v$ and the corresponding inward normals 
$\{e_{v,1}, \ldots e_{v,n}\}$ form a basis of $\R^n$.  
We refer to such date $(\Delta, {\bf L})$ as a {\it labelled} (simple, compact, convex) polytope. 
Notice that, by Delzant's theorem~\cite{Delzant}, $(\Delta, \bf L)$ is the momentum image of a compact smooth toric variety 
if the labeling ${\bf L}$ satisfies the integrality condition that at each vertex $v$, 
${\rm span}_{\Z}\{u_{v,1}, \ldots, u_{v,n}\} $ is a fixed lattice $\Lambda \subset \R^n$. 
We shall refer to such labelled polytopes $(\Delta, {\bf L})$ as {\it Delzant polytopes}.
\end{defn}
In the case when $(\Delta, {\bf L})$ is Delzant, the works \cite{Abreu0, Guillemin} give an effective way to parametrize $\T$-invariant, 
$\omega$-compatible K\"ahler metrics $g$ on the toric symplectic manifold $(X, \omega)$ classified by $(\Delta, {\bf L})$ 
in terms of strictly convex smooth functions $u(x)$ defined on the interior $\Delta^0$ of $\Delta\subset (\R^n)^*$ and satisfying certain boundary conditions on $\partial \Delta$. Specifically,  the K\"ahler metric  $g$ is written on $X^0=\mu^{-1}(\Delta^0)$ as
\begin{equation}\label{toric-g}
g =\sum_{i,j=1}^n ( u_{,ij} dx_i \otimes dx_j  + u^{,ij} dt_i \otimes dt_j)
\end{equation}
where $(x_1, \ldots, x_n)$ are the Euclidean coordinates on $(\R^n)^*$, $(u_{,ij}) = {\rm Hess}(u)$ 
(and we tacitly identify smooth functions and tensors on $\Delta^0$ with their pull-backs via $\mu$ on $X^0$) and $(t_1, \ldots, t_n)$ 
are angular ($2\pi$-periodic) coordinates obtained by fixing a point $p_0\in X^0$ 
and identifying $X^0 \cong (\C^{\times})^n$ with the principal orbit of $p_0$ under the complexified action $\T^c$ 
(with respect to the complex structure $J$ determined by $g$ and $\omega$). In this formalism, the symplectic from is
$$\omega = \sum_{i=1}^n dx_i \wedge dt_i.$$
A central fact in this theory (see \cite{Abreu1,Donaldson-estimates}) is that  \eqref{toric-g} extends to a smooth Riemannian metric on $X$ 
if and only if $u$ satisfies the following {\it Guillemin boundary conditions}:
\begin{defn}\label{d:guill} Let $(\Delta, {\bf L})$ be a labelled convex compact simple polytope in $(\R^n)^*=\tor^*$. 
We say that a strictly convex smooth function  $u$ on $\Delta^0$ satisfies the {\it Guillemin boundary conditions} if 
\begin{enumerate}
\item[$\bullet$] $u - \frac{1}{2}\sum_{k=1}^{d} L_k \log L_k$ is smooth on $\Delta$, and
\item[$\bullet$] the restriction of $u$ to the interior $F^0$ of any face $F\subset \Delta$ is smooth and strictly convex.
\end{enumerate}
We denote by $\cS(\Delta, {\bf L})$ the space of such $u$.
\end{defn}

An example of a function in $\cS(\Delta, {\bf L})$ is  (see \cite{Guillemin})
\begin{equation}\label{u0}
u_0:= \frac{1}{2} \sum_{k=1}^d L_k \log L_k, 
\end{equation}
which, in the Delzant case,  characterizes the induced K\"ahler metric on $X$  via the K\"ahler reduction of the flat metric on $\C^d$.

\smallskip

The space $\cS(\Delta, {\bf L})$ can be equivalently characterized in terms of {\it first order boundary conditions}:
\begin{prop}~\cite[Prop.1]{ACGT}\label{first-order-boundary} 
The space $\cS(\Delta, {\bf L})$ consists of all smooth functions $u$ on $\Delta^0$ such that ${\bf H}^u:= ({\rm Hess}(u))^{-1}$ satisfies
\begin{enumerate}
\item[$\bullet$] {\rm [smoothness]} ${\bf H}^u$ extends smoothly on $\Delta$ as an $S^2(\tor^*)$-valued function;
\item[$\bullet$]  {\rm [boundary conditions]}  For any  facet $F_j \subset {\partial \Delta}$ with normal $e_j=dL_j$, and $x \in F_j$
\begin{equation}\label{boundary-divisor}
{\bf H}^u_{x}(e_j, \cdot)=0; \ \ (d{\bf H}^u)_{x} (e_j,e_j) =2e_j;
\end{equation}
\item[$\bullet$] {\rm [positivity]} ${\bf H}^u$ is positive definite on $\Delta^0$, 
 as well as on the interior $\Sigma^0$ of any face $\Sigma \subset \Delta$, 
 viewed there as a smooth function with values in $S^2(\mathfrak{t}/\mathfrak{t}_{\Sigma})^*$ 
 where $\mathfrak{t}_{\Sigma}$ denotes the subspace spanned by normals to facets containing $\Sigma$.
\end{enumerate}
\end{prop}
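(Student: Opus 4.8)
The plan is to establish the equivalence between the Guillemin boundary conditions stated in Definition~\ref{d:guill} and the first-order boundary conditions on $\mathbf{H}^u = (\mathrm{Hess}(u))^{-1}$ by a direct local computation near the boundary of $\Delta$. First I would reduce to the case of a single facet: fix a facet $F_j$ with inward normal $e_j = dL_j$, and choose affine coordinates $(y_1, \ldots, y_n)$ on $(\R^n)^*$ adapted so that $L_j = y_1$ (so $F_j = \{y_1 = 0\}$ locally) and the remaining $y_2, \ldots, y_n$ restrict to coordinates on $F_j$; near the relative interior of $F_j$ the other labels $L_k$ are bounded away from zero and contribute smoothly, so the only singular term in $u_0 = \frac{1}{2}\sum_k L_k \log L_k$ that matters is $\frac{1}{2} y_1 \log y_1$.

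Next I would carry out the model computation. Writing $u = \frac{1}{2} y_1 \log y_1 + v$ with $v$ smooth up to $F_j$, one computes $\mathrm{Hess}(u)$ in the $(y_1, \ldots, y_n)$ coordinates: the $(1,1)$ entry is $\frac{1}{2 y_1} + v_{,11}$, which blows up like $\frac{1}{2 y_1}$, while all other entries are smooth. Inverting this matrix (using, e.g., the block/cofactor formula, with the $(1,1)$ entry dominating) shows that $\mathbf{H}^u = (\mathrm{Hess}(u))^{-1}$ extends smoothly across $y_1 = 0$, that $\mathbf{H}^u_x(e_1, \cdot) \to 0$ as $y_1 \to 0$ since $e_1$ is dual to $y_1$ and the $(1,1)$ cofactor is suppressed by the large denominator, and that the normal derivative satisfies $\partial_{y_1} \mathbf{H}^u_{11} \to 2$ at $y_1 = 0$, i.e. $(d\mathbf{H}^u)_x(e_1, e_1) = 2 e_1$. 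This is essentially the content of Proposition~\ref{first-order-boundary}; conversely, given a smooth positive $S^2(\tor^*)$-valued $\mathbf{H}^u$ satisfying \eqref{boundary-divisor}, one integrates: the conditions force $\mathrm{Hess}(u)$ to have precisely the singularity $\frac{1}{2 y_1}$ in the $(1,1)$ slot and smooth behaviour otherwise, and reconstructing $u$ by integrating $\mathrm{Hess}(u)$ twice yields $u - \frac{1}{2} L_j \log L_j$ smooth near $F_j$. One then handles the second Guillemin bullet — smoothness and strict convexity of $u|_{F^0}$ — by an induction on the dimension of faces, using the positivity clause of Proposition~\ref{first-order-boundary} on the interior of each face $\Sigma$, which says precisely that $\mathbf{H}^u$ descends to a positive-definite form on $S^2(\tor/\tor_\Sigma)^*$, hence its inverse is the Hessian of the restriction of $u$ to $\Sigma^0$.

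To assemble the global statement, I would patch these local pictures: smoothness of $\mathbf{H}^u$ on all of $\Delta$ follows from the facet-by-facet analysis together with the fact that near a vertex (or lower-dimensional face) the adjacent labels $L_{v,1}, \ldots$ give local coordinates and the singular part of $u_0$ is a sum of terms $\frac{1}{2} L_{v,i} \log L_{v,i}$ in those coordinates, which is the standard Guillemin model whose Hessian inverse is well understood. The positivity of $\mathbf{H}^u$ on $\Delta^0$ is just the statement that $u$ is strictly convex there; the positivity on faces is matched, as above, with strict convexity of the restrictions.

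The main obstacle I expect is the careful bookkeeping at faces of codimension $\ge 2$: near such a face several labels degenerate simultaneously, the change of coordinates mixing the $L_{v,i}$ with the transverse directions is only affine but must be tracked, and one has to verify that the asymptotic expansion of $(\mathrm{Hess}(u))^{-1}$ is compatible with smoothness jointly in all variables (not just in each normal direction separately) — the individual normal-direction estimates must glue to a genuine smooth $S^2(\tor^*)$-valued function. The cleanest route is probably to argue by induction on $\dim \Sigma$, restricting to a transverse slice and reducing to the already-treated facet case in one lower dimension, so that the codimension-one computation does all the real work and the higher-codimension cases become formal consequences of the induction hypothesis applied to the faces of $\Delta$ viewed themselves as labelled polytopes.
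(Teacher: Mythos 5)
Your outline is essentially correct, and the underlying idea -- a local computation near the boundary comparing $u$ to the singular model $\tfrac12 L_j\log L_j$ and reading off the behaviour of $({\rm Hess}(u))^{-1}$ -- is exactly the one used in \cite[Prop.~1]{ACGT}, to which the paper defers for this statement (the paper only reproduces the argument, in its Poincar\'e-type variant, for Proposition~\ref{complete-first-order} in Appendix~\ref{AppendixA}). The genuine difference is in the mechanics at faces of codimension $\ge 2$, which you rightly flag as the main obstacle. You propose to invert ${\rm Hess}(u)$ by a cofactor/block computation facet by facet and then recover joint smoothness at corners by an induction on the dimension of faces via transverse slices; this can be made to work, but the slicing argument only gives smoothness of ${\bf H}^u$ restricted to each face and along each normal direction separately, and upgrading that to smoothness as an $S^2(\tor^*)$-valued function jointly in all variables is precisely the delicate point. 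The reference (and Appendix~\ref{AppendixA}) instead works in a single vertex-adapted chart where all the degenerating labels are coordinates, introduces the diagonal model matrices ${\bf G}^0={\rm Hess}(u_0)$, ${\bf H}^0=({\bf G}^0)^{-1}$, and observes that ${\bf H}^0{\bf G}^u=I_n+{\bf H}^0\,{\rm Hess}(w)$ is smooth with nonvanishing determinant on every face through the vertex (by the strict convexity of the restrictions $u_{\bf f}$); inverting this smooth matrix and writing ${\bf H}^u=({\bf H}^0{\bf G}^u)^{-1}{\bf H}^0$ settles joint smoothness at all faces in one stroke and produces the explicit form of ${\bf H}^u-{\bf H}^0$ with the $x_ix_j$ prefactors (the analogue of \eqref{eqn_matrixO}), from which the boundary conditions and the converse direction are read off. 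If you pursue your route, the induction step is where you would have to supply the argument that the multiplicative trick renders unnecessary.
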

The extremality of the  K\"ahler  metric \eqref{toric-g} with $u\in \cS(\Delta, {\bf L})$ reduces to solving the  {\it Abreu equation}~\cite{Abreu0}
\begin{equation}\label{abreu}
-\sum_{i,j=1}^n \frac{\partial^2 {\bf H}^u_{ij}}{\partial x_i \partial x_j} = s_{(\Delta, {\bf L})},
\end{equation}
for an affine linear function $s_{(\Delta, {\bf L})}$, pre-determined by the labelled polytope $(\Delta, {\bf L})$ by requiring that
$$\cL_{(\Delta, {\bf L}) }(\varphi) := 2\int_{\partial \Delta} \varphi d\nu_{\bf L} - \int_{\Delta} s_{(\Delta, {\bf L})} \varphi d\mu =0$$
for any affine linear function $\varphi$, where $d\mu$ is a (fixed) Lebesgue measure on $\tor^* =(\R^n)^*$ and $d\nu_{\bf L}$ is obtained from $d\mu$ and ${\bf L}$ 
via \eqref{measure}. 
In this setting, we recall the following
\begin{defn} A labelled compact convex polytope $(\Delta, {\bf L})$ in  a vector space $\tor^*$ is  called {\it stable} if 
$$\cL_{(\Delta, {\bf L}) }(\varphi)  \ge 0$$
for any convex, piecewise affine linear function $\varphi$, and  equality is achieved only when $\varphi$ is affine linear.
\end{defn}
The stability of $(\Delta, {\bf L})$ is a necessary condition for  a solution $u\in \cS(\Delta, {\bf L})$ to \eqref{abreu} to exist~\cite{ZZ}, and  as first observed in \cite{Guan}, in the latter case $u$ must be, up to addition of affine linear functions, the unique critical point  ($=$ the minimum) of the convex relative Mabuchi functional 
\begin{equation}\label{relative-mabuchi}
\cM_{(\Delta, {\bf L})} (u) := \cL_{(\Delta, {\bf L})}(u) - \int_{\Delta} \log \det ({\bf H}^u) d\mu,
\end{equation}
which, as shown in \cite{Do2}, then takes values in $(0, \infty]$.

\bigskip
It is observed in \cite[p. 344]{Do2} that most of the above theory readily generalizes to the case when  for a fixed subset $F=F_1\cup \cdots \cup F_k$ 
which is the union of facets of $(\Delta, {\bf L})$ one modifies the induced measure $d\nu_{\bf L}$ to be zero on $F$. 
By \eqref{measure}, for each facet $F_i \subset F$,  the modified measure   can  be thought of as the limit  $\lim_{t\to \infty} d\nu_{tL_i}$, 
i.e. the measure obtained as in \eqref{measure} when sending the corresponding label $L_i$ to infinity.  
There is a subtle point here, however. 
It is not immediately clear how to extend the Guillemin boundary conditions of Definition~\ref{d:guill} over such limits.  
On the other hand, the equivalent first order boundary conditions given by Proposition~\ref{first-order-boundary}  extend naturally, as observed in \cite{ACG2}:
\begin{defn}\label{d:guillemin-boundary} Let $(\Delta, {\bf L})$ be a labelled convex compact simple polytope in $\tor^*$ and $F=F_1\cup \cdots \cup F_k$  
the union of some of its facets. We denote by $\cS(\Delta,{\bf L},F)$ the functional space of $u\in \cC^{\infty}(\Delta^0)$ verifying the first order boundary conditions
\begin{enumerate}
\item[$\bullet$] {\rm [smoothness]} ${\bf H}^u$ extends smoothly on $\Delta$;
\item[$\bullet$] {\rm [boundary conditions] }  for any facet $F_i \subset F$ and any point $x \in F_i$, 
\begin{equation}\label{boundary-divisor-0}  
{\bf H}^u_{x}(e_i, e)=0; \ \ \big(d{\bf H}^u(e_i,e)\big)_x =0,
\end{equation}
where $e_i=dL_i$ is the inward normal to $F_i$ defined by ${\bf L}$ and $e\in \tor$, and, for any facet $F_r$ which is not in $F$, and  $x \in F_r$, 
\begin{equation}\label{boundary-divisor-1} {\bf H}^u_{x}(e_r, e)=0; \ \ \big(d{\bf H}^u(e_r,e_r)\big)_x =2e_r.
\end{equation}  
\item[$\bullet$] {\rm [positivity]} ${\bf H}^u$ is positive definite on $\Delta^0$, as well as on the interior of any face $\Sigma \subset \Delta$, 
                                    viewed there as a smooth function with values in $S^2(\mathfrak{t}/\mathfrak{t}_{\Sigma})^*$ 
                                    where $\mathfrak{t}_{\Sigma}$ denotes the subspace spanned by normals to facets containing $\Sigma$.
\end{enumerate}
\end{defn}
\begin{rem} Manifestly, the conditions \eqref{boundary-divisor-0} are independent of the choice of labels $L_i$ for the facets $F_i \subset F$, 
and are obtained from \eqref{boundary-divisor-1} by letting $e_r \to \infty$.
\end{rem}
\begin{rem} Symplectic potentials satisfying Definition \ref{d:guillemin-boundary} do not necessarily correspond to Poincar\'e type metrics. 
However, below we shall define subspaces $\cS_{\alpha, \beta}(\Delta, {\bf L}, F)$ of $\cS (\Delta, {\bf L}, F)$ 
depending on a positive parameter $\alpha$ and a real parameter $\beta$ which do induce metrics of Poincar\'e type on $X \setminus Z$.
\end{rem}
We are thus interested to find solutions of \eqref{abreu} in $\cS(\Delta, {\bf L}, F)$, where,  by the integration by parts argument from \cite{Do2}, 
the right hand side again must be the unique  affine linear function  function $s_{(\Delta, {\bf L}, F)}$, called {\it extremal affine function},  satisfying 
\begin{equation}\label{extremal-function-F}
\cL_{(\Delta,{\bf L}, F)}(f)=\int_{\partial \Delta \setminus F} f d\nu_{\bf L} - \frac{1}{2}\int_{\Delta} f s_{(\Delta, {\bf L}, F)} d\mu=0
\end{equation} for any affine linear function $f$.
We also have the following straightforward  extension of arguments in the case of $(\Delta, {\bf L})$:
\begin{prop}\cite{Do2, Guan, ZZ}\label{necessary} 
Suppose there exists a function $u\in \cS(\Delta, {\bf L}, F)$ which solves the Abreu equation
\begin{equation}\label{abreu1}
-\sum_{i,j=1}^n \frac{\partial^2 {\bf H}^u_{ij}}{\partial x_i \partial x_j} = s_{(\Delta, {\bf L}, F)},
\end{equation}
where $s_{(\Delta, {\bf L}, F)}$ is the extremal affine linear function of $(\Delta, {\bf L}, F)$. Then
$\cL_{(\Delta, {\bf L},F)}(\varphi)  \ge 0$  for any convex, piecewise affine linear function $\varphi$, with equality  iff $\varphi$ is affine linear.
\end{prop}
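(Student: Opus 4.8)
The plan is to run the classical integration-by-parts argument of Donaldson \cite{Do2}, in the sharpened form used in \cite{Guan,ZZ}, adjusted to the boundary behaviour prescribed by Definition~\ref{d:guillemin-boundary}. Fix a convex, piecewise affine linear function $\varphi$ on $\Delta$ and start from the defining identity \eqref{extremal-function-F},
\[
\cL_{(\Delta,{\bf L},F)}(\varphi)=\int_{\partial\Delta\setminus F}\varphi\,d\nu_{\bf L}-\frac12\int_{\Delta}\varphi\,s_{(\Delta,{\bf L},F)}\,d\mu .
\]
First I would replace $s_{(\Delta,{\bf L},F)}$ using the Abreu equation \eqref{abreu1}, so that the second term becomes $\frac12\int_\Delta \varphi\,\big(\sum_{i,j}\partial_i\partial_j {\bf H}^u_{ij}\big)\,d\mu$, and then integrate by parts twice in the Euclidean coordinates $(x_1,\dots,x_n)$ on $\tor^*$. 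This produces a bulk term $\frac12\int_\Delta \sum_{i,j}{\bf H}^u_{ij}\,\partial_i\partial_j\varphi\,d\mu$ --- to be read, since $\varphi$ is only piecewise affine, as the integral of ${\bf H}^u$ against the nonnegative distributional Hessian of the convex function $\varphi$ --- together with two boundary integrals over $\partial\Delta$, one carrying $\langle \operatorname{div}{\bf H}^u,\nu\rangle$ and one carrying $\langle {\bf H}^u\nu,\nabla\varphi\rangle$, where $\nu$ is the outer conormal.

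Next I would evaluate the boundary terms using Definition~\ref{d:guillemin-boundary}. Because ${\bf H}^u_x(e_j,\cdot)=0$ along every facet $F_j$ of $\Delta$ --- whether or not $F_j\subset F$ --- and $\nu$ is a multiple of $e_j=dL_j$ there, the integral involving $\langle {\bf H}^u\nu,\nabla\varphi\rangle$ vanishes identically on $\partial\Delta$. For the other one: along a facet $F_r\not\subset F$ the conditions \eqref{boundary-divisor-1} are exactly those of Proposition~\ref{first-order-boundary}, and, just as in the Delzant case, they give $\langle \operatorname{div}{\bf H}^u,\nu\rangle\,d\sigma=-2\,d\nu_{\bf L}$ along $F_r$; along a facet $F_i\subset F$ the strengthened conditions \eqref{boundary-divisor-0} force \emph{all} first derivatives of $x\mapsto {\bf H}^u_x(e_i,\cdot)$ to vanish on $F_i$, whence $\langle\operatorname{div}{\bf H}^u,\nu\rangle\equiv 0$ along $F_i$, consistent with $d\nu_{\bf L}$ having been set to $0$ on $F$. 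Substituting, the surviving boundary contribution cancels the term $\int_{\partial\Delta\setminus F}\varphi\,d\nu_{\bf L}$ exactly, and one is left with
\[
\cL_{(\Delta,{\bf L},F)}(\varphi)=\frac12\int_\Delta \sum_{i,j}{\bf H}^u_{ij}\,\partial_i\partial_j\varphi\,d\mu .
\]
Since ${\bf H}^u$ is positive definite on $\Delta^0$ (third bullet of Definition~\ref{d:guillemin-boundary}) and the distributional Hessian of $\varphi$ is $\ge 0$, the right-hand side is nonnegative, and it vanishes only if that Hessian is identically zero, i.e.\ $\varphi$ is affine linear on the connected polytope $\Delta$. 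This proves both claims.

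The step I expect to be the main obstacle is making the double integration by parts rigorous when $\varphi$ is merely piecewise affine: its Hessian is a measure carried by the creases, so one must argue chamber by chamber and control the interface terms, and in particular one must handle the case when a crease of $\varphi$ reaches $\partial\Delta$, where ${\bf H}^u$ degenerates transversally and only its positivity on the open faces $\Sigma^0$ is available. This is precisely the technical heart of \cite{Do2,ZZ}; since the only departure from the Delzant polytope case is along the facets of $F$, where $d\nu_{\bf L}\equiv 0$ simply removes the corresponding boundary terms, their arguments carry over essentially verbatim, so in the write-up I would recall their set-up and point out the modifications rather than reproduce the estimates.
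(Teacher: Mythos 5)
Your proposal is correct and is exactly the argument the paper intends: Proposition~\ref{necessary} is stated there without proof as a ``straightforward extension'' of the integration-by-parts arguments of \cite{Do2, Guan, ZZ}, and the identity you arrive at, $\cL_{(\Delta,{\bf L},F)}(\varphi)=\tfrac12\int_\Delta \sum_{i,j}{\bf H}^u_{ij}\varphi_{,ij}\,d\mu$, is precisely the formula the authors record in the proof of Theorem~\ref{hugues}, with the boundary conditions \eqref{boundary-divisor-0} killing the contribution along $F$ just as you describe. Your closing caveat about handling the distributional Hessian of a piecewise affine $\varphi$ and creases meeting $\partial\Delta$ correctly identifies the only technical point, which is resolved exactly as in the cited references.
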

\begin{defn}\label{d:Donaldson-metric} A labelled convex, compact, simple polytope $(\Delta, {\bf L})$ in $(\R^n)^*$  
with a fixed subset $F$ of facets satisfying the conclusion of Proposition~\ref{necessary} will be referred to as {\it stable} triple $(\Delta, {\bf L}, F)$. 
A K\"ahler metric on $g_D$ on $\Delta^0 \times \T$ defined by a solution $u\in \cS(\Delta, {\bf L}, F)$ of \eqref{abreu1}  
(if it exists) will be called the  {\it Donaldson metric} of $(\Delta, {\bf L}, F)$.
\end{defn} 
The geometric interest of Donaldson metrics as above comes from  \cite[Conjectures~ 7.2.3]{Do2}:
\begin{conj}[\bf Donaldson~\cite{Do2}]\label{conjecture2}  Let $(\Delta, {\bf L})$  be the momentum polytope of a smooth compact toric K\"ahler manifold $(X, \omega_0)$ 
and $Z$  the divisor in $X$ corresponding to the momentum  preimage of the union $F$  of facets of $\Delta$.  
If $(\Delta, {\bf L}, F)$ is stable, then there exists a  complete extremal K\"ahler metric $g_D$,  defined on $X\setminus Z$.
\end{conj}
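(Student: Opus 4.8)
Since the statement is Donaldson's conjecture, what follows is the strategy I would pursue rather than a complete argument. The plan is to transpose to the triple $(\Delta, \mathbf{L}, F)$ the variational approach Donaldson developed for extremal K\"ahler metrics on compact toric manifolds. The relevant functional is the relative Mabuchi-type energy on $\cS(\Delta, \mathbf{L}, F)$, the analogue of \eqref{relative-mabuchi},
\[
\cM_{(\Delta, \mathbf{L}, F)}(u) = \cL_{(\Delta, \mathbf{L}, F)}(u) - \int_{\Delta} \log \det (\mathbf{H}^u)\, d\mu,
\]
whose critical points are exactly the solutions $u \in \cS(\Delta, \mathbf{L}, F)$ of the Abreu equation \eqref{abreu1}, and which, as in the compact case treated in \cite{Do2}, takes values in $(0,\infty]$. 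The first step is to show that stability of $(\Delta, \mathbf{L}, F)$ in the sense of Definition~\ref{d:Donaldson-metric} --- positivity of $\cL_{(\Delta, \mathbf{L}, F)}$ on convex piecewise-affine functions that are not affine --- implies a coercivity (properness) bound for $\cM_{(\Delta, \mathbf{L}, F)}$ modulo affine functions, so that a minimizing sequence, or a family along a continuity path, is controlled in a suitable (weighted) norm.

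The analytic heart of the matter is the second step: uniform a priori estimates for $\mathbf{H}^u$. The interior $C^0$, $C^1$, $C^2$ and higher-order estimates on $\Delta^0$ are expected to follow, with only cosmetic changes, the compact toric theory. The boundary analysis, however, must accommodate two local models at once: across a facet $F_r \notin F$ one has the usual Guillemin behaviour \eqref{boundary-divisor-1}, under which \eqref{toric-g} extends smoothly over the corresponding divisor, while across a facet $F_i \subset F$ one has the degenerate conditions \eqref{boundary-divisor-0}, under which $\mathbf{H}^u$ vanishes to first order in the normal direction and \eqref{toric-g} becomes a complete cusp-like end. The genuinely new point is to bound $\mathbf{H}^u$ uniformly near the lower-dimensional faces of $\Delta$ where facets of $F$ meet one another or meet facets not in $F$ --- precisely where the metric is complete in some directions and extends smoothly in others --- which requires patching the two local models and checking that the estimates survive the patching.

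Given a solution $u \in \cS(\Delta, \mathbf{L}, F)$ of \eqref{abreu1}, the final step is routine: using Definition~\ref{d:guillemin-boundary} and a comparison of \eqref{toric-g} with the standard local models along $Z$, one verifies that $g_D$ extends to a complete extremal K\"ahler metric on $X\setminus Z$ (not necessarily of Poincar\'e type), and the convexity of $\cM_{(\Delta, \mathbf{L}, F)}$ yields uniqueness modulo automorphisms. The main obstacle is exactly the boundary a priori estimate along the non-compact end: the coercivity from stability and the interior estimates should adapt from the compact theory, but uniform control of $\mathbf{H}^u$ near the facets of $F$ and their intersections has no analogue there and is not known in general. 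For this reason we do not attack Conjecture~\ref{conjecture2} directly; instead, in Section~\ref{s:examples} we confirm it (and the sharper Conjecture~\ref{conjecture1}) for toric surfaces by the explicit route of \cite{ACG1,ACG2}, where the ambitoric ansatz reduces \eqref{abreu1} together with the boundary conditions of Definition~\ref{d:guillemin-boundary} to ODEs solvable in closed form, and the stability of the quadrilateral pairs $(\Delta,F)$ that occur is decided by the combinatorial method of \cite{lars}.
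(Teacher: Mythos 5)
The statement is a conjecture, and the paper offers no general proof of it; its contribution is to verify it for toric surfaces through the explicit ambitoric solutions of \cite{ACG1,ACG2}, Dixon's completeness result \cite{Dixon}, and the stability analysis of \cite{lars}, which is precisely the route your final paragraph defers to. Your variational outline is a reasonable sketch of a general attack (and is consistent with the relative Mabuchi functional \eqref{relative-mabuchi} and its modification used in Proposition~\ref{p:alpha-beta}) and correctly isolates the genuinely open ingredient, namely uniform boundary a priori estimates for $\mathbf{H}^u$ near the facets of $F$ and their intersections with the remaining faces; so there is nothing to fault beyond the fact that, as you acknowledge, the sketch is not a proof.
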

We now use the results from \cite{Auvray1, Auvray2} in order to establish the following
\begin{thm}\label{hugues} Let $(X, L)$ be an  $n$-dimensional polarized toric projective variety under the action of the $n$-dimensional real torus $\T$, 
and $Z\subset X$ a smooth divisor corresponding to the preimage under the moment map of a facet $F$ of  the momentum polytope $\Delta$.  
If $X\setminus Z$ admits a $\T$-invariant extremal K\"ahler metric of Poincar\'e type in $c_1(L)$, 
then  $(\Delta, F)$ is stable and the relative Sz\'ekelyhidi numerical constraint \eqref{inequality-gabor} holds. Furthermore, the Delzant polytope $F$ is stable and 
$$ s_F - ({s}_{(\Delta,F)})_{|_F} = const >0.$$
\end{thm}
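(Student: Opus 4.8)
The plan is to extract the three conclusions of Theorem~\ref{hugues} --- stability of $(\Delta, F)$, the relative Sz\'ekelyhidi numerical constraint, and stability of $F$ together with the strict inequality $s_F - (s_{(\Delta,F)})|_F = \text{const} > 0$ --- from a single differential-geometric input: the existence of a $\T$-invariant extremal K\"ahler metric of Poincar\'e type on $X \setminus Z$ in $c_1(L)$. The first step is to pass to the Abreu--Guillemin description. A $\T$-invariant extremal K\"ahler metric of Poincar\'e type should, by the results of \cite{Auvray1, Auvray2} on the structure of such metrics near $Z$, correspond to a symplectic potential $u$ on $\Delta^0$ whose inverse Hessian ${\bf H}^u$ extends smoothly to $\Delta$ and satisfies the boundary conditions of Definition~\ref{d:guillemin-boundary} for the facet $F$ (the $\log$-type blow-up of the metric transverse to $Z$ is exactly what forces ${\bf H}^u(e, \cdot) = 0$ and $d{\bf H}^u(e,e) = 0$ on $F$, rather than the Guillemin condition $d{\bf H}^u(e,e) = 2e$). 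The extremality of the metric translates into the Abreu equation \eqref{abreu1} with right-hand side the extremal affine function $s_{(\Delta, F)}$ determined by \eqref{extremal-function-F}.

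With $u \in \cS(\Delta, {\bf L}, F)$ solving \eqref{abreu1} in hand, the stability of $(\Delta, F)$ and the inequality $\cL_{(\Delta, {\bf L}, F)}(\varphi) \ge 0$ for convex piecewise affine $\varphi$ (with equality iff $\varphi$ affine) follows immediately from Proposition~\ref{necessary}: this is just the integration-by-parts argument of \cite{Do2, Guan, ZZ} carried over to the $F$-modified setting, which has already been stated. Given stability of $(\Delta, F)$, Lemma~\ref{l:numerical/stability} yields $\int_F (s_F - s_{(\Delta, F)})\, d\nu_F \ge 0$; combined with Lemma~\ref{l:toric-numerical}, which identifies this integral (up to the factor $\tfrac12$) with $F_\chi''(0)$, we get the relative Sz\'ekelyhidi numerical constraint in its non-strict form, and I expect the strict inequality to come from the strict inequality $s_F - (s_{(\Delta,F)})|_F > 0$ proved below, or else from a separate destabilization argument showing that equality would contradict the existence of the extremal Poincar\'e-type metric.

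The substantive part --- and the main obstacle --- is the last assertion: that $F$ is stable as an $(n-1)$-dimensional Delzant polytope, and that the difference $s_F - (s_{(\Delta,F)})|_F$ is a \emph{positive constant}. The strategy here is to analyze the asymptotic behaviour of $u$ (equivalently ${\bf H}^u$) near the facet $F$ and restrict the Abreu equation to $F$. Writing the transverse coordinate $L = L_F$ and using the boundary conditions ${\bf H}^u(e_F, \cdot) = 0$ and $d{\bf H}^u(e_F, e_F) = 0$ on $F$, a Taylor expansion of ${\bf H}^u$ in $L$ should show that the restriction ${\bf H}^{u}|_{F}$ defines (after the appropriate quotient by $\tor_F$) a symplectic potential on $F$ whose inverse Hessian satisfies the Guillemin boundary conditions for the labelled polytope $F$, so that restricting \eqref{abreu1} to $F$ gives an Abreu-type equation on $F$ whose right-hand side is forced to be $s_F$. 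The extra term produced by the $L \to 0$ expansion of $-\partial^2_{ij}{\bf H}^u_{ij}$ --- the coefficient of the next order in the transverse Taylor expansion of ${\bf H}^u$, which by the Poincar\'e-type asymptotics of \cite{Auvray1} is controlled --- is precisely what accounts for the gap $s_F - (s_{(\Delta, F)})|_F$; the point is that this gap must be a \emph{constant} (not a general affine function) because the sub-leading transverse coefficient of ${\bf H}^u$ is scalar along $F$, and it must be \emph{positive} because it is governed by a curvature/convexity quantity (essentially the scalar curvature contribution of the normal direction, or equivalently the positivity built into the Poincar\'e-type model metric) which the completeness and finiteness of volume force to have the correct sign. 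Making this last sign statement rigorous --- pinning down exactly which coefficient in the asymptotic expansion it is and why the extremal equation forces it to be a strictly positive constant rather than merely nonnegative --- is where the real work lies, and I would expect to invoke the refined asymptotic analysis and the Poincar\'e-type Futaki-invariant computations of \cite{Auvray2} at this step.
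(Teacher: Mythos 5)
Your outline for the first two conclusions coincides with the paper's: pass to the momentum description, show the metric is (weakly) Donaldson, apply the integration-by-parts argument of Proposition~\ref{necessary} to get stability of $(\Delta,F)$, and then combine Lemma~\ref{l:toric-numerical} with the results of \cite{Auvray2} for the numerical constraint. But you have glossed over what is in fact the bulk of the paper's proof. You assert that the Poincar\'e-type condition forces ${\bf H}^u$ to extend \emph{smoothly} to $\Delta$ with the boundary conditions of Definition~\ref{d:guillemin-boundary}; the paper proves only a much weaker statement --- ${\bf H}^u$ extends $C^\infty$ on $\Delta\setminus F$, merely $C^0$ up to $F$ with ${\bf H}^u(e_F,\cdot)=0$ there, and the first-order condition $d{\bf H}^u(e_F,\cdot)\to 0$ only as a limit --- and this already requires a delicate computation starting from the refined splitting \eqref{poincare-split} of \cite[Thm.~4]{Auvray1}, inverting the resulting almost-diagonal system for $dx_i$ in terms of $d|z_j|^2$, and tracking the $O(|\log|z_1||^{-\delta})$ error terms. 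The weak statement suffices for the integration by parts, but your stronger claim is not justified by Definition~\ref{d:Poincare-type} alone (indeed, that all Poincar\'e-type extremal potentials lie in $\cS_{\alpha,\beta}(\Delta,{\bf L},F)$ is only conjectural in the paper). Also note the logical order of the strict inequality: the paper obtains the \emph{strict} constraint \eqref{inequality-gabor} directly from \cite[Thm.~4 \& Prop.~2.1]{Auvray2} (once ${\rm Scal}_g=s_{(\Delta,F)}$ is known), whereas stability via Lemma~\ref{l:numerical/stability} only gives $\geq 0$; your suggestion to derive strictness from $s_F-(s_{(\Delta,F)})|_F>0$ and then feed it back is circular unless the positivity is established independently.

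For the final assertion your route genuinely diverges from the paper's, and this is where the gap is most serious. You propose to Taylor-expand ${\bf H}^u$ transversally to $F$, restrict the Abreu equation, and read off the gap $s_F-(s_{(\Delta,F)})|_F$ from the second-order transverse coefficient. This is exactly the computation carried out in Step~1 of the proof of Proposition~\ref{p:alpha-beta} (yielding $s_{(\Delta,{\bf L},F)}=s_{(F,{\bf L}_F)}-2/\alpha$), but there it is done under the hypothesis $u\in\cS_{\alpha,\beta}(\Delta,{\bf L},F)$, which supplies the second- and third-order boundary conditions on ${\bf H}^u$ at $F$; for a general Poincar\'e-type extremal metric these are precisely what is \emph{not} available, and the paper's proof of Theorem~\ref{hugues} establishes only first-order information. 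The paper sidesteps this entirely: stability of $F$ comes from \cite[Thm.~4]{Auvray1} (the induced metric on $Z$ is extremal K\"ahler, hence $F$ is stable by \cite{ZZ}), constancy of $s_F-(s_{(\Delta,F)})|_F$ comes from the agreement of the extremal vector fields on $Z$ proved in \cite[p.~44]{Auvray1}, and positivity of the constant then follows by integrating against $d\nu_F$ and invoking \eqref{inequality-gabor}. Your approach could in principle work, but only after importing the full second-order asymptotics from \cite{Auvray1}, i.e.\ after doing strictly more analysis than the paper does; as written, the decisive steps are deferred to ``the refined asymptotic analysis'' without identifying which statement of \cite{Auvray1,Auvray2} actually delivers them.
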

\begin{proof}  The main point is to show that a $\T$-invariant  extremal  K\"ahler metric $(g, \omega)$  of Poincar\'e type on $X\setminus Z$ 
gives rise to a Donaldson metric in a slightly weaker sense, 
namely it corresponds to ${\bf H}^u \in C^{\infty}(\Delta^0, S^2(\tor^*))$ which extends smoothly on $\Delta\setminus F$ and $C^0$ on $\Delta$ and,  moreover, 
the conditions \eqref{boundary-divisor-0} and \eqref{boundary-divisor-1} hold where the first order condition at $F$ is taken in the sense of limit, 
i.e. $$\lim_{x\to F, x \in \Delta\setminus F} \big(d{\bf H}^u(e_F, \cdot)\big)_x =0,$$
for  $e_F\in \tor$ the inward normal to $F$. This will be enough  to establish the integration by parts formula
$$\int_{\Delta} \Big(\sum_{i,j=1}^n H^u_{ij,ij}\Big) \varphi d\mu 
= \int_{\Delta}\Big(\sum_{i,j=1}^n H^u_{ij}\varphi_{,ij}\Big) d\mu - 2 \int_{\partial \Delta \setminus F} \varphi d\nu_{\bf L}$$
for any smooth function $\varphi$ on $\Delta$. The latter in turn would imply that:  
\begin{enumerate}
\item[(a)] ${\rm Scal}_g = s_{(\Delta, F)}$ and 
\item[(b)] $(\Delta, F)$ is stable (compare with Proposition~\ref{necessary} above). 
\end{enumerate}

With the conclusions (a) and (b) in place, the result follows easily from \cite{Auvray1,Auvray2}. 
Indeed,  (a) and Lemma~\ref{l:toric-numerical}  together with \cite[Thm.~4 \& Prop.~2.1]{Auvray2} 
show that \eqref{inequality-gabor} is a necessary condition for the existence of an extremal K\"ahler metric of Poincar\'e type on $X\setminus Z$. 
Furthermore, by \cite[Thm. 4]{Auvray1},  $Z$ must admit an extremal K\"ahler metric $\check g$  in the K\"ahler class $c_1\big (L_{|_Z}\big)$, 
so that $F$ must be a stable Delzant polytope by the result in \cite{ZZ}. 
It is also shown in \cite[p.44]{Auvray1} that the extremal vector fields $J{\rm grad}_{g} {\rm Scal}_g$ and $J{\rm grad}_{\check g} {\rm Scal}_{\check g}$ agree on $Z$, 
which in our case translates to say that $s_F-(s_{(\Delta, F)})_{|_F} =const$. 
The constant is positive because of \eqref{inequality-gabor}.

\smallskip
We thus focus for the remainder of the proof to show that an extremal $\T$-invariant Poincar\'e type metric $(g, \omega)$ on $X\setminus Z$ is (weakly) Donaldson. 
To this end, we fix a $\T$-invariant K\"ahler metric $\omega_0 \in c_1(L)$  on $X$ and denote by $(\Delta, {\bf L})$ the corresponding Delzant polytope. 
We shall write, for any basis $\{e_1, \ldots, e_n\}$ of $\tor$, $x^0=(x_1^0, \ldots, x^0_n)$ the corresponding momenta, 
viewed as functions from $X$ to $\tor^*$ defined by $\imath_{K_j}\omega_0 = - dx^0_j$ where $K_j$ is the fundamental vector field of $X$ corresponding to  $e_j \in \tor$; 
thus $\Delta= {\rm Im}(x^0)$ and $Z= (x^0)^{-1}(F)$ for a facet $F\subset \Delta$. 
Let $v\in \Delta$ be a vertex of $\Delta$ and $F$, and $\{e_1, \ldots, e_n\}$ the basis of $\tor$  formed by the inward normals to the facets containing $v$,  with $e_F=e_1$. 
By  Delzant theory (see \cite{Delzant, LT})  there exists a $(\C^{\times})^n$ equivariant chart $\C^n_v$ of $X$ 
(with respect to the complexified $(\C^{\times})^n$-action of $\T$ on $X$ and the standard  $(\C^{\times})^n$-action on $\C_v^n =\C^n$) in which $F$ is given by $z_1=0$. 
Furthermore, in this chart, $|z_j|^2= x_j^0 e^{\phi_j(z)}$ for smooth functions $\phi_j$ on $X$ (see e.g. \cite{Do2}) 
whereas the holomorphic vector fields $\frac12(K_j  - \sqrt{-1}  JK_j)$  become $\sqrt{-1} z_j \frac{\partial}{\partial z_j}$.

According to Definition~\ref{d:Poincare-type}, we can write $\omega= \omega_0 + dd^c \varphi$ for a smooth $\T$-invariant function $\varphi$ on $X\setminus Z$, 
such that $d\varphi$ is bounded at any order  with respect  to the model metric 
$$ \omega_{\rm mod} =\sqrt{-1} \Big(\frac{1}{|z_1|^2(\log|z_1|)^2} dz_1 \wedge d\bar z_1 +  \sum_{j=2}^n dz_j \wedge d\bar z_j \Big)$$
defined on the  chart $\C_v^n$: in particular 
\begin{equation}\label{estimates}
d\varphi(JK_j)= O(|z_j|), j=2, \ldots, n, \ \ \ d\varphi(JK_1) = O\Big(\frac{1}{|\log(|z_1|^2)|}\Big).
\end{equation}
Writing
\begin{equation}\label{poincare-momenta}
x_j = x_j^0 + d\varphi(JK_j), \ \  j=1, \ldots n,
\end{equation}
for the momenta of $(g, \omega)$, we see that the map $x^0 \to x$  sends $\Delta \setminus F$ to itself, preserving  the faces. 
Furthermore, $x_1 : X\setminus Z \to \Delta\setminus F$ extends continuously as zero over $Z$.

We now let ${\bf H}_x(e_i,e_j)=(g_p(K_i, K_j))$ be the smooth 
 $S^2(\tor^*)$-valued function, 
defined on $\Delta^0$ by using the extremal K\"ahler metric $g$ and the momentum map $x$ (with $x=x(p)$ for $p\in X\setminus Z$). 
Clearly, ${\bf H}$ extends smoothly over $\Delta\setminus F$. 
The proof of Proposition~\ref{first-order-boundary} (given in \cite{ACGT}) 
uses local arguments around a point  on a facet $F_r \subset \overline{\Delta\setminus F}$, 
and thus shows that ${\bf H}$ satisfies the boundary conditions \eqref{boundary-divisor-1} on  each $F_r$. 
We now focus on $F$. 
We use the chart $\C^n_v$ as above,  and denote by $\pi_1 : \C_v^n \to \C^{n-1}$  the projection $\pi_1(z_1, z_2, \ldots, z_n) = (z_2, \ldots, z_n)$. 
Then, \cite[Thm.~4]{Auvray1} tells that as $z_1 \to 0$, $\omega$  is written as
\begin{equation}\label{poincare-split}
\omega = a_1\Big(\frac{\sqrt{-1}(dz_1 \wedge d\bar z_1)}{|z_1|^2 \big(\log(|z_1|^2)^2}\Big) + \pi_1
^* \omega_1 + O(|\log|z_1||^{-\delta}),
\end{equation}
where $a_1$ and $\delta$ are positive reals, $\omega_1$ is an extremal K\"ahler metric on $Z\cap \C_v^n=\{(0,z_2, \ldots, z_n)\}$, 
and $O(|\log|z_1||^{-\delta})$ is understood at any order with respect  to the K\"ahler metric 
\begin{equation}   \label{e:split-model}
 a_1\Big(\frac{\sqrt{-1}(dz_1 \wedge d\bar z_1)}{|z_1|^2 \big(\log(|z_1|^2)^2}\Big) + \pi_1^* \omega_1.
\end{equation}
We compute from \eqref{poincare-split}, with respect to the vector fields $\frac12(K_j  - \sqrt{-1}  JK_j)=\sqrt{-1} z_j \frac{\partial}{\partial z_j}$, 
\begin{equation}   \label{e:H}
\begin{split}
{\bf H}(e_1,e_1) &= \omega(K_1, JK_1)=\frac{2a_1}{(\log(|z_1|^2))^2} + O(|\log|z_1||^{-\delta-2}), \\
{\bf H}(e_1, e_j) &= \omega(K_1, JK_j)= O(|z_j||\log|z_1||^{-\delta-1}), \\
{\bf H}(e_i, e_j) & =\omega(K_i, JK_j)= \check{\bf H}^1(e_i,e_j) + O(|z_iz_j||\log|z_1||^{-\delta}), i,j \ge 2,
\end{split}
\end{equation}
where $\check{\bf H}^1(e_i, e_j)= \pi^*\omega_1(K_i, JK_j)$ is the $z_1$ independent smooth function computed from $\omega_1$ 
with respect to the induced vector fields on $Z$. It follows that ${\bf H}$ extends continuously on $F$, verifying ${\bf H}_x(e_1, \cdot)=0$ on $F$.

Taking interior product with $K_1$ and $K_i$, $i\geq 2$, in  \eqref{poincare-split} we obtain 
\begin{equation*}
 \begin{aligned}
  dx_1 &= a_1 \Big(\frac{d|z_1|^2}{|z_1|^2(\log|z_1|^2)^2}\Big) +  \sum_{j=1}^n f_j(z) d|z_j|^2,    \\
  dx_i &= 2\sum_{j=2}^n \check{\bf H}^1(e_i,e_j) d|z_j| + \sum_{j=1}^n f_{ij}(z) d|z_j|^2 = \pi_1^*dx_i^1 + \sum_{j=1}^n f_{ij}(z) d|z_j|^2, 
 \end{aligned}
\end{equation*}
where  $f_1(z)= O\Big(\frac{1}{|z_1|^2(\log|z_1|^2)^{2+\delta}}\Big), \ f_j(z)= O(|\log|z_1||^{-1-\delta}), \ j=2, \ldots, n$, 
and $f_{i1}(z)= O\Big(\frac{|z_i|^2}{|z_1|(\log|z_1|^2)^{1+\delta}}\Big), \ f_{ij}(z)= O(|z_i|^2|\log|z_1||^{-\delta}), \ i,j=2, \ldots, n$; 
here $dx_i^1=-\iota_{K_i}\omega_1$ for $i=2, \ldots, n$, and $dx_i^1 = \sum_{j=2}^n A_{ij} d|z_j|^2$ for some 
invertible matrix $(A_{ij})$, locally uniformly bounded together with its inverse $(A^{ij})$ on $Z\cap \C^n_v$. 
Putting $\sigma_1 = \frac{d|z_1|^2}{|z_1|^2|\log(|z_1|^2)|}$ and $\sigma_i = d|z_i|^2$, 
the relations above can be recapped as 
 \begin{equation*}
  \begin{pmatrix}
   \big|\log|z_1|\big|dx_1 \\ dx_2 \\ \vdots \\ dx_n 
  \end{pmatrix}
   = 
  \Bigg[
  \begin{pmatrix}
    a_1      & 0     \\
   0         & (A_{ij})
  \end{pmatrix}
  + \varepsilon
  \Bigg]
  \begin{pmatrix}
   \sigma_1 \\ \vdots \\ \sigma_n 
  \end{pmatrix},
 \end{equation*}
with $\varepsilon = O(|\log|z_1||^{-\delta})$;  
solving this system provides 
 \begin{equation*}
  \sigma_1 = \frac{|\log|z_1||}{a_1}dx_1 + \sum_{j=1}^n \eta_{1j} dx_j, \ \ \
  \sigma_i = \eta_{i1} dx_1 + \sum_{j=2}^n (A^{ij}+\eta_{ij}) dx_j, \ i=2,\ldots,n,
 \end{equation*}
with $\eta_{i1}= O(|\log|z_1||^{1-\delta})$, $i=1,\ldots,n$, and $\eta_{ij} = O(|\log|z_1||^{-\delta})$, $i=1,\ldots,n, \ j=2,\ldots,n$. 
Differentiating the first two lines of \eqref{e:H} 
with respect to $z_1\log|z_1|\frac{\partial}{\partial z_1}$, the $\frac{\partial}{\partial z_i}$'s ($i\geq2$), 
and their conjugates\footnote{using that we can replace $dz_i$ and $d\overline{z_i}$ by $\sigma_i$ with help of the torus action, 
as in the estimates for $f_j$ and $f_{ij}$ above.},  
implies:
\begin{equation*}
 \begin{aligned}
 d{\bf H}(e_1, e_1) &= - \frac{4}{\log|z_1|}\Big((1+\epsilon_{11})dx_1 + \sum_{j=2}^n \epsilon_{1j} dx_j\Big), \\
 d{\bf H}(e_1, e_i) &= \epsilon_{i1}dx_1 + \sum_{j=2}^n \epsilon_{ij} dx_j, \ \ i=2,\ldots,n, 
 \end{aligned}
\end{equation*}
with $\epsilon_{i1}=O(|\log|z_1||^{-\delta})$, $i=1,\ldots,n$, and 
$\epsilon_{ij}=O(|\log|z_1||^{-\delta-1})$, $i=1,\ldots,n, \ j=2,\ldots,n$. 
Hence in particular: 
$$\lim_{x\to F} (d{\bf H}(e_1, e_i))_x =0, \ \ i=1, \ldots, n, $$
as claimed.
\end{proof}

\subsection{Conjectural picture for the existence of  extremal toric metrics of Poin\-car\'e type} 
Theorem~\ref{hugues}  and Conjecture~\ref{conjecture2}  motivate the  following conjectural picture in the toric case:
\begin{conj}\label{conjecture1}  
A smooth toric variety  $(X,L)$ with momentum polytope $\Delta$ and a divisor $Z \subset X$ 
corresponding to the preimage of the union $F=\cup_{i}F_{i}$ of some facets $F_i$ of $\Delta$ admits an extremal toric K\"ahler metric of Poincar\'e type 
if the following three conditions  are satisfied:
\begin{enumerate}
\item[\rm (i)] $(\Delta, F)$ is stable,   and
\item[\rm (ii)]   for any facet $F_{i} \subset F$, the pair $(F_i, F_{i} \cap (\cup_{j\neq i \in I} F_j))$  is stable,  and 
\item[\rm (iii)]  if $s_{(F_i, F_{i} \cap (\cup_{j\neq i \in I} F_j)} $ is the extremal affine function corresponding to $(F_i, F_{i} \cap (\cup_{j\neq i \in I} F_j))$, then 
\begin{equation}\label{hyperbolicity-condition}
s_{(F_i, F_{i} \cap (\cup_{j\neq i} F_j))} - {s}_{(\Delta,F)} = c_i>0,
\end{equation} 
where  $c_i$ are real constants.
\end{enumerate}
\end{conj}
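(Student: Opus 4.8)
The plan is to attack Conjecture~\ref{conjecture1} by the variational method used for the compact Abreu equation in \cite{Do2, ZZ, Guan}, transplanted to the space $\cS_{\alpha,\beta}(\Delta,{\bf L},F)$ of symplectic potentials that induce Poincar\'e type metrics (the subspace of $\cS(\Delta,{\bf L},F)$ singled out in Section~\ref{s:abreu-guillemin}). Fix a Delzant labelling ${\bf L}$ realizing $(\Delta,F)$ as $(X,Z)$, and let $c_i>0$ be the constants of \eqref{hyperbolicity-condition}; together with the inward normals of the facets in $F$ these determine the parameters $(\alpha,\beta)$, hence $\cS_{\alpha,\beta}(\Delta,{\bf L},F)$. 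One then minimizes a renormalized relative Mabuchi functional $\cM_{(\Delta,{\bf L},F)}(u)=\cL_{(\Delta,{\bf L},F)}(u)-\int_\Delta\log\det({\bf H}^u)\,d\mu$, the entropy term being understood after subtracting the universal divergent contribution of the cusp model \eqref{e:split-model} near $F$ (necessary since ${\bf H}^u(e_i,\cdot)\to 0$ along $F_i\subset F$). By the integration-by-parts identity behind Proposition~\ref{necessary}, a smooth critical point in $\cS_{\alpha,\beta}(\Delta,{\bf L},F)$ solves the Abreu equation \eqref{abreu1} with right-hand side $s_{(\Delta,{\bf L},F)}$, and then Theorem~\ref{thm:criterion} identifies the associated metric on $X\setminus Z$ as extremal of Poincar\'e type.

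The core is to show that (i)--(iii) make this functional coercive on $\cS_{\alpha,\beta}(\Delta,{\bf L},F)$, with the three hypotheses entering through distinct mechanisms. Condition (i), exactly as in \cite{Do2, ZZ}, makes $\cL_{(\Delta,{\bf L},F)}$ positive and suitably proper on normalized convex potentials, controlling the interior and the facets $F_r\not\subset F$ as in the closed case. Near $F$ the entropy term is the delicate one: in a splitting adapted to a vertex of $F$, ${\bf H}^u$ decomposes into a transverse block behaving like a multiple of $1/(\log|z_1|)^2$, cf. \eqref{e:H}, and a tangential block limiting onto a $\check{\bf H}$, the Hessian-inverse of the induced potential on the facet. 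Condition (ii) is precisely what makes that induced boundary problem solvable --- each $F_i$ must carry its own Donaldson (extremal) metric, by the result of \cite{ZZ} --- so that the tangential part of the renormalized entropy stays controlled; and condition (iii), which by Lemma~\ref{l:toric-numerical} and Lemma~\ref{l:numerical/stability} is the \emph{strict} positivity $F_\chi''(0)=\tfrac12\int_F(s_F-s_{(\Delta,F)})\,d\nu_F>0$, pins down a definite positive coefficient for the transverse block, distinguishing the Poincar\'e model \eqref{e:split-model} from the competing complete asymptotics found in \cite{Sz}; quantitatively it should supply the lower bound on the renormalized $\log\det({\bf H}^u)$ needed to close the estimate. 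In higher dimensions one presumably runs this by induction on the number of facets in $F$ (and on $n$), using (ii) as the inductive input for each $F_i$.

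Given a weak minimizer, the last step is regularity: interior smoothness and strict convexity follow from the estimates of \cite{Donaldson-estimates} applied away from $F$, while the precise model behaviour \eqref{poincare-split}--\eqref{e:split-model} near $F$ --- i.e.\ membership in $\cS_{\alpha,\beta}(\Delta,{\bf L},F)$, not merely in $\cS(\Delta,{\bf L},F)$ --- must be extracted from the linear theory of \cite{Auvray0, Auvray, Auvray1}, using that the induced metric on each $F_i$ is the unique extremal one from (ii) and that the transverse decay rate is governed by $c_i$. A possibly more robust alternative is a continuity method in the facet labels, deforming the labels of the facets in $F$ from a large finite value to the limit: at finite large value one is in the compact extremal problem, openness comes from an implicit function argument in weighted H\"older spaces modelled on $\omega_{\rm mod}$, and closedness from a priori estimates uniform in the limit, conditions (i)--(iii) being designed so that stability (and the strict constraint) persist.

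The step I expect to be the genuine obstacle is exactly these uniform estimates near $F$. Because $F$ lies ``at infinity'' for the Poincar\'e metric, the localization-and-compactness scheme of \cite{Donaldson-estimates} does not apply there, and one needs a new ingredient: a weighted Schauder and $C^0$ theory for the fourth-order Abreu operator on the cusp model, strong enough to bound the renormalized $\log\det({\bf H}^u)$ and all its derivatives. Tied to this is the conceptual crux of the whole conjecture --- proving that condition (iii), \emph{beyond} the non-strict inequality that is automatic from (i) by Lemma~\ref{l:numerical/stability}, is exactly what prevents a minimizing sequence from ``escaping'' to a complete extremal metric of non-Poincar\'e type (as in \cite{Sz}); making this dichotomy precise is, I expect, where the real work lies.
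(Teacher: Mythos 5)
The first thing to note is that the statement you are addressing is Conjecture~\ref{conjecture1}: the paper does not prove it. What the paper actually establishes is (a) the \emph{necessity} of conditions (i)--(iii) when $Z$ is smooth (Theorem~\ref{hugues}, via the asymptotic results of \cite{Auvray1,Auvray2}), and (b) the \emph{sufficiency} only for $\C P^2$, $\C P^1\times\C P^1$ and the Hirzebruch surfaces, by exhibiting the extremal metrics explicitly through the ambitoric and Calabi ans\"atze of \cite{ACG1,ACG2} and then checking membership in $\cS_{\alpha,\beta}(\Delta,{\bf L},F)$ via Theorem~\ref{thm:criterion}. Your variational/continuity program is a reasonable sketch of how one might attack the general case, but, as you yourself concede, the decisive analytic steps (coercivity of the renormalized Mabuchi functional near the cusp, a weighted Schauder theory for the fourth-order Abreu operator at $F$, and regularity of a weak minimizer up to $F$) are not supplied, so this is a research plan rather than a proof.

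More importantly, there is a concrete error in how you feed condition (iii) into the argument. You assert that (iii) ``is'' the strict positivity $F_\chi''(0)=\tfrac12\int_F(s_F-s_{(\Delta,F)})\,d\nu_F>0$ of Lemma~\ref{l:toric-numerical}. It is not: condition (iii) demands that $s_{F_i}-(s_{(\Delta,F)})|_{F_i}$ be a \emph{positive constant} on each $F_i$, which is strictly stronger than positivity of its integral. The gap between the two is precisely the point of part (b) of Theorem~\ref{main-example} and of Example~\ref{ex:1}: for $Z$ a torus-fixed fibre of a Hirzebruch surface, $(\Delta,F)$ is stable and $F_\chi''(0)>0$, yet the (unique) Donaldson metric fails to be of Poincar\'e type because the difference of extremal affine functions restricts to $F$ as a non-constant affine function. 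Hence any coercivity argument that only invokes $F_\chi''(0)>0$ would ``prove'' existence of a Poincar\'e type extremal metric in a case where the paper shows none exists, so your scheme as written cannot close; the constancy in (iii) must enter essentially. In the paper's framework that constancy is exactly what allows a single pair $(\alpha,\beta)$ to be singled out as in Proposition~\ref{p:alpha-beta} --- and note also that $c_i$ alone only determines $\alpha=2/c_i$, not $\beta$, which requires the further boundary data computed there, so your opening claim that the $c_i$ and the normals determine $(\alpha,\beta)$ needs that extra computation as well.
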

\begin{rem} Theorem~\ref{hugues} readily generalizes to the case when $Z$ is a smooth toric divisor of $(X, L)$, 
i.e. $Z$ is the preimage under the moment map of the union $F$ of disjoint facets of $\Delta$, 
thus showing the necessity of the conditions (i),(ii),(iii) in this case. 
The situation is not so clear in general, when $Z$ has simple normal crossings. In this case we make the following remarks:
\begin{enumerate}
\item In order to establish (i) we would need to show that {\it any} toric extremal K\"ahler metric of Poincar\'e type on $X\setminus Z$ 
      belongs to the class $\cS(\Delta, {\bf L}, F)$, at least in the weaker sense as in the proof of Theorem~\ref{hugues}.
\item  (ii)  would follow from  (i), noting that  the extremal Poincar\'e type metric on $Z_i\setminus Z'_i$ where  $Z_i$ is the component of $Z$ corresponding to $F_i$ and $Z'_i$  is the divisor of $Z_i$ induced by $Z$ 
(which exists by virtue of \cite[Thm. 4]{Auvray1}) must be {\it toric}. 
      Indeed, this can be derived from \cite{Auvray1} as follows:
       \begin{itemize}
        \item[$\bullet$] using toric-equivariant coordinates $(z_1,\ldots, z_n)\in \C^n_v $ centred at a point in $Z_i$ fixed by the torus action,  
                         and such that $Z_i \cap \C^n_v = \{z_1 = 0\}$ and $Z \cap \C^n_v = \{z_1 \cdots z_s = 0\}$, 
                         the induced metric is a ${\mathcal C}^{\infty}_{\rm loc}$-limit 
                         of $\omega_v^{\epsilon_j} := \omega_{|_{\{z_1=\epsilon_j\}\setminus Z}}$ 
                         (the pull-back of $\omega$ to $\{z_1=\epsilon_j\}\setminus Z$ by inclusion), with $\epsilon_j \to 0$; 
        \item[$\bullet$] the metric $\omega$ and the hypersurfaces $\{z_1=\epsilon_j\}\setminus Z$ are invariant by the action of $\T/\T_{F_i}$;  
                         therefore, the $\omega_v^{\epsilon_j}$ are invariant under this action, 
                         and their ${\mathcal C}^{\infty}_{\rm loc}$-limit is thus toric. 
       \end{itemize}   
\item (iii)  would  follow  by  \cite[Thm. 4 \& Prop. 2.1]{Auvray2}, 
      once we know that the scalar curvature of the extremal K\"ahler Poincar\'e type metric coincides with $s_{(\Delta, F)}$. 
      This in turn would be the case if we establish point (i) above.
\end{enumerate}
Another interesting question is to know whether or not (i) implies (ii).
\end{rem}

\subsection{A class of Poincar\'e type toric K\"ahler metrics} 
To link Conjectures~\ref{conjecture2} and \ref{conjecture1},  
one needs a criterion ensuring that the Donaldson metric is of Poincar\'e type. We address this question in this  section.

We start by introducing a class of toric metrics  in the form \eqref{toric-g} on $(\Delta^0 \times \T)$  via a certain type of {\it Guillemin boundary conditions} 
for the corresponding symplectic potential $u$, depending on the data  $(\Delta, {\bf L}, F)$,  
compare with Definition~\ref{d:guillemin-boundary} in the case $F= \varnothing$. 
For simplicity, we shall assume that $(\Delta, {\bf L})$ is Delzant and $F=F_{1}$ is a single facet defined by the label $L_F(x):=L_{1}(x)=0$.
\begin{defn}\label{toric-poincare} Let $\alpha>0$ and $\beta \in \R$ be fixed real numbers. 
The class $\cS_{\alpha, \beta}(\Delta, {\bf L}, F)$ of symplectic potentials $u$ is defined as the space of smooth and strictly convex functions on $\Delta^0$, 
satisfying the following boundary conditions:
\begin{enumerate}
\item[$\bullet$] $u  + (\alpha +\beta L_{F}) \log L_F - \frac{1}{2}\sum_{j=2}^d L_j \log L_j$ is smooth on $\Delta$;
\item[$\bullet$] if ${\bf f}  \subset F$ is a sub-face  of $F$, 
                 then $u_{\bf f}: = u +  (\alpha +\beta L_{F})\log L_{F}$ restricts to the relative interior of ${\bf f}$ as a smooth strictly convex function;
\item[$\bullet$] if $\Sigma \not\subset F$ is regular face,  then $u$ restricts to the relative interior of $\Sigma$ as a smooth strictly convex function.
\end{enumerate}
\end{defn}

Our first observation is the following result, whose proof  is given  in Appendix~\ref{AppendixA}.
\begin{thm}\label{thm:criterion} Let $(X, \omega)$ be a smooth, compact symplectic toric manifold 
with momentum Delzant polytope  $(\Delta, {\bf L})$ and $F$ be a single facet of $\Delta$. 
Then, for any $u\in \cS_{\alpha, \beta}(\Delta, {\bf L}, F)$,  the metric \eqref{toric-g} defines on $X$ a $\T$-invariant complex structure $J$ 
such that the momentum preimage of $F$ is a smooth divisor $Z$ of $(X, J),$ and \eqref{toric-g} is a K\"ahler metric of Poincar\'e type on $X\setminus Z$. 
\end{thm}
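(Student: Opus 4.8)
The plan is to verify the two defining conditions of Definition~\ref{d:Poincare-type} directly from the boundary behaviour of $u\in\cS_{\alpha,\beta}(\Delta,{\bf L},F)$, after first checking that the metric \eqref{toric-g} extends to a genuine K\"ahler metric on the complement of the divisor $Z=\mu^{-1}(F)$ in a smooth toric manifold $(X,J)$. The key algebraic input is that near $F$ the potential splits as
\begin{equation*}
u = (\alpha+\beta L_F)\log L_F + v,
\end{equation*}
with $v$ behaving like a Guillemin-type potential for the ``face data'' transverse to $F$ (i.e. $v-\frac12\sum_{j\ge2}L_j\log L_j$ smooth on $\Delta$, and $v$ restricting smoothly and strictly convexly to faces). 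First I would compute ${\rm Hess}(u)$ in coordinates adapted to $F$: writing $x_1=L_F$ and keeping $x_2,\ldots,x_n$ as coordinates along $F$, the Hessian is a perturbation of ${\rm diag}\big(\alpha/L_F^2+(\text{lower order}),\,{\rm Hess}_{F}(v)\big)$, so that along the $L_F$-direction $u_{,11}\sim \alpha/L_F^2$ while the transverse block is the (smooth, positive definite) Hessian of the induced potential on $F$. This shows ${\bf H}^u=({\rm Hess}\,u)^{-1}$ satisfies the boundary conditions \eqref{boundary-divisor-0} at $F$ and \eqref{boundary-divisor-1} at the other facets, hence $u\in\cS(\Delta,{\bf L},F)$ and (by the discussion around Definition~\ref{d:guillemin-boundary} and the standard Abreu--Guillemin/Delzant construction applied to the facets not in $F$) the metric \eqref{toric-g} glues to a smooth K\"ahler metric on $X\setminus Z$ for a complex structure $J$ making $Z$ a smooth divisor; here one uses that the $L_F$-direction contributes a logarithmic cusp rather than a smooth boundary point, which is exactly the statement that $Z$ is a divisor ``at infinity'' of the symplectic manifold.

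Next I would pass to the complex (holomorphic) picture to identify the model behaviour. Using the Legendre transform / action--angle formalism, on $X^0=\mu^{-1}(\Delta^0)$ the complex coordinates are $w_j=e^{u_{,j}+\sqrt{-1}t_j}$, and the term $(\alpha+\beta L_F)\log L_F$ in $u$ produces, after the change of variables, a factor making $|z_1|$ (the defining coordinate of $Z$) behave like $L_F \sim e^{-1/(\alpha\,\text{something})}$, i.e. $L_F \asymp 1/|\log|z_1||$ up to bounded multiplicative factors; this is the standard cusp dictionary (as in \cite{Auvray0}), and it is where the hypothesis $\alpha>0$ is used. Substituting back, the $dx_1\otimes dx_1 + u^{,11}dt_1\otimes dt_1$ part of \eqref{toric-g} becomes quasi-isometric to $\frac{|dz_1|^2}{|z_1|^2(\log|z_1|)^2}$, while the transverse part becomes quasi-isometric to the smooth metric $\sum_{j,k\ge2}\big({\rm Hess}_F v\big)_{jk}dx_jdx_k+(\cdots)dt_jdt_k$, which extends to a smooth metric transverse to $Z$; the cross terms are lower order. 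This establishes the first (quasi-isometry) bullet of Definition~\ref{d:Poincare-type}.

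For the second bullet, I would write $\omega-\omega_0=dd^c\varphi$ where $\omega_0$ is the Guillemin metric attached to some fixed Delzant labelling of the \emph{same} $\Delta$ (so $\omega_0$ corresponds to $u_0=\frac12\sum_j L_j\log L_j$), and $\varphi$ is obtained from the difference of potentials via the Legendre transform. The difference $u-u_0$ equals $(\alpha+\beta L_F)\log L_F-\frac12 L_F\log L_F+(\text{smooth on }\Delta)$, whose dominant term near $F$ is $\sim\alpha\log L_F\sim -\alpha\log|\log|z_1||$ (up to bounded terms); transporting this through the Legendre transform gives precisely $\varphi=O(\log(-\log|z_1|))$ with all $\omega_{\rm mod}$-covariant derivatives of $d\varphi$ bounded — the derivative bounds following from the fact that $u-u_0-\alpha\log L_F$ is, to leading order, governed by smooth data on $\Delta$ and a function of $L_F$ alone, so differentiating in the cusp direction repeatedly only improves decay. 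The main obstacle, and the part requiring genuine care rather than bookkeeping, is controlling the \emph{off-diagonal} interaction between the cusp direction and the directions tangent to $F$ in all the above estimates: one must show that the inverse Hessian ${\bf H}^u$, the change of coordinates to $(z_1,\ldots,z_n)$, and the potential $\varphi$ all have cross terms that are subleading at every order of differentiation with respect to $\omega_{\rm mod}$. I would handle this by a careful block-matrix inversion of ${\rm Hess}(u)$ written in the $x_1=L_F$ adapted frame — schematically $\begin{pmatrix}A&B\\B^{\rm T}&D\end{pmatrix}^{-1}$ with $A\sim\alpha/L_F^2$ large, $D$ bounded and invertible, $B$ of intermediate size — together with an induction on the order of derivatives, exploiting that each $\partial/\partial x_1$ acting in the cusp region carries a gain of $L_F\sim1/|\log|z_1||$ and each application of the torus action lets one trade $dz_j,d\bar z_j$ for the scaled forms appearing in $\omega_{\rm mod}$. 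This is exactly the computation deferred to Appendix~\ref{AppendixA}.
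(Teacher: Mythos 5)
Your outline coincides in structure with the proof given in Appendix~\ref{AppendixA}: the Legendre transform to pass to holomorphic coordinates, the cusp dictionary $\log|z_1|=u_{,1}\sim-\alpha/L_F$ giving $L_F\asymp 1/|\log|z_1||$, the splitting of the potential into the cusp term plus transverse Guillemin data on $F$ (with positivity along $Z$ coming from the strict convexity of the restricted potential), and the derivative bounds obtained by trading each $x_1$-derivative for a gain of $1/|\log|z_1||$. The minor sign slip in your expression for $u-u_0$ (the singular term is $-(\alpha+\beta L_F)\log L_F$, and $|z_1|\sim e^{-\alpha/L_F}$ rather than the relation you wrote) does not affect the $O(\log(-\log|z_1|))$ conclusion.

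There is, however, one genuine gap: the case $\beta\neq 0$. Your whole scheme — block inversion of ${\rm Hess}(u)$, smooth inversion of the change of variables, and the induction on derivative order — rests on the claim that, after separating off $\alpha\log L_F$, everything is ``governed by smooth data on $\Delta$ and a function of $L_F$ alone.'' When $\beta\neq0$ this is false at the point where it matters: the relation between $x_1$ and $-1/\log|z_1|$ takes the form $-1/\log|z_1|=f_1(x)$ with
$$f_1(x)=\frac{x_1}{\alpha-\beta x_1\log x_1 - x_1\big(\beta+w_{,1}(x)\big)},$$
and the term $\beta x_1\log x_1$ makes $f_1$ fail to be $C^2$ (let alone smooth) up to $\{x_1=0\}$. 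The inverse function theorem therefore does not give you a smooth inverse, and the higher-order bounds required by Definition~\ref{d:Poincare-type} (boundedness of all $\omega_{\rm mod}$-derivatives of $d\varphi$) cannot be obtained by the direct inversion you propose. The paper handles this with a separate argument: setting $s=-1/\log x_1$ and $t_1=\log(-\log|z_1|)$, one shows the relation $1/t_1=f(s)$ extends \emph{smoothly} through $s=0$ (because $e^{-1/s}$ and $e^{-1/s}/s$ extend smoothly by zero), from which $x_1=\tfrac{-\alpha}{\log|z_1|}\big(1+O(t_1e^{-t_1})\big)$ at all orders, and the $\beta$-correction is seen to be harmless. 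Without some substitute for this step your argument only covers $\beta=0$.
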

Using arguments similar to those in \cite{ACGT} (see Appendix~\ref{AppendixA} for more details), 
one can relate the spaces $\cS_{\alpha, \beta}(\Delta, {\bf L}, F)$ and $\cS(\Delta, {\bf L}, F)$ as follows:
\begin{prop}\label{complete-first-order} 
The space $\cS_{\alpha, \beta}(\Delta, {\bf L}, F)$  is equivalently defined as the space of smooth functions on $\Delta^0$ 
such that ${\bf H}^u = ({\rm Hess}(u))^{-1}$ satisfies
\begin{enumerate}
\item[$\bullet$] {\rm [smoothness]} ${\bf H}^u$ extends smoothly on $\Delta$;
\item[$\bullet$] {\rm [boundary conditions on $F$] }  for any  $x \in F$ we have 
\begin{equation*}
\begin{split}
{\bf H}^u_{x}(e_F, e)&=0; \ \ (d{\bf H}^u)_{x} (e_F, e) =0, \\  
(d^2{\bf H}^u)_{x} (e_F,e_F) &=\frac{2}{\alpha} e_F \otimes e_F, \  \ (d^3{\bf H}^u)_{x} (e_F,e_F) = -\frac{6\beta}{\alpha^2} e_F^{\otimes 3},
\end{split}
\end{equation*}
where $e_F=dL_F$ is the inward normal to $F$ defined by ${\bf L}$,  $e$ is any vector in $\tor$, and  for a smooth function $f$ on $\tor$,  
$d^k f$ denotes the $k$-th covariant derivative of $f$ with respect to the flat affine structure  on $\tor^*$, so that $(d^k f)_x \in S^k(\tor )$;  
\item[$\bullet$] {\rm [regular boundary conditions]} for any facet $F_r $ with inward normal $e_r$ which is not in $F$, and  $x \in F_r$, 
\begin{equation*}{\bf H}^u_{x}(e_r, e)=0; \ \ (d{\bf H}^u)_{x} (e_r,e_r) =2e_r;
\end{equation*}  
\item[$\bullet$] {\rm [positivity]} ${\bf H}^u$ is positive definite on $\Delta^0$, as well as on the relative interior of any face $\Sigma \subset \Delta$, 
viewed there as a smooth function with values in $S^2(\mathfrak{t}/\mathfrak{t}_{\Sigma})^*$ 
where $\mathfrak{t}_{\Sigma}$ denotes the subspace spanned by normals to facets containing $\Sigma$. 
\end{enumerate}
In particular, $\cS_{\alpha, \beta}(\Delta, {\bf L}, F) \subset \cS(\Delta, {\bf L}, F)$.
\end{prop}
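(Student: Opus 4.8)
The plan is to adapt the proof of Proposition~\ref{first-order-boundary} from \cite{ACGT}, the sole new ingredient being the local analysis at the facet $F$, where the Guillemin term $\tfrac12 L_F\log L_F$ of the unmodified case is replaced by $-(\alpha+\beta L_F)\log L_F$. All conditions in the statement, as well as those of Definition~\ref{toric-poincare}, are local near $\partial\Delta$, and on $\Delta^0$ both sides merely say that $u$ is smooth and strictly convex iff ${\bf H}^u$ is smooth and positive definite; so one argues in a Delzant chart around a boundary point. Around a point lying only on facets $F_r\neq F$ the equivalence is verbatim \cite{ACGT}. Granting the equivalence, the inclusion $\cS_{\alpha,\beta}(\Delta,{\bf L},F)\subset\cS(\Delta,{\bf L},F)$ is immediate, for the conditions on $F$ contain ${\bf H}^u_x(e_F,e)=0$ and $(d{\bf H}^u)_x(e_F,e)=0$, which are \eqref{boundary-divisor-0}, the conditions on $F_r\neq F$ are \eqref{boundary-divisor-1}, and the positivity clauses agree.

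For the implication from Definition~\ref{toric-poincare} to the conditions on ${\bf H}^u$, fix a vertex $v$ of $\Delta$ lying on $F$ and affine coordinates $x=(x_1,\dots,x_n)$ in which the $n$ labels through $v$ are the coordinate functions, with $L_F=x_1$, so that $e_F=e_1$. By the first bullet of Definition~\ref{toric-poincare} one may write, near $v$, $u=\vzero+\psi$ with $\psi$ smooth and $\vzero$ given in these coordinates by $-(\alpha+\beta x_1)\log x_1+\tfrac12\sum_{j=2}^n x_j\log x_j$, so that ${\rm Hess}(\vzero)=D:={\rm diag}\big(\tfrac{\alpha-\beta x_1}{x_1^2},\tfrac1{2x_2},\dots,\tfrac1{2x_n}\big)$ and, $\alpha$ being positive, $D^{-1}={\rm diag}\big(\tfrac{x_1^2}{\alpha-\beta x_1},2x_2,\dots,2x_n\big)$ is smooth up to $\{x_1=0\}$. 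Direct inspection shows the model ${\bf H}^{\vzero}=D^{-1}$ satisfies all the boundary conditions: on $F$ its $e_1$-row vanishes, and the Taylor expansion of $\tfrac{x_1^2}{\alpha-\beta x_1}$ in $x_1$ reproduces the prescribed $(d^2{\bf H}^u)_x(e_F,e_F)$ and $(d^3{\bf H}^u)_x(e_F,e_F)$, while on each $F_r\neq F$ through $v$ the summand $\tfrac12 x_r\log x_r$ yields the \cite{ACGT} conditions. For general $\psi$ one has ${\bf H}^u=(\mathrm{Id}+D^{-1}{\rm Hess}(\psi))^{-1}D^{-1}$, and since $D^{-1}{\rm Hess}(\psi)$ vanishes at $v$ a Neumann series shows ${\bf H}^u$ extends smoothly up to $\Delta$ near $v$. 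The key point is that $(D^{-1})_{11}=\tfrac{x_1^2}{\alpha-\beta x_1}$ makes the $e_1$-row of $D^{-1}{\rm Hess}(\psi)$ of order $x_1^2$, so the Neumann series makes the $e_1$-row of $(\mathrm{Id}+D^{-1}{\rm Hess}(\psi))^{-1}$ equal to that of $\mathrm{Id}$ up to $O(x_1^2)$; as $D^{-1}$ is diagonal, ${\bf H}^u(e_1,e_j)=[(\mathrm{Id}+D^{-1}{\rm Hess}(\psi))^{-1}]_{1j}(D^{-1})_{jj}$, and inserting $(D^{-1})_{11}=\tfrac{x_1^2}{\alpha-\beta x_1}+O(x_1^4)$ and $(D^{-1})_{jj}=2x_j$ for $j\ge2$ recovers, to the order needed and for $x\in F$, the asserted values of ${\bf H}^u_x(e_F,e)$, $(d{\bf H}^u)_x(e_F,e)$, $(d^2{\bf H}^u)_x(e_F,e_F)$, $(d^3{\bf H}^u)_x(e_F,e_F)$. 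The conditions on the $F_r\neq F$ are unchanged from \cite{ACGT}, and positivity of ${\bf H}^u$ on $\Delta^0$ and on the relative interior of faces is the dual form of the strict convexity demanded in Definition~\ref{toric-poincare} — of $u$ on faces not in $F$, of $u_{\bf f}=u+(\alpha+\beta L_F)\log L_F$ on sub-faces of $F$. A general boundary point is treated the same way, with $D$ replaced by the corresponding block model built from the labels through that point.

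For the converse, given $u$ smooth and strictly convex on $\Delta^0$ with ${\bf H}^u$ satisfying the listed conditions, one reverses the computation: near $v$, the vanishing and first-order conditions on $F$ force ${\bf H}^u(e_1,e_j)=O(x_1^2)$ for $j\ge2$, while the prescribed $(d^2{\bf H}^u)_x(e_F,e_F)$ and $(d^3{\bf H}^u)_x(e_F,e_F)$ pin the $2$- and $3$-jets of ${\bf H}^u(e_1,e_1)$ along $F$ to those of $\tfrac{x_1^2}{\alpha-\beta x_1}$, and on the $F_r\neq F$ one has the \cite{ACGT} asymptotics; a Schur complement in the $x_1$-direction then gives ${\rm Hess}(u)=D+(\text{a smooth symmetric matrix})$ near $v$, and integrating this identity twice, exactly as in \cite{ACGT}, yields $u-\vzero\in C^\infty$ near $v$, which is the first bullet of Definition~\ref{toric-poincare}; positivity of ${\bf H}^u$ on faces translates back into strict convexity of the restrictions of $u$ and of $u_{\bf f}$, giving the remaining bullets, and the inclusion $\cS_{\alpha,\beta}(\Delta,{\bf L},F)\subset\cS(\Delta,{\bf L},F)$ follows. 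The step I expect to be the main obstacle is this reconstruction near the codimension $\ge2$ faces $F\cap F_r$, where the $F$-analysis and the $F_r$-analysis interact: one must verify that the local expansions of ${\bf H}^u$ in the coordinates adapted to each such face are mutually compatible and integrate to a single function whose Hessian differs from ${\rm Hess}(\vzero)$ by a globally smooth Hessian, with no anomalous cross terms; as in \cite{ACGT}, this is handled by a careful bookkeeping of orders of vanishing, and is carried out in detail in Appendix~\ref{AppendixA}.
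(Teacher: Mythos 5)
Your proposal is correct and follows essentially the same route as the paper's Appendix~\ref{AppendixA}: local comparison with the explicit model potential $\vzero$ near a vertex of $F$, the matrix identity ${\bf H}^u=\big(\mathrm{Id}+\mathbf{H}^0_{\alpha,\beta}\,{\rm Hess}(w)\big)^{-1}\mathbf{H}^0_{\alpha,\beta}$ together with order-of-vanishing bookkeeping for the entries of ${\bf H}^u-\mathbf{H}^0_{\alpha,\beta}$ (the paper's \eqref{eqn_matrixO}), and a converse obtained by inverting this and reading off the face convexity from the block-triangular form of $\mathbf{H}^0_{\alpha,\beta}\mathbf{G}^u$ along each face. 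The one discrepancy is a sign: taking the model $-(\alpha+\beta L_F)\log L_F$ of Definition~\ref{toric-poincare} literally, as you do, yields $(d^3{\bf H}^u)_x(e_F,e_F)=+\tfrac{6\beta}{\alpha^2}\,e_F^{\otimes 3}$ rather than the stated $-\tfrac{6\beta}{\alpha^2}\,e_F^{\otimes 3}$; this reflects an internal sign inconsistency of the paper itself (the appendix works with $\vzero=-(\alpha-\beta x_1)\log x_1$) and amounts only to the relabeling $\beta\mapsto-\beta$.
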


Our next result shows that the extremality assumption in fact determines uniquely the space $\mathcal{S}_{\alpha, \beta}(\Delta, {\bf L}, F)$.

\begin{prop}\label{p:alpha-beta} Suppose $u\in \cS_{\alpha, \beta}(\Delta, {\bf L}, F)$ is a solution of \eqref{abreu1}. 
Then  the real numbers $\alpha, \beta$ are  uniquely and explicitly determined from the data $(\Delta, {\bf L}, F)$.  
Furthermore, the solution  $u$ is unique modulo the addition of an affine linear function.
\end{prop}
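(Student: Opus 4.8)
The proposition makes two assertions: (1)~the parameters $\alpha,\beta$ are forced by the data $(\Delta,{\bf L},F)$; and (2)~for these $\alpha,\beta$, the solution $u$ of \eqref{abreu1} is unique up to affine linear functions. I would prove (1) first, because once it is known that every solution of \eqref{abreu1} lies in the \emph{same} space $\cS_{\alpha,\beta}(\Delta,{\bf L},F)$, assertion (2) reduces to a convexity argument. Throughout I use the description of $\cS_{\alpha,\beta}(\Delta,{\bf L},F)$ via ${\bf H}^u$ from Proposition~\ref{complete-first-order}, and that $s_{(\Delta,{\bf L},F)}$ depends only on $(\Delta,{\bf L},F)$ through \eqref{extremal-function-F}.

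\textbf{Determining $\alpha$.} Fix a vertex $v\in F$ and coordinates $(x_1,\dots,x_n)$ on $\tor^*$ with $x_1=L_F$ and $x_2,\dots,x_n$ restricting to coordinates on $F$, as in the proof of Theorem~\ref{hugues}. From Proposition~\ref{complete-first-order}, as $x_1\to 0$ one has $H^u_{11}=\tfrac{1}{\alpha}x_1^2-\tfrac{\beta}{\alpha^2}x_1^3+O(x_1^4)$ and $H^u_{1j}=O(x_1^2)$ for $j\geq 2$; moreover — since the $\log L_F$ terms in the symplectic potential do not depend on $x_2,\dots,x_n$ and hence leave the tangential Hessian of $u$ unchanged, while the Schur complement of $\mathrm{Hess}(u)$ relative to its $x_1$-block tends to $0$ on $F^0$ because $u_{,11}\to\infty$ — the functions $\check{\bf H}^1_{ij}:=H^u_{ij}|_{x_1=0}$ ($i,j\geq 2$) are the components of $\bigl(\mathrm{Hess}(u_F|_F)\bigr)^{-1}$ with $u_F:=u+(\alpha+\beta L_F)\log L_F\in\cS(F,{\bf L}|_F)$. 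Restricting \eqref{abreu1} to $\{x_1=0\}$, the cross terms $\partial_1\partial_j H^u_{1j}$ vanish and $\partial_1^2 H^u_{11}|_{x_1=0}=\tfrac{2}{\alpha}$, so $-\sum_{i,j\geq 2}\partial_i\partial_j\check{\bf H}^1_{ij}=s_{(\Delta,{\bf L},F)}|_F+\tfrac{2}{\alpha}$. The left-hand side is the Abreu operator of a potential in $\cS(F,{\bf L}|_F)$ and the right-hand side is affine linear, so by uniqueness of the extremal affine function of $(F,{\bf L}|_F)$ (the integration by parts underlying Proposition~\ref{necessary}) it must equal $s_F$; hence $s_F-s_{(\Delta,{\bf L},F)}|_F$ is a positive constant, consistently with Theorem~\ref{hugues}, and $\alpha=2/\bigl(s_F-s_{(\Delta,{\bf L},F)}|_F\bigr)$.

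\textbf{Determining $\beta$.} Since ${\bf H}^u$ is smooth up to $\Delta$, I differentiate \eqref{abreu1} in the $x_1$-direction and restrict to $F$: using $\partial_1^3 H^u_{11}|_{x_1=0}=-6\beta/\alpha^2$, the result is an identity on $F$ in which the only term that is not a tangential divergence of $\partial_1^2 H^u_{1j}|_F$ or a tangential double divergence of $\partial_1 H^u_{ij}|_F$ is the constant $-6\beta/\alpha^2$. Integrating this identity over $F$ turns the divergence terms into integrals over $\partial F$; a Taylor analysis of ${\bf H}^u$ near each codimension-two face $F\cap F_r$, using the boundary conditions of Proposition~\ref{complete-first-order} on both $F$ and $F_r$ (for instance $H^u(e_F,e_r)$ vanishes to order two along $F$ and to order one along $F_r$, which forces $\partial_1^2 H^u(e_F,e_r)|_F=O(L_r)$, and similarly for the components entering $\partial_1 H^u_{ij}|_F$), shows that these boundary contributions vanish or can be evaluated directly from the data. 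What remains is a linear relation expressing $\beta$ explicitly through $\alpha$ and the affine functions $s_{(\Delta,{\bf L},F)}$ and $s_F$. I expect the main obstacle to be exactly this boundary analysis, together with justifying the integrations by parts in the presence of the mild singularities of $u$ near the lower-dimensional faces of $\Delta$.

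\textbf{Uniqueness of $u$.} By the two previous steps, any two solutions $u_0,u_1$ of \eqref{abreu1} lie in $\cS_{\alpha,\beta}(\Delta,{\bf L},F)$ for the now-determined $\alpha,\beta$, so they have the same singular part and $v:=u_1-u_0$ extends smoothly to $\Delta$. The segment $u_t=(1-t)u_0+tu_1$ stays in $\cS_{\alpha,\beta}(\Delta,{\bf L},F)$: the first condition of Definition~\ref{toric-poincare} is affine in $u$ with a fixed correction term, and strict convexity on $\Delta^0$ and on each face is preserved under convex combinations. Consider $\cM_{(\Delta,{\bf L},F)}(u):=\cL_{(\Delta,{\bf L},F)}(u)-\int_\Delta\log\det\bigl(\mathrm{Hess}(u)\bigr)\,d\mu$, which is finite on $\cS_{\alpha,\beta}(\Delta,{\bf L},F)$ because near $F$ one has $u=O(\log L_F)$ and $\log\det\bigl(\mathrm{Hess}(u)\bigr)=-2\log L_F+O(1)$, both integrable. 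It is convex along $u_t$, with $\tfrac{d^2}{dt^2}\cM_{(\Delta,{\bf L},F)}(u_t)=\int_\Delta\bigl\|(\mathrm{Hess}\,u_t)^{-1/2}\,\mathrm{Hess}(v)\,(\mathrm{Hess}\,u_t)^{-1/2}\bigr\|^2\,d\mu\geq 0$, vanishing precisely when $\mathrm{Hess}(v)\equiv 0$. The integration by parts identity from the proof of Theorem~\ref{hugues} (valid here since the first-order conditions on $F$ annihilate the $F$-boundary term) shows that the solutions of \eqref{abreu1} in $\cS_{\alpha,\beta}(\Delta,{\bf L},F)$ are exactly the critical points of $\cM_{(\Delta,{\bf L},F)}$; being critical points of a convex functional they are absolute minima, so $\cM_{(\Delta,{\bf L},F)}(u_0)=\cM_{(\Delta,{\bf L},F)}(u_1)$, the convex function $t\mapsto\cM_{(\Delta,{\bf L},F)}(u_t)$ is constant, and hence $\mathrm{Hess}(v)\equiv 0$, i.e.\ $v$ is affine linear.
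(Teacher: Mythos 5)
Your determination of $\alpha$ and your uniqueness argument are essentially the paper's: the restriction of the Abreu equation to $F$ via the boundary conditions of Proposition~\ref{complete-first-order} (killing the cross terms $H_{1j,1j}$ and producing $-2/\alpha$ from $H_{11,11}$) is exactly the computation \eqref{trick}--\eqref{alpha}, and the convexity of the Guan--Donaldson functional on the linearly convex space $\cS_{\alpha,\beta}(\Delta,{\bf L},F)$ is the paper's uniqueness proof (the paper normalizes by a reference potential $u'$ to make $\cL$ finite, where you argue integrability of $\log L_F$ directly; both work). These parts are fine.

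The gap is in the determination of $\beta$, and it is exactly where you say you "expect the main obstacle" to be: that obstacle \emph{is} the content of the step, and you have not addressed it. After differentiating \eqref{abreu1} in the $e_F$-direction, restricting to $F$ and integrating by parts, one is left with boundary integrals over the facets $\Sigma=F\cap P$ of $F$ involving second and third derivatives of ${\bf H}^u$, which a priori depend on the particular solution $u$ and not just on $(\Delta,{\bf L},F)$. These terms do \emph{not} all vanish: the normal $e_\Sigma$ entering the integration-by-parts formula \eqref{byparts} is $e_P-c_1(\Sigma)e_F$, not $e_P$, and while $\mathrm{Hess}\bigl({\bf H}^u(e_F,e_P)\bigr)=0$ on $\Sigma$, the correction term $-c_1(\Sigma)\,{\bf H}^u(e_F,e_F)_{,11}=-2c_1(\Sigma)/\alpha$ survives and contributes $-\tfrac{2c_1(\Sigma)}{\alpha}\mathrm{Vol}(\Sigma)$. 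More seriously, the term $\int_\Sigma\sum_{i\geq 2}\bigl(H(e_i,e_\Sigma)\bigr)_{,1i}\,d\sigma_\Sigma$ reduces to $c_1(\Sigma)\int_\Sigma {\bf H}^u(e_P,e_P)_{,e_Pe_P}\,d\sigma_\Sigma$ plus codimension-three contributions, and evaluating $\int_\Sigma {\bf H}^u(e_P,e_P)_{,e_Pe_P}\,d\sigma_\Sigma$ requires running the restriction trick a second time on $\Sigma$ together with Donaldson's identity $\int_\Sigma\mathrm{Scal}\bigl((g_u)_{|\Sigma}\bigr)\,d\sigma_\Sigma=\int_\Sigma s_{(\Sigma,{\bf L}_\Sigma)}\,d\sigma_\Sigma$ and a further integration by parts over $\partial\Sigma$; this is how the answer acquires the term $\int_\Sigma\bigl(s_{(\Sigma,{\bf L}_\Sigma)}-s_{(F,{\bf L}_F)}\bigr)d\sigma_\Sigma$ and the combinatorial constants $c_2,c_3$. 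Until you carry out this evaluation (roughly two pages in the paper), you have shown only that $\beta$ satisfies \emph{some} linear relation with solution-dependent coefficients, not that it is determined by the data.
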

\begin{proof} 
The uniqueness part is standard as each $\cS_{\alpha, \beta}(\Delta, {\bf L}, F)$ is a linearly convex space and, choosing a reference point 
$u' \in \cS_{\alpha, \beta}(\Delta, {\bf L}, F)$, we can consider the following modification of relative Mabuchi functional \eqref{relative-mabuchi}:
$$\cM_{(\Delta, {\bf L},F)} (u) := \cL_{(\Delta, {\bf L},F)}(u-u') - \int_{\Delta} \log \det ({\bf H}^u) d\mu.$$
The point is that $\cM_{(\Delta, {\bf L},F)} (u)$ is well-defined with values in $(-\infty, \infty]$, as $u-u'$ is a smooth function over $\Delta$. 
The  argument in \cite{Guan} shows that $\cM_{(\Delta, {\bf L},F)} (u)$ is convex and its minima, 
which  are unique up to the addition of affine linear function, are precisely the solutions of \eqref{abreu1}.

The fact that  $s_{(\Delta, {\bf L}, F)} - s_{(F, {\bf L}_F)}=const$ follows from \cite[p. 44]{Auvray1}  
when it is shown that the extremal vector of $Z$ equals to the vector field induced on $Z$ by the extremal vector field of $X\setminus Z$. 
In the toric case, this condition reads as $d((s_{(\Delta, {\bf L}, F)})_{|_F} - s_{(F, {\bf L}_F)})=0$.

\smallskip
It remains to determine $(\alpha, \beta)$ from $(\Delta, {\bf L}, F)$, which will occupy the remainder of the proof.  

\noindent
{\bf Step 1. Determining $\alpha$.} Let us  choose a basis $\{e_1, e_2, \ldots, e_n\}$ of $\tor$ (and $\{e_1^*, \ldots, e_{n}^*\}$  denote the dual basis of $\tor^*$),  
by fixing a vertex $v\in F$ of $\Delta$ and taking $e_j$ be the inward normals to the facets meeting $v$ with  $e_1 = e_F$ 
(and therefore $e_i^*, i=2, \ldots, n$  are tangent to $F$). 
We assume  furthermore that $v$ is at the origin  
(so that $F\subset \{x_1=0\}$) and we write ${\bf H}^{u}= (H_{ij})$ in the chosen basis, 
where $H_{ij}(x)$ are smooth functions on $\Delta$, see Proposition~\ref{complete-first-order}. 
As $u$ is a solution of \eqref{abreu1}, we have
\begin{equation}\label{solution}
s_{(\Delta, {\bf L}, F)} = -\sum_{i,j=1}^n H_{ij,ij}.
\end{equation}
We denote by $\check{\bf H}^u \in S^2((\tor/\tor_F)^*)$ the induced smooth  positive definite bilinear  form on $F$. 
It is easily seen (by continuity) that $\check{\bf H}^u$ satisfies the boundary conditions  of Proposition~\ref{first-order-boundary} 
with respect to the labeling ${\bf L}_{F}$ of $F$, see \cite[Rem.~1]{ACGT}. 
It thus defines an almost-K\"ahler metric $\check{g}_u$ on $F$ (which can be shown to be K\"ahler). 
With respect to our choice of basis of $\tor$, we can identify $\tor/\tor_F \cong \R^{n-1}= {\rm span}_{\R}\{e_2, \ldots, e_n\}$, 
so that we have $\check{\bf H}^u=(H_{ij})_F, i,j = 2, \ldots, n$.  
It thus follows that on $F$ we have
\begin{equation}\label{trick}
\begin{split}
s_{(\Delta, {\bf L}, F)}  &= -\sum_{i,j=1}^n H_{ij,ij} \\
                          &= -H_{11,11} - 2\sum_{j=2}^n H_{1j,1j} - \sum_{i,j=2}^n H_{ij,ij} \\
                          &= -\frac{2}{\alpha}  - \sum_{i,j=2}^n\check{\bf H}^u_{ij,ij} =  -\frac{2}{\alpha} + {\rm Scal}(\check{g}_u), 
\end{split}
\end{equation}
where we have used the boundary conditions of Proposition~\ref{complete-first-order} 
(or equivalently the form \eqref{eqn_matrixO} in our compatible coordinates) 
in order to see that $H_{1j,1j}=0$ on $F$ for $j>1$. 
It thus follows that $\check{g}_u$ is an extremal  almost-K\"ahler metric on $F$  and
\begin{equation}\label{extremal}
 s_{(\Delta, {\bf L}, F)}  = s_{(F, {\bf L}_F)}  - \frac{2}{\alpha}
\end{equation}
Integrating over $F$, we thus have 
\begin{equation}\label{alpha}
-\frac{2}{\alpha} = \int_F(s_{(\Delta, {\bf L}, F)} - s_{(F, {\bf L}_F)})d\nu_F/{\rm Vol}(F),
\end{equation}
which determines $\alpha$.

\smallskip
\noindent 
{\bf Step 2: Determining $\beta$.} In order to determine $\beta$,  notice that (see \eqref{measure}) at each point $p\in F$, we have $e_1 \wedge d\nu_F = - d\mu$  
where we recall that we have set $e_1=dL_{F}=e_F$. Furthermore,  with our choice of basis  we have  $(e_j \wedge d\nu_F) =0$ for $j=2, \ldots, n$. 
We thus have,   using \eqref{solution}, 
$$(d s_{(\Delta, {\bf L}, F)} \wedge d\nu_F) =  \Big(\sum_{i,j=1}^n H_{ij,ij1}(p)\Big)d\mu = c d\mu$$
for a real constant $c=c(\Delta, {\bf L}, F)$ determined from the polytope $(\Delta, {\bf L}, F)$. In other words, 
\begin{equation}\label{basic}
\begin{split}
c  &=  \Big(\sum_{i,j=1}^n H_{ij,ij1}\Big)_F \\
    &= \big(H_{11,111}\big)_F  + 2\big(\sum_{j=1}^nH_{1j, 11j}\big)_F  +\big(\sum_{i,j=2}^n H_{ij,1ij}\big)_F \\
    &= -\frac{6 \beta}{\alpha^2} + 2\big(\sum_{j=2}^nH_{1j, 11j}\big)_F  +\big(\sum_{i,j=2}^n H_{ij,1ij}\big)_F,
    \end{split}
    \end{equation}
where  in the last line we have used $(H_{11,111})_F = -\frac{6\beta}{\alpha^2}$, see Proposition~\ref{complete-first-order}. 
We are going to integrate \eqref{basic} over $F$, and to this end we are going  to use 
the integration by parts formula  
\begin{equation}\label{byparts}
\int_F \sum_{j=2}^n V_{j, j}d\nu_F = - \sum_{\Sigma \subset \partial F} \int_{\Sigma} \langle V, e_{\Sigma}\rangle d\sigma_{\Sigma},
\end{equation}
where:  $F$ belongs to the hyperplane $x_1=0$ of $\tor^*$,  a smooth function $V$ on $F$  is seen as a smooth function of the variables $(x_2, \ldots, x_n)$, the sum is taken over the facets $\Sigma$ of $F$ with inward normal $e_{\Sigma} \in \tor/\tor_F\cong \R^{n-1}= {\rm span}_{\R}\{e_2, \ldots, e_n\},$ 
and the induced measures $d\sigma_{\Sigma}$  are constructed from the label polytope $(F, {\bf L}_F)$ via \eqref{measure}. Thus,  integrating \eqref{basic} and using \eqref{byparts} gives
\begin{equation}\label{beta0}
\begin{split}
\Big(c+ \frac{6 \beta}{\alpha^2}\Big){\rm Vol}(F)  = &  -2\sum_{\Sigma \in \partial F} \int_{\Sigma}\big(H(e_1, e_{\Sigma})\big)_{,11} d\sigma_{\Sigma}  \\
                                                     &    -  \sum_{\Sigma \in \partial F} \int_{\Sigma} \sum_{i=2}^n \big(H(e_i, e_{\Sigma}))_{,1i} d\sigma_{\Sigma}.
\end{split}
\end{equation}
We recall that in \eqref{beta0},  $\alpha$ and $c$ have been already defined in terms of $(\Delta, {\bf L}, F)$, so in order to define $\beta$  it will be enough to show that  each of  the two sums at the right hand side of \eqref{beta0} can also be defined by  $(\Delta, {\bf L}, F)$.

\smallskip 
We first deal with the term $\int_{\Sigma}\big(H(e_1, e_{\Sigma})\big)_{,11} d\sigma_{\Sigma}$. 
Notice that if $\Sigma$ is a facet of $F$ which meets the chosen vertex ($=$ the origin), 
i.e. if $\Sigma$ belongs to $\{x_1=0, x_j=0\}, j>1$, 
then $\big((H_{1, e_{\Sigma}})_{,11}\big)_{| \Sigma} = (H_{1j,11})_{\{x_1=0, x_j=0\}}=0$  by the expansion \eqref{eqn_matrixO} of $H_{ij}$ near $\Sigma$. 
For a general facet $\Sigma$ of $F$,  we let $P\subset \partial \Delta$ be the unique other facet of $\Delta,$ such that $\Sigma = F\cap P$ 
and denote by $e_{F}, e_{P}$ the corresponding inward normals.  
Thus,  $\tor_{\Sigma} = {\rm span}_{\R}\{e_F, e_P\}$ is the annihilator of $T_p\Sigma \subset \tor^*$ (where $p$ in a interior point for $\Sigma$),  
equipped with a natural basis $\{e_F, e_{P}\}$.  
For any two vectors $e', e'' \in \tor$, the function  ${\bf H}^u(e', e'')$ is smooth on $\Delta$ 
and we denote by ${\rm Hess}\Big({\bf H}^u(e', e'')\Big)_p$ its Hessian at $p\in \Delta$, computed with respect to the affine structure of $\tor^*$. 
Thus, ${\rm Hess}\Big({\bf H}^u(e', e'')\Big)_p \in S^2(\tor)$ and with respect to the chosen basis we have
$$H_{ij,kr} (p) = \Big\langle e_k^*\otimes e_r^*, {\rm Hess}\big({\bf H}^u(e_i, e_j)\big)_p\Big\rangle.$$

Using the boundary conditions of Proposition~\ref{complete-first-order}, 
we notice that  for any $e\in \tor$,  $d{\bf H}^u(e_F, e)=0$ along $F$ and, hence along $\Sigma$. 
It thus follows that  for each interior point $p\in \Sigma$,  the symmetric bilinear form ${\rm Hess}\Big({\bf H}^u(e_F, e)\Big)_p$  degenerates on   $T_p\Sigma$, 
or in other words, for each  $p\in \Sigma$,  ${\rm Hess}\Big({\bf H}^u(e_F, e)\Big)_p$  has values in $\tor_{\Sigma}\otimes \tor_{\Sigma}$.
Using the basis $\{e_{F}, e_{P}\}$ of $\tor_{\Sigma}$, we have a natural decomposition   at each point of $\Sigma$:
\begin{equation*}
\begin{split}
{\rm Hess}\Big({\bf H}^u(e_F, e)\Big)  =  &\big({\bf H}^u(e_F, e)\big)_{, e_Fe_F} e_F\otimes e_F\\
                                          & + \big({\bf H}^u(e_F, e)\big)_{, e_Fe_P} (e_F\otimes e_P + e_P\otimes e_F) \\
                                          &+ \big({\bf H}^u(e_F, e)\big)_{, e_Pe_P} e_P\otimes e_P
\end{split}
\end{equation*} 
By choosing a vertex of $\Delta$ which belongs to $\Sigma$ and a basis as above,  and letting  $e=\sum_{i=1}^n a_i e_i$ the  coefficients above  become
\begin{equation*}
\begin{split}
\big({\bf H}^u(e_F, e)\big)_{, e_Fe_F} &= \sum_{i=1}^n a_i H_{1i, 11}, \\
 \big({\bf H}^u(e_F, e)\big)_{, e_Fe_P}  & = \sum_{i=1}^n a_iH_{1i,1j},  \\
\big({\bf H}^u(e_F, e)\big)_{, e_Pe_P}  &= \sum_{i=1}^n a_iH_{1i,jj}, 
\end{split}
\end{equation*}
where the index $j>1$ is determined by $\Sigma \subset \{x_1=0, x_j=0\}$. Using the boundary conditions of Proposition~\ref{complete-first-order},  
which are equivalently expressed by the form \eqref{eqn_matrixO} of ${\bf H}^u$  near $\Sigma$, we obtain that on $\Sigma$ 
\begin{equation}\label{hessian1}
\begin{split}
{\rm Hess}\big({\bf H}^u(e_F, e_F)\big) &= \frac{2}{\alpha} e_F \otimes e_F= \frac{2}{\alpha} e_1 \otimes e_1 \\
{\rm Hess}\big({\bf H}^u(e_F, e_P)\big) & =  0.
\end{split}
\end{equation}
Similarly, using the constancy of $d {\bf H}^u(e_P, e_P)$ along $\Sigma \subset P$ and \eqref{eqn_matrixO} with respect to a suitable basis, we also conclude that
\begin{equation}\label{hessian2}
{\rm Hess}\big({\bf H}^u(e_P, e_P)\big) =  \big({\bf H}^u(e_P, e_P)\big)_{, e_Pe_P}  e_P\otimes e_P.
\end{equation}
Turning back to the term $\int_{\Sigma}\big(H(e_1, e_{\Sigma})\big)_{,11} d\sigma_{\Sigma}$, 
we notice that the definition of the normal $e_{\Sigma}$ in the expression $H_{1e_{\Sigma}, 11}$ uses the initial basis $\{e_1, \ldots, e_n\}$. 
Indeed, decomposing
$$e_P= \sum_{i=1}^n c_i e_i, $$
we have that $e_{\Sigma}= e_{P} - c_1 e_1=e_{P}-c_1e_{F}$ where $c_1= c_1(\Sigma)=-e_P\wedge d\nu_{F}/d\mu$ is a constant determined 
by the polytope  $(\Delta, {\bf L}, F)$ and the facet $\Sigma$ of $F$. 
It thus follows  from \eqref{hessian1} that  on $\Sigma$
\begin{equation*}
\begin{split}
H(e_1,e_{\Sigma})_{, 11} &= \Big\langle e_1^*\otimes e_1^*,  \big({\bf H}^u(e_F, e_P)- c_1 {\bf H}^u(e_F, e_F)\big)\Big\rangle\\
                         &=- \frac{2c_1}{\alpha},
\end{split}
\end{equation*}
and therefore
\begin{equation}\label{first-term}
\int_{\Sigma}\big(H(e_1, e_{\Sigma})\big)_{,11} d\sigma_{\Sigma} =-\frac{2c_1(\Sigma)}{\alpha}  {\rm Vol}(\Sigma).
\end{equation}
We have thus shown that the first sum in \eqref{beta0} only depends on $(\Delta, F, {\bf L})$.

\smallskip
We now deal with the terms $\int_{\Sigma} \sum_{i=2}^n \big(H(e_i, e_{\Sigma}))_{,1i} d\sigma_{\Sigma}$ in the second sum of \eqref{beta0}.  
First of all, notice that on $F$,  the expression 
$$\sum_{i=2}^n \big(H(e_i, e_{\Sigma}))_{,1i}  = \sum_{i=2}^n \Big\langle  e_i^*\otimes e_1^*, {\rm Hess} \big({\bf H}^u(e_i, e_{\Sigma})\big) \Big\rangle$$
does not change if we replace the initial basis $\{e_1=e_F, e_2, \ldots, e_n\}$ of $\tor$ 
with  a basis of the form $\{\bar e_1=e_F, \bar e_2, \ldots, \bar e_n\}$ with $\bar e_j \in {\rm span}_{\R}\{e_2, \ldots, e_n\} = \R^{n-1}$ for $j =2, \ldots, n$. 
We can thus assume that, on a given $\Sigma$, we have chosen the basis with $e_2=e_{\Sigma} \in \R^{n-1}$, 
and use then the integration by parts formula \eqref{byparts}  to write
\begin{equation}\label{second-term} 
 \int_{\Sigma} \sum_{i=2}^n \big(H(e_i, e_{\Sigma}))_{,1i} d\sigma_{\Sigma}
   = \int_{\Sigma}  \big(H(e_{2}, e_{2}))_{,12} d\sigma_{\Sigma} 
     - \sum_{ {\bf f} \in \partial \Sigma} \int_{\bf f} H(e_{\bf f}, e_{\Sigma})_{,1} d\sigma_{\bf f},
\end{equation}
where the  sum is over the facets ${\bf f}$ of $\Sigma$ (taken to be zero if $n=2$), $e_{\bf f}$ is the induced inward normal of $\bf f$, 
seen as an affine hyperplane of the affine space supporting $\Sigma$, and $d\sigma_{\bf f}$ is the corresponding induced measure on ${\bf f} \subset \Sigma$.  
With these choices, we have
\begin{equation*}
\begin{split}
\big(H(e_2, e_2)\big)_{,12}  &= \Big\langle e_1^*\otimes e_2^*, {\rm Hess}\big({\bf H}^u(e_{\Sigma}, e_{\Sigma})\big)\Big\rangle \\
                             &= \Big\langle e_1^*\otimes e_2^*, {\rm Hess}\big({\bf H}^u(e_{P}- c_1e_F, e_{P}- c_1e_F)\big)\Big\rangle  \\
                             &  = \Big\langle e_1^*\otimes e_2^*, {\rm Hess}\big({\bf H}^u(e_{P}, e_{P})\big)\Big\rangle  \\
                             &= {\bf H}^u(e_P, e_P)_{, e_P,e_P} \big\langle e_1^*\otimes e_2^*, (c_1 e_1 + e_2)\otimes (c_1e_1 + e_2)\big\rangle \\
                             &= c_1  {\bf H}^u(e_P, e_P)_{, e_Pe_P},  
\end{split}
\end{equation*}
for the same  constant $c_1=c_1(\Sigma)$ as above. 
In order to compute the (base independent) quantity $\int_{\Sigma} {\bf H}^u(e_P, e_P)_{, e_Pe_P}  d\sigma_{\Sigma}$, 
we are going to re-introduce a basis $\{e_1=e_F, e_2=e_P, e_3, \ldots, e_n \}$ with respect to a vertex of $v\in \Delta$.  
In this basis, ${\bf H}^u(e_P, e_P)_{, e_Pe_P} =H_{22,22}$. A computation along the lines of  \eqref{trick} yields
\begin{equation*}
\begin{split}
\int_{\Sigma} s_{(F, {\bf L}_F)} d\sigma_{\Sigma} &= \int_{\Sigma} -\Big(\sum_{i,j=2}^n H_{ij,ij}\Big) d\sigma_{\Sigma}\\
                                                  &= \int_{\Sigma} \Big({\rm Scal}\big((g_u)_{|\Sigma}\big) - H_{22,22} - \sum_{j=3}^n H_{2j,2j}\Big) d\sigma_\Sigma \\
                                                  &= \int_{\Sigma} \Big(s_{(\Sigma, {\bf L}_{\Sigma})} -H_{22,22}\Big)d\sigma_{\Sigma}   
                                                     + \sum_{{\bf f} \subset \partial \Sigma} \int_{\bf f} H(e_2,e_{\bf f})_{, 2} d\sigma_{\bf f},
\end{split}
\end{equation*}           
where ${\rm Scal}\big((g_u)_{|\Sigma}\big) := - \sum_{i,j=2}^n H_{ij,ij}$ 
is the scalar curvature of the almost-K\"ahler metric $(g_u)_{|\Sigma}$ induced via ${\bf H}^u$ on the  preimage of $\Sigma$,  
and for passing from the second line to the third we have used that 
$\int_{\Sigma}  {\rm Scal}\big((g_u)_{|\Sigma}\big) d\sigma_{\Sigma}= \int_{\Sigma} s_{(\Sigma, {\bf L}_{\Sigma})} d\sigma_{\Sigma}$ 
(see  \cite[Lem.~3.3.5]{Do2}) and  \eqref{byparts} applied to $(\Sigma, {\bf L}_{\Sigma})$. 
Note that in the last term (which is considered trivially $0$ when $n=2$), 
the  sum is over the facets ${\bf f}$ of $\Sigma$, and $e_{\bf f}$ denotes the corresponding inward normal of ${\bf f}$ 
(when considered as an affine hyperplane of the subspace $\{x_1=0, x_2=0\}$).
We thus have
 $$\int_{\Sigma} {\bf H}^u(e_P, e_P)_{, e_Pe_P}  d\sigma_{\Sigma} 
    = \int_{\Sigma} \big(s_{(\Sigma, {\bf L}_{\Sigma})} - s_{(F, {\bf L}_F)}\big) d\sigma_{\Sigma} 
      + \sum_{{\bf f} \subset \partial \Sigma}\int_{\bf f} H(e_{\bf f}, e_2)_{,2}d\sigma_{\bf f},$$
in an $(F,\Sigma)$-compatible basis with $e_1=e_F, e_2=e_P$. 
Notice that in any such a basis,   we have $e_{\bf f}= e_Q -  c_2 e_F - c_3 e_P= e_Q -c_2e_1-c_3e_2,$ 
where $e_Q$ is the normal of the unique facet $Q \subset \Delta$, such that $F\cap P \cap Q= {\bf f}$. 
Here, the constants $c_2=c_2(\Sigma, {\bf f})= -e_Q\wedge d\nu_{F}/(d\mu)$ and $c_3=c_3(\Sigma, {\bf f})= -(e_Q \wedge d\nu_P)/(d\mu)$ 
are determined in terms of $(\Delta, {\bf L})$. 
In particular, we have on ${\bf f}$
\begin{equation*}
\begin{split}
H(e_{\bf f}, e_2)_{,2}  &= \Big\langle e_2^*, d {\bf H}^u(e_P, e_Q - c_2e_F-c_3 e_P) \Big\rangle \\
                        &=  -c_3 \Big\langle e_2^*,  d {\bf H}^u(e_P,  e_P) \Big\rangle \\
                        &= - 2c_3 \langle e_2^*,  e_P  \rangle  = - 2c_3\langle e_2^*, e-2\rangle = -2c_3,
\end{split}
\end{equation*}
 where we have used the first order boundary conditions  along ${\bf f}$ (see Proposition~\ref{complete-first-order}):
 \begin{equation}\label{first-order}
 \begin{split}
  \big(d{\bf H}^u(e_F, e)\big)_{|\bf f}  &= 0 ,  \  \forall e\in \tor,   \ \  \big(d{\bf H}^u(e_P, e_Q)\big)_{|{\bf f}} =0     \\
 \ \  \  \big(d{\bf H}^u(e_P, e_P)\big)_{|{\bf f}} &= 2e_P,   \ \  \ \ \ \  \ \  \big(d{\bf H}^u(e_Q, e_Q)\big)_{|{\bf f}} = 2 e_Q.           
\end{split}
\end{equation}
To summarize, we have shown that 
\begin{equation}\label{second-term-1}
\begin{split}
 \int_{\Sigma}\big(H(e_2,e_2)\big)_{,12}  d\sigma_{\Sigma} =&  c_1(\Sigma) \Big(\int_{\Sigma}  \big( s_{(\Sigma, {\bf L}_{\Sigma})} 
                                                               -s_{(F, {\bf L}_{F})}\big) d\sigma_{\Sigma} \\
                                                            & \ \ \ \  \ \ \ \  - 2\sum_{{\bf f} \subset \partial \Sigma} c_3(\Sigma, {\bf f}){\rm Vol}({\bf f}) \Big). 
\end{split}
\end{equation}

\smallskip
Finally, we deal with the terms $\int_{\bf f} H(e_{\bf f}, e_{\Sigma})_{,1} d \sigma_{\bf f}$  in \eqref{second-term} 
(where we recall ${\bf f} \subset \Sigma \subset F$ is sequence of co-dimension one sub-faces). 
Using \eqref{first-order} again, we have along ${\bf f}$
\begin{equation*}
\begin{split}
H(e_{\bf f}, e_{\Sigma})_{,1} &= \Big\langle e_1^*, d{\bf H}^u(e_Q-c_2e_F- c_3e_P, e_F - c_1 e_P) \Big\rangle \\
                              &= 2c_1c_3 \langle e_1^*, e_P \rangle \\
                              &= 2c_1c_3 \langle e_1^*,  e_2 + c_1 e_1 \rangle = 2c_1^2c_3,                                    
\end{split}
\end{equation*}
where for passing from the second line to the third 
we have used that we choose in \eqref{second-term} a base with $e_1=e_F, e_2=e_{\Sigma}=e_P - c_1 e_F=e_P - c_1e_1$. 
It follows that
\begin{equation}\label{second-term-2}
\sum_{{\bf f} \subset \partial \Sigma} H(e_{\bf f}, e_{\Sigma})_{,1} d\sigma_{\bf f}
     = \big(c_1(\Sigma)\big)^2\Big( \sum_{{\bf f} \subset \Sigma}c_3(\Sigma, {\bf f})\Big).
\end{equation}
Substituting \eqref{second-term-1} and \eqref{second-term-2} back in \eqref{second-term}, 
and \eqref{first-term}, \eqref{second-term} and \eqref{alpha} back in \eqref{beta0}, we obtain an expression for $\beta$ in terms of $(\Delta, {\bf L}, F)$.  \end{proof}

\begin{rem} \begin{enumerate}
\item Theorem~\ref{thm:criterion} and Proposition~\ref{p:alpha-beta} extend without difficulty to the case when $F=F_{1} \cup \cdots \cup F_{k}$ is a union of {\it non-intersecting} facets,  i.e. $Z$ is a smooth toric divisor.
In general,  it is natural to extend Definition~\ref{toric-poincare} by introducing a pair of real numbers $(\alpha_i, \beta_i)$ for each facet $F_i \subset F$ and,  for each face ${\bf f}\subset F$, one should  require the smoothness and  convexity over the relative interior of ${\bf f}$ of  the function $$u_{\bf f}:=u + \sum_{F_i \in F : {\bf f} \subset F_i}  (\alpha_i + \beta_i L_i)\log L_i.$$  It will be interesting to see wether or not the above statements hold true for a general toric divisor as above,   with respect to such spaces of symplectic potentials  (compare with Conjecture~\ref{conjecture1} above). 

\item The explicit examples in the next section suggest 
that  the complete Donaldson metrics will have, more generally, symplectic potentials $u$  with the asymptotic 
$$u = \frac{1}{2} \Big(\sum_{F_k\notin F}  L_k(x)\log L_k(x) + \sum_{F_i \in F}  f_{i}(x) \log L_{i}(x) \Big)+ {\rm smooth \ terms},$$
where,  for any facet $F_i\in F$, $f_{i}$ is  some affine function.

\item As we noticed in the course of the proof of Proposition~\ref{p:alpha-beta}, the situation simplifies when $n=2$.  
In fact, one can then explicitly determine $(\alpha, \beta)$ as follows: 
suppose  (without loss of generality) that $(\Delta, {\bf L}, F)$  is such that $\Delta \subset \{(x_1, x_2): x_1 \ge 0, x_2 \ge 0,\ \ell- x_2 - \lambda x_1 \ge 0\}$, $F$ corresponds to the affine line $x_1=0$ whereas  the two adjacent facets of $\Delta$  to $F$ are defined by the affine lines $ x_2=0$ and $\ell- x_2 - \lambda x_1 =0$.  Suppose,  furthermore,  that the extremal affine function of $(\Delta, {\bf L}, F)$ is $s_{(\Delta, {\bf L}, F)} = a_0 + a_1 x_1 + a_2 x_2$. Then the real parameters $(\alpha, \beta)$ of Proposition~\ref{p:alpha-beta} are given by
$$\alpha = \frac{2\ell}{4-a_0\ell}, \ \ \ \beta= \frac{\alpha^2}{6}\Big(a_1 + \frac{2\lambda a_0}{\ell} - \frac{12\lambda}{\ell^2}\Big).$$
\end{enumerate}

\end{rem}

With this motivation in mind, we will show in the next section,  by using the explicit constructions of \cite{ACG1,ACG2},  
that Conjecture~\ref{conjecture2} is true for $X=\C P^2, \C P^1\times \C P^1$ 
or the $m$-th Hirzebruch complex surface $\F_{m} = \mathbb \Proj(\cO \oplus \cO(m))\to \C P^1, m\ge 1$.

\section{Explicit Donaldson metrics on quadrilaterals}\label{s:examples}

By \cite[Thm.~1 and Rem.~7]{ACG2},  any {\it stable} compact convex quadrilateral $(\Delta, {\bf L}, F)$  in $\R^2$ admits a (canonical) Donaldson metric, 
which is explicit and ambitoric. K. Dixon~\cite{Dixon}  showed that  when $(\Delta, {\bf L}, F)$ corresponds to a compact toric complex orbi-surface $X$ with a divisor $Z$,  
the metric is complete on $X\setminus Z$.  
In other words,  Conjecture~\ref{conjecture2} holds true for compact toric surfaces whose momentum polytope is a quadrilateral.  
On the other hand, a detailed study of the stability of the triples $(\Delta, {\bf L}, F)$ was carried out by the third named author in \cite{lars}. In the next subsections we shall combine  these results in order to obtain a complete picture 
in the case when $X=\C P^2,  \C P^1 \times \C P^1$ or $\mathbb{P}(\cO \oplus \cO(k))\to \C P^1$, 
i.e. $(\Delta, {\bf L})$ is a Delzant triangle, parallelogram or a  trapezoid.

\subsection{Ambitoric structures} Here we briefly review the explicit construction of extremal toric metrics for $n=2$ via the ambitoric ansatz of \cite{ACG1}.
\begin{defn} An \emph{ambik\"ahler structure} on a real $4$-manifold or
orbifold $M$ consists of a pair of K\"ahler metrics $(g_+, J_+, \omega_+)$ and
$(g_-, J_-, \omega_-)$ such that
\begin{bulletlist}
\item $g_+$ and $g_-$ induce the same conformal structure (i.e., $g_- =
f^2g_+$ for a positive function $f$ on $M$);
\item $J_+$ and $J_-$ have opposite orientations (equivalently the
volume elements $\frac1{2}\omega_+\wedge\omega_+$ and
$\frac1{2}\omega_-\wedge\omega_-$ on $M$ have opposite signs).
\end{bulletlist}
The structure is said to be \emph{ambitoric} if in addition
\begin{bulletlist}
\item there is a $2$-dimensional subspace $\tor$ of vector fields on $M$,
linearly independent on a dense open set, whose elements are hamiltonian and
Poisson-commuting Killing vector fields with respect to both $(g_+, \omega_+)$
and $(g_-, \omega_-)$.
\end{bulletlist}
\end{defn}
Thus $M$ has a pair of conformally equivalent but oppositely oriented K\"ahler
metrics, invariant under a local $2$-torus action, and both locally toric with
respect to that action.
There are three classes of examples of ambitoric structures.

\subsection{Toric products} Let $(\Sigma_1,g_1,J_1,\omega_1)$ and
$(\Sigma_2,g_2,J_2,\omega_2)$ be (locally) toric K\"ahler
manifolds or orbifolds of real dimension 2, with hamiltonian Killing vector fields $K_1$ and
$K_2$. Then $M=\Sigma_1\times \Sigma_2$ is ambitoric, with $g_\pm=g_1\oplus
g_2$, $J_\pm=J_1\oplus (\pm J_2)$, $\omega_\pm=\omega_1\oplus (\pm\omega_2)$
and $\tor$ spanned by $K_1$ and $K_2$.  
The metric $g_+$ is extremal (resp. CSCK) iff $g_{-}$ is extremal (resp. CSCK) iff both $g_1$ and $g_2$ are extremal (resp. CSCK). 
Writing $(\Sigma_1,g_1)$ and  $(\Sigma_2, g_2)$ as toric Riemann surfaces
$$g_1 =  \frac{dx^2}{A(x)} + A(x)dt_1^2; \ \ g_2=\frac{dy^2}{B(y)} + B(y) dt_2^2,$$
for positive functions $A,B$ of one variable, and momentum/angular  coordinates
$$x_1=x, \ x_2=y,  \ t_1,  \ t_2$$
the extremal metrics are given by taking $A$ and $B$ to be polynomials of degrees $\le 3$.  
In this case,  we obtain solutions to Abreu's equation on labelled parallelograms $(\Delta, {\bf L}, F)$ (which are affine equivalent to a square) by taking 
\begin{equation}\label{solution-parallelogram}
{\bf H}^{A,B} ={\rm diag} (A(x), B(y))
\end{equation}
for $A$ and $B$ polynomials of degree $\le 3$ 
and noting that the positivity and boundary conditions of Definition~\ref{d:guillemin-boundary} reduce to $A>0$ on $(\al_\1, \al_\2)$, $B>0$ on $(\be_\1, \be_\2)$  and
\begin{equation}\label{parallelogram-boundary}
A(\al_k)=0=B(\be_k)=0, \quad A'(\al_k)=-2r_{\al,k}, \quad B'(\be_k)
=2r_{\be,k} \quad (k=\1,\2),
\end{equation}
where $r_{\al,\1} \le 0 \le r_{\al,\2}, \  r_{\be,\1} \ge  0 \ge r_{\be,\2}$ are determined 
by the choice of inward normals $e_{\al, k}=\frac{1}{r_{\al, k}}(-1,0)$  (resp. $e_{\be, k}= \frac{1}{r_{\be, k}}(0,1)$) 
if the facet $F_{\al,k}$ (resp. $F_{\be,k}$) defined by $x=\al_k$ (resp. $y=\be_k$) does not belong to $F$,  and $r_{\al,k}=0$ (resp. $r_{\be,k}=0$) otherwise.  
The above boundary conditions can be solved for polynomials of degree $3$, $A(x)$ and $B(y)$,  
if and only if $|r_{\al,\1}| + |r_{\al, \2}|>0$ and $|r_{\beta,\1}| + |r_{\be,\2}|>0$, i.e. iff no two opposite sides of $\Delta$ belong to $F$:  
in this case, the positivity of $A$ and $B$ automatically follows from the boundary conditions. 
On the other hand,  when two opposite sides of $\Delta$ belong to $F$ there is no solution to \eqref{abreu1} verifying the positivity condition. 
Indeed,  if  $r_{\al,\1}=r_{\al,\2}=0$ say,  then  ${\bf H}^{A,B} ={\rm diag} (A(x), B(y))$  
with $A\equiv 0$ and $B(y)$ a polynomial of degree $\le 3$ determined from \eqref{parallelogram-boundary}. 
This  provides a formal solution of \eqref{abreu1}. 
The latter can be used (by using integration by parts, as in \cite{Eveline,ACG2}) to compute that $\cL_{\Delta,u,F}(f_{\al})=0$ 
for any simple crease function $f_{\al}$ with crease at $x=\alpha$ ($\al \in (\al_\1,\al_\2)$), showing that $(\Delta, {\bf L}, F)$ is not stable in this case.

Whenever it exists, the solution $u_{A,B}$ is determined from \eqref{solution-parallelogram} by the formula
$$u_{A,B} = \int^x\Big(\int^s \frac{dt}{A(t)}\Big)ds + \int^y \Big(\int^s \frac{dt}{B(t)}\Big)ds, $$
which leads to the intrinsic expression
\begin{equation}\label{U-parallelogram}
u_{A,B} = \frac{1}{2}\Big(\sum_{F_j \in \partial \Delta} L_j \log L_j  -  \sum_{F_k \in F} a_k \log L^c_k \Big).
\end{equation}
where, for each facet  $F_j \in \partial \Delta \setminus F$,   
$L_j(x)= \langle e_j, x\rangle + \lambda_j$ is the corresponding label from ${\bf L}$ and,  for each $F_k \in F$,   
we define the label $L^c_k(x) := \langle e_k,x \rangle+ \lambda_k$  by requiring that $e_k:=-e_{\tilde k}= -dL_{\tilde k}$  
where $L_{\tilde k}$ is the label form ${\bf L}$ of the {\it opposite} side $F_{\tilde k}$ to $F_k$ 
(by the discussion above, $F_{\tilde k} \in \partial \Delta \setminus F$)  and  let $a_k  := L_k + L_{\tilde k}>0$ be a real constant.

We notice that when the solution exists, the degree $\le 3$ polynomials  $A(x)$ and $B(y)$ must satisfy $A''(x)=A''(\al_k) >0$ (resp. $B''(y)=B''(\be_k)>0$) on facets in $F$. 
The formula for the scalar curvature 
$$s_+ = - (A''(x) + B''(y))$$
then confirms that $s_{(\Delta, {\bf L}, F)} - s_{(F, {\bf L}_F, \check{F})}$ restricts to $F$ as a negative constant.

We conclude that 
\begin{prop}\label{parallelogram}  
Let $(\Delta,{\bf L},F)$ be a labelled parallelogram in $\R^2$. 
Then  the Abreu equation \eqref{abreu1} admits a solution in $\cS(\Delta, {\bf L}, F)$  iff $F$ doesn't contain opposite sides. 
In this case,   there exists a solution explicitly given by \eqref{U-parallelogram}. 
\end{prop}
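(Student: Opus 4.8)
The plan is to use the toric-product ansatz. Since the Abreu equation \eqref{abreu1} and the space $\cS(\Delta,{\bf L},F)$ are invariant under affine transformations of $\tor^*$, I may assume $\Delta$ is a rectangle $\{\al_\1\le x\le \al_\2,\ \be_\1\le y\le \be_\2\}$ and look for a solution with ${\bf H}^u=\mathrm{diag}(A(x),B(y))$ as in \eqref{solution-parallelogram}. The first step is to record, as indicated before the statement, two translations: for ${\bf H}=\mathrm{diag}(A(x),B(y))$ the smoothness and boundary conditions of Definition~\ref{d:guillemin-boundary} reduce to \eqref{parallelogram-boundary} (with the convention $r_{\al,k}=0$, resp. $r_{\be,k}=0$, precisely when the facet $\{x=\al_k\}$, resp. $\{y=\be_k\}$, belongs to $F$), the positivity condition reducing to $A>0$ on $(\al_\1,\al_\2)$ and $B>0$ on $(\be_\1,\be_\2)$; and, if moreover $A,B$ are polynomials of degree $\le 3$ satisfying \eqref{parallelogram-boundary}, then ${\bf H}=\mathrm{diag}(A,B)$ meets the hypotheses of the integration-by-parts formula from the proof of Theorem~\ref{hugues}, and testing that formula against affine functions forces $-(A''+B'')$ to equal the extremal affine function $s_{(\Delta,{\bf L},F)}$ of \eqref{extremal-function-F}, so \eqref{abreu1} holds automatically. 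This reduces the proposition to a one-variable problem.

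For the "if" direction, assume $F$ contains no pair of opposite sides, so that at least one of $r_{\al,\1},r_{\al,\2}$ and at least one of $r_{\be,\1},r_{\be,\2}$ is nonzero. The conditions $A(\al_k)=0$, $A'(\al_k)=-2r_{\al,k}$ form a Hermite interpolation problem at the distinct nodes $\al_\1\ne\al_\2$, hence have a unique cubic solution $A$ (and similarly $B$), with $A\not\equiv0$ since not both $r_{\al,k}$ vanish. Writing $A(x)=(x-\al_\1)(x-\al_\2)\ell(x)$ with $\ell$ affine, the sign conditions $r_{\al,\1}\le0\le r_{\al,\2}$ translate to $\ell(\al_\1)\le0$ and $\ell(\al_\2)\le0$; since $\ell$ is affine and cannot vanish at both endpoints (as $A\not\equiv0$), this gives $\ell<0$ on $(\al_\1,\al_\2)$, hence $A>0$ there, and likewise $B>0$ on $(\be_\1,\be_\2)$. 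Thus ${\bf H}=\mathrm{diag}(A,B)$ satisfies all the conditions of Definition~\ref{d:guillemin-boundary}; it is $(\mathrm{Hess}\,u_{A,B})^{-1}$ for the strictly convex function $u_{A,B}=\int^x\big(\int^s\frac{dt}{A(t)}\big)\,ds+\int^y\big(\int^s\frac{dt}{B(t)}\big)\,ds$, which therefore lies in $\cS(\Delta,{\bf L},F)$ and solves \eqref{abreu1} by the first step. A partial-fractions computation, carried out as in \cite{ACG2}, then rewrites $u_{A,B}$, up to an affine function, in the intrinsic form \eqref{U-parallelogram}.

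For the "only if" direction, suppose $F$ contains a pair of opposite sides, which after renaming coordinates we take to be $\{x=\al_\1\}$ and $\{x=\al_\2\}$, so $r_{\al,\1}=r_{\al,\2}=0$. Any polynomial of degree $\le3$ with a double root at each of $\al_\1,\al_\2$ vanishes identically, so the only formal solution of product type is the degenerate ${\bf H}=\mathrm{diag}(0,B(y))$ with $B$ the cubic fixed by the $\be$-conditions in \eqref{parallelogram-boundary}, and this $A\equiv0$ violates positivity. To rule out \emph{all} solutions, I note that ${\bf H}=\mathrm{diag}(0,B)$ still satisfies the smoothness and boundary conditions of Definition~\ref{d:guillemin-boundary}, hence the integration-by-parts formula applies to it; testing against affine functions identifies $-B''$ with $s_{(\Delta,{\bf L},F)}$, and evaluating the formula in the sense of measures, as in \cite{Eveline,ACG2}, on a simple crease function $f_{\al}$ with crease line $\{x=\al\}$, $\al\in(\al_\1,\al_\2)$, gives $\cL_{(\Delta,{\bf L},F)}(f_{\al})=\int_\Delta A\cdot(f_{\al})_{,11}\,d\mu=0$ because $A\equiv0$, while $(f_{\al})_{,11}$ is a measure carried by $\{x=\al\}$. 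Since $f_{\al}$ is convex, piecewise affine and not affine, Proposition~\ref{necessary} shows that no $u\in\cS(\Delta,{\bf L},F)$ solves \eqref{abreu1}.

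The genuinely delicate point, and the main obstacle, is this non-existence half: one must exclude solutions that are \emph{not} of product type. This is handled not by a classification but by passing to the coordinate-free obstruction of Proposition~\ref{necessary}, where the only subtlety is the justification that the integration-by-parts identity persists for the degenerate formal solution ${\bf H}=\mathrm{diag}(0,B)$ and for piecewise-affine convex test functions, so that the vanishing $\cL_{(\Delta,{\bf L},F)}(f_{\al})=0$ can be read off. On the "if" side the only laborious computation is the partial-fractions identification of $u_{A,B}$ with \eqref{U-parallelogram}, which is routine.
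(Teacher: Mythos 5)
Your proposal is correct and follows essentially the same route as the paper: the product ansatz ${\bf H}^{A,B}=\mathrm{diag}(A(x),B(y))$ with cubic $A,B$ determined by \eqref{parallelogram-boundary}, positivity following automatically from the sign conditions on $r_{\al,k},r_{\be,k}$ when $F$ contains no opposite pair, and, in the degenerate case, the formal solution with $A\equiv 0$ used via integration by parts to show $\cL_{(\Delta,{\bf L},F)}(f_{\al})=0$ for crease functions, so that Proposition~\ref{necessary} excludes any (not necessarily product-type) solution. Your factorization $A=(x-\al_\1)(x-\al_\2)\ell(x)$ merely makes explicit the positivity claim the paper asserts without detail.
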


Turning to the compact smooth case, 
there exists only one compact complex toric surface whose  Delzant polytopes are parallelograms, 
namely $X= \C P^1 \times \C P^1$. 
The result above trivially produces products of a cusp metric on $\C P^1\setminus \{{\rm pt} \}$ with a Fubini-Study metric on another copy of $\C P^1$, 
or the product of two cusp metrics on $(\C P^1\setminus \{{\rm pt} \}) \times (\C P^1\setminus \{{\rm pt} \})$, 
according to  whether $Z \subset \C P^1\times \C P^1$ is a copy of $\C P^1$ (i.e. $F$ is a one facet) 
or is the union of two copies of $\C P^1$ (i.e. $F$ consists of two adjacent facets). 
We thus can conclude that 

\begin{cor}\label{c:product} 
Let  $X= \C P^1 \times \C P^1$ endowed with the product of  circle actions on each factor,  
and $Z$ be either $\C P^1 \times \{p_2\}$ or $(\C P^1 \times \{p_2\}) \cup (\{p_1\} \times \C P^1)$ where $p_1$ and $p_2$ are fixed points for the $S^1$ actions on each factor. 
Then, in each K\"ahler class of $\C P^1\times \C P^1$, there exists a complete extremal Donaldson metric on $\C P^1 \times \C P^1 \setminus Z$ which is of Poincar\'e type. 

If $Z$  contains $(\C P^1 \times \{p_2'\}) \cup (\C P^1 \times \{p_2''\})$ where $p_2'$ are $p_2''$ are the two distinct fixed points for the $S^1$ action, 
then $(X, Z)$ is $K$-unstable, and admits no extremal Donaldson metric at all.
\end{cor}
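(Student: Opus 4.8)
The plan is to read off all three assertions from Proposition~\ref{parallelogram}, Theorem~\ref{thm:criterion}, Proposition~\ref{necessary} and the completeness result of \cite{Dixon}, using that for the product of the circle actions on the two factors every K\"ahler class on $X=\C P^1\times\C P^1$ is represented by a rectangular (hence Delzant parallelogram) momentum polytope $(\Delta,{\bf L})$. First I would treat the two stable cases. If $Z=\C P^1\times\{p_2\}$ then $F$ is a single facet of $\Delta$; if $Z=(\C P^1\times\{p_2\})\cup(\{p_1\}\times\C P^1)$ then $F$ is the union of the two (necessarily \emph{adjacent}) facets through one vertex of $\Delta$. In either case $F$ contains no pair of opposite sides, so Proposition~\ref{parallelogram} produces $u_{A,B}\in\cS(\Delta,{\bf L},F)$ solving the Abreu equation \eqref{abreu1}, explicitly given by \eqref{U-parallelogram}, with ${\bf H}^{u_{A,B}}=\operatorname{diag}(A(x),B(y))$ for polynomials $A,B$ of degree $\le3$ subject to \eqref{parallelogram-boundary}; by Definition~\ref{d:Donaldson-metric} the associated metric $g_D$ on $\Delta^0\times\T$ is the Donaldson metric of $(\Delta,{\bf L},F)$. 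Since $g_D=g_1\oplus g_2$ with $g_i=\tfrac{dx_i^2}{A_i}+A_i\,dt_i^2$, and for each facet of $F$ the relevant polynomial has a \emph{double} root at the corresponding endpoint (because $A'=-2r=0$ there), each such factor is the complete cusp metric on $\C P^1\setminus\{\mathrm{pt}\}$ --- the distance to that endpoint is $\int\tfrac{dx_i}{\sqrt{A_i}}=\infty$ as $A_i$ vanishes to order exactly $2$ --- while the remaining factor, if present, extends smoothly to $\C P^1$. Hence $g_D$ is complete on $X\setminus Z$ (this also follows from \cite{Dixon}), and Conjecture~\ref{conjecture2} holds in these cases.

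Next I would show $g_D$ is of Poincar\'e type. When $F$ is a single facet this is immediate from Theorem~\ref{thm:criterion} once we check $u_{A,B}\in\cS_{\alpha,\beta}(\Delta,{\bf L},F)$ for suitable $\alpha>0$, $\beta\in\R$: with $e_F=dL_F$ the inward normal, the off-diagonal entries of ${\bf H}^{u_{A,B}}=\operatorname{diag}(A,B)$ vanish identically, and $A(x)=c(x-\alpha_1)^2(\alpha_2-x)$ with $c>0$ forced by positivity on the open edge, so on $F$ one reads off ${\bf H}^{u_{A,B}}(e_F,\cdot)=0$, $d{\bf H}^{u_{A,B}}(e_F,\cdot)=0$, $(d^2{\bf H}^{u_{A,B}})(e_F,e_F)=A''(\alpha_1)\,e_F\otimes e_F$ and $(d^3{\bf H}^{u_{A,B}})(e_F,e_F)=A'''(\alpha_1)\,e_F^{\otimes3}$; matching these against the boundary conditions of Proposition~\ref{complete-first-order} gives $\alpha=2/A''(\alpha_1)>0$ and $\beta=-\alpha^2A'''(\alpha_1)/6\in\R$, the remaining regular and positivity conditions being exactly \eqref{parallelogram-boundary} together with $A,B>0$ on the open edges. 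When $F$ is the union of the two facets through a vertex, $g_D$ is the product of two such complete cusp metrics, each of which is of Poincar\'e type on $\C P^1\setminus\{\mathrm{pt}\}$ by the $n=1$ instance of Theorem~\ref{thm:criterion}; since the model form $\omega_{\rm mod}$ of Definition~\ref{d:Poincare-type} attached to a simple normal crossing divisor is exactly a sum of one--variable models of this kind, a finite product of Poincar\'e type K\"ahler metrics is again of Poincar\'e type, whence $g_D$ is of Poincar\'e type on $X\setminus Z$.

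Finally I would handle the unstable case. If $Z$ contains $(\C P^1\times\{p_2'\})\cup(\C P^1\times\{p_2''\})$, then $F$ contains a pair of opposite facets of $\Delta$, say $\{y=\beta_1\}$ and $\{y=\beta_2\}$ (possibly among others), so the corresponding $r_{\be,\1}=r_{\be,\2}=0$; the ambitoric formal solution of \eqref{abreu1} then forces $B\equiv0$, and exactly as in the proof of Proposition~\ref{parallelogram} an integration by parts against a simple crease function $f$ with crease at some $y=\beta\in(\beta_1,\beta_2)$ gives $\cL_{(\Delta,{\bf L},F)}(f)=0$ with $f$ convex and not affine. Hence $(\Delta,{\bf L},F)$ is not stable, i.e. $(X,Z)$ is $K$-unstable in the sense of Definition~\ref{d:toric-stability}, and by Proposition~\ref{necessary} no Donaldson metric can exist on $X\setminus Z$. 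The only steps that are not entirely routine are the verification $u_{A,B}\in\cS_{\alpha,\beta}(\Delta,{\bf L},F)$ --- pinning down the Taylor coefficients of $A$ at its double root and identifying them with the data $(\alpha,\beta)$ of Proposition~\ref{complete-first-order} --- and, in the two--facet case, the elementary but slightly delicate observation that the product of two cusp metrics realises $\omega_{\rm mod}$ near the crossing point of the two components of $Z$.
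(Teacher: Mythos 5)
Your proposal is correct and follows essentially the same route as the paper: the explicit product solutions of Proposition~\ref{parallelogram} give the Donaldson metrics (cusp $\times$ Fubini--Study, or cusp $\times$ cusp), the Poincar\'e type property is read off from the double-root structure of $A$ (resp.\ $A$ and $B$), and instability for opposite facets comes from the crease-function computation already made in the proof of Proposition~\ref{parallelogram} together with Proposition~\ref{necessary}. You in fact supply more detail than the paper does on two points it leaves implicit --- the identification of $(\alpha,\beta)$ from the Taylor coefficients of $A$ at its double root so that Theorem~\ref{thm:criterion} applies, and the observation that a product of one-dimensional cusp metrics matches the simple-normal-crossing model $\omega_{\rm mod}$ in the two-facet case --- both of which are accurate.
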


\subsection{Toric Calabi type metrics}\label{s:calabi} 
The construction in this section is not new, see e.g. \cite{hwang-singer}. 
For the sake of completeness, and to  make the link with toric geometry more explicit, we follow the formalism from \cite{ACG1}.

Let $(\Sigma,g,J,\omega)$ be a toric
real $2$-dimensional K\"ahler manifold with hamiltonian Killing vector
field $V$ (with momentum $y$).  Let $\pi\colon P\to \Sigma$ be a circle bundle with connection
$\theta$ and curvature $d\theta=\pi^*\omega_{\Sigma}$, and  $A(x)$ be a positive
function defined on an open interval $I \subset \R^+$. Then $M=P\times I$ is ambitoric, with
\begin{align*}
g_\pm &= x^{\pm1}
\Bigl(g_{\Sigma}+\frac{dx^2}{A(x)}+ \frac{A(x)}{x^2}\theta^2\Bigr), \ d\theta = \omega_{\Sigma},\\
\omega_\pm &= x^{\pm1}\bigl( \omega_{\Sigma} \pm x^{-1} dx\wedge\theta\bigr),\qquad
J_\pm (x dx) = \pm A(x) \theta,
\end{align*}
and the local torus action spanned by  the generator $K$ of the circle
action on $P$ and the lift $\tilde V = V^H + y K$ of the hamiltonian Killing field of $(\Sigma, g_{\Sigma}, \omega_{\Sigma})$ to $M$. 
Here,  $x\colon M\to \R^+$ is the projection onto $I\subset \R^+$. It is easily seen that $g_+$ is extremal (resp. CSCK) 
iff $g_-$ is extremal iff $(\Sigma,g)$ has constant Gauss curvature $\kappa$ 
and $A(x)$ is a polynomial of degree $\le 4$ with coefficient of $x^2$ equal to $\kappa$. Because of this equivalence, we shall focus on $(g_+,\omega_+)$, say.

Writing the toric metric $(g_{\Sigma}, \omega_{\Sigma})$ in momentum/angle coordinates as
\begin{equation}\label{FS}
g_{\Sigma}= \frac{{dy}^2}{B(y)} + B(y) dt_2^2, \ \omega_{\Sigma}  = dy \wedge dt_2,
\end{equation}
for a positive function $B(y)$,  the K\"ahler metric $(g_+, \omega_+)$ becomes (see \cite{Eveline})
\begin{equation}\label{calabi-type}
\begin{split}
g_+ &= x \frac{dx^2}{A(x)} + x \frac{dy^2}{B(y)} + \frac{A(x)}{x}(dt_1 + y dt_2)^2 + x B(y) dt_2^2 , \\
\omega_+ & = dx \wedge dt_1 + d(xy) \wedge dt_2= dx_1\wedge dt_1 + dx_2 \wedge dt_2,
\end{split}
\end{equation}
with
\begin{equation}\label{calabi-moments}
(x_1, x_2)=(x, xy)
\end{equation} 
being the momentum coordinates and $(t_1,t_2)$ the angular coordinates. The corresponding symplectic potential is then
\begin{equation}\label{potential-calabi}
u_{A,B}(x,y)= x\int^y \Big(\int^s \frac{dt}{B(t)}\Big)ds  + \int^x \Big(\int^s \frac{tdt}{A(t)}\Big) ds.
\end{equation}

In order to obtain  functions in $\cS(\Delta,{\bf L},F)$ for some compact convex labelled polytope $(\Delta, {\bf L}, F)$, we  fix the data of real numbers
\begin{equation}\label{calabi-data}
0 \le \be_\1<\be_\2, \ 0<\al_\1 < \al_\2, \ r_{\al,\1} \le 0 \le r_{\al,\2}, \  r_{\be,\1} \ge  0 \ge r_{\be,\2}
\end{equation}
and impose the following positivity and boundary conditions  on the smooth functions of one variable $A(x)$ and $B(y)$
\begin{equation}\label{positivity-trapez}
A(x)>0 \ {\rm on} \ (\al_\1, \al_\2) \ {\rm and} \ B(y)>0 \ {\rm on} \ (\be_\1, \be_\2),
\end{equation}
\begin{equation}\label{boundary-trapez}
A(\al_k)=0,  A'(\al_k)=-2r_{\al,k}, \  B(\be_k)=0, B'(\be_k)= -2r_{\be,k}, \quad (k=\1,\2), 
\end{equation}
Note that the line $\{x=\alpha\}$ transforms in the $(x_1,x_2)$-coordinates \eqref{calabi-moments} 
to the affine line $\ell_{\alpha} = \{(\alpha, x_2)\}$ with normal $p_{\alpha}=(\alpha,0)$ 
and $y=\beta$ ($\beta>0$) to the affine line $\ell_{\beta}=\{(x_1,\beta x_1) \}$ with normal $p_{\beta}=(-\beta, 1)$. 
Thus, the image of $D=[\al_\1, \al_\2]\times [\be_\1, \be_\2]$ under \eqref{calabi-moments} is  a trapezoid $\Delta$ 
with facets $F_{\al,k}, F_{\be,k}$ determined by the lines $\ell_{\alpha_k}, \ell_{\be_k}$, and the inverse Hessian ${\bf H}^{A,B}$ of $u_{A, B}$, is given by
\begin{equation}\label{solution-trapezoid} 
{\bf H}^{A,B} = \frac{1}{x}\Big(\begin{array}{cc} A(x) & yA(x) \\  yA(x) & x^2B(y) + y^2A(x) \end{array} \Big).
\end{equation}
We write for the normals $e_{\al, k}= p_{\al_k}/ r_{\al,k}, e_{\be,k}= p_{\be_k}/r_{\be,k}$. Then, ${\bf H}^{A,B}$ satisfies the smoothness, 
positivity and boundary conditions \eqref{boundary-divisor-0}-\eqref{boundary-divisor-1} iff  $r_{\al,k}=0$ 
(resp. $r_{\be,k}=0$) on a facet $F_{\al,k}\in F$ (resp. $F_{\be,k} \in F$). 

Conversely, by \cite[Lem.~4.7]{Eveline}, if  $(\Delta,{\bf L}, F)$ is a labelled trapezoid,  there exist real numbers $\al_k, r_{\al,k}, \kappa$ $(k=\1,\2)$, subject
to the inequalities \eqref{calabi-data}, such that  $\Delta$ the image of $D=[\al_\1, \al_\2]\times [\be_\1, \be_\2]$ under $(\mu_1, \mu_2)$,  
and $r_{\al,k}$ are determined from the normals ${\bf L}$ and $F$ as explained above. 
It is easily seen~\cite[Prop.~4.12]{Eveline} that \eqref{solution-trapezoid} satisfies \eqref{abreu1} 
if and only if $A(x)$ is a polynomial of degree $\le 4$, $B(y)$ is a polynomial of degree $\le 2$,  which satisfy
\begin{equation}\label{calabi-relation}
A''(0) + B''(0)=0.
\end{equation}
For such polynomials to satisfy \eqref{boundary-trapez}, one must have $r_{\be,\1}= -r_{\be,\2}= r \ge 0$. 
This is also a sufficient condition to determine the polynomials $A(x)$ and $B(y)$ from \eqref{boundary-trapez},  
subject to the relation \eqref{calabi-relation}. In particular,  $B(y)= \frac{r}{(\be_2-\be_1)}(y-\be_1)(\be_\2-y)$, showing that the positivity
conditions \eqref{positivity-trapez} imply $r>0$, i.e. $r_{\be,k} \neq 0$. 
This also implies positivity for $A(x)$ on $(\al_\1, \al_\2)$:  
otherwise $A(x)$ will  have all of its roots between  $[\al_\1, \al_\2]$ and, by the boundary conditions \eqref{boundary-trapez},  
it must satisfy $\lim_{x\to \infty} A(x)= -\infty$. 
The latter  contradicts $A''(0)= -B''(0)>0$. 
The corresponding K\"ahler metric has scalar curvature
\begin{equation}\label{scal-calabi}
s_+ = -\frac{A''(x) + B''(y)}{x},
\end{equation}
showing that the extremal affine function $s_{(\Delta, {\bf L}, F)}$ determines an affine line parallel to $F_{\alpha, \1}$ and $F_{\al, \2}$. 
Conversely, the proofs  of  \cite[Lem.~4.2, Thm.~1.4]{Eveline} show that if  $(\Delta, {\bf L}, F)$ is a labelled trapezoid $(\Delta,{\bf L}, F)$ 
which is not a parallelogram,  and  the extremal affine function $s_{(\Delta, {\bf L}, F)}$ is  constant on  each of the  pair of  parallel facets of $\Delta$,  
then one can associate to $(\Delta, {\bf L}, F)$ data \eqref{calabi-data} satisfying the relation $r_{\be,\1}=-r_{\be,\2}=r \ge 0$.  
The case $r=0$ (i.e. when two opposite non-parallel facets of $\Delta$ belong to $F$) implies $B(y)\equiv 0$. 
As observed in \cite{Sz}, and similarly to the case of a parallelogram, this contradicts the stability of $(\Delta, {\bf L}, F)$. 
Indeed,   substituting  in \eqref{solution-trapezoid}, 
we still obtain a smooth matrix ${\bf H}^{A,B}$ on $\Delta$ verifying \eqref{abreu1} and the boundary conditions of Definition~\ref{d:guillemin-boundary}. 
This can be used to compute $\cL_{(\Delta,u,F)} (f_{\alpha})$ for a simple crease function $f_{\alpha}$ with crease on the line $\ell_{\alpha}=\{x=\alpha\}$:  
integration by parts  reduces to an integral over the crease of  the quantity ${\bf H}^{A,0}(p_{\al}, p_{\al})=0$, 
showing that  $\cL_{(\Delta,{\bf L}, F)} (f_{\alpha})=0$,  i.e. $(\Delta, {\bf L}, F)$ is not stable. 
We summarize the discussion in the following:
\begin{prop}\label{trapezoid} \cite{Eveline, Sz}
Let $(\Delta,{\bf L}, F)$ be a labelled trapezoid in $\R^2$ which is not a parallelogram. 
Suppose that the corresponding extremal affine function  $s_{(\Delta, {\bf L} ,F)}$ is constant on each of the  pair of parallel facets of $\Delta$. 
Then $(\Delta, {\bf L}, F)$ admits a solution to \eqref{abreu1} in $\cS(\Delta, {\bf L}, F)$ if and only if  $(\Delta, F)$ is stable, 
if and only if $F$ is one or the union of $2$ of the parallel facets of $\Delta$. 
In these cases, the solution is of Calabi-type, i.e. given by \eqref{potential-calabi} for polynomials $A(x)$ and $B(y)$  as described above.
\end{prop}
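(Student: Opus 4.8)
The strategy is to run around the cycle $(1)\Rightarrow(2)\Rightarrow(3)\Rightarrow(1)$, the only substantive inputs being Proposition~\ref{necessary}, the explicit Calabi-type ansatz of Section~\ref{s:calabi}, and the normal-form reduction from \cite{Eveline}. First, $(1)\Rightarrow(2)$ is nothing but Proposition~\ref{necessary}. For the remaining two implications, the key is that, by \cite[Lem.~4.2, Thm.~1.4]{Eveline} and the standing hypothesis (that $s_{(\Delta,{\bf L},F)}$ is constant on each of the two parallel facets), one may put $\Delta$ in the shape of Section~\ref{s:calabi}: it is the image of a rectangle $[\al_\1,\al_\2]\times[\be_\1,\be_\2]$ under \eqref{calabi-moments}, equipped with data \eqref{calabi-data} for which $r_{\be,\1}=-r_{\be,\2}=r\ge0$, where $r_{\be,k}=0$ exactly when the non-parallel facet $F_{\be,k}$ lies in $F$ and $r_{\al,k}=0$ exactly when the parallel facet $F_{\al,k}$ lies in $F$. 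The proof then splits according to whether $r>0$ or $r=0$.

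If $r>0$, then $r_{\be,\1},r_{\be,\2}\neq0$, so no non-parallel facet of $\Delta$ lies in $F$; hence the (nonempty) $F$ is one or the union of two of the parallel facets $F_{\al,\1},F_{\al,\2}$, which is exactly condition (3). Conversely, assuming (3), the non-parallel facets of $\Delta$ are not in $F$, so $r_{\be,k}\neq0$ and thus $r>0$. In this situation I construct a solution of \eqref{abreu1} in $\cS(\Delta,{\bf L},F)$ of Calabi type: set $B(y)=\frac{r}{\be_\2-\be_\1}(y-\be_\1)(\be_\2-y)$, which is positive on $(\be_\1,\be_\2)$ since $r>0$, and determine the polynomial $A$ of degree $\le 4$ from \eqref{boundary-trapez} together with the normalisation $A''(0)=-B''(0)=\frac{2r}{\be_\2-\be_\1}>0$ coming from \eqref{calabi-relation}; positivity of $A$ on $(\al_\1,\al_\2)$ then follows exactly as argued in Section~\ref{s:calabi}. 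With these $A,B$, the matrix ${\bf H}^{A,B}$ of \eqref{solution-trapezoid} satisfies the boundary conditions of Definition~\ref{d:guillemin-boundary} and solves \eqref{abreu1}, so $u_{A,B}$ given by \eqref{potential-calabi} is the desired Calabi-type symplectic potential. This proves $(3)\Leftrightarrow(r>0)$ and $(3)\Rightarrow(1)$.

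If $r=0$, then $B\equiv0$ and $r_{\be,\1}=r_{\be,\2}=0$, i.e. the two non-parallel facets of $\Delta$ both lie in $F$. Following \cite{Sz} (and as in the parallelogram case above), substituting $B\equiv0$ into \eqref{solution-trapezoid} still yields a smooth matrix ${\bf H}^{A,0}$ on $\Delta$ solving \eqref{abreu1} and the boundary conditions of Definition~\ref{d:guillemin-boundary}. Feeding it into the integration-by-parts computation of $\cL_{(\Delta,{\bf L},F)}(f_\al)$ for a simple crease function $f_\al$ whose crease is the line $\ell_\al=\{x=\al\}$, $\al\in(\al_\1,\al_\2)$, reduces the value to the integral along the crease of ${\bf H}^{A,0}(p_\al,p_\al)=0$; hence $\cL_{(\Delta,{\bf L},F)}(f_\al)=0$ although $f_\al$ is not affine, so $(\Delta,F)$ is not stable. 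By Proposition~\ref{necessary}, \eqref{abreu1} then has no solution in $\cS(\Delta,{\bf L},F)$. Thus $r=0$ is incompatible with both (1) and (2), which --- together with the previous paragraph --- closes the chain $(1)\Rightarrow(2)\Rightarrow(3)\Rightarrow(1)$. Finally, whenever a solution exists we are in the case $r>0$, and the solution of \eqref{abreu1} in $\cS(\Delta,{\bf L},F)$ is unique modulo affine linear functions by convexity of the relative Mabuchi functional (as in \cite{Guan,ZZ,Do2}, cf. the proof of Proposition~\ref{p:alpha-beta}); hence it coincides, up to an affine function, with the Calabi-type $u_{A,B}$ constructed above, giving the last assertion.

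The step I expect to require the most care is the boundary value problem for $A$ when $F=F_{\al,\1}\cup F_{\al,\2}$ consists of both parallel facets: then $r_{\al,\1}=r_{\al,\2}=0$ forces $A(x)=c(x-\al_\1)^2(x-\al_\2)^2$, and one must check that $c>0$ can be chosen so that $A''(0)=2c(\al_\1^2+4\al_\1\al_\2+\al_\2^2)$ equals the prescribed positive value $\frac{2r}{\be_\2-\be_\1}$ (possible since $0<\al_\1<\al_\2$), while simultaneously verifying that the dichotomy $r>0$ versus $r=0$ does coincide with condition (3) versus its negation; the rest is a direct appeal to the machinery of Section~\ref{s:calabi} and \cite{Eveline}.
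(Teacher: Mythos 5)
Your argument is correct and follows essentially the same route as the paper's own proof (which is the discussion preceding the proposition): reduction to the Calabi normal form via \cite[Lem.~4.2, Thm.~1.4]{Eveline}, the dichotomy $r>0$ versus $r=0$, the explicit construction of the polynomials $A,B$ and the automatic positivity of $A$ when $r>0$, and the crease-function computation combined with Proposition~\ref{necessary} when $r=0$. One small slip, which is present in the paper's own text as well: when $B\equiv 0$ the matrix ${\bf H}^{A,0}$ of \eqref{solution-trapezoid} is rank one with null direction $(-y,1)=p_{\be}$, so the destabilising simple crease functions are those with crease along $\ell_{\be}=\{y=\be\}$, $\be\in(\be_\1,\be_\2)$, on which ${\bf H}^{A,0}(p_{\be},p_{\be})=0$, and not along $\{x=\al\}$, where ${\bf H}^{A,0}(p_{\al},p_{\al})=\al\, A(\al)$ need not vanish; with this relabelling the instability argument in your $r=0$ case goes through verbatim.
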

In order to derive further geometric applications, 
we use  \cite[Cor.~1.6]{Eveline} which  identifies  the choice of labels ${\bf L}$ of a given trapezoid $\Delta$ 
for which $s_{(\Delta,  {\bf L},F)}$ is  constant on each of  the pair of parallel facets with one single linear constraint on the pair of inward normals to non-parallel facets. 
Up to an overall positive rescaling of ${\bf L}$, this fixes the choice of these normals,  
but leaves no constraint on the pair of normals corresponding to the parallel opposite facets. 
In our notation, this corresponds to fixing the boundary condition for $B(y)$ and allowing  $r_{\al,\1} \le 0 \le r_{\al,\2}$ to be arbitrary real numbers. 
It thus follows that if $(\Delta,{\bf L})$ is a labelled trapezoid 
for which the corresponding extremal affine function $s_{(\Delta,{\bf L})}$ is parallel to the pair of parallel facets, 
then, by taking $F$ to be either one or two of the parallel facets of $\Delta$, 
the extremal affine function $s_{(\Delta,{\bf L},F)}$ must also be parallel to the pair of parallel facets. 
We now apply this observation to Delzant trapezoids $(\Delta, {\bf L})$.

The compact toric  complex surfaces $X$ for which the Delzant polytopes are trapezoids  (but not parallelograms) are the Hirzebruch surfaces 
${\mathbb F}_m= \Proj(\cO \oplus \cO(m)) \to \C P^1, \ m \ge 1$. 
Calabi~\cite{calabi} has shown that these surfaces admit extremal K\"ahler metric of Calabi-type in each K\"ahler class. 
In particular,  the extremal affine functions are always  constant on  the pair of parallel facets of the corresponding Delzant polytopes. 
Thus, Proposition~\ref{trapezoid}  yields the following natural extension of Calabi's result.

\begin{cor}\label{Hirzebruch} 
 Let  $X\cong \F_m=\Proj(\cO \oplus \cO(m))\to \C P^1$ be the $m$-th Hirzebruch surface 
 and $Z\subset X$ the  divisor consisting of either  the zero section $S_0$, the infinity section $S_{\infty}$ or the union of both. 
 Then $X\setminus Z$ admits a complete extremal Donaldson K\"ahler metric in each K\"ahler class of $\F_m$. Furthermore, this metric is of Poincar\'e type.
\end{cor}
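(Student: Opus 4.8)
The plan is to read Corollary~\ref{Hirzebruch} off the explicit Calabi-type construction of Section~\ref{s:calabi} together with the Poincar\'e-type criterion of Theorem~\ref{thm:criterion}. Recall that the Delzant polytope $(\Delta,{\bf L})$ of $\F_m$ is a trapezoid which is not a parallelogram, that the two parallel facets $F_{\al,\1},F_{\al,\2}$ of $\Delta$ are the momentum images of $S_0$ and $S_\infty$, while the two $\T$-fixed fibres of $X\to \C P^1$ correspond to the non-parallel facets. By Calabi~\cite{calabi}, every K\"ahler class on $\F_m$ carries an extremal metric of Calabi type; in particular the extremal affine function $s_{(\Delta,{\bf L})}$ is constant on each of the parallel facets of $\Delta$. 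As in the discussion preceding the statement (via \cite[Cor.~1.6]{Eveline}), this persists for $s_{(\Delta,{\bf L},F)}$ whenever $F$ is one, or the union of both, of the parallel facets. Hence the hypotheses of Proposition~\ref{trapezoid} hold in each of the cases $F=F_{\al,\1}$, $F=F_{\al,\2}$, $F=F_{\al,\1}\cup F_{\al,\2}$: the pair $(\Delta,F)$ is stable and \eqref{abreu1} admits a Calabi-type solution $u=u_{A,B}\in\cS(\Delta,{\bf L},F)$ given by \eqref{potential-calabi}, with $A$ a polynomial of degree $\le4$ and $B$ of degree $\le2$ subject to \eqref{boundary-trapez} and \eqref{calabi-relation}. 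This $u$ defines the Donaldson metric $g_D$ of $(\Delta,{\bf L},F)$ in the sense of Definition~\ref{d:Donaldson-metric}, and since $\Delta$ is a quadrilateral and $\F_m$ is smooth and compact, completeness of $g_D$ on $X\setminus Z$ follows from \cite{ACG2,Dixon}.

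It then remains to upgrade $g_D$ to a Poincar\'e-type metric, and here the point is to show $u_{A,B}\in\cS_{\alpha,\beta}(\Delta,{\bf L},F)$ for suitable $\alpha>0,\beta\in\R$, after which Theorem~\ref{thm:criterion} applies directly when $Z$ is a single section and its extension to a disjoint union of facets (noted in the Remark following Proposition~\ref{p:alpha-beta}) applies when $Z=S_0\cup S_\infty$. Since a facet $F_{\al,k}\subset F$ forces $r_{\al,k}=0$, the boundary conditions \eqref{boundary-trapez} make $A$ vanish to second order at $x=\al_k$, with $A''(\al_k)>0$: this is forced by the positivity \eqref{positivity-trapez} of $A$ on $(\al_\1,\al_\2)$ together with the double vanishing at the endpoint $\al_k$. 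Writing the partial-fraction expansion of $t/A(t)$ at the double root and integrating twice as in \eqref{potential-calabi} gives $u_{A,B}=-(\alpha_k+\beta_k L_{F_{\al,k}})\log L_{F_{\al,k}}+(\text{function smooth up to }F_{\al,k})$, with $\alpha_k=2\al_k/A''(\al_k)>0$ and $\beta_k$ read off from the residue at the simple pole; equivalently, in terms of the inverse Hessian \eqref{solution-trapezoid}, ${\bf H}^u(e_F,\cdot)$ and its first normal derivative vanish along $F_{\al,k}$ (being proportional to $A(x)$, resp.\ $yA(x)$), while its second and third normal derivatives reproduce the boundary conditions of Proposition~\ref{complete-first-order} with these $\alpha_k,\beta_k$. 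Thus $u_{A,B}\in\cS_{\alpha_k,\beta_k}(\Delta,{\bf L},F_{\al,k})$, and the Poincar\'e-type property follows.

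The existence, completeness and Donaldson parts are therefore a bookkeeping of Calabi's theorem, \cite[Cor.~1.6]{Eveline}, Proposition~\ref{trapezoid} and \cite{ACG2,Dixon}, with no genuinely new input. The step I expect to be the main obstacle is precisely the membership $u_{A,B}\in\cS_{\alpha,\beta}(\Delta,{\bf L},F)$: one must check that the degree-$\le4$ polynomial $A$, constrained only by \eqref{boundary-trapez} and \eqref{calabi-relation}, produces exactly the logarithmic singularity of Definition~\ref{toric-poincare}, i.e.\ that $u_{A,B}+(\alpha+\beta L_F)\log L_F-\tfrac12\sum_{j\ge2}L_j\log L_j$ extends smoothly across $F$ (and, in particular, that the root of $A$ at the facet of $F$ is exactly double, so that $\alpha>0$ is finite). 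This is a routine but slightly delicate computation at the double root of $A$, and it is what links the explicit Calabi-type metric to the abstract criterion of Theorem~\ref{thm:criterion}.
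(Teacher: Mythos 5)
Your strategy is the same as the paper's: existence, stability and completeness of the Donaldson metric are read off Proposition~\ref{trapezoid} together with \cite{ACG2,Dixon}, and the Poincar\'e-type assertion is reduced to showing $u_{A,B}\in\cS_{\alpha,\beta}(\Delta,{\bf L},F)$ so that Theorem~\ref{thm:criterion} applies. You also locate the crux correctly: the root of $A$ at a facet of $F$ must be \emph{exactly} double, so that $\alpha=2\al_k/A''(\al_k)$ is finite and positive.

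The gap is in how you dispose of that crux. You claim $A''(\al_k)>0$ is ``forced by the positivity \eqref{positivity-trapez} of $A$ on $(\al_\1,\al_\2)$ together with the double vanishing at the endpoint''. That only gives $A''(\al_k)\ge 0$: a one-sided positive function can vanish to order three at the endpoint. Concretely, when $F$ is a single facet the boundary conditions \eqref{boundary-trapez} allow a priori $A(x)=c(x-\al_\1)^3(x-\al_\2)$ with $c<0$, which is positive on $(\al_\1,\al_\2)$, has a double zero and $A'(\al_\2)=-2r_{\al,\2}\neq0$, yet has $A''(\al_\1)=0$; then $\alpha$ is infinite and the metric is a cusp-type, not Poincar\'e-type, end. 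Excluding this needs input beyond positivity. The paper does it by writing the solution explicitly, $A(x)=-(px+q)(x-1)^2(x-a)$ with $p,q$ as in \eqref{AB-0}, and checking that the degenerate case $p=-q$ --- which is precisely $A''(1)=0$; the paper's ``$A'''(1)=0$'' is a slip, as its own computation shows --- forces $3m(a+1)+(a-1)^3=0$, impossible for $a>1$. Alternatively one can argue structurally: a triple root at $\al_\1>0$ (or $\al_\2$) together with positivity forces the leading coefficient of $A$ to be negative, whence $A''(0)=6c\,\al_\1(\al_\1+\al_\2)<0$, contradicting the extremality relation \eqref{calabi-relation} since $B''(0)<0$. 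Either way, a further argument is required, and your proof omits it. (Only in the case $Z=S_0\cup S_\infty$ does your reasoning close up as stated, since a degree-$\le4$ polynomial with double zeros at both $\al_\1$ and $\al_\2$ must be $c(x-\al_\1)^2(x-\al_\2)^2$, and positivity pins $c>0$.)
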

\begin{proof} 
The only additional clarification we need to supply is whether the explicit extremal Calabi type metrics are of Poincar\'e type. 
This follows  from the expression  \eqref{solution-trapezoid}, noting that  the only the facets $F_{\al, k}$ are in $F$, 
and  on such a facet (having normal vector ($\al_k, 0)$),  
the boundary conditions of Proposition~\ref{complete-first-order}  reduce to $A(\al_k)= A'(\al_k)=0$ and $A'''(\al_k)\neq 0$.  
If these hold,  the extremal K\"ahler metric of Calabi type is manifestly in some class $\cS_{\alpha, \beta}(\Delta, {\bf L}, F)$, 
and thus is of Poincar\'e type according to Theorem~\ref{thm:criterion}.  
The vanishing conditions are always satisfied.  The only condition we need to verify is $A'''(\al_k)\neq 0$. 
To this end, we describe the solutions explicitly. 

Letting 
\begin{equation}
\begin{split}
\al_\1 &=1, \al_\2=a>1, r_{\al,k}=0, \\
\be_\1&=0, \be_\2= m, r_{\be,\1}= - r_{\be,\2} = 1,  
\end{split}
\end{equation}
we obtain in the case  $Z= S_0 \cup S_{\infty}$  an  extremal K\"ahler metric on $X\setminus Z$ given by \eqref{calabi-type} with  
\begin{equation}\label{double-cusp-calabi}
B(y) = -\frac{2}{m}y(y - m), \ A(x)=-\frac{2}{m(a^2 + 4a +1)}(x-1)^2(x-a)^2
\end{equation}
where $a>1$ parametrizes (up to a scale) the K\"ahler cone of $\F_m$. 
This is a complete extremal K\"ahler metric  defined on the total space of the principal $\C^{\times}$-bundle over $\C P^1$  
classified by  $c_1(\cO(m)) \in H^2(\C P^1, \Z)$,  with cusp singularities at $0$ and $\infty$. 
The conditions $A'''(1) \neq 0 \neq A'''(a)$ obviously hold.

Similarly, when $Z=S_0$ say, for the same choice of $\al_k, \be_k$  the extremal solution is given by \eqref{calabi-type}  with
\begin{equation}\label{zero-cusp-calabi}
B(y)  = -\frac{2}{m}y(y - m), \ A(x) =-(px+q)(x-1)^2(x-a),
\end{equation}
where the constants $p,q$ are given by
\begin{equation}\label{AB-0}
\begin{split}
p & = \frac{2( \frac{r_{\al,\2}(a+2)}{(a-1)^2} -\frac{1}{m})}{(a^2 + 4a +1)}, \\
q &= \frac {2( \frac{r_{\al,\2}(2a+1)}{(a-1)^2} +\frac{a}{m})}{(a^2+4a+1)}.
\end{split}
\end{equation}
Such a metric compactifies smoothly at $S_{\infty}$ precisely when the real parameter $r_{\al,\2}=1$, 
which gives  the  complete extremal K\"ahler metrics in Corollary~\ref{Hirzebruch};  
for other values of $r_{\al,\2}>0$,  one gets a complete metric on $X\setminus S_0$ with a cone singularity of angle $2\pi r_{\al,\2}$ along $S_{\infty}$.   Now, the condition $A'''(1)=0$  is equivalent to $p=-q$. With $r_{\al, \2}=1$, this reads as 
$$3m(a+1) + (a-1)^3 =0,$$
which is impossible for $a>1$.

The case of $Z=S_{\infty}$ can be treated similarly. \end{proof}

\begin{rem}
As a special case of the ansatz \eqref{zero-cusp-calabi},  one can construct  CSCK metrics by putting $r_{\al,\2}= \frac{(a-1)^2}{m(a+2)}$ (i.e. setting the coefficient $p$ in \eqref{AB-0} to be zero). For each $m\ge 1$, this defines a CSCK metric on $\F_m\setminus S_0$, in each K\"ahler class of $\F_m$  (parametrized by $a>1$) with a cusp singularity along $S_0$ and a cone singularity of angle $2\pi \Big(\frac{(a-1)^2}{ma(a+2)}\Big)<2\pi$ along  $S_{\infty}$.   
\end{rem}

\subsection{Regular ambitoric structures} Let $q(z)=q_0z^2+2q_1 z+ q_2$ be a
quadratic polynomial,  $M$ a $4$-dimensional manifold 
with real-valued functions $(x,y,\tau_0,\tau_1,\tau_2),$ such that $x>y$,  
$2q_1\tau_1=q_0\tau_2+q_2\tau_0$, and  at each point of $M$, the $1$-forms  $dx, dy, d\tau_0, d\tau_1, d\tau_2$ span the cotangent space. 
Let $\tor$ be the $2$-dimensional space of vector fields $K$
on $M$ satisfying $dx(K)=0=dy(K)$ and $d\tau_j(K)$ constant. 
Then, for any  smooth  and positive functions  of one variable,  $A(x)$ and $B(y)$,  defined  on  the images of $x$ and $y$ in $\R$, respectively, 
$M$ is ambitoric with respect to $\tor$ and the K\"ahler structures
\begin{align} \label{g-pm}
g_\pm =  \biggl(\frac{x-y}{q(x,y)}\biggr)^{\pm1}
\biggl(& \frac{dx^2}{A(x)} + \frac{dy^2}{B(y)}  + A(x) \Bigl(\frac{y^2 d\tau_0 + 2y d\tau_1 + d\tau_2}{(x-y)q(x,y)}\Bigr)^2 \\ \nonumber
& + B(y) \Bigl(\frac{x^2 d\tau_0 + 2x d\tau_1 + d\tau_2}{(x-y)q(x,y)}\Bigr)^2
\biggr),\\
\label{omega-pm}
\omega_\pm = \biggl(\frac{x-y}{q(x,y)}\biggr)^{\pm 1}
& \frac{dx\wedge (y^2 d\tau_0 + 2y d\tau_1 + d\tau_2)
\pm dy \wedge (x^2 d\tau_0 + 2x d\tau_1 + d\tau_2)}{(x-y)q(x,y)},\\ \nonumber
J_\pm dx= A(x)&\frac{y^2 d\tau_0 + 2y d\tau_1 + d\tau_2}{(x-y)q(x,y)}, \quad 
J_\pm dy= \pm B(y)\frac{x^2 d\tau_0 + 2x d\tau_1 + d\tau_2}{(x-y)q(x,y)},
\end{align}
where $q(x,y)=q_0xy+q_1(x+y)+q_2$.  The metric $g_+$ is extremal iff $g_-$ is extremal  iff 
\begin{equation}\label{regular-extremal}\begin{split}
A(z)&=q(z)\pi(z)+P(z),\\
B(z)&=q(z)\pi(z)-P(z),\\
\end{split}\end{equation}
where $\pi(z)=\pi_0z^2 + 2\pi_1z + \pi_2$ is a polynomial of degree at most two satisfying  $2\pi_1 q_1 -(q_2\pi_0 + q_0\pi_2)=0$,  and $P(z)$ is
polynomial of degree at most four.

The space of Killing fields of $g_{\pm}$  for  the torus is naturally isomorphic to the space of $S^2_{0,q}$ of polynomials $p(z)$ of degree $\le 2$ 
which are orthogonal to $q$ with respect to the inner product $\langle\cdot, \cdot \rangle$ defined by the discriminant, i.e. $p(z)= p_0z^2 + 2p_1z + p_2$ satisfying
$$\langle p, q \rangle =2p_1 q_1 -(q_2p_0 + q_0p_2)=0.$$
The space $S^2_{0,q}$ is in turn isomorphic to the quotient space $S^2 /\langle q \rangle$ of all polynomials of degree $\le 2$ by the subspace generated by $q$, 
by using $\frac{1}{2} {\rm ad}_q$ with respect to the Poisson bracket $${\rm ad}_q(w)=\{q,w\}= q'w - w'q$$ on $S^2$. 
Thus, if $\{p_1,p_2\}$ is a basis of $S^2_{q,0}$ and $\{w_1,w_2\}$ the corresponding basis of $S^2 /\langle q \rangle$ 
(with $p_i = \frac{1}{2}\{q, w_i\}$) momentum/angular coordinates for $g_{\pm}$ are given by
\begin{equation}\label{moments}
\begin{split}
x_i^+ &= w_i(x,y)/q(x,y), \ t_i, \  (i=1,2) \\
x_i^- & = p_i(x,y)/(x-y), \ t_i, \  (i=1,2).
\end{split}
\end{equation}
It follows that the lines $x=\alpha$ (resp. $y=\beta$) transform to lines $\ell^+_{\alpha}= \frac{(x-\alpha)(y-\alpha)}{q(x,y)}=0$ 
(resp. $\ell^+_{\beta}= \frac{(x-\beta)(y-\beta)}{q(x,y)}$) in the $(x_1^+, x_2^+)$ plane,  
which are tangent to the non-degenerate conic $C^*_+ \subset  \tor^*$ corresponding to $\Big(\frac{x-y}{q(x,y)}\Big)^2=0$; 
similarly, $\ell^-_{\alpha}= \frac{(x-\alpha)q(y, \alpha)}{(x-y)}$ 
(resp. $\ell_{\beta}^-=\frac{(y-\beta)q(x,\beta)}{x-y}$) are lines in the $(x_1^-,x_2^-)$-plane (corresponding to $x=\alpha$ and $y=\beta$ in the $(x,y)$-plane) 
which are  tangent to the (possibly degenerate) conic $C^*_-\subset \tor^*$  defined by $\Big(\frac{q(x,y)}{x-y}\Big)^2=0$.  
In both cases, the corresponding normals are
\begin{equation}\label{normals}
p_{\alpha}(z) = q(\alpha, z)(z-\alpha) ; \ \ p_{\be} (z) = q(z, \be)(z-\be),
\end{equation}
viewed as elements of $S^2_{0,q}$.

It is straightforward to compute the  matrix  ${\mathbf H}^{A,B}_{\pm}$ of
$g_\pm$:
\begin{equation}\label{solution-regular}
\begin{split}
{\mathbf H}^{A,B}_{-}(p_i,p_j)&=\frac{A(x) p_i(y)  p_j(y) + B(y) p_i(x) p_j(x)}{(x-y)^3\, q(x, y) },\\
{\mathbf H}^{A,B}_{+}(p_i,p_j)&=\frac{A(x) p_i(y) p_j(y) + B(y) p_i(x) p_j(x)}{(x-y)\, q(x, y)^3 }.
\end{split}
\end{equation}
whose inverses are the Hessians in momenta of the symplectic potentials
\begin{equation}\label{U-AB}
\begin{split}
u^+_{A,B}(x,y) &= - \int^x \frac{(t-x)(t-y)dt}{q(x,y)A(t)}  + \int^y \frac{2(t-x)(t-y)dt}{q(x,y)B(t)},\\
u^-_{A,B}(x,y)  & =  \int^x \frac{2(x-t)q(y,t)dt}{(x-y)A(t)}  + \int^y \frac{2(y-t)q(x,t)dt}{(x-y)B(t)}.
\end{split}
\end{equation}

In order for $u^{\pm}_{A,B}$ be in  $\cS(\Delta, {\bf L}, F)$ for some labelled compact convex quadrilateral $(\Delta,{\bf L},F)$, 
one has to choose real numbers $\al_k, \be_k, r_{\al,k}, r_{\be,k}$ $(k=\1,\2)$ satisfying the inequalities
\begin{equation*}
\be_\1 < \be_\2 <  \al_\1 < \al_\2, \qquad
r_{\al,\1} \le 0 \le r_{\al,\2}, \quad r_{\be,\1} \ge  0 \ge r_{\be,\2},
\end{equation*}
and such that $q(x,y)>0$ on $D=[\al_\1,\al_\2]\times [\be_\1,\be_\2]$, and then impose on the smooth functions $A(x)$, $B(y)$ the positivity conditions 
\begin{equation}\label{positivity-AB}
A(x)>0 \ {\rm on} \  (\al_\1,\al_\2) \  {\rm and} \ B(y)>0 \ {\rm on} \  (\be_\1, \be_\2), 
\end{equation}
and the boundary conditions 
\begin{equation}\label{boundary-AB}
A(\al_k)=0=B(\be_k),\  A'(\al_k)=-2r_{\al,k}, \ B'(\be_k)
=2r_{\be,k} \ (k=\1,\2).\end{equation}

Considering ${\bf H}^{A,B}_+$ for simplicity (and dropping the $+$ script), the data as above gives rise to a convex compact quadrilateral $\Delta$ 
(determined by the affine lines $\ell_{\al_k}$ and $\ell_{\be_k}$ introduced above via \eqref{moments}) 
which is  endowed with the canonical set  $\{p_{\al_k}, p_{\be_k}, k=\1,\2\}$ of normals \eqref{normals}. 
We take $F$ be the union of all facets $\ell_{\al_k}=0$ and $\ell_{\be,k}=0$ for which  $r_{\al,k}=0$ and $r_{\be,k}=0$, and normalize the remaining normals by 
$$e_{\al,k}: = p_{\al_k}/r_{\al,k}, \ e_{\be,k}:= p_{\be_k}/r_{\be,k}.$$
One can easily check that these become inward normals to $\Delta$ and that ${\bf H}^{A,B}$ verifies the boundary conditions \eqref{boundary-divisor-0}-\eqref{boundary-divisor-1} 
on $(\Delta, {\bf L}, F)$ if and only if \eqref{boundary-AB} holds. 
Furthermore, as it is shown in \cite{ACG1},  ${\bf H}^{A,B}$ gives rise to a solution of the Abreu equation \eqref{abreu1} on $(\Delta,{\bf L},F)$  
iff $A, B$ are polynomials of degree $\le 4$ which satisfy \eqref{regular-extremal} and the  positivity and boundary conditions \eqref{positivity-AB}-\eqref{boundary-AB}.

Conversely, the following is established in \cite{ACG2}.
\begin{prop}\label{main} \cite{ACG2} Let $(\Delta, {\bf L})$ be a compact convex labelled quadrilateral in $\R^2$, and $F$ the union of some of its facets. 
Suppose that $\Delta$  is neither a parallelogram nor a trapezoid whose extremal affine function $s_{(\Delta, {\bf L}, F)}$ is constant on the  parallel facets $\Delta$. 
Then there exist real numbers $\al_k, \be_k, r_{\al,k}, r_{\be,k}$ $(k=\1,\2)$, subject
to the inequalities
\begin{equation*}
\be_\1 < \be_\2 <  \al_\1 < \al_\2, \qquad
r_{\al,\1} \le 0 \le r_{\al,\2}, \quad r_{\be,\1} \ge  0 \ge r_{\be,\2},
\end{equation*}
and a quadratic $q(z)$  satisfying $q(x,y)>0$ on $D=[\al_\1,\al_\2]\times [\be_\1,\be_\2]$,  such that 
\begin{bulletlist}
\item $\Delta$ is the image of $D$ either under $(x_1^+, x_2^+)$  or  $(x_1^-, x_2^-)$ in \eqref{moments};
\item For each facet $F_{\al,k}$ of $\Delta$ obtained as the image of $x=\al_k$ under \eqref{moments} {\rm (}resp. $F_{\be,k}$ obtained as the image of $y=\beta_k${\rm )}, 
      which does not belong to $F$, $r_{\al,k}\neq 0$ {\rm (}resp. $r_{\be,k} \neq 0${\rm )} 
      and the corresponding inward normal is  $e_{\al,k} = \frac{p_{\alpha_k}}{r_{\al,k}}$ 
      {\rm (}resp. $e_{\be,k} = \frac{p_{\beta_k}}{r_{\al,k}}${\rm )}, where $p_{\al_k}$ and $p_{\be_k}$ are the the normals defined by  \eqref{normals}; 
\item For each facet $F_{\al,k}$ of $\Delta$ {\rm (}resp. $F_{\be,k}${\rm )} which belongs to $F$, the corresponding $r_{\al,k}= 0$ {\rm (}resp. $r_{\be,k} =0${\rm )};
\item There exist polynomials $P(z)$ of degree $\le 4$ and $\pi(z)$ of degree $\le 2$, satisfying $\langle q, \pi \rangle =0$,  
      such that the $A(z)$ and $B(z)$ defined by \eqref{regular-extremal} satisfy the boundary conditions \eqref{boundary-AB} 
      {\rm (}but not necessarily the positivity condition \eqref{positivity-AB}{\rm )}.
\end{bulletlist}
Furthermore, the corresponding ${\bf H}_{+}^{A,B}$ or ${\bf H}_-^{A,B}$ defined by \eqref{solution-regular} satisfies \eqref{abreu1} 
and defines a solution $u_{A,B}\in \cS(\Delta, {\bf L}, F)$  if and only if $(\Delta, {\bf L}, F)$ is stable.  
The latter condition is  equivalent to \eqref{positivity-AB}.
\end{prop}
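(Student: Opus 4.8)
The plan is to follow the ambitoric construction of \cite{ACG1,ACG2} in three stages: first realise the labelled quadrilateral $(\Delta,{\bf L},F)$ inside the regular ambitoric normal form \eqref{g-pm}--\eqref{U-AB}; next produce the extremal polynomials $A,B$ as the solution of a linear system; and finally match the positivity requirement \eqref{positivity-AB} with stability. For the realisation I would invoke the classification of labelled compact convex quadrilaterals of \cite{ACG1,ACG2}: a quadrilateral $\Delta$ which is neither a parallelogram nor a trapezoid with $s_{(\Delta,{\bf L},F)}$ constant on its parallel facets --- precisely the degenerate cases already disposed of in Propositions~\ref{parallelogram} and \ref{trapezoid} --- can be written as the image of a rectangle $D=[\al_\1,\al_\2]\times[\be_\1,\be_\2]$ under one of the momentum maps $(x_1^+,x_2^+)$, $(x_1^-,x_2^-)$ of \eqref{moments}, for a quadratic $q(z)$ with $q(x,y)>0$ on $D$, in such a way that the four edges of $\Delta$ are the images of $\{x=\al_k\}$ and $\{y=\be_k\}$, carrying the canonical conormals $p_{\al_k},p_{\be_k}$ of \eqref{normals}; the choice of branch $+$ or $-$ records the combinatorial position of the tangent conic relative to $\Delta$. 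Having fixed this realisation I would read off the prescribed labels: on each edge not contained in $F$ define $r_{\al,k}$, resp.\ $r_{\be,k}$, by $e_{\al,k}=p_{\al_k}/r_{\al,k}$, resp.\ $e_{\be,k}=p_{\be_k}/r_{\be,k}$, and on the edges of $F$ set $r_{\al,k}=0$, resp.\ $r_{\be,k}=0$; convexity of $\Delta$ then forces $r_{\al,\1}\le 0\le r_{\al,\2}$ and $r_{\be,\1}\ge 0\ge r_{\be,\2}$.

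For the second stage I would argue as follows. By \eqref{solution-regular}--\eqref{U-AB} and the curvature formula of \cite{ACG1}, the potential $u^{\pm}_{A,B}$ has affine scalar curvature exactly when $A,B$ are of the form \eqref{regular-extremal}, and the smoothness and boundary parts of the definition of $\cS(\Delta,{\bf L},F)$ hold exactly when the eight equations \eqref{boundary-AB} hold; granting the latter, integration by parts against an affine test function forces the affine scalar curvature to equal $s_{(\Delta,{\bf L},F)}$, so that \eqref{abreu1} is satisfied. Producing a formal solution thus amounts to solving \eqref{boundary-AB} for the coefficients of $\pi$ (subject to $\langle\pi,q\rangle=0$, hence two parameters) and of $P$ (degree $\le 4$, five parameters): seven unknowns against eight equations. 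I would verify consistency by exhibiting the single compatibility relation of the system --- it is the integration-by-parts identity on affine functions, which is precisely the equation \eqref{extremal-function-F} that defines $s_{(\Delta,{\bf L},F)}$, hence automatically satisfied --- so a formal solution $A,B$ always exists, unique up to addition of an affine function (by a short degree count, or by the general uniqueness of minimisers of the relative Mabuchi functional \cite{Guan}).

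The third stage is the chain ``$u_{A,B}\in\cS(\Delta,{\bf L},F)$ $\iff$ $(\Delta,{\bf L},F)$ is stable $\iff$ \eqref{positivity-AB} holds''. From \eqref{solution-regular}, ${\bf H}^{A,B}_{\pm}$ is positive definite on $\Delta^0$ and on the relative interiors of all faces of $\Delta$ precisely when $A>0$ on $(\al_\1,\al_\2)$ and $B>0$ on $(\be_\1,\be_\2)$; since the smoothness and boundary conditions are already guaranteed by \eqref{boundary-AB}, this gives that $u_{A,B}\in\cS(\Delta,{\bf L},F)$ if and only if \eqref{positivity-AB} holds, and then Proposition~\ref{necessary} yields stability. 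For the remaining implication I would run the destabilising-crease argument already used for parallelograms and trapezoids: if, say, $A$ fails to be positive throughout $(\al_\1,\al_\2)$, then --- since $A(\al_k)=0$ with the sign of $A'(\al_k)$ prescribed by $r_{\al,k}$ --- it has an interior zero of even order or changes sign, and testing against a simple crease function $f_{\al}$ with crease on the line $\ell_{\al}=0$ reduces $\cL_{(\Delta,{\bf L},F)}(f_{\al})$, after integration by parts, to a boundary integral of ${\bf H}^{A,B}(p_{\al},p_{\al})$, which is $\le 0$ at a point where $A\le 0$; this contradicts stability, and symmetrically for $B$.

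The step I expect to be the main obstacle is the first one: showing that \emph{every} admissible labelled quadrilateral outside the explicitly excluded degenerate families does arise from the regular ansatz, with the correct branch and with $r_{\al,k},r_{\be,k}$ of the signs that make $e_{\al,k},e_{\be,k}$ genuine inward conormals of $\Delta$. This requires the projective analysis of four lines tangent to a (possibly degenerate) conic together with a case-by-case matching of combinatorial types, and constitutes the technical core of \cite{ACG2}. By contrast, the second and third stages are mild elaborations of the parallelogram and trapezoid computations already carried out in this section.
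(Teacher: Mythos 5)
The paper does not prove this proposition: it is imported verbatim from \cite{ACG2} (``Conversely, the following is established in \cite{ACG2}''), so there is no in-text argument to compare against; your outline is, in substance, a reconstruction of the strategy of that reference, and your third stage (positivity of ${\bf H}^{A,B}_{\pm}$ $\Leftrightarrow$ \eqref{positivity-AB}, stability via Proposition~\ref{necessary}, and the converse via $\cL_{(\Delta,{\bf L},F)}(f_{\al})=c_{\al}A(\al)$ with $c_{\al}>0$ because $p_{\al}(x)$ vanishes on the crease $x=\al$) is correct and is exactly the crease computation the paper itself runs in the parallelogram and trapezoid cases.

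The genuine gap is in your second stage, and it is not merely technical. You count seven unknowns ($P$ and $\pi$ modulo $\langle q,\pi\rangle=0$) against the eight equations \eqref{boundary-AB} and assert that the single compatibility relation is ``automatically satisfied'' because it coincides with the identity \eqref{extremal-function-F} defining $s_{(\Delta,{\bf L},F)}$. That identity is three conditions, and what it actually forces is that the affine scalar curvature $\zeta_{\pi}$ of the candidate solution equal $s_{(\Delta,{\bf L},F)}$; since $\pi$ ranges over only a $2$-parameter family, the affine functions realizable as $\zeta_{\pi}$ form a proper subspace of the $3$-dimensional space of affine functions, a subspace depending on $q$. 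For a generic choice of $q$ in the pencil of conics inscribed in the four lines supporting $\partial\Delta$, the system \eqref{boundary-AB} therefore has \emph{no} solution of the form \eqref{regular-extremal}. The compatibility is purchased in your first stage, not your second: $q$ must be selected within that pencil so that $s_{(\Delta,{\bf L},F)}$ lies in the image of $\pi\mapsto\zeta_{\pi}$, and the hypothesis excluding parallelograms and the trapezoids with $s_{(\Delta,{\bf L},F)}$ constant on the parallel facets is precisely the condition that such an adapted $q$ (and branch $\pm$) exists --- those excluded cases are the ones handled instead by Propositions~\ref{parallelogram} and \ref{trapezoid}. So the realization step cannot be a shape classification alone; it must take $({\bf L},F)$ and the extremal affine function as input. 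A minor further slip: adding an affine function to $u_{A,B}$ does not change ${\bf H}^{A,B}$, so the polynomials $(A,B)$ themselves are uniquely determined; the ``up to affine'' freedom lives only at the level of the symplectic potential.
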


As an illustration of the theory, let us again take $(\Delta, {\bf L})$ to be a trapezoid but not a parallelogram,  
and $F$ to be either a facet which is not parallel to another facet, or  the union of two adjacent facets. 
We have shown in Section ~\ref{s:calabi} that 
in this case the extremal affine linear function $s_{(\Delta,{\bf L},F)}$ is {\it not} constant on the parallel facets of $\Delta$ and, 
therefore, the solution of \eqref{abreu1} (if it exists)  must be given by Proposition~\ref{main} above. 
On the other hand, we have the following
\begin{prop}\label{stable-Hirzebruch} 
Let $(\Delta, {\bf L})$ be a labelled trapezoid corresponding to a Hirzebruch surface, and $F$ be one facet, or the union of $2$ adjacent facets. 
Then $(\Delta, {\bf L}, F)$ is stable.
\end{prop}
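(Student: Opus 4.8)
The plan is to reduce the statement to the explicit ambitoric constructions already at hand, according to which facets make up $F$. Recall that the Delzant trapezoid $(\Delta,{\bf L})$ of $\F_m$, $m\ge 1$, has as its two parallel facets the sections $S_0,S_\infty$ and as its two non-parallel facets (its ``legs'') the torus-fixed fibres $F_1,F_2$, and that any two adjacent facets of a trapezoid consist of one parallel facet together with one leg. Hence, up to the evident involution exchanging $S_0\leftrightarrow S_\infty$ and $F_1\leftrightarrow F_2$, three cases remain: (i) $F$ is a single section; (ii) $F$ is a single leg; (iii) $F$ is the union of a section with one of its adjacent legs. Case (i) is already covered by Corollary~\ref{Hirzebruch}: there we exhibited an explicit Calabi-type solution $u\in\cS(\Delta,{\bf L},F)$ of \eqref{abreu1}, and by Proposition~\ref{necessary} the existence of such a solution forces $(\Delta,{\bf L},F)$ to be stable.

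For cases (ii) and (iii) the plan is to invoke Proposition~\ref{main}. Its hypotheses hold: $\Delta$ is a trapezoid which, for $m\ge 1$, is not a parallelogram, and --- since $F$ now contains a leg --- one checks directly from the defining relations \eqref{extremal-function-F} that $s_{(\Delta,{\bf L},F)}$ is not constant along the two parallel facets (if it were, then \eqref{extremal-function-F} tested against an affine function transverse to those facets would be overdetermined, and this overdetermined system is inconsistent for the Hirzebruch data). Proposition~\ref{main} then supplies, for appropriate $\al_k,\be_k,r_{\al,k},r_{\be,k}$ obeying the required inequalities --- with $r_{\be,\1}=0$ in case (ii) and $r_{\be,\1}=r_{\al,\1}=0$ in case (iii) --- and a quadratic $q$ positive on $D=[\al_\1,\al_\2]\times[\be_\1,\be_\2]$, polynomials $\pi$ of degree $\le 2$ with $\langle q,\pi\rangle=0$ and $P$ of degree $\le 4$ such that $A=q\pi+P$ and $B=q\pi-P$ satisfy the boundary conditions \eqref{boundary-AB}; and it reduces the stability of $(\Delta,{\bf L},F)$ to the positivity \eqref{positivity-AB} of these $A,B$.

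It therefore remains to check that $A>0$ on $(\al_\1,\al_\2)$ and $B>0$ on $(\be_\1,\be_\2)$. For this I would normalize the Hirzebruch trapezoid as in Section~\ref{s:calabi}, with parallel facets $\{x=1\}$ and $\{x=a\}$, $a>1$ parametrizing the K\"ahler class, and legs $\{y=0\}$ and $\{y=m\}$, and pass to the momentum plane via \eqref{calabi-moments} and \eqref{moments}; solving the linear system \eqref{regular-extremal}--\eqref{boundary-AB} then produces $A(z)$ and $B(z)$ as explicit polynomials with coefficients rational in $a$ and $m$. By \eqref{boundary-AB} each of $A,B$ vanishes at its two prescribed endpoints --- with a double root at the endpoint of a facet contained in $F$, where the relevant $r$ vanishes, and with a non-zero derivative of the appropriate sign at a non-$F$ endpoint --- so $A$ (respectively $B$) is positive just inside each end of its interval, and positivity throughout follows once one shows that, after the known roots are factored out, the residual factor (of degree at most two) has no root in the interior. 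This last point is the crux of the argument: it is a concrete polynomial inequality in $a>1$ and $m\ge 1$, of a nature similar to the relation $3m(a+1)+(a-1)^3\neq 0$ encountered in the proof of Corollary~\ref{Hirzebruch}, and I expect to settle it by a discriminant and root-location analysis of the residual factor, using $m\ge 1$ and $a>1$, exactly along the lines of \cite{ACG2,lars}. Once positivity is established, Proposition~\ref{main} yields the stability of $(\Delta,{\bf L},F)$ in cases (ii) and (iii), which completes the proof.
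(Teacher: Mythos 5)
Your reduction into the three cases is sensible, and case (i) (a single section) is genuinely handled: Corollary~\ref{Hirzebruch} produces an explicit Calabi-type solution of \eqref{abreu1} in $\cS(\Delta,{\bf L},F)$, and Proposition~\ref{necessary} then gives stability. This is a legitimate alternative to what the paper does for that sub-case. However, for cases (ii) and (iii) your argument has a genuine gap: Proposition~\ref{main} does not \emph{prove} stability, it only converts it into the positivity condition \eqref{positivity-AB} for the polynomials $A$ and $B$ determined by the boundary conditions \eqref{boundary-AB}. You correctly identify this positivity as ``the crux of the argument'' and then do not establish it --- ``I expect to settle it by a discriminant and root-location analysis'' is a plan, not a proof, and it is precisely where all the content of the proposition lives (note that the statement is genuinely false for non-Delzant trapezoids, so any successful root-location analysis must use the integrality $m\in\Z_{\ge 1}$ in an essential way; nothing in your sketch indicates how). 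Tellingly, the paper itself runs the logic in the opposite direction: in Example~\ref{ex:1}, when the same positivity question arises (the existence of a root $\al_\1$ with $b<\al_\1<\al_\2$), the authors explicitly decline to verify it by hand and instead \emph{cite} Propositions~\ref{main} and~\ref{stable-Hirzebruch} to guarantee it. So your proposed route would require carrying out a computation the authors deliberately avoided, and until it is done the cases where $F$ contains a fibre remain unproved.

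For comparison, the paper's Appendix~\ref{AppendixB} proceeds quite differently. For $F$ a union of two adjacent facets it uses the criterion of \cite{lars} (Proposition~\ref{detcond}): stability of the pair is equivalent to the positivity of the determinants of the Hessians of the bidegree-$(3,3)$ polynomials $\phi_i$ parametrizing the Donaldson--Futaki invariants of simple crease functions, and these determinants are computed explicitly as polynomials in the trapezoid data $(q,k)$ (Lemma~\ref{adjcomp}); positivity is then read off after the substitution $q=k+1+\lambda$, $\lambda\ge 0$, which is exactly where the integrality $|q-k|\ge 1$ enters. The single-facet case is then deduced from the two-facet cases by writing the weight $(0,1,1,1)$ as a positive combination of the weights $(0,0,1,1)$, $(0,1,0,1)$, $(0,1,1,0)$ and invoking the convexity of the cone of stable weights --- a soft argument that covers single sections and single fibres uniformly, with no further computation. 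If you want to salvage your approach, you would either need to actually carry out the discriminant analysis of $A$ and $B$ (including the case distinction between the elliptic/hyperbolic/parabolic normalizations of $q$), or substitute the determinant criterion of \cite{lars} for the adjacent-facet cases as the paper does.
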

Putting Propositions~\ref{main} and \ref{stable-Hirzebruch} together, we obtain
\begin{cor}\label{Hirzebruch-non-poincare} Let $X= \F_m$ be the $m$-th Hirzebruch surface and $Z$ be the divisor consisting of a single fibre fixed by the $\T$ action, 
or the union of such a fibre with either the zero section or the infinity section. 
Then,  $X\setminus Z$ admits a complete extremal Donaldson metric  in each K\"ahler class of $X,$ which is \emph{not} of Poincar\'e type.
\end{cor}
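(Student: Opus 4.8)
The statement to be proven is Corollary~\ref{Hirzebruch-non-poincare}, which follows by combining Proposition~\ref{main} and Proposition~\ref{stable-Hirzebruch} with the earlier analysis of when a Donaldson metric fails to be of Poincar\'e type.

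\textbf{The plan.} The strategy is as follows. First I would recall that for $X=\F_m$, the momentum polytope $(\Delta, {\bf L})$ is a trapezoid which is not a parallelogram, with one pair of parallel facets being the preimages of the zero section $S_0$ and infinity section $S_\infty$, and the other pair being (non-parallel) preimages of two fixed fibres. When $Z$ is a single fibre, $Z$ is the preimage of one of the two \emph{non-parallel} facets; when $Z$ is the union of a fibre with a section, $Z$ is the preimage of two \emph{adjacent} facets. In either case $F$ falls exactly into the hypotheses of Proposition~\ref{stable-Hirzebruch}, so $(\Delta, {\bf L}, F)$ is stable. Next, by Proposition~\ref{main}, stability implies that there exists a solution $u_{A,B} \in \cS(\Delta, {\bf L}, F)$ of the Abreu equation \eqref{abreu1}, given by the regular ambitoric ansatz \eqref{solution-regular}; by \cite[Thm.~1 and Rem.~7]{ACG2} together with \cite{Dixon}, this $u_{A,B}$ induces a complete extremal K\"ahler metric $g_D$ on $X\setminus Z$, i.e. a Donaldson metric, in the given K\"ahler class. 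This establishes the existence half of the corollary.

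\textbf{Ruling out Poincar\'e type.} It remains to show this complete extremal metric is \emph{not} of Poincar\'e type. Here I would invoke Theorem~\ref{hugues}: if $X\setminus Z$ admitted a $\T$-invariant extremal K\"ahler metric of Poincar\'e type in $c_1(L)$, then necessarily $s_F - (s_{(\Delta,F)})|_F = \mathrm{const} > 0$, where $s_F$ is the extremal affine function of the facet $F$ (viewed as a Delzant segment, i.e. a polytope for a $\C P^1$) and $s_{(\Delta, F)}$ is the extremal affine function of the pair. Since $F$ is $1$-dimensional here, $s_F$ is literally a constant. The key computation, already carried out in Section~\ref{s:examples} (see the scalar-curvature formula \eqref{scal-calabi} and the discussion following it, and the analogous formula $s_+ = -(A''(x)+B''(y))/x$-type expressions for the regular ambitoric case), is that for these examples the extremal affine function $s_{(\Delta, {\bf L}, F)}$ is \emph{not} constant along $F$: when $Z$ is a fixed fibre, $F$ is one of the non-parallel sides of the trapezoid, along which $s_{(\Delta, {\bf L}, F)}$ genuinely varies (it is constant only on the \emph{parallel} pair of facets, by the Calabi-type structure of the extremal metric on $\F_m$). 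Hence $(s_{(\Delta,F)})|_F$ is a non-constant affine function on $F$, so $s_F - (s_{(\Delta,F)})|_F$ cannot be constant, contradicting Theorem~\ref{hugues}. Therefore no Poincar\'e type extremal metric exists in $c_1(L)$, while the Donaldson metric just constructed is a \emph{complete} extremal metric; it simply has the faster-decay asymptotics of the type exhibited by Sz\'ekelyhidi rather than the model $\omega_{\rm mod}$.

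\textbf{Main obstacle.} The substantive point — and the step I expect to need the most care — is verifying that for the relevant $F$ the extremal affine function $s_{(\Delta, {\bf L}, F)}$ is non-constant along $F$, equivalently that condition (iii) of Conjecture~\ref{conjecture1} / the conclusion of Theorem~\ref{hugues} fails. This is where one must actually use the explicit shape of the trapezoid and the explicit extremal polynomials from the regular ambitoric ansatz (cf. \eqref{regular-extremal}--\eqref{boundary-AB}): one checks that the linear function $s_{(\Delta,{\bf L},F)}$ produced by \eqref{extremal-function-F} has nonzero gradient in the direction transverse to the parallel pair of facets, hence restricts non-constantly to the non-parallel facet $F$. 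Once this is in hand, the rest is bookkeeping: assembling the three ingredients (stability from Proposition~\ref{stable-Hirzebruch}, existence and completeness from Proposition~\ref{main} plus \cite{ACG2, Dixon}, and the obstruction from Theorem~\ref{hugues}) gives the corollary. The cases $Z = F_1 \cup S_0$ and $Z = F_1 \cup S_\infty$ are handled identically, with $F$ now a pair of adjacent facets, one parallel-class and one non-parallel-class; the presence of the non-parallel facet in $F$ is again exactly what forces $s_{(\Delta,{\bf L},F)}$ to be non-constant on that component, so Theorem~\ref{hugues} is again violated.
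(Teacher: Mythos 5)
Your overall architecture coincides with the paper's: stability from Proposition~\ref{stable-Hirzebruch}, existence and completeness of the Donaldson metric from Proposition~\ref{main} together with \cite{ACG2, Dixon}, and exclusion of Poincar\'e type by showing that the necessary condition of Theorem~\ref{hugues} (i.e.\ condition \eqref{hyperbolicity-condition}) fails. The gap is at the decisive step: you never actually establish that $s_{(\Delta,{\bf L},F)}$ restricts non-constantly to the fibre facet, and the heuristic you offer for it is wrong. You write that $s_{(\Delta,{\bf L},F)}$ ``genuinely varies'' along the fibre because ``it is constant only on the parallel pair of facets, by the Calabi-type structure of the extremal metric on $\F_m$.'' First, the Calabi-type structure concerns the compact $\F_m$ (or pairs where $F$ consists of parallel facets); once a fibre facet is placed in $F$, Section~\ref{s:calabi} shows that $s_{(\Delta,{\bf L},F)}$ is in fact \emph{not} constant on the parallel pair either, so the premise of your parenthetical is false for the very pair at hand. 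Second, and more importantly, the two constancy statements are logically independent: if the parallel facets have direction $v$ and the fibre facet direction $w$, with $v,w$ a basis of $\R^2$, then constancy of an affine function on the parallel facets says its gradient annihilates $v$, while constancy on the fibre facet says it annihilates $w$. Writing $s_{(\Delta,{\bf L},F)}=ax+by+c$ in the coordinates of \eqref{hirzebrucheqns}, the former is the condition $b=0$ and the latter $a=0$; knowing $b\neq 0$ tells you nothing about $a$. You correctly restate at the end what must be checked (nonzero gradient component transverse to the parallel pair), but you defer it as ``the main obstacle'' without carrying it out --- and this verification is essentially the entire content of the corollary beyond the quoted propositions.

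The paper closes this gap by explicit computation in Appendix~\ref{AppendixB}: it writes down the extremal affine functions $B_i$ for each single-facet choice of $F$, forms the differences $K_i$ of their values at the two vertices of $F_i$, and shows $K_1,K_3\neq 0$ for all $d>0$, $k\geq 1$ (with $K_2=K_4=0$ recovering the section cases of Corollary~\ref{Hirzebruch}); the adjacent-facet cases are handled by the analogous quantity $K$ via Lemma~\ref{PTrestriction}. Note also that in the case $Z=F_1\cup S_0$ the correct necessary condition on the fibre component is that $A$ restricted to it differ by a constant from $B_\lambda(z)=\tfrac{6}{\lambda^2}z-\tfrac{2}{\lambda}$ (the extremal affine function of a segment with one zero-measure endpoint), which is \emph{not} constant --- so your reduction ``$s_F$ is literally a constant'' applies only to the single-fibre case, and the condition for adjacent facets pins down the gradient of $A$ rather than merely its variation along $F$. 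An alternative closure of the gap, also in the paper (Example~\ref{ex:1}), reads off from the explicit ambitoric symplectic potential that the coefficient of $\log L_{\al,\2}$ is an affine function non-constant on the facet. Without one of these computations your argument does not yet prove the statement.
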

The proofs of Proposition \ref{stable-Hirzebruch}  and Corollary~\ref{Hirzebruch-non-poincare}  are  presented in Appendix \ref{AppendixB}. 

\begin{ex}\label{ex:1}  In the light of Corollary~~\ref{Hirzebruch-non-poincare}, 
we  use  the explicit description of the extremal Donaldson metrics in order to determine their  asymptotic behaviour in normal direction to $Z$.

\smallskip
The parametrization of a regular ambitoric metric by the data $$\al_k, \be_k, r_{\al,k}, r_{\be,k}, q(z), A(z), B(z)$$ as above is not effective: 
there is a natural ${\rm SL}(2, \R)$ action on the space of degree $2$ polynomials $q(z)$,  as well as a homothety freedom for the metric. 
This can be normalized by taking $q(z)$ to be either $1, 2z$ or $z^2+1$ 
(see \cite[Sec.~5.4]{ACG1}), thus referring to the corresponding ambitoric metric as being of {\it parabolic}, {\it hyperbolic} or {\it elliptic type}, respectively.   
Moreover, it is observed  in  \cite[Sec.~5.4]{ACG2}  that the solution corresponding to a trapezoid is given by a (positive) hyperbolic ambitoric metric, i.e. 
\begin{equation}\label{hyperbolic}
\begin{split}
g =& \frac{(x-y)}{(x+y)}\Big(\frac{dx^2}{A(x)} + \frac{dy^2}{B(y)}\Big) \\ &+ \frac{1}{(x-y)(x+y)^3}\Big(A(x)(dt_1 + y^2 dt_2)^2 + B(y)(dt_1 + x^2 dt_2)^2\Big),\\
\omega = & \frac{dx\wedge (dt_1 + y^2 dt_2)}{(x+y)^2} + \frac{dy\wedge (dt_1 + x^2 dt_2)}{(x+y)^2},
\end{split}
\end{equation}
for $(x, y) \in [\al_\1, \al_\2]\times [\be_\1, \be_\2]$ with
$$\be_{\1} <\be_{\2} < \al_{\1} < \al_{\2}, \ \  \ \be_{\1} + \al_{\1} >0$$
and polynomials $A(z)= \sum_{i=0}^4 a_i z^{4-i}$ and $B(z)=\sum_{i=0}^4 b_i z^{4-i}$   satisfying
\begin{equation}\label{extremal-hyperbolic-0}
a_0+b_0=a_2+b_2 = a_4+b_4=0,
\end{equation}
and the positivity and boundary conditions  \eqref{positivity-AB}-\eqref{boundary-AB}. 
The momentum coordinates of \eqref{hyperbolic}  then become
\begin{equation}\label{momentum-hyperbolic}
x_1= -\frac{1}{x+y}, \ x_2 = \frac{xy}{x+y}
\end{equation}
so that the image of the interval $[\al_\1, \al_\2]\times [\be_\1, \be_\2]$ under \eqref{momentum-hyperbolic} is a quadrilateral $\Delta$ determined by the affine lines 
$$\ell_{\alpha,k}= -\al_k^2x_1 +x_2 - \al_k=0, \ \  \ell_{\beta,k}= -\be_k^2x_1 + x_2 - \be_k =0, \ k=\1, \2, $$
whose normals are $p_{\al,k}=(-\al_k^2, 1)$ and $p_{\be,k}=(-\be_k^2, 1),$ respectively. It follows that $\Delta$ is a trapezoid iff $\be_\2=-\be_\1=b>0$, see Figure 2 below.

\begin{figure}[h!]
   \centering
   \includegraphics[scale=0.50]{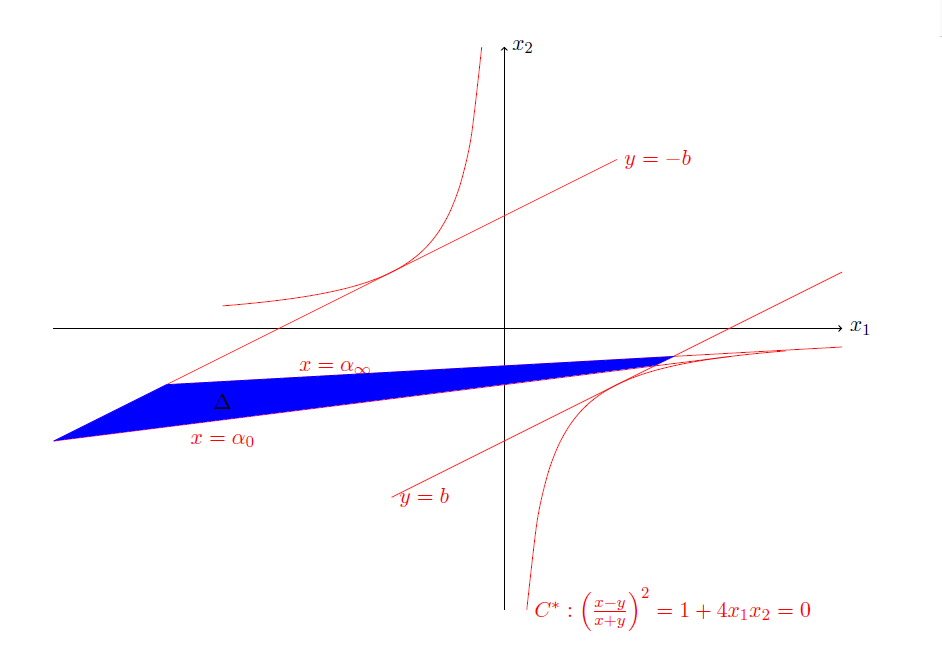}
   \caption{The Delzant polytope of a Hirzebruch surfaces (in blue)  obtained by the hyperbolic ambitoric construction.}
\end{figure}

As observed in \cite{Dixon}, each Hirzebruch surface $\F_m$  can be obtained from a labelled trapezoid $(\Delta, {\bf L})$ as above,  
by taking  inward normals $e_{\al,k}= p_{\al_k}/r_{\al,k}$ and $e_{\be,k}=p_{\be_k}/r_{\be,k}$
satisfying
$$e_{\be,\1}= -e_{\be,\2},  \  e_{\al,\2} + m e_{\be, \1}= -e_{\al, \1}.$$
Equivalently, the labelling ${\bf L} =\{L_{\al,k}=\frac{1}{r_{\al,k}} \ell_{\al,k}, L_{\be, k} = \frac{1}{r_{\be, k}} \ell_{\be, k}, k=\1, \2\}$   satisfies
\begin{equation}\label{hirzebruch}
r_{\be,\1}=-r_{\be, \2}=r>0, \ r_{\al,\1}= \frac{r}{m} \Big(\frac{\al_\1^2-\al_\2^2}{\al_\2^2-b^2}\Big), \  
r_{\al, \2}= \frac{r}{m} \Big( \frac{\al_\2^2 -\al_\1^2}{\al_\1^2-b^2}\Big).
\end{equation}
The positive constant $r$ is just a scale factor for the K\"ahler class and can be taken $r=1$. 
Thus, by considering the lattice generated by $e_{\al,k}, e_{\be,k}$ as above, the corresponding labelled trapezoid corresponds to a toric Hirzebruch surface $\F_m$.

We now take  $F_{\alpha, \2}$ (defined by $\ell_{\al,{\2}}=0$) be the facet of $\Delta$ corresponding to a fibre of $\F_m$ fixed by the torus action.  
We are thus looking for extremal metrics given by \eqref{hyperbolic} for polynomials 
\begin{equation}\label{hyperbolicAB}
A(x)= -c(x-\al_\1)(x-\al_\2)^2(x-\al_3), \ B(y)= c(y-b)(y+b)(y^2 + py + q)
\end{equation}
where $0<b<\al_\1 < \al_\2$  and $\al_3, c, p, q$ are real parameters (which we are going to express as functions of $(b, \al_{\1}, \al_{\2})$). 

The extremality conditions \eqref{extremal-hyperbolic-0} then  read as 
\begin{equation}\label{extremal-hyperbolic}
\begin{split}
& \al_3(2\al_\2 + \al_\1) + \al_\2^2 + 2\al_\1\al_\2 = q- b^2\\
& \al_3\al_\2^2\al_\1= -q b^2
\end{split}
\end{equation}
from which we get
\begin{equation}\label{al-be}
\begin{split}
\al_3 &=- \Big(\frac{b^2 + \al_\2^2 + 2\al_\1\al_\2}{2\al_\2 + \al_\1 + \frac{\al_\2^2\al_\1}{b^2}}\Big), \\
q &=  \frac{\al_\2^2\al_\1}{b^2}\Big(\frac{b^2 + \al_\2^2 + 2\al_\1\al_\2}{2\al_\2 + \al_\1 + \frac{\al_\2^2\al_\1}{b^2}}\Big).
\end{split}
\end{equation}
From the boundary condition  \eqref{boundary-AB} at $\al_{\1}$ we obtain
$$A'(\al_\1)=-c(\al_\1-\al_3)(\al_\1-\al_\2)^2 =-2r_{\al, \1}= \frac{2}{m} \Big( \frac{\al_\2^2 -\al_\1^2}{\al_\2^2-b^2}\Big)$$
so that we determine
\begin{equation}\label{c}
\begin{split}
c &= -\frac{2}{m} \Big(\frac{(\al_{\2} + \al_{\1})}{(\al_{\1}-\al_3)(\al_{\2}-\al_{\1})(\al_{\2}^2 - b^2)}\Big) \\
  &= -\frac{2}{m} \Big(\frac{(\al_{\2} + \al_{\1})(2\al_{\2} + \al_{\1} + \frac{\al_{\2}^2\al_{\1}}{b^2})}{\big(\al_{\1}^2 + \al_{\2}^2 + 4\al_{\1}\al_{\2} + b^2 + \frac{\al_{\1}^2\al_{\2}^2}{b^2}\big)(\al_{\2}-\al_{\1})(\al_{\2}^2 - b^2)}\Big) .
  \end{split}
\end{equation}

\smallskip
Consider first the case  when  $F=F_{\al, {\2}}$ consists of only one facet. 
The boundary conditions \eqref{boundary-AB} at $\pm b$ read as
$$B'(b) =2cb(b^2 + pb + q) =2r_{\be, \2}=-2, \  B'(-b)=-2cb(b^2 - pb + q) = 2r_{\be, \1}= 2.$$
We then have $p=0$ and the additional relation  $-1 = cb(b^2 + q)$  
which can be used in order to express $\al_{\1}$ as a function of $(\al_{\2}, b)$. 
This last step, however, is not obvious (and is implicit) as $\al_{\1}$ appears to be a real root of a polynomial of degree $4$, 
which also needs to satisfy $0<b  < \al_{\1} < \al_{\2}$.  
The existence of such a root is thus  guaranteed by Propositions~\ref{main} and \ref{stable-Hirzebruch},  so we shall not develop this step any further. 
We also notice that homotheties in $(x,y)$ preserve the form \eqref{hyperbolic} (but change $A$ and $B$ by scale) so we can assume $b=1$. 
Thus, on a fixed Hirzebruch surface $\F_m$ we obtain a one-dimensional family of complete extremal K\"ahler metrics  (defined on $\F_m \setminus Z$) 
parametrized by $a=\al_{\infty}>1$, which is precisely the dimension of the K\"ahler cone of $\F_m$ modulo scales. 

Notice that, by \eqref{al-be},  the third root $\alpha_3$ of $A$ is negative, thus $\al_{\2}$ has aways multiplicity $2$. 
Using  \eqref{U-AB} with $q(x,y)=x+y$ and $A,B$ given by \eqref{hyperbolicAB}, we observe that  up to smooth terms  on $\Delta$, the symplectic potential  is of the form
\begin{equation*}
\begin{split}
u  &= \Big(A \frac{(2\al_{\2} -(x+y))}{(x+y)} + B L_{\al, {\2}}\Big) \log L_{\al, {\2}} + \frac{1}{2} \sum_{k=\1, \2}L_{\al,k} \log L_{\al,k} + L_{\be,k} \log L_{\be, k}  \\
    & = \big(A (-2\al_{\2}x_1 -1)  + B L_{\al, {\2}}\big) \log L_{\al, {\2}} + \frac{1}{2} \sum_{k=\1, \2}L_{\al, k} \log L_{\al, k} + L_{\be,k} \log L_{\be, k} 
    \end{split}
\end{equation*}
for some real constants $A \neq 0, B$.
As $\al_{\2}>0$, the affine function $\big(A (-2\al_{\2} x_1 -1)  + B L_{\al, {\2}}\big)$ is {\it not} constant when restricted to  the facet $F_{\al, \2}$  
(on this facet $x = \al_{\2}$ and $y \in [\be_{\1}, \be_{\2}]$).

\smallskip
Similarly, when $F= F_{\al, \2}\cup F_{\be, \2}$ say,  we must  have $y^2+py + q= (y-b)(y-\be_3)$, so that $q= b\beta_3$ and $p= -(b + \beta_3)$,  
and from \eqref{al-be} we determine
$$\beta_3 = \frac{\al_\2^2\al_\1}{b^3}\Big(\frac{b^2 + \al_\2^2 + 2\al_\1\al_\2}{2\al_\2 + \al_\1 + \frac{\al_\2^2\al_\1}{b^2}}\Big).$$
The above formula together with the inequalities $0<b<\al_{\1} < \al_{\2}$ show that $\beta_3 >b$, thus $b$ is double root of $B(y)$. 
Similarly to the previous case, the symplectic potential of the extremal metric then takes the form
$$u = f_{\al, \2} \log L_{\al, \2} + f_{\be, \2}\log L_{\be, \2} + \frac{1}{2}\Big(L_{\al, \1} \log L_{\al, \1} +  L_{\be, \1} \log L_{\be, \1}\Big) + smooth$$
where $ f_{\al, \2}$ and $f_{\be, \2}$ are affine functions in momenta which are not constant on the corresponding facets in $F$. 
One can also check that in this case too the condition \eqref{hyperbolicity-condition} fails.

 \end{ex}

We notice also
\begin{prop}\label{p:unstable} 
Let $X$  be a Hirzebruch surface $\F_m$ or $\C P^1 \times \C P^1$, 
viewed as a toric variety endowed with a K\"ahler class $[\omega]$ corresponding to a Delzant polytope $(\Delta, {\bf L})$.  
Let $Z\subset X$ be the toric divisor corresponding  to the union $F$ of $3$ facets of $\Delta$.  
Then $(\Delta, F)$ is unstable and $X\setminus Z$ admits neither  a Donaldson extremal K\"ahler metric nor an extremal K\"ahler metric of Poincar\'e type in $[\omega]$.
\end{prop}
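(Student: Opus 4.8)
The plan is to split the claim into the instability of $(\Delta,F)$ --- which, through Proposition~\ref{necessary}, also rules out Donaldson metrics --- and the non-existence of extremal Poincar\'e-type metrics, and to drive both from one combinatorial observation together with a one-dimensional ``cusp'' obstruction. First I would record the combinatorics. Since $X$ is $\F_m$ or $\C P^1\times\C P^1$, the polytope $\Delta$ is a quadrilateral, whose four edges form a $4$-cycle with exactly two pairs of opposite (vertex-disjoint) facets. A subset $F$ of three of these four facets is then a set of three consecutive edges: it contains exactly one full opposite pair, it omits a single facet $F_0$, and its ``middle'' edge $F_i$ is adjacent to the other two, so that both vertices of $F_i$ lie on facets of $F$ and $F_0$ is the edge opposite to $F_i$. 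In the ambitoric normal form of this section --- the product form \eqref{solution-parallelogram} for a parallelogram, the Calabi form \eqref{solution-trapezoid} for a trapezoid, or the regular form \eqref{solution-regular} in general --- the two opposite pairs are precisely $\{F_{\al,\1},F_{\al,\2}\}$ and $\{F_{\be,\1},F_{\be,\2}\}$.

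Next I would prove that $(\Delta,F)$ is unstable, and hence that there is no Donaldson metric. By Propositions~\ref{parallelogram}, \ref{trapezoid} and \ref{main}, stability of $(\Delta,{\bf L},F)$ is equivalent to the ambitoric construction yielding a solution of \eqref{abreu1} meeting the positivity condition \eqref{positivity-AB}. With three of the four facets carrying zero measure, the boundary conditions \eqref{boundary-AB} (respectively their Calabi and product analogues \eqref{boundary-trapez}, \eqref{parallelogram-boundary}) force one of the one-variable polynomials, say $B$, to vanish to second order at both endpoints of its interval; coupled with the extremality relations \eqref{regular-extremal} (respectively \eqref{calabi-relation}, respectively the condition that $A$ have degree at most $3$ in the product case) this forces $B\equiv 0$ or makes ${\bf H}^{A,B}$ everywhere of rank one, so that positivity fails either way. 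Hence $(\Delta,F)$ is not stable; and since stability is necessary for a solution $u\in\cS(\Delta,{\bf L},F)$ of the Abreu equation (Proposition~\ref{necessary}), $X\setminus Z$ admits no Donaldson extremal K\"ahler metric. For $\C P^1\times\C P^1$, and for the $\F_m$-configurations in which $F$ contains the two fibres, this is already contained in the proof of Proposition~\ref{parallelogram} and in the discussion of Section~\ref{s:calabi} (the ``$r=0$'' destabilising crease function); the one remaining configuration of $\F_m$, namely $Z\supseteq S_0\cup S_\infty$, I would treat by a short direct computation with the hyperbolic ambitoric parametrisation of Example~\ref{ex:1}, exhibiting a simple crease function $f$ with $\cL_{(\Delta,{\bf L},F)}(f)=0$.

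Finally I would rule out extremal Poincar\'e-type metrics. If $X\setminus Z$ carried a $\T$-invariant extremal K\"ahler metric of Poincar\'e type in $[\omega]$, then by \cite[Thm.~4]{Auvray1} it would restrict, along the component $Z_i\cong\C P^1$ of $Z$ defined by the middle facet $F_i$, to an extremal K\"ahler metric of Poincar\'e type on $Z_i\setminus(Z_i\cap Z')\cong\C P^1\setminus\{2\ \text{points}\}$, with $Z'=\overline{Z\setminus Z_i}$, in the class $c_1(L_{|_{Z_i}})$. But $\C P^1\setminus\{2\ \text{points}\}$ has no such metric: after symmetrising over its $S^1$-action it is a toric K\"ahler surface whose momentum interval $[0,\ell]$ has both endpoints of zero measure, so its extremal affine function is $0$ and its scalar curvature vanishes identically; since the Poincar\'e cusp condition at each end of $[0,\ell]$ forces ${\bf H}^u$ and $({\bf H}^u)'$ to vanish there, integrating $-({\bf H}^u)''=0$ against affine functions gives ${\bf H}^u$ affine, hence ${\bf H}^u\equiv 0$, contradicting positivity. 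This contradiction completes the proof.

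The main obstacle is the middle step, and within it the one remaining $\F_m$-configuration $Z\supseteq S_0\cup S_\infty$: here the ``degenerate formal solution'' device that handles the cases of one or two zero-measure facets partially breaks down, the rigid extremality relations no longer being compatible with a non-trivial formal ${\bf H}^{A,B}$, so one must verify the obstruction by hand, that is, produce the destabilising crease function directly from the explicit polytope. Everything else is a repackaging of Propositions~\ref{parallelogram}, \ref{trapezoid}, \ref{main}, \ref{necessary} and \cite[Thm.~4]{Auvray1}.
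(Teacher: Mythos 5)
Your second half (ruling out Poincar\'e type metrics) is essentially the paper's argument: restrict to the ``middle'' $\C P^1$, which meets the other two components of $Z$ in two points, and derive a contradiction on $\C P^1\setminus\{p,q\}$. The paper gets the contradiction by noting the metric would have to be scalar-flat and then invoking the numerical constraint of \cite[Thm.~1.2]{Auvray}; your one-dimensional Abreu-equation argument is a reasonable substitute, but note that it silently assumes the restricted metric is $S^1$-invariant and ``weakly Donaldson'' at both cusps (continuous extension of ${\bf H}^u$ with ${\bf H}^u=d{\bf H}^u(e_F,\cdot)=0$ there), which is exactly the analytic content of the proof of Theorem~\ref{hugues} and of the remarks following Conjecture~\ref{conjecture1}; you should cite that machinery rather than treat it as free.

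The genuine gap is in the instability step. Your mechanism --- ``three zero-measure facets force a one-variable polynomial to vanish doubly at both endpoints, hence $B\equiv 0$ or ${\bf H}^{A,B}$ has rank one, hence positivity fails'' --- only works when the relevant polynomial has degree $\le 3$ (parallelogram) or $\le 2$ (the $B$ of the Calabi ansatz). For $\F_m$ with $F=S_0\cup S_\infty\cup(\text{fibre})$ the Calabi ansatz does not apply (as you can check, it admits no formal solution, so $s_{(\Delta,{\bf L},F)}$ is not constant on the parallel facets), and one is in the regular ambitoric regime of Proposition~\ref{main}, where $A$ and $B$ have degree $\le 4$: a quartic with double roots at both endpoints of its interval, $b_0(y-\be_\1)^2(y-\be_\2)^2$, is perfectly capable of being positive in between, so neither $B\equiv 0$ nor failure of positivity follows from the boundary conditions alone. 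You acknowledge this case but reduce it to ``a short direct computation exhibiting a crease function with $\cL=0$'' that you do not perform --- and that computation is precisely where the paper does its work: Remark~\ref{3facetsproof} in Appendix~\ref{AppendixB} shows that the determinant condition of Proposition~\ref{detcond} (the sign of $\det\mathrm{Hess}\,\phi_i$ at the critical corner, computed explicitly from \cite{lars}) is \emph{violated} when $F$ has three edges. Note also that a violated determinant condition produces a crease function with $\cL<0$ near the corner, not merely $\cL=0$, so even the expected form of your destabilizer is off. Without this computation (or an equivalent verification that the quartic formal solution of Proposition~\ref{main} fails positivity), the instability claim for this configuration is unproved.
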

\begin{proof} The proof of instability of $(\Delta, F)$ follows from the arguments in Appendix~\ref{AppendixB}, see in particular Remark \ref{3facetsproof}. 
Thus $(\Delta, F)$ cannot admit a Donaldson metric by Proposition~\ref{necessary}. 

To rule out the existence of a (non-toric) complete extremal metric of Poincar\'e type, 
we can use \cite[Thm.~5]{Auvray1} which asserts that each rational curve corresponding to the  preimage of a facet in $F$  
must admit a complete Poincar\'e type extremal K\"ahler metric.   
Taking the $\C P^1$ corresponding to the  facet in $F$ which intersects the other two facets in $F$, we conclude that $\C P^1 \setminus (\{p\} \cup \{q\})$ admits 
a complete extremal metric of Poincar\'e type. 
But if it did, it would have to be scalar-flat, as the Poincar\'e-Futaki invariant vanishes and the average scalar curvature is $0$. 
This would then violate the numerical constraint in \cite[Thm. 1.2]{Auvray} for Poincar\'e type metrics of constant scalar curvature. 
So no such metric can exist. 
\end{proof}

\begin{cor}
Conjecture~\ref{conjecture1} holds true  if $X$ is a Hirzebruch surface $\F_m$ or $\C P^1 \times \C P^1$.
\end{cor}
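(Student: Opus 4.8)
The plan is to prove Conjecture~\ref{conjecture1} for $X=\F_m$ and $X=\C P^1\times\C P^1$ by running through all choices of the torus-invariant divisor $Z\subset X$ --- equivalently, all nonempty unions $F$ of facets of the momentum polytope $(\Delta,{\bf L})$ --- and checking in each case that \emph{either} the conclusion of Conjecture~\ref{conjecture1} holds outright, \emph{or} one of its hypotheses (i)--(iii) fails, so that the implication holds vacuously. For $X=\F_m$ the polytope is a labelled trapezoid with two parallel facets $S_0,S_\infty$ (the zero and infinity sections) and two non-parallel facets $F_1,F_2$ (the torus-fixed fibres); this part of the Corollary is, as noted, the last sentence of Theorem~\ref{main-example}, but I would recall its organisation since the case of $\C P^1\times\C P^1$, whose polytope is a labelled parallelogram with two pairs of opposite facets, has to be argued along the same lines.

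First I would collect the cases in which the conclusion holds unconditionally. For $X=\F_m$ and $F\in\bigl\{\{S_0\},\{S_\infty\},\{S_0\cup S_\infty\}\bigr\}$, Corollary~\ref{Hirzebruch} produces, in every K\"ahler class, an explicit extremal Donaldson metric of Calabi type on $X\setminus Z$; as it lies in a space $\cS_{\alpha,\beta}(\Delta,{\bf L},F)$ it is of Poincar\'e type by Theorem~\ref{thm:criterion}. For $X=\C P^1\times\C P^1$ and $F$ a single facet or the union of two adjacent facets, Corollary~\ref{c:product} gives, in every K\"ahler class, an extremal toric metric of Poincar\'e type on $X\setminus Z$ via the toric-product construction. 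In all of these cases the implication of Conjecture~\ref{conjecture1} is automatic, whether or not its hypotheses hold.

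Next I would dispose of the remaining cases by exhibiting a failing hypothesis. If $F$ contains two opposite facets of the parallelogram (for $\C P^1\times\C P^1$) then $(\Delta,F)$ is unstable by Corollary~\ref{c:product}; note that the two-opposite-facets configuration and every $F$ with three or more facets are of this form (the three-facet case is also covered by Proposition~\ref{p:unstable}). If $F=\{F_1,F_2\}$ consists of the two (opposite, non-parallel) facets of the $\F_m$ trapezoid, then imposing vanishing boundary data there forces $B\equiv 0$ in the ambitoric ansatz, so that $\cL_{(\Delta,{\bf L},F)}(f_\alpha)=0$ for a non-affine simple crease function $f_\alpha$ and $(\Delta,F)$ is unstable (Theorem~\ref{main-example}(c)); likewise for $F$ containing three or more facets (Proposition~\ref{p:unstable}, together with the direct remark that when $F=\partial\Delta$ one has $s_{(\Delta,F)}\equiv 0$ and $\cL_{(\Delta,{\bf L},F)}\equiv 0$). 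In each of these subcases condition (i) fails. Finally, for $X=\F_m$ and $F$ equal to a single fibre $F_i$, or to the union of a fibre with one of the two sections, Corollary~\ref{Hirzebruch-non-poincare} produces a complete extremal Donaldson metric on $X\setminus Z$ that is \emph{not} of Poincar\'e type; here (ii) holds trivially (for $F=\{F_i\}$ the relevant pair is $(F_i,\varnothing)$, i.e.\ $\C P^1$ with its Fubini--Study metric, which is stable), but the closed-form hyperbolic ambitoric description of the metric in Example~\ref{ex:1} shows that $s_{(\Delta,F)}$ does not restrict to a constant on $F_i$, so that the quantity appearing in \eqref{hyperbolicity-condition} is not constant and condition (iii) fails.

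Since for both surfaces every nonempty union of facets falls into one of the two groups above, Conjecture~\ref{conjecture1} holds in all cases, which is the assertion of the Corollary. The only step that is not pure bookkeeping is the verification that (iii) fails in the fibre cases of $\F_m$: this genuinely uses the explicit extremal metrics obtained from the ambitoric ansatz of \cite{ACG1,ACG2} (Example~\ref{ex:1}), and it is precisely the failure of (iii) --- rather than of the numerical constraint $F_\chi''(0)>0$, which does hold there --- that reconciles Conjecture~\ref{conjecture1} with the non-existence statement of Corollary~\ref{Hirzebruch-non-poincare}.
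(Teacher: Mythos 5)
Your proposal is correct and follows essentially the same route as the paper: the same case division via Corollaries~\ref{c:product}, \ref{Hirzebruch} and \ref{Hirzebruch-non-poincare} together with Propositions~\ref{trapezoid} and \ref{p:unstable}, with the crucial point in the fibre cases being the failure of condition (iii) established in Example~\ref{ex:1}. The only cosmetic difference is that you enumerate all unions $F$ of facets and check each, whereas the paper first uses (i) and (ii) to narrow the list to three families before treating them.
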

\begin{proof} Using Corollaries \ref{c:product}, \ref{Hirzebruch} and \ref{Hirzebruch-non-poincare} together with Proposition~\ref{necessary} at one hand, 
and Propositions~\ref{trapezoid} and \ref{p:unstable} at the other hand, 
we conclude that the conditions (i) and (ii) of Conjecture~\ref{conjecture1} limit the possibilities  as follows:
\begin{enumerate}
\item[\rm (a)] $X= \C P^1 \times \C P^1$ and $Z$ is the union of  the preimage of one or two adjacent facets;
\item[\rm (b)] $X = \F_m$ and $Z$ consist of either the zero section, the infinity section, or the the union of both,;
\item[\rm (c)] $X= \F_m$ and $Z$ consist of a single fibre or the union of such a fibre and either the zero section or the infinity section.
\end{enumerate}
In the cases (a) and (b), there  exists an explicit extremal Poincar\'e type metric by the Corollaries \ref{c:product} and \ref{Hirzebruch}. 
In the case (c), there exists a Donaldson complete extremal metric  which is not of Poincar\'e type,  
but in this case the condition (iii) of Conjecture~\ref{conjecture1} fails, as shown in Example~\ref{ex:1}. \end{proof}

\subsection{Triangles as a limiting case} 
This case is already treated in \cite{Bryant} (see also \cite{Abreu1}), but it can also be viewed as a limiting case of the ambitoric ansatz with $\pi =0$ (i.e. $A=-B$). 
The corresponding extremal metrics  $(g_+, J_+, \omega_+)$ provide solutions of \eqref{abreu1} on labelled triangles, 
and compactify on weighted projective planes as extremal Bochner--flat (i.e. self-dual) orbifold metrics, see \cite{Abreu1,Bryant}. 

Indeed, putting $\pi=0$ and $P(z)= - \prod_{j=0}^{3} (z-\beta_j)$ with $\beta_0\le \beta_1<\beta_2 <  \beta_3$  in \eqref{g-pm} and \eqref{omega-pm}, 
the degree $4$ polynomial $B(y)=-P(y)$ is positive on $(\beta_1, \beta_2)$ while $A(x)=P(x)$ on $(\beta_2, \beta_3)$. 
When $\beta_0< \beta_1$, the K\"ahler metric $(g_+,\omega_+)$ defines an extremal Bochner-flat K\"ahler metric on a labelled  simplex $(\Delta,{\bf L})$, 
while taking $\beta_0=\beta_1$ gives rise to a  solution to the Abreu equation \eqref{abreu1} on a labelled simplex minus one facet 
(corresponding to the image of $y=\beta_1$ under the momentum map \eqref{moments}). 
One can always take the two normals to form  a basis of a lattice, so that the metric extends smoothly over the corresponding faces and has a complete end towards the third, 
see \cite{Dixon}.
We get, in fact,  one of the complete Bochner-flat metrics described in \cite[Thm. 4.2.7]{Bryant} (see also \cite{david-gauduchon}). 

To see this explicitly, let us identify (by an affine map) the simplex $\Delta$ with the standard simplex of $\R^2$ (with vertices at $(0,0), (1,0)$ and $(0,1)$)  
and assume (without loss) that $F$ corresponds to the facet defined by the equation $L_3(x)= a_3(1-x_1-x_2)=0$ 
whereas the other labels are $L_1(x)=a_1x_1$ and $L_2(x)=a_2x_2$  with $a_i>0$. 
The Bryant complete extremal Bochner-flat metric has symplectic potential in $\cS(\Delta, {\bf L}, F)$, given by
\begin{equation}\label{bryant}
u_{B} = \frac{1}{2} \Big(a_1x_1\log(x_1) + a_2 x_2\log(x_2) - (a_1x_1 + a_2x_2)\log(1-x_1-x_2)\Big).
\end{equation}

If we take $(\Delta, {\bf L})$ be a labelled simplex and $F= F_1 \cup F_2$ the union of  two facets, 
then by identifying $\Delta$ with the standard simplex of $\R^2$ and  $F_i$ with  the affine line $x_i=0, i=1,2$, respectively,    
one sees that  the reflection along the line  $x_1=x_2$ is a symmetry of $(\Delta, {\bf L}, F)$. 
By uniqueness,  $s_{(\Delta,{\bf L},F)}$ must be invariant under this reflection, i.e. $s_{(\Delta, {\bf L}, F)}= r(x_1+x_2) + c$ for some real numbers $r,c$. 
Using the definition \eqref{extremal-function} with $f= 1-(x_1 +x_2)$ (which vanishes on $F_3$), one gets $r=-2c$ for a real number $c$ 
(which must be inverse proportional to the normal $e_3$).   
It follows that $s_{(\Delta, {\bf L}, F)}$ vanishes at the affine line parallel  to $F_3$ and passing though the midpoint $m=(1/4,1/4)$ of its median $d$. 
Let $f_d$ be a simple crease function with crease along $d$ and non-zero on the sub-triangle $\Delta' \subset \Delta$ (cut from $\Delta$ by $d$).  
Two of the facets of $\Delta'$ inherit the measures of the facets of $\Delta$ and  we put measure zero to the facet along $d$. 
Thus, $(\Delta', d\nu_{\partial \Delta'})$ and $(\Delta, d\nu_{\partial \Delta})$ are equivalent under an affine transformation of $\R^2$. 
From the affine characterization of $s_{(\Delta, {\bf L}, F)}$, it follows that the extremal affine linear function of $\Delta'$ is a multiple of $s_{(\Delta, {\bf L}, F)}$; 
it is not hard to see (e.g. by using the definition  \eqref{extremal-function} with $f= 1-(x_1 +x_2)$ and $f\equiv 1$) 
that the extremal affine linear function of $\Delta'$  equals to $s_{(\Delta, {\bf L}, F)}$. 
Thus, $\cL_{(\Delta, {\bf L}, F)}(f_d)$ also computes the Donaldson--Futaki invariant of the affine linear function  $f_d$ over $\Delta'$, and hence is zero.  
It follows that $(\Delta, {\bf L}, F)$ is unstable. We thus conclude
\begin{thm}\label{thm:triangle} 
Let $(\Delta,{\bf L},F)$ be a labelled simplex in $\R^2$. Then $(\Delta, {\bf L}, F)$ is stable if and only if $F$ consist of a single facet. 
In this case  \eqref{abreu1} admits an explicit solution $u_B$ in $\cS(\Delta, {\bf L}, F)$ given by
\begin{equation}
u_{B} = \frac{1}{2}\Big( L_1 \log L_1 + L_2\log L_2 - (L_1 + L_2)\log L_3\Big),
\end{equation}
where  $L_3$ vanishes on $F$. The corresponding metric \eqref{toric-g} extends to the complete Bochner-flat metric on $\C^2$ found in \cite{Bryant}. 
In particular,  $\C P^2\setminus \C P^1$ admits a complete extremal Donaldson metric,  which is of Poincar\'e type 
(and conformal to the Taub-NUT metric).
\end{thm}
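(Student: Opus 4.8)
The plan is to reduce everything to the standard labelled simplex and then dispatch the three assertions --- the stability dichotomy, the explicit potential, and the geometric statements about $\C P^2\setminus\C P^1$ --- in turn. After an affine change of variables I may assume $\Delta$ is the standard $2$-simplex with vertices $(0,0),(1,0),(0,1)$ and labels $L_1=a_1x_1$, $L_2=a_2x_2$, $L_3=a_3(1-x_1-x_2)$ with $a_i>0$, and that $F$ is a non-empty union of one, two or three of its facets. First I would settle instability for $|F|\ge 2$. If $F$ is all of $\partial\Delta$, then $\partial\Delta\setminus F=\varnothing$, so \eqref{extremal-function-F} forces $\int_\Delta f\,s_{(\Delta,{\bf L},F)}\,d\mu=0$ for all affine $f$; taking $f=s_{(\Delta,{\bf L},F)}$ gives $s_{(\Delta,{\bf L},F)}\equiv0$ and hence $\cL_{(\Delta,{\bf L},F)}\equiv0$. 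If $F$ consists of exactly two facets, I would use the reflection argument already carried out in the paragraph preceding the theorem: identifying $F$ with $\{x_1=0\}\cup\{x_2=0\}$, the symmetry $x_1\leftrightarrow x_2$ forces $s_{(\Delta,{\bf L},F)}=r(x_1+x_2)+c$ with $r=-2c$, so $s_{(\Delta,{\bf L},F)}$ vanishes on the line through the midpoint of the median $d$ of $F_3$; the simple crease function along $d$ is then recognized by $\cL_{(\Delta,{\bf L},F)}$ as the Donaldson--Futaki invariant of an affine function on the sub-triangle $\Delta'$ cut off by $d$, which vanishes because the extremal affine function of $(\Delta',d\nu_{\partial\Delta'})$ equals $s_{(\Delta,{\bf L},F)}$. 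In both cases there is a non-affine convex piecewise affine destabilizer, so $(\Delta,{\bf L},F)$ is unstable.

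Next I would produce the explicit solution for $F$ a single facet, say $F=F_3=\{L_3=0\}$. The cleanest route is the $\beta_0=\beta_1$ degeneration of the regular ambitoric ansatz of Section~\ref{s:examples}: take $\pi\equiv0$ (so $A=-B$) and $P(z)=-(z-\beta_1)^2(z-\beta_2)(z-\beta_3)$ with $\beta_1<\beta_2<\beta_3$, so that $B(y)=(y-\beta_1)^2(y-\beta_2)(y-\beta_3)>0$ on $(\beta_1,\beta_2)$, $A(x)=P(x)>0$ on $(\beta_2,\beta_3)$, and $A,B$ satisfy \eqref{regular-extremal}. The double root of $B$ at $\beta_1$ means $r_{\beta,1}=0$, i.e. the facet which is the image of $y=\beta_1$ is exactly $F$, and one checks that ${\bf H}^{A,B}_+$ of \eqref{solution-regular} then satisfies the smoothness and first-order boundary conditions of Definition~\ref{d:guillemin-boundary}, the conditions along $F$ being the degenerate ones \eqref{boundary-divisor-0} forced by that double root. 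Passing to momentum coordinates \eqref{moments} and integrating gives, in intrinsic form, the symplectic potential $u_B$ of \eqref{bryant},
\[
u_B=\tfrac12\big(L_1\log L_1+L_2\log L_2-(L_1+L_2)\log L_3\big)\in\cS(\Delta,{\bf L},F),
\]
with $-\sum_{i,j}({\bf H}^{u_B})_{ij,ij}=s_{(\Delta,{\bf L},F)}$; alternatively one can bypass the ansatz and verify this formula directly by computing ${\rm Hess}(u_B)$ and its inverse. Proposition~\ref{necessary} then gives $\cL_{(\Delta,{\bf L},F)}(\varphi)\ge0$ for all convex piecewise affine $\varphi$ with equality only for affine $\varphi$, i.e. $(\Delta,{\bf L},F)$ is stable, completing the ``if'' direction of the dichotomy.

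For the geometric conclusions I would argue as follows. The two facets of $\Delta$ not in $F$ have integral inward normals that form a lattice basis at the vertex of $\Delta$ opposite to $F$, so \eqref{toric-g} with $u=u_B$ extends smoothly across their preimages, producing a smooth Kähler metric on $\C P^2\setminus\C P^1\cong\C^2$; by \cite{Dixon} this metric is complete towards the preimage of $F$, and it is one of the complete Bochner--flat (self-dual) Kähler metrics on $\C^2$ of \cite[Thm.~4.2.7]{Bryant}, which is conformal to the Taub--NUT metric (see \cite{Bryant, david-gauduchon}). Finally, to see the Poincaré type property, note that every Kähler class on $\C P^2$ is a positive multiple of the hyperplane class, so I may take $a_1=a_2=a_3=a$; then $x_1+x_2=1-L_3/a$ gives $\tfrac12(L_1+L_2)=\tfrac{a}{2}-\tfrac12L_3$, whence, with $L_F:=L_3$,
\[
u_B+(\alpha+\beta L_F)\log L_F-\tfrac12(L_1\log L_1+L_2\log L_2)=\big(\alpha-\tfrac{a}{2}+(\beta+\tfrac12)L_F\big)\log L_F,
\]
which vanishes identically for $\alpha=a/2>0$ and $\beta=-\tfrac12$. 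The remaining conditions of Definition~\ref{toric-poincare} are immediate because $u_B+(\alpha+\beta L_F)\log L_F=\tfrac12(L_1\log L_1+L_2\log L_2)$ restricts to each sub-face of $F$, and $u_B$ restricts to each face not contained in $F$, as a smooth strictly convex function. Hence $u_B\in\cS_{a/2,\,-1/2}(\Delta,{\bf L},F)$ and Theorem~\ref{thm:criterion} shows the metric is of Poincaré type on $\C P^2\setminus\C P^1$.

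I expect the main obstacle to be the middle step: verifying that the $\beta_0=\beta_1$ limit of the regular ambitoric construction is a genuine element of $\cS(\Delta,{\bf L},F)$ solving \eqref{abreu1} --- that is, that the first-order boundary behaviour survives the degeneration of $B$ at the facet $y=\beta_1$ into a double root --- and identifying the resulting potential with the closed-form expression \eqref{bryant}. If one prefers to avoid the ambitoric machinery, this is traded for a direct but somewhat lengthy check that $({\rm Hess}\,u_B)^{-1}$ extends smoothly to $\Delta$ with the correct boundary conditions and solves the Abreu equation for the extremal affine function of $(\Delta,{\bf L},F)$; the rest of the argument is then bookkeeping and citation.
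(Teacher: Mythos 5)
Your proposal is correct and follows essentially the same route as the paper: the $\pi\equiv 0$, $\beta_0=\beta_1$ degeneration of the ambitoric ansatz to produce the explicit potential \eqref{bryant}, the reflection/median crease argument for instability when $F$ has two facets (plus the trivial three-facet case), and the citations of \cite{Dixon} and \cite{Bryant} for completeness and Bochner-flatness. Your explicit check that $u_B\in\cS_{a/2,\,-1/2}(\Delta,{\bf L},F)$, so that Theorem~\ref{thm:criterion} yields the Poincar\'e type property, is a welcome addition that makes precise a point the paper leaves implicit.
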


\appendix
\section{Proof of Theorem~\ref{thm:criterion} and Proposition~\ref{complete-first-order}}\label{AppendixA} 
\subsection{Proof of Theorem~\ref{thm:criterion}} We follow the notation of Sections~\ref{s:toric} and~\ref{s:abreu-guillemin}. 
Thus, $X$ is a smooth compact toric variety  classified by the labelled Delzant polytope $(\Delta, {\bf L})$. 
We fix once and for all a $\T$-invariant K\"ahler metric $\omega_0$  on $X$  with momentum map $\mu_0 : X \to \Delta \subset \tor^*$.  
To simplify the discussion, we can take $\omega_0$ to be the K\"ahler quotient metric on $X$ obtained from the flat K\"ahler structure on $\C^d$ via the Delzant construction, 
see equation \eqref{u0}.  We denote by $X^\circ=\mu_0^{-1}(\Delta^0)$ the  preimage of the interior of $\Delta$, which is also the subspace of regular points for the action of $\T$. 
Complexifying the $\T$-action, we obtain a holomorphic action of the complex $n$-torus $\T^c =(\C^{\times})^n$ with $X^{\circ}$ being  the principal orbit for the $\T^c$-action. 
Choosing (once for all) a reference point $z^{\circ}\in X^{\circ}$, for each fixed point for  the $\T$-action,  
corresponding to a vertex $v\in \Delta$, we introduce a $(\C^{\times})^n$-equivariant chart $\C_v^n \cong \C^n$ as follows.  
Using a basis of $\tor$ obtained by the inward normals of the facets of $\Delta$  meeting at $v$, 
we identify $\T^c $ with $(\C^{\times})^n$ and  consider the equivariant map $\Phi_v : (\C^{\times})^n \to X^{\circ}$ 
$$\Phi_{v}(r_1e^{\sqrt{-1}t_1}, \ldots, r_n e^{\sqrt{-1}t_n}) : =(r_1e^{\sqrt{-1}t_1}, \ldots, r_n e^{\sqrt{-1}t_n})\cdot z^{\circ},$$
where $r_ie^{\sqrt{-1}t_i} \in \C^{\times}$ stand for the polar coordinates on each factor.
It follows by the holomorphic slice theorem that $\Phi_v : X^{\circ} \to (\C^{\times})^n$ extends equivariantly to a holomorphic embedding of $\C^n_v$ to $X$, 
thus defining an equivariant atlas of affine charts $\C^n_v$ of $X$ (where $v$ runs among the vertices of $\Delta$). 

The theory of toric varieties (see e.g.~\cite{guillemin-book}) yields that in such a chart, 
the (smooth) divisor $Z$ corresponding to the  preimage under $\mu_0$ of a facet $F$ meeting $v$  
has the equation $z_j=0$ where $(z_1, \ldots, z_n)=(r_1 e^{\sqrt{-1}t_1}, \ldots, r_n e^{\sqrt{-1}t_n})$ are the affine coordinates on $\C_v^n$. 
In what follows,  we will suppose without loss that 
\begin{equation}\label{chart-divisor}
Z\cap \C_v^n =\{(z_1, \ldots, z_n) \in \C^n : z_1 =0\}.
\end{equation}

To connect with the description \eqref{toric-g} of the K\"ahler metric $\omega_0$, 
one needs to apply the Legendre transform to the strictly convex smooth function $u_0$ on $\Delta^0$,  given by \eqref{u0}. 
More precisely, if $x^0 = \mu_0(z^{\circ})\in \Delta^0$ and $u\in \mathcal{S}(\Delta, {\bf L})$ is any symplectic potential, we let 
$$y(x):=du(x) - du(x^0) = (u_{,1}(x), \ldots, u_{,n}(x)) - (u_{,1}(x^0), \ldots, u_{,n}(x^0))$$
and define a smooth function $\varphi_u(y)$ by
\begin{equation}\label{kahler-potential}
\varphi_{u}(y) + u(x) = \sum_{i=1}^n y_ix_i.
\end{equation}
We notice the following elementary  
\begin{lemma}   \label{lem_diffeo}
   Let $\Omega$ be a non-empty bounded convex open subset of a finite-dimensional affine space $\tor^*$, 
   and  $u:\Omega\to\R$ a smooth strictly convex function such that $|du|$ tends to $\infty$ near $\partial \Omega$, 
   where $|\cdot|$ is any Euclidean  norm on $\tor$.  
   Then $du$ is a diffeomorphism from $\Omega$ onto $\tor$. 
  \end{lemma}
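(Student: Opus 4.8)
The plan is to show that $du\colon\Omega\to\tor$ is a local diffeomorphism, is injective, and is surjective, and then invoke the fact that a bijective local diffeomorphism is a diffeomorphism. The first two properties are essentially formal. The differential of $du$ at a point $x\in\Omega$ is the Hessian $\Hess(u)(x)$, which is positive definite --- this is part of what ``strictly convex'' means here, and is in any case forced by the invertibility of $\Hess(u)$ underlying the formula ${\bf H}^u=(\Hess u)^{-1}$ --- so $du$ is a local diffeomorphism by the inverse function theorem; in particular it is an open map and $du(\Omega)$ is a nonempty open subset of $\tor$. For injectivity, given $x\ne y$ in $\Omega$ the one-variable function $t\mapsto u\big((1-t)y+tx\big)$ on $[0,1]$ is smooth and strictly convex, hence has strictly increasing derivative, which yields $\langle du(x)-du(y),\,x-y\rangle>0$ and in particular $du(x)\ne du(y)$.

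It remains to establish surjectivity, which is the only place the boundary hypothesis is used; I would deduce it by showing that $du(\Omega)$ is also closed in $\tor$. Suppose $du(x_k)\to\xi$ for some $\xi\in\tor$. Since $\Omega$ is bounded, a subsequence of $(x_k)$ converges to a point $x_*\in\overline\Omega$. Reading the hypothesis as the statement that for every $M>0$ there is a compact $K\subset\Omega$ with $|du|>M$ on $\Omega\setminus K$, a sequence approaching $\partial\Omega$ must eventually leave every compact subset of $\Omega$; hence if $x_*\in\partial\Omega$ we would get $|du(x_k)|\to\infty$, contradicting $du(x_k)\to\xi$. Therefore $x_*\in\Omega$ and $\xi=du(x_*)\in du(\Omega)$, so $du(\Omega)$ is closed. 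Being nonempty, open and closed in the connected space $\tor$, it equals all of $\tor$, and combining this with injectivity and the local-diffeomorphism property completes the proof.

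I do not anticipate a genuine obstacle here: the argument is a standard blend of the inverse function theorem, monotonicity of the gradient of a convex function, and a properness/connectedness argument for surjectivity. The one point deserving care is the precise interpretation of ``$|du|$ tends to $\infty$ near $\partial\Omega$'' as properness of $du$ with respect to the exhaustion of $\Omega$ by compact subsets, since that is exactly what pins the limit $x_*$ in the interior; an equivalent, slightly more variational route would be to fix $\xi\in\tor$ and show that $u-\langle\xi,\cdot\rangle$, which is bounded below on $\Omega$ by convexity, attains its infimum at an interior point, again using the boundary hypothesis to exclude boundary minima.
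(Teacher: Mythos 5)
Your proposal is correct and follows essentially the same route as the paper's proof: the Hessian gives the local diffeomorphism, strict monotonicity of $t\mapsto \langle du((1-t)y+tx),x-y\rangle$ gives injectivity, and surjectivity is obtained by showing $du(\Omega)$ is closed (a limit point of a sequence $du(p_k)$ forces the $p_k$ to stay away from $\partial\Omega$ by the blow-up hypothesis) and then using openness and connectedness of $\tor$. Your remark that ``strictly convex'' is to be read here as positive-definiteness of $\Hess(u)$ matches the convention the paper uses implicitly.
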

 \begin{proof} Clearly, $du(x)=\big(u_{,1}(x), \ldots, u_{,n}(x)\big)$ is a local diffeomorphism  
  as  its differential at $x$ (represented by  the matrix  $(u_{,ij}(x))$ is invertible for  all $x\in \Omega$ by the strict convexity assumption. 
  
  Moreover $du$ is injective by using the convexity of $\Omega$ and the fact that $du(p(t))$ is strictly monotone  in $t$ on each linear segment $p(t) \in \Omega$ 
  (which again follows from the strict convexity of $u$).

  Finally, $du$ is surjective: This can be proven by checking that $du(\Omega)$ is closed, and  hence coincides with $\R^n$ as it is also non-empty and open. 
  Indeed, let $(q_k)$ be a sequence in $\R^n\cong \tor$, with limit $q$, such that $q_k = du(p_k)$ for $p_k\in\Omega$ for all $k\geq0$. 
  There is a subsequence, still denoted by $(p_k)$, converging to some $p\in\overline{\Omega}$. 
  Now if $p\in\partial\Omega$, then by assumption, $|q_k|=|du(p_k)|\to \infty$, a contradiction, and thus $p\in\Omega$, and $q=du(p)$. \end{proof}

It follows that for each $u\in \mathcal{S}(\Delta, {\bf L})$, we have  a $\T^c$-equivariant biholomorphism  
$\Phi_{u} : (\Delta^0 \times \T, J_u)  \to (\C^{\times})_v^n \cong X^{\circ},$ given by
\begin{equation}\label{complex-chart}
\Phi_{u}(x_1, \ldots, x_n, t_1, \ldots, t_n) := (e^{y_1 + \sqrt{-1} t_1}, \ldots, e^{y_n + \sqrt{-1} t_n}),
\end{equation}
where, we recall,  $y(x) = du(x) - du(x^0)$ is a diffeomorphism from $\Delta^0$ to $\R^n$ by virtue of Lemma~\ref{lem_diffeo},  
and $J_u$ is the $\T$-invariant $\omega_0$-compatible complex structure corresponding to $u\in \mathcal{S}(\Delta, {\bf L})$. 
The central fact in this theory is the following  identity on $(\C^{\times})^n_v$ (see \cite{guillemin-book}):
\begin{equation}\label{complex}
\omega_u:=(\Phi_{u}^{-1})^*(\omega_0) = dd^c \Big(\big(\varphi_u(\log |z_1|, \ldots, \log |z_n|)\big)\Big), 
\end{equation}
where  $(z_1, \ldots, z_n)$ are the complex coordinates associated to the chart $\C_v^n,$ and $d^c$ is taken with respect to the standard complex structure.  
The fact that $u\in \mathcal{S}(\Delta, {\bf L})$ guarantees the smooth extension of the right hand side to a positive definite $(1,1)$-form on $\C^n_v$.

 \smallskip
 Let us now suppose $u\in \mathcal{S}_{\alpha, \beta}(\Delta, {\bf L}, F)$ (instead of being in $\mathcal{S}(\Delta, {\bf L})$).  
 It is easily checked that such a $u$ still verifies the condition that $|du|$ tends to $\infty$ near $\partial\Delta$, 
 so that, by using Lemma~\ref{lem_diffeo},  \eqref{complex-chart}  and \eqref{complex},  we obtain a K\"ahler metric $\omega_u$ on $(\C^{\times})^n_v,$ which can be written as
 \begin{equation}\label{potential}
 \omega_u = \omega_{0} + dd^c  \varphi,
 \end{equation}
 where $\omega_0$ is the (globally defined on $\C^n_v$) K\"ahler metric corresponding to \eqref{u0} and 
  \begin{equation}\label{phi}
 \varphi(z_1, \ldots, z_n) :=(\varphi_u-\varphi_{u_0})(\log |z_1|, \ldots, \log |z_n|)
  \end{equation}
 is a smooth function on $(\C^{\times})^n_v \subset \C^n_v$. 
 We notice that through this identification (which depends upon $u$!), $Z\cap \C^n_v$ still corresponds to a hyperplane in the $\C^n_v$ affine chart, 
 as it follows from the following
   \begin{lemma}  \label{lem_diffeo2}
    Let $u_0$ be the symplectic potential in $\mathcal{S}(\Delta, {\bf L})$ given by \eqref{u0} and let $u \in \mathcal{S}_{\alpha, \beta}(\Delta, {\bf L}, F)$. 
    Then,  
     \begin{equation*}
      (du_0)^{-1} \circ du :\Delta^0 \longrightarrow \Delta^0
     \end{equation*}
    extends continuously to a homeomorphism of $\Delta$, 
    inducing a diffeomorphism on every open face of the polytope $\Delta$ and preserving its vertices. 
   \end{lemma}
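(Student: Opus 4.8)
The plan is to write $\psi := (du_0)^{-1}\circ du$ and to analyse it face by face. From the defining condition of $\cS_{\alpha,\beta}(\Delta,{\bf L},F)$ and formula \eqref{u0} for $u_0$, we have $u = u_0 + w$ with
\[
w \;=\; g \;-\; \bigl(\alpha + (\beta+\tfrac12)L_F\bigr)\log L_F ,
\]
where $g$ is smooth on $\Delta$ and $L_F=L_1$ is the label of $F$. In particular $|du|\to\infty$ near $\partial\Delta$ (near a facet $F_r\neq F$ the normal component of $du$ behaves, as for $u_0$, like $\tfrac12\log L_r$; near $F$ it behaves like $-\alpha/L_F$, using $\alpha>0$), so Lemma~\ref{lem_diffeo} applies to both $u_0$ and $u$ and shows that $du_0$ and $du$ are diffeomorphisms of $\Delta^0$ onto $\tor$; hence $\psi$ is a diffeomorphism of $\Delta^0$, and the only remaining issue is its behaviour near $\partial\Delta$.

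The heart of the argument is a local statement at each proper face $\Sigma\subset\Delta$. I would fix a relative interior point $p\in\Sigma^0$ and choose affine coordinates $(\rho_k)_{k\in I_\Sigma}, x'$ near $p$, where $I_\Sigma$ indexes the facets containing $\Sigma$, $\rho_k = L_k$, the $x'$ are affine coordinates on the affine hull of $\Sigma$, and $\Sigma^0 = \{\rho = 0\}$ locally. If $F\notin I_\Sigma$, then $L_F$ is bounded away from $0$ near $p$, so $w$ is smooth up to $\Sigma^0$ and $du$ differs from $du_0$ by a function smooth in $(\rho,x')$. If $F\in I_\Sigma$, then all partial derivatives of $u$ except $\partial_{\rho_F}u$ still differ from those of $u_0$ by functions smooth in $(\rho,x')$, while $\partial_{\rho_F}u = \tfrac12\log\rho_F - (\beta+\tfrac12)\log\rho_F - \alpha/\rho_F$ plus a smooth remainder. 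I would then pass to logarithmic coordinates $\sigma_k := \log\rho_k$ (so $\rho_k\to 0^+$ corresponds to $\sigma_k\to-\infty$): in these coordinates $du_0$ becomes, after reparametrisation, a map of the form $(\sigma,x')\mapsto \bigl(\tfrac12\sigma,\,0\bigr) + (\text{function smooth in }(e^{\sigma},x'))$ whose limiting differential as $e^{\sigma}\to 0$ is the invertible block matrix $\operatorname{diag}\!\bigl(\tfrac12 I,\ \Hess(u_0|_\Sigma)\bigr)$, and hence admits an inverse of the same structure. Composing this inverse with $du$ — whose $\sigma_F$-slot carries the extra term $-\alpha e^{-\sigma_F}$ and whose remaining slots converge as $\rho\to 0$ — and solving the resulting system by the implicit function theorem shows that $\psi$ extends continuously to a neighbourhood of $\Sigma^0$ in $\Delta$, that $\psi(\Sigma^0)\subset\Sigma^0$, and that on $\Sigma^0$ one has $\psi|_{\Sigma^0} = (d h_0)^{-1}\circ d h$, where $h_0 = u_0|_\Sigma$ and $h$ is the induced function on $\Sigma^0$ (namely $u|_\Sigma$ if $F\notin I_\Sigma$, and $(u + (\alpha+\beta L_F)\log L_F)|_\Sigma$ if $F\in I_\Sigma$), both of which restrict to $\Sigma^0$ as smooth strictly convex functions with $|d\,\cdot\,|\to\infty$ near $\partial\Sigma$ by the defining conditions. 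By Lemma~\ref{lem_diffeo} applied on $\Sigma$, $dh_0$ and $dh$ are then diffeomorphisms of $\Sigma^0$ onto $\tor/\tor_\Sigma$, so $\psi|_{\Sigma^0}$ is a diffeomorphism of $\Sigma^0$.

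Finally I would assemble the global picture. The local extensions all extend the single diffeomorphism $\psi$ of $\Delta^0$, so they agree on overlaps and glue to a continuous map $\bar\psi:\Delta\to\Delta$ with $\bar\psi(\Sigma^0) = \Sigma^0$ for every face $\Sigma$ (including $\Sigma=\Delta$, and, taking $\Sigma$ a vertex, showing each vertex is fixed). Since $\Delta = \bigsqcup_\Sigma \Sigma^0$ and $\bar\psi$ respects the face decomposition while restricting to a diffeomorphism of each open face $\Sigma^0$, it is a bijection of $\Delta$; as $\Delta$ is compact and Hausdorff, $\bar\psi$ is a homeomorphism, inducing a diffeomorphism on every open face and preserving the vertices, which is the claim.

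I expect the only non-routine point to be the local analysis at $F$ (and its sub-faces): there the normal blow-ups of $du$ and $du_0$ occur at the genuinely different rates $\alpha/L_F$ and $\tfrac12\log L_F$, so $\psi$ is an extremely strong contraction transverse to $F$ (heuristically $L_F\circ\psi \approx \exp(-2\alpha/L_F)$), and the purpose of the logarithmic change of variables above is precisely to show that this contraction is nonetheless continuous up to the boundary and does not distort the tangential directions. Transverse to the other facets, and in all tangential directions, $\psi$ is a mild perturbation and the analysis is the standard one from Abreu--Guillemin theory (as in \cite{ACGT}).
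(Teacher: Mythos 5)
Your proposal is correct and follows essentially the same route as the paper: both arguments reduce the boundary behaviour face by face, identify the limiting map on each open face $\mathbf{f}$ with $(d u_{0,\mathbf{f}})^{-1}\circ d u_{\mathbf{f}}$ for the induced potentials (using that the tangential derivatives of the singular part vanish in adapted coordinates), and then invoke Lemma~\ref{lem_diffeo} on the face to conclude these are diffeomorphisms. The only differences are presentational: the paper establishes continuity of the extension by a sequential limit-point argument in place of your logarithmic coordinates and implicit function theorem (where the collapse $L_F\circ\psi\approx e^{-2\alpha/L_F}$ is the step you rightly flag as the delicate one), and you make explicit the final compactness argument for the homeomorphism claim, which the paper leaves implicit.
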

   \begin{proof} The proof is elementary. Let $p\in \partial\Delta$, and consider some sequence $(p_k)$ in $\Delta^0$ 
   converging to $p$. 
   We want to see that:
    \begin{enumerate}
     \item[\rm (a)] $\lim _{k\to \infty} ((d{u_0})^{-1} \circ du) (p_k)$ lies in the same open facet as $p$ (or equals $p$ if $p$ is a vertex of $\Delta$);
     \item[\rm (b)] this limit does not depend on the sequence $(p_k)$. 
    \end{enumerate}
The claimed  regularity of $(d{u_0})^{-1} \circ du$ will  follow from the proof (a) and (b) above. 
  
  Let us assume that $p \in \mathbf{f} := (F_{i_1}\cap \cdots \cap F_{i_{\ell}})\backslash (F_{i_{\ell+1}}\cup \cdots \cap F_{i_{d}}$), $\ell\leq n$, 
  where $i_1<\cdots <i_{\ell}$ and $\{i_1, \ldots, i_{d} \}= \{1, \dots ,d\}$,  
  and also that $F_{i_1}\cap \cdots \cap F_{i_n}$ is not empty. 
  
  As already noticed, $\big(du(p_k)\big)$ tends to $\infty$ in $\mathfrak{t}$; 
  thus, as $du_0$ is a diffeomorphism $\Delta\to \mathfrak{t}$ whose norm tends to $\infty$ near $\partial\Delta$, 
   \begin{equation*}
    (\underline{p}{}_k) := \big((d{u_0})^{-1}\circ du (p_k) \big)
   \end{equation*}
  tends to $\partial \Delta$. 
  We now prove that any limit point of $(\underline{p}{}_k)$ must belong to $\mathbf{f}$. 
  For this,  observe 
  that if $(\overline{p}_k)$ tends to $\partial \Delta$, 
  then $(\overline{p}_k)$ tends to $\mathbf{f}$ if and only if 
  $u_{0,1}(\overline{p}_k),\ldots, u_{0,\ell}(\overline{p}_k)$ tend to $-\infty$ 
  while $u_{0,\ell+1}(\overline{p}_k), \ldots, u_{0,n}(\overline{p}_k)$ remain bounded (we have set $u_{0,i}(x) = \frac{\partial u_0}{\partial x_i}(x)$). We thus want to prove that, as $k\to \infty$, 
   \begin{equation*}
    \left\{\begin{aligned}
     {u}_{0,1}(\underline{p}{}_k),    & \ldots, {u}_{0,\ell}(\underline{p}{}_k)  \longrightarrow -\infty,  \\
       {u}_{0,\ell+1}(\underline{p}{}_k),& \ldots, {u}_{0,n}(\underline{p}{}_k)   \,   = {O}(1), 
           \end{aligned}
    \right.
   \end{equation*}
  or equivalently
   \begin{equation}    \label{eqn_estmtes}
    \left\{\begin{aligned}
             u_{,1}(p_k),  &  \dots, u_{,\ell}(p_k)      \longrightarrow -\infty,  \\
             u_{,\ell+1}(p_k),&  \dots, u_{, n}(p_k)     \,   = {O}(1).
           \end{aligned}
    \right.
   \end{equation}
  Since $u\in \mathcal{S}_{\alpha, \beta}(\Delta, {\bf L}, F)$,  
  up to a smooth $\mathfrak{t}$-valued function  near $p$  we have
  \begin{equation*}
   u_{, j} =\left\{\begin{aligned}
              \Big(-\frac{\alpha}{L_1}+\beta\log L_1 \Big) &  &\text{ if }j=1,    \\
                                                 \frac{1}{2}\log L_{j} &    &\text{ if } j\geq 2,
             \end{aligned}
      \right.
  \end{equation*}
  which yield the estimates \eqref{eqn_estmtes}  and concludes the point (a) of our proof. 
  
 We now  address (b).  Let $p_k \to p$ and  ${\underline p}{}_k\to {\underline p}$.    
 Our task is to prove that $\underline{p}$ does not depend on $(p_k)$; if $p$ is a vertex, this already follows from (a), 
 so  we assume that $\mathbf{f}$ is an open face of $\Delta$.   
 Letting $u_{0, {\bf f}}: = (u_0)_{|_{\bf f}},$  $u_{\mathbf{f}}:=( u + \alpha \log L_1)_{|_{\mathbf{f}}}$ if $\mathbf{f}\subset F=F_1$  and 
 $u_{\mathbf{f}}=u_{|_{\mathbf{f}}}$ if $\mathbf{f}\not\subset F$, 
 the definitions of the spaces $\mathcal{S}(\Delta, {\bf L})$ and $\mathcal{S}_{\alpha, \beta}(\Delta, {\bf L}, F)$ ensure that   
 ${u}_{0,\mathbf{f}}$ and $u_{\mathbf{f}}$ are strictly convex on $\mathbf{f}$.   
 With the notations above, observe that the ${u}_{0,j}$, $j=\ell+1, \ldots, n$,  are smooth  in a neighbourhood of $\mathbf{f}$, 
 and ${u}_{0, j}= ({u}_{0,\mathbf{f}})_{,j}$ along $\mathbf{f}$;
  similarly,  the $u_{,j}$  are smooth around $\mathbf{f}$ and $u_{,j} = (u_{\mathbf{f}})_j$ along
 $\mathbf{f}$.    In this way, letting $k \to \infty$ in the equality $d{u_0}(\underline{p}{}_k) = du(p_k),$ we obtain
   \begin{equation*}
    (d{u}_{0,\mathbf{f}})(\underline{p}) = (du_{\mathbf{f}})(p),  
   \end{equation*}
  Using the strict convexity of   ${u}_{0,\mathbf{f}}$ and $u_{\mathbf{f}}$ on $\mathbf{f}$, and that 
  the norms of their differentials tend to $\infty$ near $\partial\mathbf{f}$,  
  we conclude that $d{u}_{0,\mathbf{f}}$ and $du_{\mathbf{f}}$ are diffeomorphisms $\mathbf{f}\to (\mathfrak{t}/\mathfrak{t}_{\mathbf{f}})^*$ 
  (see Lemma~\ref{lem_diffeo} above), and thus: 
   \begin{equation*}
    \underline{p}{} = (d{u}_{0,\mathbf{f}})^{-1} \circ du_{\mathbf{f}} (p), 
   \end{equation*}
  does not depend on $(p_k)$. 
 
 This completes the proof of Lemma \ref{lem_diffeo2}.  \end{proof}

The general theory~\cite{guillemin-book} 
(which uses local arguments around the  preimage of each face)  ensures that $\varphi(z)$ extends smoothly over $\C^n_v \setminus (\C^n_v \cap Z)$, 
and that $\omega_u$ defines a K\"ahler metric on $X \setminus Z$. 
We shall thus focus our analysis on $\C^n_v$ in order to understand the behaviour of $\omega_u$ near $Z\cap \C^n_v$, see \eqref{chart-divisor}.
 
 \smallskip
  Let $p\in Z \cap \C^n_v$. We shall consider the following limiting cases:
   \begin{enumerate}
    \item[\rm (a)]  $p=(0,\ldots, 0)$  corresponds to the vertex $v$ of $\Delta$;
    \item[\rm (b)]   $\mu_0(p)$ belongs to the relative interior of $F$, 
                     i.e. in the chart $\C^n_v$, $p$ has coordinates $(0, z_2, \ldots, z_n)$ with $z_j \neq 0, j=2, \ldots, n$. 
   \end{enumerate}
  The  case when $\mu_0(p)$ belongs to  an $\ell$-codimensional face of $F$ with $1\leq \ell\leq n-1$, 
  can be dealt with by combining the arguments for the cases (a) and (b).  
  We shall also assume at first that $\beta=0$. 
     
     ~
     
  \noindent
  \textit{Case (a): $p=(0, \ldots, 0)$.}  We can assume without loss that  the vertex  $\mu_0(p)=v$ of $\Delta$ is at the origin of $\tor^*\cong \R^n$, i.e. $L_j (x)= x_j, j=1, \ldots, n$.
   We thus have, near $0 \in \Delta$,   
       \begin{equation}\label{w}
      u(x)=-\alpha \log(x_1)+\frac{1}{2}\Big(\sum_{j=2}^m x_j\log(x_j) - x_j\Big) + w(x)
   \end{equation} 
 with $w(x)$ smooth on $\R^n$. We can further modify $u$ by adding an affine linear function (which does not change neither the induced K\"ahler metric nor the belonging of $u$ to $\mathcal{S}_{\alpha, \beta}(\Delta, {\bf L}, F)$) so that $du(x^0)=0$.  It then follows that in the chart $\C^n_v$ the  functions $y_j = u_{,j } = \log |z_j|$ are given by
 \begin{equation*}
 \begin{split}
 y_1 &= \log|z_1|=-\frac{\alpha}{x_1} + w_{,1}(x),\\
 y_j &= \log|z_j| = \frac{1}{2}\log x_j  + w_{,j}(x), \ j=2, \ldots, n,
 \end{split}
 \end{equation*}
 or equivalently,
 \begin{equation}    \label{eqn_Fx}
     \left\{\begin{aligned}
             -\frac{1}{\log|z_1|} &= f_1(x)  \quad,  \\
                    |z_j|^2 \quad   &= f_j (x)  \quad \text{for }j=2,\dots, n, 
            \end{aligned}
     \right.
    \end{equation}
with $f_1(x) =\frac{x_1}{\alpha - x_1w_{,1}(x)}$ and $f_j(x)=x_je^{2w_{,j}(x)}, j=2, \ldots, n.$ We want to use \eqref{eqn_Fx} in order to express the 
momentum coordinates $x=(x_1, \ldots, x_n)$ of $\omega_u$ in terms of $|z_j|'s$.  
To this end, we notice that $f(x)=(f_1(x), \ldots, f_n(x))$ extends smoothly  near the origin $v=0\in \Delta$. 
(Similarly, the LHS of \eqref{eqn_Fx} extends continuously on $\C^n_v$ by letting $\frac{-1}{\log|z_1|}$ be 0 along $z_1=0$.)  
Computing the Jacobian of $f(x)$ at $0$, we conclude that the local inverse $h(\zeta)$ of $f(x)$ is defined on a small neighbourhood of $0\in \R^n$  and has the form
\begin{equation}\label{h}
      h(\zeta)=\big(\alpha\zeta_1\tilde{h}_1(\zeta), \zeta_2\tilde{h}_2(\zeta), 
                     \ldots, \zeta_n\tilde{h}_n(\zeta)\big),
    \end{equation}
with $\tilde{h}_1(\zeta),\dots, \tilde{h}_n(\zeta)$ smooth and non-vanishing near $0$.   It thus follows from \eqref{eqn_Fx} that
    \begin{equation}   \label{eqn_Gxi}
     \left\{\begin{aligned}
             x_1 &= \Big(\frac{-\alpha}{\log|z_1|}\Big)\tilde{h}_1\Big(\frac{-1}{\log|z_1|},|z_2|^2,\dots,|z_n|^2\Big),            \\
             x_j &= |z_j|^2\tilde{h}_j\Big(\frac{-1}{\log|z_1|},|z_2|^2,\dots,|z_m|^2\Big)  \qquad \text{for }j=2,\dots, n. 
            \end{aligned}
     \right.
    \end{equation}
   By \eqref{kahler-potential},  and using that $y_j = \log |z_j|$ together with \eqref{eqn_Gxi}, we find that
    \begin{equation}\label{u-potential}
    \begin{split}
    \varphi_u(z)   =& \frac{1}{2}\sum_{j=1}^n \log(|z_j|^2)x_j - u(x) \\
                       =& - \alpha \log\big(-\log|z_1|\big)    +  W\Big(\frac{-1}{\log|z_1|},|z_2|^2,\dots,|z_n|^2\Big)
    \end{split}                                 
    \end{equation}
  where 
     \begin{equation}    \label{eqn_W}
    W(\zeta):= \alpha\log\big[\alpha\tilde{h}_1(\zeta)\big]-\alpha\tilde{h}_1(\zeta)
                          -\frac{1}{2}\Big(\sum_{j=2}^m \zeta_j\tilde{h}_j(\zeta)\big(\log \tilde{h}_j(\zeta) -1\big)\Big)-w\big[h(\zeta)\big]
    \end{equation}
for $w(x)$ defined in \eqref{w}. Thus, $W(\zeta)$  is a smooth function near $0$. 

A similar (and well-established) argument  using $u_0$ instead of $u$ shows that $\varphi_{u_0}$ can be viewed as a smooth function  $\psi_0(z)$ on $\C^n_v$, 
so that the relative potential $\varphi(z)$ in \eqref{phi} is written as
$$\varphi(z) = - \alpha \log\big(-\log|z_1|\big) 
              +\psi_0(z) + W\big(\frac{-1}{\log |z_1|},|z_2|^2,\dots,|z_n|^2\big)$$ 
 and thus has the required behaviour of a Poincar\'e type potential near $Z=\{z_1=0\}$, see Definition~\ref{d:Poincare-type}.  
   
  We now examine the asymptotic behaviour  of the K\"ahler form $\omega_u=dd^c \varphi_u$ near the origin $0 \in Z\cap \C^n_v$.  
  Using \eqref{u-potential}, we find that 
  \begin{equation}  \label{eqn_decomp1}
     \omega_u = dd^c\varphi_u = 2\alpha \frac{\sqrt{-1}dz_1\wedge d\overline{z_1}}{|z_1|^2\log^2(|z_1|^2)} +\Omega +\eta, 
    \end{equation}
   where $\Omega+\eta= dd^c\big[W\big(\frac{-1}{\log(|z_1|)},|z_2|^2,\dots,|z_n|^2\big)\big]$ are given by
    \begin{equation}   \label{eqn_OmegaEta}
     \left\{\begin{aligned}
             \Omega = & \sum_{j=2}^n W_{j}\Big(\frac{-1}{\log(|z_1|},|z_2|^2,\dots,|z_n|^2\Big) \, \sqrt{-1}dz_j\wedge d\overline{z_j}                        \\
                      & +\sum_{j,k=2}^n W_{jk}\Big(\frac{-1}{\log(|z_1|)},|z_2|^2,\dots,|z_n|^2\Big) \, \overline{z_j}z_k \sqrt{-1}dz_j\wedge d\overline{z_k}, \\
             \eta   = & W_1\Big(\frac{-1}{\log(|z_1|)},|z_2|^2,\dots,|z_n|^2\Big)\frac{ \sqrt{-1}dz_1\wedge d\overline{z_1}}{|z_1|^2\log^2(|z_1|)}          \\
                      & +\sum_{j=2}^n W_{1j}\Big(\frac{-1}{\log(|z_1|)},|z_2|^2,\dots,|z_n|^2\Big)
                                            \Big(\frac{z_j\sqrt{-1}dz_1\wedge  d\overline{z_j}}{z_1\log(|z_1|)}
                                                 +\frac{\overline{z_j} \sqrt{-1}dz_j\wedge d\overline{z_1}}{\overline{z_1}\log(|z_1|)}\Big),           
            \end{aligned}
     \right. 
    \end{equation} 
   and we have put
   \begin{equation*}
   \begin{split}
   W_1(\zeta) &=\zeta_1^2\Big(\frac{\partial^2W}{\partial \zeta_1 \partial \zeta_1}\Big)(\zeta) +\frac{1}{2}\zeta_1\Big(\frac{\partial W}{\partial \zeta_1}\Big)(\zeta), \ \ W_j(\zeta) = \Big(\frac{\partial W}{\partial \zeta_j}\Big)(\zeta), \ j\ge 2;\\
    W_{1j}(\zeta) &=-\zeta_1\Big(\frac{\partial^2 W}{\partial \zeta_1 \partial \zeta_j}\Big)(\zeta), \ j\ge 2; \ \   W_{jk}(\zeta) = \Big(\frac{\partial^2 W}{\partial \zeta_j \partial \zeta_k}\Big)(\zeta), \  j,k\ge  2. 
   \end{split}
   \end{equation*}
  Since $W_k(\zeta), W_{pq}(\zeta)$ are smooth,  $||\nabla^{s}\eta||={O}\big(\frac{1}{\log(|z_1|)}\big)$ near $Z$ for all $s\geq0$,  where $\nabla$ is the Levi--Civita connection of 
   the model Poincar\'e type metric
   \begin{equation*} 
 \omega_{\rm mod}=\frac{\sqrt{-1}dz_1\wedge d\overline{z_1}}{|z_1|^2\log^2(|z_1|^2)}+\sum_{j=2}^n \sqrt{-1}dz_j\wedge d\overline{z_j}
 \end{equation*} on $\C^n_v\backslash Z,$ 
  and the norms are computed with help of  $\omega_{\rm mod}$. 
   
  It follows from  \eqref{eqn_decomp1}--\eqref{eqn_OmegaEta} that $\omega_u$ has the Poincar\'e type behaviour in the normal $z_1$-direction, 
   as well as in the $(z_1,z_j)$-directions for $j\geq 2$.

   We are thus left  to examine the metric over the hyperplane $z_1=0$. 
   Letting $\tilde{W}_j(\zeta)$ and $\tilde{W}_{jk}(\zeta)$ be  the smooth functions  determined near $0$ by
      \begin{equation*}
  W_j(\zeta)=W_j(0,\zeta_2,\dots,\zeta_n)+\zeta_1\tilde{W}_j(\zeta); \ \  W_{jk}(\zeta)=W_{jk}(0,\zeta_2,\dots,\zeta_n)+\zeta_1\tilde{W}_{jk}(\zeta)
  \end{equation*}
   one has the decomposition $\Omega=\Omega_0+\varepsilon$, 
   where 
    \begin{equation}   \label{eqn_Omega}
     \left\{\begin{aligned}
             \Omega_0= & \sum_{j=2}^n W_{j}(0,|z_2|^2,\dots,|z_n|^2) \,\sqrt{-1}dz_j\wedge d\overline{z_j}                                                      
                                                                                                                           \\
                       & +\sum_{j,k=2}^n W_{jk}(0,|z_2|^2,\dots,|z_n|^2) \,\overline{z_j}z_k \sqrt{-1}dz_j\wedge d\overline{z_k},                               
            \\
             \varepsilon = & \frac{-1}{\log(|z_1|)}\sum_{j=2}^n \tilde{W}_{j}\Big(\frac{-1}{\log(|z_1|)},|z_2|^2,\dots,|z_n|^2\Big) 
                          \,\sqrt{-1}dz_j\wedge d\overline{z_j}       \\
                       & -\frac{1}{\log(|z_1|)}\sum_{j,k=2}^n 
                                                \tilde{W}_{jk}\Big(\frac{-1}{\log(|z_1|)},|z_2|^2,\dots,|z_n|^2\Big) 
                                                \,\overline{z_j}z_k \sqrt{-1}dz_j\wedge d\overline{z_k}.  
            \end{aligned}
     \right. 
    \end{equation}
We notice that $\Omega_0$ is smooth around the origin   whereas $\varepsilon$ satisfies,  
for all $s\geq0$, $||\nabla^{s}\varepsilon||={O}\big(\frac{1}{\log(|z_1|)}\big)$ near $Z$ 
(with covariant derivatives and norms taken with respect to the model Poincar\'e metric $\omega_{\rm mod}$). Computing the value of $\Omega_0$ at $z=0$, we get
     \begin{equation*}
     (\Omega_0)_{|_{z=0}} = \sum_{j=2}^m W_j(0) \, \sqrt{-1}dz_j\wedge d\overline{z_j}.
    \end{equation*}
 Using \eqref{eqn_W}, we have 
  \begin{equation}\label{comput}
    \begin{split}
     W_j(0) &= \Big(\frac{\partial W}{\partial \zeta_j} \Big)(0) \\
                 &= -\frac{1}{2}\tilde{h_j}(0)\log\big(\tilde{h_j}(0)\big)- \Big(\frac{\partial h_j}{\partial \zeta_j}\Big) (0)\Big(w_{,j}(0) -\frac{1}{2}\Big)   \\
                 &= -\frac{1}{2}\tilde{h}_j(0)\log\big(\tilde{h}_j(0)\big)- \tilde{h}_j(0)\Big(w_{,j}(0)-\frac{1}{2}\Big). 
    \end{split}
    \end{equation}
By the definition \eqref{h} (and inverting the diagonal Jacobian of $f$ at $0$) 
we have that for any $j=2, \ldots, n$, ${\tilde h}_j (0)= e^{-c_j}$ where $c_j=2w_{,j}(0)$.  
Substituting back to \eqref{comput}, we conclude
  \begin{equation*}
W_j(0)= e^{-c_j}\Big(-\frac{1}{2}(-c_j) - \frac{c_j-1}{2} \Big)  = \frac{e^{-c_j}}{2} >0,
\end{equation*} 
 thus showing  the positivity of $\omega_u$ in the directions parallel to $Z$.

    ~
    
  \noindent
  \textit{Case 2: $p=(0, z_2, \ldots, z_n)$ with $ z_j \neq 0$ for $j=2, \ldots, n$. } 
Now
$$u(x)=-\alpha\log x_1 +w(x)$$ with $w(x)$ smooth in a neighbourhood of $x_p=\mu(p) \in F^0$.  By assumption,    $u$ is strictly convex on $\Delta^0$, and $u_{F}= w_{|_F}$   is strictly convex on $F^0$. 
The relations \eqref{eqn_Fx}  now become
    \begin{equation}   \label{eqn_Fx2}
     \left\{\begin{aligned}
             \frac{-1}{\log(|z_1|)} &= f_1(x)  \quad,   \\
                    |z_j|^2 \quad   &= f_j (x)\quad \text{for }j=2,\dots, n, 
            \end{aligned}
     \right.
    \end{equation}
 with $f_1(x)= \frac{x_1}{\alpha-x_1w_{,1}(x)}$,  
and  $f_j(x)=e^{2w_{,j}(x)}$, $j=2,\dots,n$.  Using the strict convexity of $w$ along $F^0=\{x_1=0\}\cap \Delta$, 
we see that $f(x)=(f_1(x), \ldots, f_n(x))$ is smooth and locally invertible around $x_p=(0, b_2, \ldots, b_n)$. 
We denote by $h(\zeta)=(h_1(\zeta), \ldots, h_n(\zeta))$ the local inverse of $f$ around $x_p$, which must be of the form
 \begin{equation*}
    h(\zeta)= \big(\alpha\zeta_1\tilde{h}_1(\zeta), \tilde{h}_2(\zeta), \ldots, \tilde{h}_{n}(\zeta)\big)  
    \end{equation*}
  with ${\tilde h}_1(0) >0$.
 Thus,
 \begin{equation*}
   x = h\Big(\frac{-1}{\log|z_1|},|z_2|^2,\dots,|z_n|^2 \Big), 
    \end{equation*}
(which extends along $z_1=0$ near $p$). We obtain again
    \begin{equation}  \label{eqn_PhiuW}
     \varphi_u(z)= -\alpha\log\big(-\log|z_1|\big) +W\Big(\frac{-1}{\log|z_1|},|z_2|^2,\dots,|z_m|^2\Big), 
    \end{equation}
   with $W$ smooth  and given by
       \begin{align*}
     W(\zeta) =& \alpha\log\big(\alpha\tilde{h_1}(\zeta)\big)-\alpha\tilde{h_1}(\zeta) 
                               +\frac{1}{2}\sum_{j=2}^m \tilde{h_j}(\zeta)\log \zeta_j
                                -w\big[h(\zeta)\big], 
    \end{align*}
   (notice that $\zeta_j(p)=|z_j(p)|^2>0$ for $j=2,\ldots, n$). Thus,
   $$\varphi(z)= \varphi_u(z) - \varphi_{u_0}(z)= -\alpha\log(-\log|z_1|)- \psi(z)+ W\big(\frac{-1}{\log|z_1|},|z_2|^2,\dots,|z_n|^2\big), $$
   with $\psi$ and $W$ are smooth near $p$. 
   Consequently, $\varphi$  has the right asymptotics near $p$.

   \smallskip
   We  now address  the positivity  of $\omega_u$  near $p$.  
   Writing
    \begin{equation}  \label{eqn_decomp2}
     \omega_u = dd^c\varphi_u =\frac{2\alpha \sqrt{-1}dz_1\wedge d\overline{z_1}}{|z_1|^2\log^2(|z_1|^2)}+ \Omega+\eta,
    \end{equation}
   with $\Omega+\eta=dd^c\big(W\big(\frac{-1}{\log|z_1|},|z_2|^2,\dots,|z_n|^2\big)\big)$ given by  \eqref{eqn_OmegaEta}, we have that 
   $\eta={O}\big(\frac{1}{\log|z_1|}\big)$ at any order;  it is thus enough to show the positivity of $\Omega$ in \eqref{eqn_decomp2}. 
   Decomposing $\Omega=\Omega_0+\varepsilon$ with $\varepsilon =O\big(\frac{1}{\log|z_1|}\big)$ as in \eqref{eqn_Omega}, 
   we need to establish  the positivity of $(\Omega_0)_p$.  By its very definition, 
   $\Omega_0|_{Z}=dd^c\Big(W(0,|z_2|^2,\ldots,|z_n|^2)\Big)$. A careful examination of the definition of $W$ reveals that, 
   up to additive pluriharmonic terms $\log |z_j|^2$,  $W(0,|z_2|^2,\ldots,|z_n|^2)$, seen as a function on the hypersurface $\{z_1=0\}$,  coincides with 
   the K\"ahler potential corresponding to the Legendre transform \eqref{kahler-potential}  of the strictly convex function 
   $u_F(x) = \alpha \log x_1 + u(x)= w(x)$ restricted to the relative interior of $F$. Thus, $(\Omega_0)_p>0$.

   \smallskip
   We finally comment on the case when  $u\in \mathcal{S}_{\alpha, \beta}(\Delta, {\bf L}, F)$ with $\beta\neq 0.$ 
   The main difficulty is that the equations \eqref{eqn_Fx} hold with
    \begin{equation}  \label{eqn_F1}
     f_1(x)= \frac{x_1}{\alpha-\beta x_1\log x_1 - x_1(\beta+w_{,1}(x))}
    \end{equation}
which  is no longer smooth (nor even $C^2$)  around $x_1=0$.
 
  One way to bypass this difficulty is to use a suitable change of variables. 
  We detail below the case $n=1$, for the general case is treated similarly by considering the change of variables with respect to $|z_1|^2$ 
  and leaving the variables $|z_j|^2, j =2, \ldots, n$ unchanged.
  
  We  set 
  $$s:=\frac{-1}{\log x_1}, \  \ t_1:=\log(-\log|z_1|),$$
  so that  the first equation in \eqref{eqn_Fx} becomes
    \begin{equation} \label{eqn_Fs}
     \frac{1}{t_1}
         = \frac{s}{1+s\log\big(\alpha+\beta e^{-1/s}/s + e^{-1/s}(\beta+w_{,1}(e^{-1/s}))\big)}
         =: f(s). 
    \end{equation}
    As $w_{,1}$ is smooth in a neighbourhood of $0$ and the functions 
    \begin{equation*}
     s\longmapsto \left\{\begin{aligned}
                          & 0        &\text{ if }s\leq 0,\\
                          & e^{-1/s} &\text{ if }s > 0,
                         \end{aligned}
                  \right.
     \qquad \text{and} \qquad 
     s\longmapsto \left\{\begin{aligned}
                          & 0                    &\text{ if }s\leq 0,\\
                          & \frac{1}{s} e^{-1/s} &\text{ if }s > 0,
                         \end{aligned}
                  \right.
    \end{equation*}
   are smooth on $\R$,  we can extend $f(s)$ as a smooth function in a neighbourhood of $0$ by  letting $f(s)=\frac{s}{1-s\log\alpha}$ for $s\leq 0$. 
   
   As $\partial_sf(0)= 1$, we get that $s=h(\frac{1}{t_1})$ for some $h$ smooth around 0,  satisfying $h(0)=0$, $h'(0)=1$.  Thus, 
    $s=\frac{1}{t_1}\big(1+\frac{\gamma}{t_1}+{\tilde h}(\frac{1}{t_1})\big)$ for some constant $\gamma$, 
   and $\tilde h$ a smooth function vanishing at order $2$ at $0$. In fact, one  must have  $\gamma=\log\alpha$, 
   and thus $$\frac{1}{s}=t_1-\log\alpha+H\Big(\frac{1}{t_1}\Big), $$
   with $H$ a smooth function vanishing at order (at least) $1$  at $0$. 
  We can be more precise, and rewrite \eqref{eqn_Fs} as
    \begin{equation*}
     \frac{1}{t_1}=\frac{1}{1/s+\log\alpha+P(s)}, \quad\text{i.e.} \quad \frac{1}{s}=t_1-\log\alpha-P(s)
    \end{equation*}
   with $P(s)=\log\big(1+(\beta/\alpha)e^{-1/s}/s+e^{-1/s}(\beta+w_{,1}(e^{-1/s}))/\alpha\big)$. 
   Replacing $1/s$ with  $t_1-\log\alpha+H\big(\frac{1}{t_1}\big)$ in  the explicit expression of $P(s)$, 
   we  see by induction that $H(\frac{1}{t_1})$, 
   as well as  its derivatives with respect to $t_1$ at any order,  are ${O}(t_1e^{-t_1})$ when $t_1\to +\infty$.  
    Therefore, 
    \begin{align*}
     x= e^{-1/s} = \exp\big(-t_1 +\log\alpha + P(s)\big)
         = \frac{-\alpha}{\log|z_1|} \big(1+O(t_1e^{-t_1})\big), 
    \end{align*}
   at any order with respect to differentiation in $t_1$. 
   In particular,  
    $x=\frac{-\alpha}{\log|z_1|} + {O}(t_1e^{-2t_1})$ (instead of ${O}(e^{-2t_1})$ in the case $\beta=0$)
   at any order with respect to $\frac{|dz_1|^2}{|z_1|^2\log^2(|z_1|^2)}$.  We thus  get  for the relative potential \eqref{phi}
   $$\varphi = -\log\big(-\log|z_1|\big) + \log \alpha + O(t_1e^{-t_1}),$$
   and recover the asymptotic behaviour $\omega_u$ 
   near $Z$ with arguments identical with the ones in the case $\beta=0$.     
   This ends the proof of Theorem~\ref{thm:criterion}. 
    
 \subsection{Proof of Proposition~\ref{complete-first-order}}\label{Ealpha} 
 The arguments are local,  near a point $p\in \partial\Delta$, and not materially different from ones in \cite[\S 1.3]{ACGT}. 
 In fact, we need to only consider the case when $p\in F=F_1$ (otherwise the result follows from \cite{ACGT}). 
 To this end, we fix a vertex $v\in F$ of $\Delta$, which without loss can be taken to be at the origin of $\tor^*$, 
 and consider a basis of $\tor$ corresponding to the inward normals of the $n$ facets $F_1, \ldots, F_n$ meeting at $v$. 
 We can also assume that $F=F_1$ is defined by the equation $x_1=0$, i.e. $p=(0, x_2, \ldots, x_n)$ with $x_j \ge 0,$  and $L_j(x)=x_j$.
  
In one direction,  we want to show that  if $u \in \mathcal{S}_{\alpha, \beta}(\Delta, {\bf L}, F)$ 
then $u$ satisfies the four conditions of Proposition~\ref{complete-first-order}. 

Let us  define the mutually inverse matrices
   \begin{equation*}
  \mathbf{G}^0_{\alpha, \beta}=  \begin{pmatrix}
     \frac{\alpha + \beta x_1}{x_1^2} & 0              &  \cdots   &   0             \\
                       0                      & \frac{1}{2x_2} &  \ddots   &   \vdots        \\
                     \vdots                   &  \ddots        &  \ddots   &   0             \\
                       0                      &  \cdots        &    0      &   \frac{1}{2x_n} 
    \end{pmatrix}
    \quad\text{and}\quad
   \mathbf{H}^0_{\alpha, \beta} = \begin{pmatrix}
     \frac{(x_1)^2}{\alpha+\beta x_1}         &    0           &  \cdots   &   0             \\
                       0                      &   2x_2         &  \ddots   &   \vdots        \\
                     \vdots                   &  \ddots        &  \ddots   &   0             \\
                       0                      &  \cdots        &    0      &   2x_n 
    \end{pmatrix}.
   \end{equation*}
 Given $u\in \mathcal{S}_{\alpha, \beta}(\Delta, {\bf L}, F_1)$, 
 we first prove that  $\mathbf{G}^u ={\rm Hess}(u)$ and its inverse $\mathbf{H}^u$ satisfy the property that $\mathbf{G}^u-\mathbf{G}^0_{\alpha, \beta}$ and 
 $\mathbf{G}^0_{\alpha, \beta}\mathbf{H}^u\mathbf{G}^0_{\alpha, \beta}-\mathbf{G}^0_{\alpha, \beta}$ 
 extend smoothly through $(F_1\cup \cdots \cup F_n)\backslash(F_{n+1}\cup \cdots \cup F_d)$ 
 in the region $\{\alpha+\beta x_1>0\}$, see  Figure \ref{fig_1}.  
 Then we will show that if $\mathbf{G}^u-\mathbf{G}^0_{\alpha, \beta}$ and 
 $\mathbf{G}^0_{\alpha, \beta}\mathbf{H}_u\mathbf{G}^0_{\alpha, \beta}-\mathbf{G}^0_{\alpha, \beta}$ 
 extend smoothly, then the conditions of Propostion~\ref{complete-first-order} must be satisfied.
 
  \selectlanguage{french}
 
   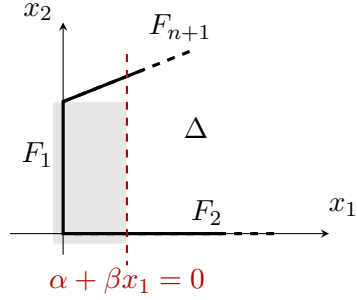
\begin{figure}[!ht]  
    \begin{center}
    \begin{tikzpicture}[scale=.7] 
             \draw [color=white, fill=gray!20, rounded corners=1pt] (-.2,1) -- (-.2,-.2) -- (1.2,-.2) -- (1.2, 2.5) -- (-.2,2.5) -- (-.2,1);
             (1.2,3.18) -- (-.2,2.62) -- (-.2,1);
             \draw (4.8,0.5)  node [right] {$x_1$};
             \draw[>=stealth, ->] (-1,0) -- (5,0); 
             \draw[>=stealth, ->] (0,-.4) -- (0,4); 
             \draw (0,4.2)  node [left] {$x_2$};
             \draw [very thick] (1.5,3.1) -- (0,2.5) -- (0,0) -- (3,0);
             \draw [very thick, dashed] (.2,0) -- (4,0);
             \draw [very thick, dashed] (0,2.5) -- (2.5,3.5);
             \draw [dashed, thick, color=black!30!red] (1.2,3.4) -- (1.2,-.6);
             \draw (0,1.5)  node [left] {$F_1$};
             \draw (2.7,.4)  node {$F_2$};
             \draw (2.2,3.9)  node {$F_{n+1}$};
             \draw [color=black!30!red] (1.2,-.9) node  {$\alpha +\beta x_1=0$};
             \draw (2.5,2)  node {$\Delta$};
       \end{tikzpicture}
      \caption{The polytope ${\Delta}$, and the domain near the face $F_1$ (grey)}
      \label{fig_1}
      \end{center}
     \end{figure}
 
\selectlanguage{english}
 
We notice that  $\mathbf{G}^0_{\alpha,\beta}={\rm Hess}(\vzero)$ with $$\vzero=-(\alpha-\beta x_1)\log x_1+\frac{1}{2}\sum_{j=2}^n x_j\log x_j.$$ 
Thus,  $\mathbf{G}^u-\mathbf{G}^0_{\alpha,\beta}$  is the Hessian 
 of a smooth  function  through the wall $(F_1\cup \cdots \cup F_n)\backslash(F_{m+1}\cup \cdots \cup F_d)$.  
 Writing $u=-(\alpha-\beta x_1)\log x_1+\frac{1}{2}\sum_{j=2}^n x_j\log x_j +w(x)$, 
with $w$ smooth,  we get:
  \begin{equation*}
   \Hzero \mathbf{G}^u = I_n +\begin{pmatrix}
                               \frac{x_1^2}{\alpha x_1+\beta}w_{,11} & \cdots & \frac{(x_1)^2}{\alpha x_1+\beta}w_{,1n} \\
                                2x_2w_{,21}                          & \cdots &  2x_2w_{,2n}                            \\
                                  \vdots                             &        & \vdots                                  \\
                                2x_nw_{,2n}                          & \cdots & 2x_nw_{,2n}
                              \end{pmatrix}
  \end{equation*}
 which clearly extends smoothly (with positive determinant over the origin). 
 Moreover, on $\mathbf{f}:=(F_1\cap\cdots\cap F_{\ell})\backslash (F_{\ell+1}\cup\cdots\cup F_m)$, for some $\ell\in\{1,\ldots,  n\}$
  \begin{equation*}
   \Hzero \mathbf{G}^u = I_n +\begin{pmatrix}
                                       0                              & \cdots &          0                             \\
                                  \vdots                              &        & \vdots                                 \\
                                              0                       & \cdots &          0                             \\
                              2x_{\ell+1}w_{,(\ell+1)1}               & \cdots &  2x_{\ell+1}w_{,(\ell+1) m}            \\
                                  \vdots                              &        & \vdots                                 \\
                                2x_nw_{,2n}                           & \cdots & 2x_nw_{,2n}
                              \end{pmatrix}
                       = \begin{pmatrix}
                           I_{\ell}  &                                     0                                             \\
                              *      & \mathbf{H}^0_{\mathbf{f}}\mathbf{G}{u_{\mathbf{f}}}
                         \end{pmatrix}
  \end{equation*} 
 where $\mathbf{H}^0_{\mathbf{f}}={\rm diag}(2x_{\ell+1}, \ldots, 2x_n)$  and, we recall,  $u_{\mathbf{f}}:= \big( u +(\alpha-\beta x_1)\log x_1)_{|_{\mathbf{f}}}$. 
 Hence $\det[\Hzero \mathbf{G}^u]=2^{m-\ell}x_{\ell+1}\cdots x_n \det[\mathbf{G}^{u_{\mathbf{f}}}]>0$ along $\mathbf{f}$, 
 as $\mathbf{G}^{u_\mathbf{f}}$ is positive definite by assumption.  Since this holds for all $\mathbf{f}$ and on the vertex $F_1\cap\cdots\cap F_n$, 
we conclude that  $\Hzero \mathbf{G}^u$ admits a smooth inverse, i.e.  $\mathbf{H}^u\Gzero =(\Hzero \mathbf{G}^u)^{-1}$ can be extended smoothly 
 through the wall too. Therefore, $(\Gzero-\mathbf{G}^u)\mathbf{H}^u\Gzero= \Gzero\mathbf{H}^u\Gzero-\Gzero$ extends smoothly as well. 
 
 Set $\mathbf{Q}^u:=\Gzero\mathbf{H}^u\Gzero-\Gzero$.   
 It follows that $\mathbf{H}^u = \Hzero\mathbf{Q}^u\Hzero+\Hzero$, hence ${\mathbf H}^u$ is smooth through the wall.  
 In fact,  a direct computation tells us that 
   \begin{equation}  \label{eqn_matrixO}
   \mathbf{H}^u - \Hzero =\begin{pmatrix}
    x_1^4 f_{11}      & x_1^2 x_2   f_{12}  & \cdots         & \cdots            & x_1^2x_n f_{1n}            \\ 
    x_1^2 x_2 f_{21}  &       x_2^2  f_{22} & x_2x_3  f_{23} & \cdots            & x_2x_n f_{2m}              \\
            \vdots    &      x_2x_3  f_{32} &                &                   & \vdots                     \\
            \vdots    &         \vdots      &                &                   & x_{n-1}x_n f_{2n}          \\
   x_1^2 x_n f_{n1}   &      x_2x_n  f_{n2} &   \cdots       & x_{n-1}x_n f_{2n} & x_n^2  f_{nn}              \\
   \end{pmatrix}
  \end{equation}
 with smooth $f_{ab}$, 
 which allows us to see that all the boundary conditions of  $\mathbf{H}^u$ are satisfied along a face ${\bf f} \subset F_1$.

 It remains to check the positivity assertion on the faces ${\mathbf f} \subset F_1$; 
 as above, suppose $\mathbf{f} =(F_1\cap\cdots\cap F_{\ell})\backslash (F_{\ell+1}\cup\cdots\cup F_d)$. 
 Along $\mathbf{f}$, 
 $\Hzero\Gv$ can be written as 
 $\big(\begin{smallmatrix} I_{\ell} & 0 \\ * & \mathbf{H}_{\mathbf{f}}^0\mathbf{G}^{u_{\mathbf{f}}} \end{smallmatrix}\big)$.  
 It thus follows that  along $\mathbf{f}$ 
 $(\Hzero\Gv)^{-1}$ has the shape 
 $\big(\begin{smallmatrix} I_{\ell} & 0 \\ * &  \mathbf{H}^{u_{\mathbf{f}}}\mathbf{G}_{\mathbf{f}}^0 \end{smallmatrix}\big)$,  
with  $\mathbf{G}^0_{\mathbf{f}}=(\mathbf{H}_{\mathbf{f}}^0)^{-1}= {\rm diag}( \frac{1}{2x_{\ell+1}}, \ldots, \frac{1}{2x_n})$, 
where ${\bf H}^{u_{\bf f}} = ({\bf G}^{u_{\bf f}})^{-1}$.
Thus, along $\bf f$, $\Hv=(\Hzero\Gv)^{-1}\Hzero=\big(\begin{smallmatrix}  0 & 0 \\ * & \mathbf{H}^{u_{\mathbf{f}}} \end{smallmatrix}\big)$ 
 (since $\Hzero=\big(\begin{smallmatrix}  0 & 0 \\ 0 & \mathbf{H}_{\mathbf{f}}^0 \end{smallmatrix}\big)$ along $\mathbf{f}$).
 The desired positivity now readily follows from that of $\mathbf{H}^{u_{\mathbf{f}}}$ along $\mathbf{f}$, 
 which in turn is a direct consequence of the convexity assumption of $u_{\bf f}$.

 \smallskip
 
We now deal with the converse direction of Proposition~\ref{complete-first-order},  
i.e.  given a strictly convex $u\in C^{\infty}(\Delta^0)$ such that the associated $\Hv$ verifies the conditions of  Proposition~\ref{complete-first-order}, 
 we have to show that $u\in \mathcal{S}_{\alpha, \beta}(\Delta, {\bf L}, F_1)$.  
 Again, as this is local and already known far from the Poincar\'e face $F_1$, 
 we focus on the same region as above. Arguments analogous to those in \cite[pp. 290-291]{ACGT} allow one to show that the boundary conditions for ${\bf H}^u$ yield that 
 $\mathbf{G}^u-\mathbf{G}^0_{\alpha, \beta}$  and 
 $\mathbf{H}^u\mathbf{G}^0_{\alpha, \beta}$ 
 extend smoothly through $(F_1\cup \cdots \cup F_n)\backslash(F_{n+1}\cup \cdots \cup F_d)$, the later having positive determinant on $F_1$; 
 the smooth extension of $\mathbf{G}^u-\mathbf{G}^0_{\alpha, \beta}$ ensures that $u$ can be written as 
  \begin{equation*} 
   -(\alpha-\beta L_1)\log\ L_1+\frac{1}{2}\sum_{j=2}^d L_j\log L_j + w(x)
  \end{equation*}
 for some $w \in C^{\infty}\big({\Delta}, \R \big)$.

 The boundary conditions  for ${\bf H}^u$ also tell us that 
 $\Hv=\Hzero+\mathbf{R}^u$, with $\mathbf{R}^u$ a smooth matrix of shape given by \eqref{eqn_matrixO} 
 (the {third order boundary condition} on the Poincar\'e face gives precisely the ${O}\big(x_1^4\big)$  estimate for the upper left coefficient of $\mathbf{R}^{u}$). 
 Hence,  $\Hv\Gzero=I_n+\mathbf{R}^u\Gzero$ with
  \begin{equation}  \label{eqn_matrixRG}
  \mathbf{R}^u\Gzero= \begin{pmatrix}
    (x_1)^2 g_{11}  & (x_1)^2    g_{12} & \cdots         & \cdots        & (x_1)^2 g_{1n}    \\ 
     x_2 g_{21}     &  x_2  g_{22}      &  x_2   g _{23} & \cdots        & x_2     g_{2n}    \\
            \vdots  &      \vdots       &                &               & \vdots            \\
            \vdots  &         \vdots    &                &               & x_{n-1} g_{(n-1)n}\\
     x_n g_{n1}     &  x_n  g_{n2}      &   \cdots       & x_n g_{n,(n-1)} &  x_n    g_{nn}    
   \end{pmatrix}, 
  \end{equation}
 for smooth $g_{ab}$. It follows that
 along $\mathbf{f}=(F_1\cap\cdots\cap F_{\ell})\backslash(F_{\ell+1}\cup\cdots\cup F_d)$ ($1\leq \ell\leq n$), 
 $\Hzero\Gv=(\Hv\Gzero)^{-1}=\big(\begin{smallmatrix} I_{\ell} & 0 \\ * & \mathbf{H}_{\mathbf{f}}^0\mathbf{G}^{u_{\mathbf{f}}} \end{smallmatrix}\big)$. 
As $\Hv\Gzero$ extends as a smooth and positive definite matrix over ${\bf f}$, 
 we conclude that $\mathbf{G}^{u_{\mathbf{f}}}>0$, i.e. $u_{\bf f}$ is strictly convex in the relative interior of $\bf f$.

\section{Proof of Proposition~\ref{stable-Hirzebruch} and Corollary~\ref{Hirzebruch-non-poincare}}\label{AppendixB}

For the proof of Proposition~\ref{stable-Hirzebruch} and Corollary~\ref{Hirzebruch-non-poincare} 
we will change point of view slightly and use explicit computations for quadrilaterals in $\mathbb{R}^2$ with the standard lattice $\mathbb{Z}^2$. 
The moment polytope corresponding to a Hirzebruch surface has at least one pair of parallel edges. 
Therefore after possibly scaling the polytope, we can take it to be given as the intersection $\cap_{i=1}^4 L_{i}^{-1}([0, \infty))$ of 
\begin{align}\label{hirzebrucheqns} L_1(x,y) &= y,\nonumber \\
 L_2(x,y) &= (1-x),\\
 L_3(x,y) &= (q-k)x- y + k,\nonumber \\
L_4(x,y) &= x \nonumber,
\end{align}
for some positive real numbers $q$ and $k$. This will then correspond to a Hirzebruch surface exactly when $q - k \in \mathbb{Z}$.

We begin with the proof of Proposition~\ref{stable-Hirzebruch} in the case of two edges, 
then show that the case of one edge is a corollary of this, using the convexity of the set of stable weights. 
We end by proving Corollary~\ref{Hirzebruch-non-poincare}.
\subsection{The case of two edges} For this we will use a criterion for stability found in \cite{lars} 
which used the ambitoric framework of \cite{ACG1} and \cite{ACG2} described above. We begin by recalling this result.

For a pair of edges $F_1,F_2$ of a general $2$-dimensional convex polytope $\Delta$, one can parametrize the lines that meet both $F_1$ and $F_2$ by $[0,1] \times [0,1]$. 
Let the vertices of $F_1$ be $v_0$ and $v_1$ and let the vertices of $F_2$ be $w_0$ and $w_1$. Then let $v_s = (1-s) v_0 + s v_1$ and $w_t = (1-t)w_0 + t w_1$. 
Picking an affine linear function $h_{s,t}$ whose zero set is the line containing $v_s$ and $w_t$, 
one then obtains a corresponding simple piecewise linear function $f_{s,t} = \max \{ 0, h_{s,t} \}$. 
We can parametrize the Donaldson-Futaki invariant of these functions as a map 
\begin{align*} \phi : [0,1]\times[0,1] &\longrightarrow \mathbb{R}, \\
(s,t) \ \ \ \ \                        &\longmapsto \mathcal{L} ( f_{s,t} ).
\end{align*}
The positivity of this is independent of the scaling of $h_{s,t}$ chosen. 
Since all lines meeting $F_1$ and $F_2$ are traced out as $(s,t)$ takes all values in $[0,1] \times [0,1]$, 
it suffices to check the positivity of $\phi$ in order to check whether or not there are any simple piecewise linear functions 
with crease meeting $F_1$ and $F_2$ violating stability. 
By choosing an appropriate scaling of $h_{s,t}$, $\phi$ can be taken to be polynomial in $(s,t)$ of bidegree $(3,3)$. 
This was essentially shown in \cite{Do2}, see also \cite[Lem. 2.10]{lars}.

If $F_1$ and $F_2$ are adjacent to a common edge $\tilde{F}$ then each of the $F_i$ has a vertex lying on $\tilde{F}$. 
Thus, up to reordering the $v_i$ and $w_i$, we have that $v_0$ and $w_0$ lie on $\tilde{F}$. 
In the domain $[0,1] \times [0,1]$ of $\phi$ the point $(0,0)$ will then correspond to 
the simple piecewise linear function $f_{0,0}$ whose crease is $\tilde{F}$; $f_{0,0}$ is then actually affine linear on $P$ and so $\phi \big(( 0,0) \big) =0$. 
Moreover, if $\tilde{F}$ is one of the components of $F$, the facets along which we let the boundary measure vanish, then the point will be a critical point of $\phi$.

In the case when $\Delta = Q$ is a quadrilateral, there are exactly two pairs of such edges that have common adjacent edges, 
namely the two pairs of opposite edges. 
As above, we then have functions $\phi_1, \phi_2$ parametrizing the Donaldson-Futaki invariant of simple piecewise linear functions meeting opposite edges of $Q$. 
In the case when $F$ consists of two edges of $Q$, there will then be exactly two critical points corresponding to two of the vertices of $\phi_1$ and/or $\phi_2$.

\begin{prop}\label{detcond} Let $Q$ be a quadrilateral and pick two edges $F_1, F_2$ of $Q$. 
Then $(Q, {\bf L}, F_1 \cup F_2)$ is stable if and only if 
the determinant of the Hessian of the functions $\phi_1,\phi_2$ at the points corresponding to an affine linear function is
\begin{itemize}
\item non-negative if $F_1, F_2$ are adjacent,
\item positive if $F_1, F_2$ are opposite.
\end{itemize}
Moreover, the positivity of the determinant implies that the relative Sz\'ekelyhidi numerical constraint is satisfied.
\end{prop}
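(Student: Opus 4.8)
The strategy is to reduce the stability of $(Q, {\bf L}, F_1\cup F_2)$ to the sign of the ``relevant'' Hessian determinants of the two bidegree--$(3,3)$ polynomials $\phi_1,\phi_2$, exploiting the fact that any simple piecewise linear function which could destabilize the triple has its crease meeting a pair of opposite edges of $Q$ (there are only the two such pairs), and that after an affine change of coordinates the destabilizing candidates are the affine images, under the ambitoric parametrization of \cite{ACG1,ACG2}, of creases meeting $F_1$ and $F_2$ or the complementary pair. Concretely, I would first recall from Lemma~\ref{characterization} (and its toric incarnation, cf.\ \eqref{chi-futaki}, \eqref{relative-constraint}) that along each of the two families of creases the functional $\cL_{(Q,{\bf L},F_1\cup F_2)}$ is computed by $\phi_1$ (resp.\ $\phi_2$), a polynomial of bidegree $(3,3)$ on $[0,1]^2$, vanishing to first order at the vertices of $[0,1]^2$ that correspond to affine linear functions, i.e.\ to the vertices of $Q$ lying on the edges in $F$.

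\textbf{Key steps.} (i) Identify precisely which vertices of $[0,1]^2$ are the ``affine'' ones: when $F_1,F_2$ are adjacent to a common edge $\tilde F$, the point $(0,0)$ (say) corresponds to $f_{0,0}$ affine linear on $Q$, and since $\tilde F\subset F$ is a zero-measure facet this is automatically a critical point of $\phi_i$; when $F_1,F_2$ are opposite, both $(0,0)$ and $(1,1)$ are such critical points. (ii) Since $\phi_i$ vanishes and is critical at such a point $p$, and is a polynomial of low bidegree, a local analysis near $p$ shows that the sign of $\phi_i$ on a punctured neighbourhood of $p$ in $[0,1]^2$ is governed by the quadratic form $\tfrac12\operatorname{Hess}_p\phi_i$: positivity of $\det\operatorname{Hess}_p\phi_i$ together with positivity of a diagonal entry forces $\phi_i>0$ near $p$, while a negative determinant yields a destabilizing direction. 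One must also rule out that $\phi_i$ could become negative far from the affine vertices; here I would invoke the explicit structure established in \cite{lars} (convexity of the set of stable labellings, and the fact that for quadrilaterals the only obstructions come from creases through opposite edges, so that non-negativity at the critical points is the only constraint) --- this is exactly the criterion quoted as Proposition~\ref{detcond} in the adjacent/opposite dichotomy. (iii) Combine (i)--(ii): $(Q,{\bf L},F_1\cup F_2)$ is stable iff $\phi_1,\phi_2$ are positive on $[0,1]^2$ away from the affine vertices, iff the Hessian determinants at those vertices are non-negative (adjacent case, where the boundary critical point allows a degenerate direction that is still harmless), resp.\ positive (opposite case). (iv) For the last sentence: by Lemma~\ref{l:toric-numerical}, $F_\chi''(0)=\tfrac12\int_F(s_F-s_{(\Delta,F)})\,d\nu_F$, and by the computation \eqref{relative-constraint} this second derivative is, up to a positive factor, the restriction of the relevant Hessian of $\phi_i$ in the direction transverse to the affine vertex (the direction ``$f_c=\max(0,c-L)$'' of Lemma~\ref{l:numerical/stability}); hence a positive Hessian determinant, which in particular makes that diagonal second derivative positive, gives $F_\chi''(0)>0$.

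\textbf{Main obstacle.} The delicate point is step (ii)--(iii): passing from the \emph{local} sign information at the affine critical vertices to the \emph{global} positivity of the bidegree--$(3,3)$ polynomials $\phi_i$ on all of $[0,1]^2$, and correctly accounting for the asymmetry between the adjacent and opposite cases (in the adjacent case the shared edge $\tilde F$ forces a boundary critical point along which one Hessian eigenvalue may vanish, so only non-negativity of the determinant is needed, whereas in the opposite case one genuinely needs strict positivity to preclude a crease-direction of zero Donaldson--Futaki invariant). I expect this to be handled by the explicit polynomial identities and the reduction already carried out in \cite[\S2]{lars}, so that the proof here amounts to unwinding those formulas and matching them against \eqref{relative-constraint}; nonetheless keeping careful track of the scaling freedom in $h_{s,t}$ (which rescales $\phi_i$ by a positive factor but can alter the numerical value of $\operatorname{Hess}_p\phi_i$ by that same factor) is where the bookkeeping is most error-prone.
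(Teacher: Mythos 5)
First, a point of comparison: the paper offers no proof of Proposition~\ref{detcond} at all --- it is explicitly \emph{recalled} from \cite{lars} (``We begin by recalling this result''), so there is no in-paper argument to measure your sketch against. Judged on its own terms, your proposal has a genuine gap at precisely the step you flag as delicate, and the way you propose to close it is circular: to pass from the signs of ${\rm Hess}\,\phi_i$ at the affine critical vertices to positivity of $\phi_1,\phi_2$ on all of $[0,1]^2$, you ``invoke the explicit structure established in \cite{lars}\dots this is exactly the criterion quoted as Proposition~\ref{detcond}'', i.e.\ you cite the statement being proved. There is also a structural omission: $\phi_1$ and $\phi_2$ only parametrize simple piecewise linear functions whose crease joins one of the two pairs of \emph{opposite} edges of $Q$; a crease joining two \emph{adjacent} edges (cutting off a corner of $Q$) is not of the form $f_{s,t}$ for either pair, so even global positivity of $\phi_1,\phi_2$ away from the affine vertices would not by itself yield stability. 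Conversely, in the ``only if'' direction one must check that a vertex with negative Hessian determinant produces a destabilizing direction that actually lies inside the parameter square $[0,1]\times[0,1]$, which the quadratic model alone does not guarantee.

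The route that actually underlies \cite{lars} goes through the explicit ambitoric solution rather than a direct study of $\phi_i$: by Proposition~\ref{main} (from \cite{ACG2}) every labelled quadrilateral carries a formal polynomial solution $(A,B)$ of the Abreu equation satisfying the boundary conditions, and $(Q,{\bf L},F_1\cup F_2)$ is stable if and only if the positivity condition \eqref{positivity-AB} holds. Integration by parts against this formal solution expresses $\cL(f)$ for \emph{any} simple piecewise linear $f$ as a crease integral of ${\bf H}^{A,B}(p,p)$ (this is how the paper itself argues in the parallelogram and trapezoid cases), which simultaneously disposes of the adjacent-edge creases and reduces everything to the signs of the one-variable polynomials $A,B$ near their prescribed roots; the Hessian determinants of $\phi_1,\phi_2$ at the affine vertices are then identified, up to positive factors, with the quantities controlling those signs. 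Your step (iv) is sound in spirit --- a positive-definite Hessian at the critical vertex dominates the second derivative of $\cL$ along the curve of creases parallel to an edge of $F$, which is $F_\chi''(0)$ by Lemma~\ref{l:toric-numerical}, consistent with the paper's remark that the converse fails --- but the equivalence in the first part of the proposition cannot be obtained by the local-to-global argument you outline without importing the content of \cite{lars} wholesale.
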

Note that the converse of the final statement is not quite true. 
If the determinant vanishes, the Hessian is positive semi-definite but not positive definite at the critical point of $\phi_i$. 
This means that there is a family $f_c$ of simple piecewise linear functions, 
with $f_0$ corresponding to the critical point of the domain of $\phi_i$, such that $\frac{d^2}{dc^2}\big|_{c=0} \big( \mathcal{L}(f_c) \big) = 0$. 
However, it is not necessarily the case that the crease of this family can be taken to be parallel to relevant edge of $Q$. 
This would have to be the case if the positivity of the determinant was equivalent to the relative Sz\'ekelyhidi numerical constraint.

For $Q$ being the moment polytope of a Hirzebruch surface given by equation \eqref{hirzebrucheqns} and ${\bf L}$ the canonical scaling of the normals to $Q$, 
one can then compute the functions $\phi_1$ and $\phi_2$ of Proposition \ref{detcond} and hence their determinants directly in terms of $q$ and $k$. 
The result of this computation is given in Lemmas \ref{adjcomp} and \ref{oppcomp} below.
\begin{lemma}\label{adjcomp} Suppose $F$ consists of two adjacent edges, which without loss of generality can be assumed to be the two edges not lying on the coordinate axes. 
Then the determinants of the Hessians of $\phi_1, \phi_2$ at the two critical points are up to a positive constant given by 
\begin{align}\label{1stdet} k^4 + 2 k^2 q^2 + q^4 - k^3 + 3k^2 q + 3 kq^2 - q^3
\end{align}
and 
\begin{equation}\label{2nddet} 
\begin{aligned}3k^6 q + 3k^5 q^2 + 6k^4q^3 + 6 k^3 q^4 + 3k^2 q^5 + 3k q^6 + 2k^6 + 2k^5q + 6k^4 q^2 \\ 
               + 4 k^3 q^3 + 6 k^2 q^4 + 2 k q^5 + 2 q^6 - 2 k^5 - 2 k^4 q + 4 k^3 q^2 + 4k^2 q^3 - 2 q^4 k - 2 q^5.
\end{aligned}
\end{equation}
\end{lemma}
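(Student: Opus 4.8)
The plan is entirely computational: one substitutes the Hirzebruch data \eqref{hirzebrucheqns} into the stability criterion of Proposition~\ref{detcond} and simplifies. With the normalisation of \eqref{hirzebrucheqns} one has $d\mu=dx\,dy$, inward normals $e_1=(0,1),\ e_2=(-1,0),\ e_3=(q-k,-1),\ e_4=(1,0)$, and a boundary measure $d\nu_{\mathbf L}$ which by \eqref{measure} is, on each edge, a prescribed positive multiple of the Lebesgue measure of that edge. The vertices of $Q$ are $(0,0),(1,0),(1,q),(0,k)$; the parallel edges are $\{L_2=0\}$ and $\{L_4=0\}$; and $F=\{L_2=0\}\cup\{L_3=0\}$, the two edges not on the axes, meeting at $(1,q)$. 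The two pairs of opposite edges, $(\{L_1=0\},\{L_3=0\})$ and $(\{L_2=0\},\{L_4=0\})$, give the maps $\phi_1,\phi_2$ of Proposition~\ref{detcond}; by the discussion preceding that proposition the corners of their domains that must be tested are the unique corners at which the parametrising line coincides with an edge of $Q$ lying in $F$, namely one corner of $\phi_1$ (corresponding to $\{L_2=0\}$) and one of $\phi_2$ (corresponding to $\{L_3=0\}$).

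First I would compute the extremal affine function $s_{(\Delta,F)}=a_0+a_1x+a_2y$ of $(\Delta,F)$ from \eqref{extremal-function-F}: testing against $f=1,x,y$ yields a $3\times3$ linear system over $\mathbb{Q}(q,k)$ whose solution has common denominator a positive polynomial $D(q,k)$ (essentially the Gram determinant of $1,x,y$ against $d\mu$ on $\Delta$). Next I would parametrise the simple crease functions $f_{s,t}=\max\{0,h_{s,t}\}$ meeting the relevant pair of opposite edges, taking $h_{s,t}$ affine, vanishing on the line through $v_s$ and $w_t$, and normalised so that its coefficients are polynomial in $(s,t)$ and $\Delta^+_{s,t}:=\{h_{s,t}>0\}$ is the thin region cut off next to the edge under consideration. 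By \cite[Lem.~2.10]{lars} (cf.\ \cite{Do2}) the resulting $\phi_i(s,t)=\mathcal{L}_{(\Delta,F)}(f_{s,t})$ is polynomial of bidegree $(3,3)$; since $\mathcal{L}_{(\Delta,F)}$ annihilates affine functions, it equals
\[
\phi_i(s,t)=\int_{(\partial\Delta\setminus F)\cap\overline{\Delta^+_{s,t}}} h_{s,t}\,d\nu_{\mathbf L}-\tfrac12\int_{\Delta^+_{s,t}} h_{s,t}\,s_{(\Delta,F)}\,d\mu ,
\]
which near the tested corner is a finite sum of integrals of polynomials over $\Delta^+_{s,t}$ --- a quadrilateral whose four vertices ($v_s$, $w_t$, and the two fixed endpoints of the edge in $F$) are affine in $(s,t)$ --- and over a single bounded piece of $\{L_1=0\}$; hence it is polynomial in $(s,t)$ with $D(q,k)$ as its only denominator.

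With $\phi_i$ in closed form, I would Taylor-expand it at the tested corner of $[0,1]\times[0,1]$. That corner corresponds to an affine function lying in $F$, so $\phi_i$ and its gradient vanish there (Proposition~\ref{detcond}) and the leading term is the quadratic form of $\mathrm{Hess}(\phi_i)$; I would read off this $2\times2$ matrix, take its determinant, clear the manifestly positive factor $D(q,k)^2$ together with the square of the positive normalising constant, and simplify. Identifying the two polynomials obtained in this way --- one for each $\phi_i$ --- with \eqref{1stdet} and \eqref{2nddet} up to positive factors completes the proof.

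The calculation is routine in principle but genuinely heavy: $s_{(\Delta,F)}$ already carries a nontrivial $q,k$-dependent denominator, the crease integrals split according to the combinatorial type of $\Delta^+_{s,t}$, and the determinant for the non-parallel pair of edges is a degree-$6$ polynomial with many monomials, as the shape of \eqref{2nddet} shows. The main obstacle is the bookkeeping --- in particular tracking the overall positive prefactor, since what Proposition~\ref{detcond} requires is the \emph{sign} of each determinant; I would perform this symbolically with a computer algebra system and cross-check the answers at several rational values of $(q,k)$, including $q=k+m$ with $m\in\mathbb{Z}_{\ge1}$, where both polynomials must be positive in accordance with Proposition~\ref{stable-Hirzebruch}.
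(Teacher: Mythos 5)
Your proposal is correct and follows essentially the same route as the paper, which simply invokes Proposition~\ref{detcond} and states that one ``can then compute the functions $\phi_1$ and $\phi_2$ \dots and hence their determinants directly in terms of $q$ and $k$''; your write-up fleshes out exactly that direct computation (bidegree-$(3,3)$ parametrisation of crease functions from \cite[Lem.~2.10]{lars}, identification of the two relevant corners as the ones whose crease is an edge of $F$, Hessian at those corners, sign of the determinant up to positive factors). No gap to report.
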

Thus to complete the proof of Proposition~\ref{stable-Hirzebruch} in the case when $F$ consists of two adjacent edges, 
we have to show that both these numbers are always positive. This is not true for arbitrary positive $q$ and $k$, 
but we will use that $q-k \in \mathbb{Z}$. 
In fact, having $|q-k| \geq 1$ ensures that both \eqref{1stdet} and \eqref{2nddet} are positive. 
Note that we do not have to consider the case $q=k$ as this corresponds to a product.

So assume first that $q>k$, so that $q \geq k+1$. We then use the substitution $q = k+1+\lambda$, where $\lambda \geq 0$ by assumption. 
The expression \eqref{1stdet} is then given by 
\begin{align*}\lambda^4 + 4 k \lambda^3 + 8 k^2 \lambda^2 + 8 k^3 \lambda + 4 k^4 + 3 \lambda^3 + 12 k \lambda^2 \\ 
+ 22 k^2 \lambda + 12 k^3 + 3 \lambda^2 + 12 k \lambda +14 k^2 + \lambda + 4 k .
\end{align*}
Since $k>0$ and $\lambda \geq 0$ it therefore follows that the term in equation \eqref{1stdet} is always positive.

Similarly, using the substitution $k=q+1+\lambda$ instead, one can show that \eqref{1stdet} is always positive when $q<k$. 
The same technique also works to show that \eqref{2nddet} is positive whenever $q-k \in \mathbb{Z}$. 
This completes the proof of Proposition \ref{stable-Hirzebruch} for the case of adjacent edges.

Though we have already proved this by different means, the above technique also works when $F$ consists of two opposite edges. 
Sz\'ekelyhidi showed in \cite[Prop. 15]{szekelyhidi08} that if the two edges that are not in $F$ are parallel, then $(Q,{\bf L} , F)$ is always strictly semistable. 
It can also be verified directly that the determinant of the Hessian as in Proposition \ref{detcond} vanishes in this situation. 
With $Q$ determined by positive numbers $k,q$ as above, we can therefore assume that $F$ consists of the two edges of $Q$ contained in $L_2 =0$ and $L_4=0$, respectively, 
since the edges lying in $L_1=0$ and $L_3=0$  are the only opposite edges that may not be parallel. 

In the case of opposite edges, the two determinant conditions turn out to be equivalent. 
Thus we need to determine that this single number is non-negative and vanishes precisely if $q=k$. This is a consequence of the Lemma below.

\begin{lemma}\label{oppcomp} Suppose $F$ consists of the two opposite edges lying on $L_2=0$ and $L_4=0$. 
Then the determinant of the Hessian of the function corresponding to the Donaldson-Futaki invariant 
of simple piecewise linear functions with crease meeting $L_1=0$ and $L_3=0$ at the critical point corresponding to $L_2=0$ is given by 
\begin{align*} \frac{(k-q)^2 (k+q)^2k^2}{2(k^2+4kq+q^2)^2}.
\end{align*}
\end{lemma}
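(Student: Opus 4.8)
The plan is to reduce the statement to a direct computation, in the spirit of the previous subsection. First I would set up the combinatorial data for the trapezoid $Q$ of \eqref{hirzebrucheqns}: its vertices are $(0,0),(1,0),(1,q),(0,k)$, its area is $(k+q)/2$, the two parallel edges are $F_2=\{L_2=0\}$ (from $(1,0)$ to $(1,q)$) and $F_4=\{L_4=0\}$ (from $(0,0)$ to $(0,k)$), and the measure $d\nu_{\bf L}$ equals $dx$ on each of $F_1=\{L_1=0\}$ and $F_3=\{L_3=0\}$ and vanishes on $F=F_2\cup F_4$. I would then compute the extremal affine function $s_{(\Delta,{\bf L},F)}=s_0+s_1x+s_2y$ by solving the $3\times3$ linear system coming from \eqref{extremal-function-F} with $f\in\{1,x,y\}$; all the integrals involved are the elementary moments $\int_0^1\bigl(k+(q-k)x\bigr)^{j}\,dx$ for $j=0,1,2,3$.

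Next I would parametrise the creases meeting $F_1$ and $F_3$. Put $v_s=(s,0)\in F_1$ and $w_t=(t,k+(q-k)t)\in F_3$ for $(s,t)\in[0,1]^2$; with the normalisation that makes the function $\phi$ of Proposition~\ref{detcond} a polynomial of bidegree $(3,3)$ (cf. \cite{Do2} and \cite[Lem.~2.10]{lars}), the affine function cutting out the crease line is
\[ h_{s,t}(x,y)=\bigl(k+(q-k)t\bigr)(x-s)+(s-t)\,y, \]
and a short calculation gives the convenient identities $h_{s,t}|_{F_1}=\bigl(k+(q-k)t\bigr)(x-s)$ and $h_{s,t}|_{F_3}=\bigl(k+(q-k)s\bigr)(x-t)$ as functions of $x\in[0,1]$. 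Since only the $2$-jet of $\phi$ at $(1,1)$ is needed, it suffices to work near $(1,1)$, where $\{h_{s,t}>0\}\cap Q$ is the quadrilateral $\Delta'_{s,t}$ with vertices $v_s,(1,0),(1,q),w_t$ and
\[ \phi(s,t)=\tfrac12\bigl(k+(q-k)t\bigr)(1-s)^2+\tfrac12\bigl(k+(q-k)s\bigr)(1-t)^2-\tfrac12\int_{\Delta'_{s,t}}h_{s,t}\,s_{(\Delta,{\bf L},F)}\,d\mu; \]
the remaining integral is elementary after splitting $\Delta'_{s,t}$ into two triangles (the integral of a quadratic over a triangle being given by its values at the edge midpoints times the area).

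From this one reads off $\phi(1,1)=0$ and $\partial_s\phi(1,1)=\partial_t\phi(1,1)=0$, so $(1,1)$ is indeed the critical point attached to $L_2=0$, and then computes the three second partials at $(1,1)$. The two boundary terms contribute $q$ to each of $\partial_s^2\phi(1,1)$ and $\partial_t^2\phi(1,1)$ and $0$ to the mixed partial, while the area term contributes a quadratic correction read off from the way $\Delta'_{s,t}$ collapses onto the segment $F_2$. Substituting the $s_i$ from the first step and expanding $\det\Hess\phi(1,1)$ should, after cancellation, collapse to
\[ \det\Hess\phi(1,1)=\frac{(k-q)^2(k+q)^2k^2}{2\,(k^2+4kq+q^2)^2}. \]
Since $k>0$ and $k^2+4kq+q^2>0$ for positive $q,k$, this is non-negative and vanishes exactly when $q=k$, i.e. when $Q$ is a rectangle and the two edges not in $F$ are parallel; this is the assertion of the lemma. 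The one real difficulty is bookkeeping: carrying the area integral and its second derivatives through without sign or constant slips, and keeping the scaling of $h_{s,t}$ (hence the overall normalisation of $\phi$) consistent with Proposition~\ref{detcond}. I would organise everything so that $s_{(\Delta,{\bf L},F)}$ and all downstream quantities are expressed purely as rational functions of $k$ and $q$, and cross-check the final formula with a computer algebra system.
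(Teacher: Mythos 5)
Your strategy coincides with the paper's (Appendix~B offers no argument for this lemma beyond ``the result of this computation''), and nearly every ingredient is set up correctly: the vertices $(0,0),(1,0),(1,q),(0,k)$, the measure $dx$ on $F_1$ and $F_3$, the identities $h_{s,t}|_{F_1}=(k+(q-k)t)(x-s)$ and $h_{s,t}|_{F_3}=(k+(q-k)s)(x-t)$, the location of the critical point at $(s,t)=(1,1)$, the contribution $q$ of each boundary term to the corresponding diagonal entry, and the fact that the area term enters at second order through the collapse of $\Delta'_{s,t}$ onto $F_2$. Carrying your recipe out: the extremal affine function is $s_{(\Delta,F)}=\frac{24q+24(k-q)x}{k^2+4kq+q^2}$ (the $f=y$ equation is automatically satisfied, so it is indeed constant in $y$), its value on $L_2=0$ is $s_2=\frac{24k}{k^2+4kq+q^2}$, and in the variables $\sigma=1-s$, $\tau=1-t$ one gets
\begin{equation*}
\mathrm{Hess}\,\phi(1,1)=\begin{pmatrix} q-\tfrac{q^2s_2}{6} & -\tfrac{q^2 s_2}{12}\\[1mm] -\tfrac{q^2 s_2}{12} & q-\tfrac{q^2 s_2}{6}\end{pmatrix},\qquad
\det=q^2\Bigl(1-\tfrac{qs_2}{4}\Bigr)\Bigl(1-\tfrac{qs_2}{12}\Bigr)=\frac{q^2(k-q)^2(k+q)^2}{(k^2+4kq+q^2)^2}.
\end{equation*}

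Here is the gap: with your normalisation of $h_{s,t}$ (which is ``twice the signed area of the triangle $(v_s,w_t,\cdot)$'', the natural choice making $\phi$ polynomial) the determinant is $\frac{q^2(k-q)^2(k+q)^2}{(k^2+4kq+q^2)^2}$, not the stated $\frac{k^2(k-q)^2(k+q)^2}{2(k^2+4kq+q^2)^2}$; for $k=1$, $q=2$ these are $36/169$ versus $9/338$. The ratio $2q^2/k^2$ is not a universal constant, so the mismatch cannot be absorbed into an overall convention for $\mathcal{L}$ or a global rescaling of the $(s,t)$-parameters: the normalisation implicit in Lemma~2.10 of \cite{lars} must differ from yours in a $(k,q)$-dependent way (note the stated formula carries $k^2$, the squared length of the $L_4=0$ edge, while the computation at the $L_2=0$ corner naturally produces $q^2$, plus there is a residual factor of $2$), and you would need to import that normalisation verbatim to land on the displayed constant. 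None of this affects how the lemma is used downstream --- your expression and the paper's are positive multiples of one another, both non-negative and vanishing exactly when $k=q$, so Proposition~\ref{detcond} applies either way --- but as a proof of the formula as literally stated, your computation produces a different number, and you should either pin down the normalisation from \cite{lars} or restate the lemma up to the convention.
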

The determinant is thus always non-negative and since $k$ and $q$ are positive it vanishes if and only if $k=q$, i.e. if and only if $Q$ is a rectangle, as expected. 

\begin{rem}\label{3facetsproof} The determinant condition Proposition \ref{detcond} holds regardless of the normals we use for the remaining two edges in $F$. 
In particular, it applies when we have a third facet in $F$. In this case similar formulae to the ones given above show that the determinant condition is violated, 
and so $(Q,F)$ is always unstable when $F$ consists of three edges of $Q$.
\end{rem}

\subsection{The case of one edge}

To prove that $(Q,{\bf L} , F)$ is stable when $Q$ corresponds to a Hirzebruch surface and $F$ is a single edge of $Q$, 
we will use the notion of weighted stability and the convexity of the set of stable weights. 
In general, for a Delzant polytope $\Delta$, 
we let $r_i \in \mathbb{R}_{\geq 0}$ be the reciprocal of the scaling of the $i^{\textnormal{th}}$ defining function of $\Delta$ 
given by the data ${\bf L}$ if this facet is not in $F$ and $r_i =0$ if it is. 
Then we can identify the triple with $(\Delta, {\bf L}, F)$ with $(\Delta, \underline{r})$ where $\underline{r} \in \mathbb{R}^d_{\geq 0}$ 
is the \textit{weight} of $(\Delta, {\bf L}, F)$. The weight $\underline{r}$ is \textit{stable} if the corresponding triple $(\Delta, {\bf L}, F)$ is. 

For us the key property of weighted stability is that the set of non-zero stable weights, thought of as a subset of $\mathbb{R}_{\geq 0}^d \setminus \{ 0 \}$, 
is a \textit{convex cone}. Thus a positive linear combination of semistable weights is semistable, 
and moreover, if at least one of the weights is stable, then the linear combination is stable too.

Going back to the case when $\Delta = Q$ corresponds to a Hirzebruch surface, 
the notion of the stability of $(Q,{\bf L} , F)$ where $F$ is a single edge of $Q$ is exactly the same as the stability of the weight $(0,1,1,1)$, 
where $F_1$ is the edge in $F$ and $F_2, F_3, F_4$ are the remaining three edges of $P$.

We now note that this weight can be written as 
\begin{align*} (0,1,1,1) = \frac{1}{2} (0,0,1,1) +  \frac{1}{2} (0,1,0,1) + \frac{1}{2} (0,1,1,0) .
\end{align*} 
The stability of the weights $ (0,0,1,1) , (0,1,0,1)$ and $(0,1,1,0)$ each correspond to the stability of some $(Q,{\bf L} , F')$ 
where $F'$ consists of exactly two edges of $Q$. 
Thus $(0,1,1,1)$ is a positive combination of semistable weights, hence is semistable. 
Moreover, at least two of the weights are then in fact stable, since two of these weights correspond to when $F'$ consists of two adjacent sides. 
Hence $(0,1,1,1)$ must be stable as well. 
This means that $(Q,{\bf L} , F)$ where $Q$ is a Hirzebruch surface, ${\bf L}$ are the canonical defining functions of $Q$ and $F$ is a single edge of $Q$, is always stable, 
and this completes the proof of Proposition~\ref{stable-Hirzebruch}.

\subsection{The proof of Corollary~\ref{Hirzebruch-non-poincare}}

The missing component in the proof is to show that the extremal metrics obtained cannot be of Poincar\'e type. 
For this we will show that in the above situation, the necessary condition in equation~\eqref{hyperbolicity-condition} 
for the existence of a toric extremal metric of Poincar\'e type is violated. 
In the Hirzebruch surface cases we are interested in, this condition becomes the following: 
\begin{lemma}\label{PTrestriction} Let $Q$ be the moment polytope of a Hirzebruch surface $X$ and K\"ahler class $\Omega$ given by the data in equations~\eqref{hirzebrucheqns}. 
If $X$ admits an extremal Poincar\'e type metric on the complement of a divisor $Z$ corresponding to the a union $F$ of facets of $Q$, 
then the associated affine linear function $A$ satisfies that
\begin{itemize}
\item $A$ is constant along $F$, and so in particular at its vertices, if $F$ is a single edge,
\item $A$ is constant along the line with vertices $(0,k)$ and $(\frac{1}{k},0)$, if $F$ consists of the two edges lying in the $x$ and $y$-axes.
\end{itemize}
\end{lemma}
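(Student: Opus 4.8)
The plan is to deduce Lemma~\ref{PTrestriction} directly from the general necessary conditions already established in Theorem~\ref{hugues} (and its generalization to smooth toric divisors noted in the Remark following Conjecture~\ref{conjecture1}), combined with the explicit Abreu--Guillemin description of extremal metrics of Poincar\'e type coming from $\cS_{\alpha,\beta}$. The affine linear function $A$ in the statement is nothing but the extremal affine function $s_{(Q,{\bf L},F)}$ governing the Abreu equation \eqref{abreu1} for the pair; so what must be shown is that $s_{(Q,{\bf L},F)}$ restricts to a constant on each component $F_i$ of $F$ in the two cases under consideration.

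First I would treat the single-edge case. If $X\setminus Z$ carries a $\T$-invariant extremal K\"ahler metric of Poincar\'e type, then by Theorem~\ref{hugues} we know the metric is (weakly) Donaldson, ${\rm Scal}_g = s_{(Q,{\bf L},F)}$, and moreover $s_F - (s_{(\Delta,F)})_{|_F}$ is a (positive) constant, where $s_F$ is the extremal affine function of the edge $F$ viewed as the $1$-dimensional labelled polytope classifying the rational curve $Z$. But $F$ is an interval, so $s_F$ --- an affine linear function on a $1$-dimensional polytope --- is the average scalar curvature constant of the induced metric on $Z\cong\mathbb{CP}^1$: it is \emph{a fortiori} constant. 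Hence $(s_{(\Delta,F)})_{|_F} = s_F - c$ is constant along $F$, which is exactly the first bullet (the statement at the vertices of $F$ being a special case). I would phrase this so that it reads off immediately from Theorem~\ref{hugues}.

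Next, the two-edge case. Here $Z$ is the union of the two rational curves corresponding to the edges $F_1=\{L_2=0\}$ (i.e. $x=1$... wait, the statement says the two edges lying in the $x$- and $y$-axes, i.e. $F_1=\{L_1=0\}=\{y=0\}$ and $F_2=\{L_4=0\}=\{x=0\}$); these two edges are \emph{adjacent}, meeting at the vertex $(0,0)$, and their union is a smooth normal-crossing divisor. By the Remark after Conjecture~\ref{conjecture1}, Theorem~\ref{hugues} generalizes to a smooth toric divisor $Z$ (here a normal-crossing divisor of disjoint-after-blowup type is fine since the two curves meet transversally at one point, and the argument there is local around each facet). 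Applying it to each facet $F_i$ separately: $(s_{(Q,{\bf L},F)})_{|_{F_i}} = s_{F_i} - c_i$ for constants $c_i$, where $s_{F_i}$ is the extremal affine function of $F_i\cap(\text{other facet of }F)$, again a $1$-dimensional labelled polytope, hence constant. So $A=s_{(Q,{\bf L},F)}$ is constant along $F_1$ ($=\{y=0\}$) and along $F_2$ ($=\{x=0\}$). An affine linear function on $\R^2$ which is constant along both coordinate axes is constant everywhere --- in particular along the line through $(0,k)$ and $(1/k,0)$. This gives the second bullet; alternatively one notes that constancy along $\{x=0\}$ already forces $A=A(0,0)$ on the entire line $\{x=0\}\ni(0,k)$ and constancy along $\{y=0\}$ gives $A=A(0,0)$ at $(1/k,0)$, and an affine function agreeing at two points on a line is constant along that line.

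The main obstacle is making sure Theorem~\ref{hugues} is invoked in a form that genuinely covers the two-edge (normal-crossing) situation, since as stated it is proved only for a smooth \emph{irreducible} divisor $Z=(x^0)^{-1}(F)$ with $F$ a single facet. The Remark after Conjecture~\ref{conjecture1} asserts the extension to a smooth toric divisor (union of disjoint facets), but two \emph{adjacent} facets do intersect. The safe route, which I would take, is to localize: the relevant conclusion --- constancy of the extremal function along an edge $F_i$ --- is a statement about the restriction of the extremal metric to the rational curve $Z_i$ corresponding to $F_i$, and by \cite[Thm.~4]{Auvray1} this restriction is an extremal K\"ahler metric of Poincar\'e type on $Z_i$ minus the finitely many points where $Z_i$ meets the other components of $Z$; its extremal vector field is the restriction of the ambient one, so the ambient extremal affine function restricts to $F_i$ as the (automatically constant, since $F_i$ is $1$-dimensional) extremal affine function of that punctured curve. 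This avoids any subtlety about normal crossings and is exactly the ingredient used in part (ii) of the Remark after Conjecture~\ref{conjecture1}. I would therefore write the proof to rely on \cite[Thm.~4]{Auvray1} restricted to each edge rather than on the full force of Theorem~\ref{hugues}, noting that a $1$-dimensional labelled polytope has constant extremal affine function.
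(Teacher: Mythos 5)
Your treatment of the first bullet is correct and agrees with the paper (which treats that case as immediate): a single unpunctured edge is the Delzant polytope of $\C P^1$ with its full boundary measure, its extremal affine function is the constant average scalar curvature, and \eqref{hyperbolicity-condition} then forces $A|_F$ to be constant. The localization via \cite[Thm.~4]{Auvray1} to each component of $Z$ is also the right way to sidestep the normal-crossing issue.

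The two-edge case, however, contains a genuine error. You assert twice that the extremal affine function of the pair $(F_i,\,F_i\cap F_j)$ is ``automatically constant, since $F_i$ is $1$-dimensional.'' This is false: a $1$-dimensional labelled polytope has constant extremal affine function only when its boundary measure is unmodified at both endpoints. Here $F_1=\{y=0\}$ and $F_4=\{x=0\}$ meet at the origin, so each is punctured at exactly one endpoint, and the extremal affine function of $([0,\lambda],\{0\})$ is $B_\lambda(z)=\frac{6}{\lambda^2}z-\frac{2}{\lambda}$ (in the paper's normalization) --- it has non-zero slope, reflecting the fact that the extremal Poincar\'e type metric on $\C P^1$ minus one point is not CSC. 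Consequently \eqref{hyperbolicity-condition} gives $A|_{y=0}=B_1(x)+c_1$ and $A|_{x=0}=B_k(y)+c_2$, which pins down the linear part of $A$ to be $6x+\frac{6}{k^2}y$; it does \emph{not} make $A$ constant along either axis, let alone globally constant. The actual content of the second bullet is only the coincidence $A(0,k)=A(1/k,0)=c+6/k$, whence constancy along that one line --- this is exactly the paper's computation. As written, your argument derives a ``necessary condition'' ($A$ globally constant) that is both unjustified and false, and which would contradict the non-vanishing of $K$ established afterwards in the proof of Corollary~\ref{Hirzebruch-non-poincare}.
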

\begin{proof} The only part that needs clarification is the last statement regarding the case of two adjacent edges. 
Let $A$ be the associated affine linear function to $Q$ given by the equations~\eqref{hirzebrucheqns}. We are taking the edges in $F$ to be $F_1$ and $F_4$. 

Recall that for the moment polytope $I = [0,\lambda]$ with coordinate $z$ and $F$ being the end-point $0$, the associated affine linear function is
\begin{align*} B_{\lambda} (z) = \frac{6}{\lambda^2} z - \frac{2}{\lambda}.
\end{align*}
Since $F$ is $F_1 \cup F_4$ for $Q$, the condition of equation~\eqref{hyperbolicity-condition} then becomes that 
\begin{align*} A_{| y = 0} &= B_1 (x) + c_1 \\
A_{| x = 0} &= B_k (y ) + c_2 ,
\end{align*}
where the $c_i$ are constants. Thus if $A = ax + b y +c $, we have that
\begin{align*} a&= 6 \\
b &= \frac{6}{k^2}.
\end{align*}
Thus $A (0,k) = c+ \frac{6}{k}$ and $A(\frac{1}{k},0) = c + \frac{6}{k}$, too.  
\end{proof}

We will change our parametrization of $Q$ slightly from the beginning of this Appendix and the above proof, and instead take $Q$ to have vertices 
\begin{align*} v_1 =& (-d, 0), \\
v_2 =&(k,0) ,\\
v_3 =&(0,1) ,\\
v_4 =&(-d, 1).
\end{align*}
Here $d>0$ and $k \in \mathbb{Z}_{\geq 0}$. We can always take $Q$ to be of this form up to scaling. 
We will let $F_1$ be the edge connecting $v_1$ and $v_4$, $F_2$ to be the edge connecting $v_1$ and $v_2$, $F_3$ to be edge connecting $v_2$ 
and $v_3$ and $F_4$ to be the edge connecting $v_3$ and $v_4$.

First, we let $A_i$ be the associated affine linear function the case when $F$ consists of all edges but $F_i$. Then
\begin{align*} A_1  =& - \frac{12}{2d^2 + 2dk + k^2} x - \frac{24 kd (k+d)}{(2d^2 + 2dk + k^2)(6 d^2 + 6 dk + k^2)} y \\ &-  \frac{6(4d^3 -2d^2k -6k^2d - k^3)}{(2d^2 + 2dk + k^2)(6 d^2 + 6 dk + k^2)} , \\
A_2 =& - \frac{12 (3d^2 + 4dk + k^2)}{6 d^2 + 6 dk + k^2} y -  \frac{6(4d^2 + 5 dk + k^2)}{6 d^2 + 6 dk + k^2} , \\
A_3  =& \frac{12}{2d^2 + 2dk + k^2} x + \frac{12 k  (4d^2 + 4dk + k^2)}{(2d^2 + 2dk + k^2)(6 d^2 + 6 dk + k^2)} y \\ &+ \frac{6(8d^3 + 2d^2k -4k^2d - k^3)}{(2d^2 + 2dk + k^2)(6 d^2 + 6 dk + k^2)} , \\
A_4 =& - \frac{12 (3d + 2k)}{6 d^2 + 6 dk + k^2} y -  \frac{6d (k+2d)}{6 d^2 + 6 dk + k^2} .
\end{align*}

Since the associated linear functions depend linearly on the inverse normals, the associated linear function $B_i$ to when $F$ consists of a single edge $F_i$ is therefore given by $B_i = \sum_{j \neq i} A_i$, which is 
\begin{align*} B_1  =& \frac{12}{2d^2 + 2dk + k^2} x - \frac{12 k (4 d^3 + 6d^2 k + 4 dk^2 + k^3 - 4d^2 -4 dk - k^2)}{(2d^2 + 2dk + k^2)(6 d^2 + 6 dk + k^2)} y \\ & +  \frac{6(4d^4 +12d^3 k + 12 d^2 k^2 + 6 dk^3 + k^4 + 8 d^3 + 2d^2k - 4dk^2 - k^3)}{(2d^2 + 2 dk + k^2)(6 d^2 + 6 dk + k^2)} , \\
B_2 =& \frac{12 (3d^2 + 2 dk + k)}{6 d^2 + 6 dk + k^2} y -  \frac{6d (2d + k- 2)}{6 d^2 + 6 dk + k^2} , \\
B_3  =& - \frac{12}{2d^2 + 2dk + k^2} x - \frac{12 k  (4d^3 + 6d^2 k+ 4dk^2 + k^3 + 2d^2 + 2dk)}{(2d^2 + 2dk + k^2)(6 d^2 + 6 dk + k^2)} y \\ &+ \frac{6 ( 4d^4 + 12 d^3 k + 12 d^2 k^2 + 6dk^3 + k^4 - 4d^3 + 2d^2 k + 6dk^2 + k^3)}{(2d^2 + 2dk + k^2)(6 d^2 + 6 dk + k^2)} , \\
B_4 =& - \frac{12 (3d^2 + 4 dk + k^2 - k)}{6 d^2 + 6 dk + k^2} y + \frac{6(4 d^2 + 5dk + k^2 + 2d)}{6 d^2 + 6 dk + k^2} .
\end{align*}

Using Lemma~\ref{PTrestriction}, it suffices to verify whether or not the linear part of $B_i$ is equal at the two vertices of $F_i$. Let $K_i$ be the difference of these two numbers. Then 
\begin{align*} K_1 &= \frac{12k (4 d^3 + 6 d^2 k + 4 dk^2 + k^3 -4 d^2 - 4dk - k^2)}{(2d^2 + 2dk + k^2)(6 d^2 + 6 dk + k^2)}, \\
K_2 &= 0, \\
K_3 &= - \frac{12k (4 d^3 + 6 d^2 k + 4 dk^2 + k^3  - 4 d^2 - 4 dk - k^2)}{(2d^2 + 2dk + k^2)(6 d^2 + 6 dk + k^2)} ,\\
K_4 &= 0.
\end{align*}

The cases of $K_2$ and $K_4$ are the cases when $Z$ is the zero or infinity section in $X$. This is already treated in Corollary~\ref{Hirzebruch} where we know that there exists Poincar\'e type extremal metrics. The above computations then confirm that the condition of equation~\eqref{hyperbolicity-condition} holds, which we also know by general theory regarding extremal Poincar\'e type metrics. 

To prove Corollary~\ref{Hirzebruch-non-poincare} in the case when $F$ consists of a single edge using Lemma~\ref{PTrestriction}, we need to show that $K_1$ and $K_3$ can never be $0$ if $d>0$ and $k \geq 1$. The requirement $k \geq 1$ comes from the fact that if $k=0$, then $X = \mathbb{CP}^1 \times \mathbb{CP}^1$, which is the case we are not considering.

First note that denominator of $K_1$ is always positive, so it will have the same sign as
\begin{align*}4 d^3 + 6 d^2 k + 4 dk^2 + k^3 -4 d^2 - 4dk - k^2  
\end{align*}
which equals
\begin{align*} 4 d^3 + 2 d^2 k + 4 dk (k-1) + k^2(k-1)  + 4 d^2 (k-1) .
\end{align*}
This is always positive as $d>0$ and $k \geq 1$. For $K_3$, note that it equals $-K_1$, hence is always negative.

The remaining case is that of when $F$ consists of two adjacent edges of $Q$, which we take to be $F_1$ and $F_2$. The associated affine linear function $A$ is then $A_3 + A_4$ and by Lemma~\ref{PTrestriction} we need to verify that $K = A(k,0) - A(-d, \frac{1}{k+d})$ can never be $0$. A computation shows that this quantity is given by

\begin{align*} K  &= \frac{12(6d^4+18d^3k+19d^2k^2+8dk^3+k^4+6d^3+6d^2k+3dk^2+k^3)}{(k+d)(2d^2 + 2dk + k^2)(6 d^2 + 6 dk + k^2)}  
\end{align*} 
which is clearly positive. In particular, it can never be $0$.

\end{document}